\newtheorem{theorem}{Theorem}[section]
\newtheorem{lemma}[theorem]{Lemma}
\newtheorem{proposition}[theorem]{Proposition}
\newtheorem{corollary}[theorem]{Corollary}
{ \theoremstyle{definition}
\newtheorem{definition}[theorem]{Definition}}
{ \theoremstyle{definition}
}
{ \theoremstyle{remark}
\newtheorem{remark}[theorem]{Remark}}
\newcommand\eqd{\stackrel{\mathclap{\normalfont d}}{=}}
\begin{document}

\title[Transversal fluctuations of the ASEP]{Transversal fluctuations of the ASEP, stochastic six vertex model, and Hall-Littlewood Gibbsian line ensembles}

\author{Ivan Corwin and Evgeni Dimitrov}

\begin{abstract}
We consider the ASEP and the stochastic six vertex model started with step initial data. After a long time, $T$, it is known that the one-point height function fluctuations for these systems are of order $T^{1/3}$. We prove the KPZ prediction of $T^{2/3}$ scaling in space. Namely, we prove tightness (and Brownian absolute continuity of all subsequential limits) as $T$ goes to infinity of the height function with spatial coordinate scaled by $T^{2/3}$  and fluctuations scaled by $T^{1/3}$.
The starting point for proving these results is a connection discovered recently by Borodin-Bufetov-Wheeler between the stochastic six vertex height function and the Hall-Littlewood process (a certain measure on plane partitions). Interpreting this process as a line ensemble with a Gibbsian resampling invariance, we show that the one-point tightness of the top curve can be propagated to the tightness of the entire curve.
\end{abstract}
\maketitle

\setcounter{tocdepth}{2}
\tableofcontents

\section{Introduction}\label{Section1}

\subsection{The ASEP and S6V model}\label{Section1.1}
In this paper we prove, as Theorem \ref{ASEPTight} and Corollary \ref{SVTight}, the long predicted transversal $2/3$ exponent for the asymmetric simple exclusion process (ASEP) \cite{Spi, Lig2} and the stochastic six vertex (S6V) model \cite{Gwa} -- two (closely related) $1+1$ dimensional random interface growth models / interacting particle systems in the Kardar-Parisi-Zhang (KPZ) universality class. We work with step initial data for both models and demonstrate that their height functions, scaled in space by $T^{2/3}$ and in fluctuation size by $T^{1/3}$, are tight as spatial processes as time $T$ goes to infinity (we use $T$ for time since $t\in (0,1)$ will be reserved for the Hall-Littlewood parameter). We also show as Corollary \ref{ACC}, that all subsequential limits of the scaled height function (shifted by a parabola) have increments, which are absolutely continuous with respect to a Brownian bridge measure. Conjecturally the limit process should be the Airy$_2$ process and we provide further evidence for this conjecture by uncovering a Gibbsian line ensemble structure behind these models, which formally limits to that of the Airy line ensemble \cite{CorHamA}.

\subsubsection{Background and literature}
In 1986, Kardar, Parisi, and Zhang \cite{KPZ} predicted that a large class of growth models in one-spatial dimension subject to space-time independent random forcing, and lateral growth would all demonstrate the same universal scaling properties in long time (see also \cite{KSB}). In particular, drawing on the earlier 1977 work of Forster, Nelson and Stephen \cite{FNS} (which involved a non-rigorous dynamic renormalization group study of the stochastic Burgers equation -- a continuum interacting particle system), \cite{KPZ} predicted that these growth models would have fluctuations of order $T^{1/3}$ in the direction of growth, and have non-trivial spatial correlations in the $T^{2/3}$ transversal scale (the exact nature of this correlation structure was not understood until later). Here time $T$ is assumed to be large. This $3:2:1$ scaling of time : space : fluctuation (known now as KPZ scaling) immediately caught the imagination of physicists and then in the late 90s, mathematicians. These researchers generally sought to refine and expand the scope and notion of this KPZ universality class through numerics, experiments, non-rigorous methods, and in some limited cases mathematical proofs. Much of this progress and a broader discussion about KPZ universality can be found in surveys and books such as \cite{CU2,QS, HT, BS} and references given therein.

Rigorous results concerning the KPZ class generally come in two flavors -- those mainly reliant upon delicate underlying integrable structures in certain exactly solvable models (see, for example, the surveys \cite{Cor14,BPIP,BPIP2,BorGorIP, BorCor}), and those mainly reliant upon more flexible probabilistic methods like couplings or resolvent equations (see for example \cite{BalS, QV,BQS, QV2}). The integrable results provide the highest resolution and have led to exact calculations of statistics for KPZ fluctuations. The more probabilistic methods are more widely applicable and have met with some success in extending KPZ universality outside the realm of integrable models.

Recently, there have been a few hybrid works which have recast certain elements of the integrable model structures into more probabilistic language, and consequently provided new tools in asymptotics. Examples of such works are \cite{CorHamA, CorHamK}, which introduce a method to show tightness of Gibbsian line ensembles from one-point tightness, and use that to probe the fluctuations of the Airy$_2$ process and the KPZ equation. We will have more to say on this method in Section \ref{sec.HLGLE}, since our results ultimately rely on a variant of it. Another such hybrid method is that of continuum statistics \cite{CQR} (see also \cite{MFQR, BorCorRem, QR}) which recasts the TASEP multipoint fluctuation formulas in terms of a kernel which solves a simple boundary value problem. It turns out that this recast kernel admits a simple limit as the number of points of interest grows. Recently, \cite{MQR} extends this method to take limits of general initial data formulas of \cite{Sas05, BFPS} so as to give the full transition probabilities for the KPZ fixed point.

Returning to the $2/3$ KPZ transversal exponent, our results are not the first for KPZ class models (though they are for the ASEP and S6V model). The TASEP (or exponential/geometric last passage percolation (LPP) and the longest increasing subsequence (LIS) of random permutations) is solvable via Schur / determinantal point processes. Using this, \cite{Spohn,Joh02} extract multipoint limit theorems in the $T^{2/3}$ transversal scale and show that from step initial data (particles start packed to one side of the origin and empty to the other), the TASEP height function converges to the Airy$_2$ process (introduced in those works). Under the same scaling, but for other initial data (e.g. periodic, or Bernoulli) other limit processes arise \cite{BFPS,BFP, CFP ,MQR}.

For LPP, LIS, and directed polymer models there is another closely related notion of transversal fluctuation which relates to the wandering of the maximizing (or polymer) path. That exponent is also $2/3$ as was first demonstrated for the LIS in \cite{Joh00b} and for directed LPP with geometric weights in \cite{BDMM}. The KPZ relation predicts that twice the transversal exponent should equal one plus the fluctuation exponent -- in our case $2(2/3) = 1+ 1/3$. That relation has been shown to hold if one makes very strong assumptions on existence of exponents \cite{ Chatt,Dam14,Dam12}. The ASEP and S6V model are not mappable to polymers, so these results do not apply here. For that matter, even if they did, the notion of existence of exponents in those works are very strong and to our knowledge have not yet been verified for any models, even those that are exactly solvable.

Whereas the TASEP is solvable via determinantal / Schur process methods, the ASEP and S6V model are not. There are two main algebraic structures which produce tools for the analysis of integrable KPZ class models (including the TASEP, ASEP and S6V model) -- Macdonald processes and quantum integrable systems. In fact, there are now some bridges between these structures which suggest that they will eventually be joined together. Both of these structures produce moment formulas, which in principle entirely characterize the distribution of the probabilistic systems in question at a given time. However, it remains a significant challenge to extract meaningful asymptotics from these formulas. So far, besides in the very special determinantal models, it is only for one-point asymptotics that this has been achieved. Since formulas are similar in the Macdonald and quantum integrable systems cases, there is a fairly established route now to one-point asymptotics.

Focusing on step initial data, the ASEP one-point $T^{1/3}$ fluctuations were established first in \cite{TWASEP2} and then for other related methods in \cite{BCS,Cor12,BorO}. Analogous asymptotic results are proved for the S6V model in \cite{BCG14, Bor16}, KPZ equation in \cite{CorQ,BCF}, semi-discrete directed polymer in \cite{BorCor}, inverse-gamma polymer in \cite{BCR,KQ}, and $q$-TASEP in \cite{FerVet,Bar15}. There are other models which fall into these hierarchies of integrability for which similar results have been demonstrated (see, for example, \cite{Cor14} and some references therein), and there are some other types of initial data which have been dealt with (though not yet flat -- see however \cite{OQR} for preasymptotic progress on this).

From exact formulas (e.g. \cite{NZ, BorCor,BCS,BP16 }), multipoint fluctuations for the ASEP/S6V model and other non-determinantal models have proved elusive so far. There have been some non-rigorous attempts (e.g. \cite{ PSpohn, PSpohn2, Dot13,Dot14,  ISS13}) for related models (KPZ equation and inverse-gamma polymer) by use of certain uncontrolled approximations. It is unclear whether the resulting formulas are correct. From more probabilistic methods, \cite{Sep12,Sep10} demonstrate the $2/3$ transversal exponent in terms of the typical scale fluctuations of the polymer path for the inverse-gamma and semi-discrete Brownian directed polymer models (with stationary boundary conditions).

Employing the hybrid approach, by associating the narrow wedge initial data solution to the KPZ equation to a Gibbsian line ensemble, \cite{CorHamK} gives the first rigorous proof of the $2/3$ transversal exponent for the KPZ equation. In particular, they show that the spatial fluctuations are tight in this transversal scale and under fluctuation scaling by exponent $1/3$, and, moreover, that subsequential limits are locally absolutely continuous with respect to Brownian motion. The starting point for this result is the remarkable fact that the KPZ equation solution at a fixed time can be realized as the top indexed curve of an infinite ensemble of curves (called the KPZ line ensemble -- see also \cite{OCW, Nica}) which interact with nearest neighbor curves through an exponential energy. At the heart of this existence is the relation between the semi-discrete directed polymer and the Whittaker process and quantum Toda lattice Hamiltonian, as facilitated by the geometric Robinson-Schensted-Knuth (RSK) correspondence \cite{OC} (see also \cite{CCSZ, CSZ, NZ}).

Until recently, it did not seem that the ASEP or S6V model enjoyed such a relationship with Gibbsian line ensembles. The work of \cite{BBW} shows that the S6V height function (and that of the ASEP through a limit transition) arises as the top curve of a line ensemble related to the Hall-Littlewood process. In fact, \cite{BM} demonstrates a Hall-Littlewood variant of the RSK correspondence which makes this relationship all the more transparent. As we discuss below in Sections \ref{sec.HLGLE} and \ref{Section3}, the Hall-Littlewood line ensemble has a slightly more involved Gibbs property which requires us to develop some new techniques beyond those of \cite{CorHamK} in order to demonstrate our tightness results.\\

\subsubsection{Our main results}
We now state our main results concerning the ASEP. Precise definitions of this model and further discussion can be found in Section \ref{Section2.3}. We forgo stating the S6V model result until the main text -- Corollary \ref{SVTight} -- since it requires more notation to define the model.

In the ASEP, particles occupy sites indexed by $\mathbb{Z}$ with at most one particle per site (the exclusion rule) and jump according to independent exponential clocks to the right and left with rates $R$ and $L$ respectively ($R > L$ is assumed). Jumps that would violate the exclusion rule are suppressed. Step initial data means that particles start at every site in $\mathbb{Z}_{\leq 0}$ (and no particles start elsewhere). The height function $\mathfrak{h}_T(x)$ records the number of particles at or to the right of position $x\in \mathbb{Z}$ at time $T$. For $x\notin \mathbb{Z}$ we linearly interpolate to make the height function continuous. With this notation we can state our main theorem (Theorem \ref{ASEPTight} and Corollary \ref{ACC} in the main text).
\begin{theorem}\label{TMain}
Suppose $r > 0$, $R = 1$, $L \in (0,1)$, $\gamma = R- L$ and fix $\alpha \in (0, 1)$. For $s \in [-r,r]$ set
\begin{equation}
f^{ASEP}_N(s) =  \sigma_\alpha^{-1}N^{-1/3} \left(f_3(\alpha)N + f'_3(\alpha)s N^{2/3} + (1/2) s^2 f_3''(\alpha)N^{1/3} - \mathfrak{h}_{N/ \gamma} \left( \alpha N + sN^{2/3}\right) \right),
\end{equation}
The constants above are given by $\sigma_\alpha = 2^{-4/3}(1 - \alpha^2)^{2/3}$, $f_3(\alpha) = \frac{(1-\alpha)^2}{4}$, $f'_3(\alpha) = -\frac{1 - \alpha}{2}$, $f''_3(\alpha) = \frac{1}{2}$. If $\mathbb{P}_N$ denotes the law of $f^{ASEP}_N(s)$ as a random variable in $(C[-r,r], \mathcal{C})$ --- the space of continuous functions on $[-r,r]$ with the uniform topology and Borel $\sigma$-algebra $\mathcal{C}$ (see e.g. Chapter 7 in \cite{Bill}) --- then the sequence $\mathbb{P}_N$ is tight. 

Moreover, if $\mathbb{P}_\infty$ denotes any subsequential limit of $\mathbb{P}_N$ and $f^{ASEP}_\infty$ has law $\mathbb{P}_\infty$, then $g^{ASEP}_\infty$ defined by 
$$g^{ASEP}_\infty(x) =\sigma_{\alpha} f^{ASEP}_\infty(x) -  \frac{x^2 f_3''(\alpha)}{2 }, \mbox{ for $x \in [-r,r]$}, $$
is absolutely continuous with respect to a Brownian bridge of variance  $-2r   f'_3(\alpha) [1 + f'_3(\alpha)]$ in the sense of Definition \ref{DACB}.
\end{theorem}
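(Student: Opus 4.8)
The plan is to deduce both assertions from a single structural fact: the ASEP height function $\mathfrak{h}_T$ is a limit transition of the top curve of a line ensemble carrying a \emph{Hall-Littlewood Gibbs property}, and this property lets one upgrade one-point control to control of the entire curve. Concretely, I would first pass through the stochastic six vertex model, for which the Borodin-Bufetov-Wheeler correspondence \cite{BBW} expresses the height function as the top curve $\mathfrak{L}_1$ of a line ensemble $\{\mathfrak{L}_i\}_{i\ge 1}$ built from the Hall-Littlewood process; an appropriate degeneration of the S6V parameters produces the ASEP and a corresponding ensemble $\{\mathfrak{L}^N_i\}_{i\ge 1}$ whose top curve, after the deterministic parabolic recentering and the scaling by $N^{2/3}$ in space and $\sigma_\alpha^{-1}N^{-1/3}$ in fluctuation that defines $f^{ASEP}_N$, has law $\mathbb{P}_N$. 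The crucial property is that, conditionally on all curves outside a window $[a,b]$ and on the neighboring curve $\mathfrak{L}^N_{i+1}$ below it, the curve $\mathfrak{L}^N_i$ restricted to $[a,b]$ is a family of non-intersecting geometric-type random walk bridges reweighted by an explicit Hall-Littlewood Boltzmann factor depending on the parameter $t$; this Gibbs property comes with a monotonicity (stochastic ordering under raising or lowering the boundary data and the lower curve), which will be used repeatedly.

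Second, I would feed in the known one-point asymptotics. The $T^{1/3}$ one-point fluctuation theorem for the ASEP with step initial data \cite{TWASEP2,BCS,Cor12,BorO}, applied at the moving location $\alpha N + sN^{2/3}$ (whose effective density varies slowly in $s$), together with the Taylor expansion of the macroscopic limit shape at $\alpha$ --- whose value, slope and curvature are $f_3(\alpha)$, $f'_3(\alpha)$, $f''_3(\alpha)$ --- gives that the real-valued random variables $f^{ASEP}_N(s)$, $N\ge 1$, form a tight sequence for each fixed $s\in[-r,r]$. The normalization $\sigma_\alpha$ and the subtracted parabola are exactly what make the limiting one-point fluctuations carry the standard diffusive normalization.

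Third, and this is the heart of the matter, I would propagate this pointwise one-point tightness to tightness in $(C[-r,r],\mathcal{C})$ by running a variant of the Corwin-Hammond resampling argument \cite{CorHamK,CorHamA}, adapted to the Hall-Littlewood Gibbs property. The mechanism is familiar in outline: if the top curve had an atypically large oscillation on a short subinterval, then resampling it on a slightly larger window via the Gibbs property would, with probability bounded below, return a far flatter curve, and this would contradict either one-point lower bounds at interior points or the ordering with $\mathfrak{L}^N_{i+1}$ combined with non-degeneracy of the resampling normalization. Carrying this out requires (i) a lower bound showing that, with high probability, the top curve stays above a uniform level throughout $[-r,r]$, obtained from one-point estimates at finitely many points together with Gibbs resampling and monotonicity; (ii) a matching upper bound by a similar argument; and (iii) quantitative control of the acceptance probability in the Hall-Littlewood resampling. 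Step (iii) is where genuinely new work is needed, since the relevant conditional laws are discrete random walk bridges interacting through a non-Brownian Hamiltonian rather than the Brownian bridges of \cite{CorHamK}, so the monotone-coupling and small-ball estimates underlying the argument must be redeveloped in this setting; this is the main obstacle. With a uniform modulus-of-continuity bound in hand, tightness follows from the Arzel\`a-Ascoli criterion.

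Finally, for the absolute-continuity statement, I would argue that any subsequential limit $\mathbb{P}_\infty$ is the law of the top curve of a line ensemble satisfying a \emph{Brownian} Gibbs property: under KPZ scaling the geometric random walk bridges converge to Brownian bridges, and the Hall-Littlewood Boltzmann factor converges to a Radon-Nikodym weight that is bounded above and bounded away from zero on compact sets, so the limiting conditional law of the top curve on $[-r,r]$ given its endpoint values and the second curve is mutually absolutely continuous with respect to a Brownian bridge, with bounded density. Subtracting the parabola $x^2 f''_3(\alpha)/2$ and invoking this limiting Brownian Gibbs property on $[-r,r]$ then yields that $g^{ASEP}_\infty$ is absolutely continuous with respect to a Brownian bridge in the sense of Definition \ref{DACB}; its variance $-2r f'_3(\alpha)[1+f'_3(\alpha)]$ is obtained by tracking the constant $\sigma_\alpha$ and the local slope $f'_3(\alpha)$ (equivalently, the effective density $-f'_3(\alpha)$ at $\alpha$) through the scaling.
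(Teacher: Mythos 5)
Your overall skeleton agrees with the paper: embed $-\mathfrak{h}_T$ as the top curve of a Hall--Littlewood Gibbsian line ensemble via the Borodin--Bufetov--Wheeler identity and the S6V-to-ASEP degeneration (this is Proposition \ref{HLASEP}), take one-point tightness from the Tracy--Widom asymptotics, and propagate it to functional tightness by a Corwin--Hammond style resampling scheme; the variance bookkeeping $p=-f_3'(\alpha)$, $2rp(1-p)=-2rf_3'(\alpha)[1+f_3'(\alpha)]$ is also right. However, there is a genuine gap at the point you say you will use ``monotonicity (stochastic ordering under raising or lowering the boundary data and the lower curve) \ldots repeatedly.'' For the Hall--Littlewood Gibbs property this stochastic monotonicity is simply \emph{false}: the acceptance weight $W_t$ depends not only on the gaps between curves but on where those gaps decrease (i.e.\ on relative slopes), and one can exhibit a higher path that is accepted with strictly smaller probability than a lower one (see Remark \ref{MonCoup}). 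So the ``monotone-coupling estimates'' you propose to redevelop do not exist; what replaces them in the paper is a strictly weaker statement, Lemma \ref{LemmaMon} and its corollaries, asserting that the acceptance probability increases under raising the curve only \emph{in expectation} and only up to the constant $c(t)=\prod_{i\ge1}(1-t^i)$, and the entire propagation argument (Proposition \ref{PropMain}, Lemmas \ref{PropSup}--\ref{LemmaAP2}) has to be rebuilt around this averaged input. As written, your step (i)--(iii) strategy rests on a tool that fails, and fixing it is not a routine adaptation of \cite{CorHamK} but the main new idea of the argument.

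The second gap is in the absolute-continuity part. You propose to pass to a subsequential \emph{limit line ensemble} with a Brownian Gibbs property and a Radon--Nikodym weight ``bounded above and bounded away from zero.'' Two problems: first, this requires tightness (and a Gibbs structure in the limit) of at least the second curve under KPZ scaling, which is not established here --- the paper proves tightness only of the top curve and explicitly declines to control the full ensemble; second, even conjecturally the limiting interaction is non-intersection conditioning, whose density with respect to a free Brownian bridge is not bounded away from zero, so mutual absolute continuity with bounded density is too strong a claim. The paper instead proves absolute continuity entirely at the prelimit level (Theorem \ref{ACBB}): using the lower bound on the acceptance probability from Proposition \ref{PropMain}, the resampling index $Q$ is stochastically bounded by a geometric random variable, and a KMT-type strong coupling of Bernoulli bridges with Brownian bridges (Theorem \ref{KMT}) shows directly that any open set that is negligible for the Brownian bridge of variance $2rp(1-p)$ is asymptotically negligible for the rescaled top curve. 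If you want your route to work you must either supply tightness of the second curve together with a limiting Gibbs property, or switch to this prelimit coupling argument.
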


Our approach for proving Theorem \ref{TMain} is to (1) embed the ASEP height function into a line ensemble, which enjoys a certain `Hall-Littlewood Gibbs' resampling property, and (2) use the known one-point tightness in the $T^{1/3}$ fluctuation scale to obtain the $T^{2/3}$ transversal tightness. These two points will be discussed more extensively in the section below. Here we mention that the Gibbs property implies that conditional on the second curve in the line ensemble, the top curve (i.e. the height function) has a law expressible in terms of an explicit Radon-Nikodym derivative with respect to the trajectory of a random walk. By controlling this Radon-Nikodym derivative as $T$ goes to infinity, we are able to control quantities like the maximum, minimum and modulus of continuity of the prelimit continuous curves, which translates into a tightness statement in the space of continuous curves. By exploiting a strong coupling of random walk bridges and Brownian bridges we can further deduce the absolute continuity of subsequential limits with respect to Brownian bridges of appropriate variance.
\subsection{Hall-Littlewood Gibbsian line ensembles}\label{sec.HLGLE} 

\subsubsection{Line ensembles and resampling} The central objects that we study in this paper are discrete line ensembles, which satisfy what we call the Hall-Littlewood Gibbs property. In what follows we describe the general setup informally, and refer the reader to Section \ref{Section3.1} for the details. 

A discrete line ensemble is a finite collection of up-right paths $\{L_i\}_{i = 1}^k$ drawn on the integer lattice, which we assume to be weakly ordered, meaning that $L_i(x) \geq L_{i+1}(x)$ for $i = 1,..., k-1$, and all $x$. The up-right paths $L_i$ are understood to be continuous curves on some interval $I = [a,b]$, and to be piecewise constant or have slope $1$ (see Figure \ref{S1_1} for examples). 
\begin{figure}[h]
\centering
\scalebox{0.45}{\includegraphics{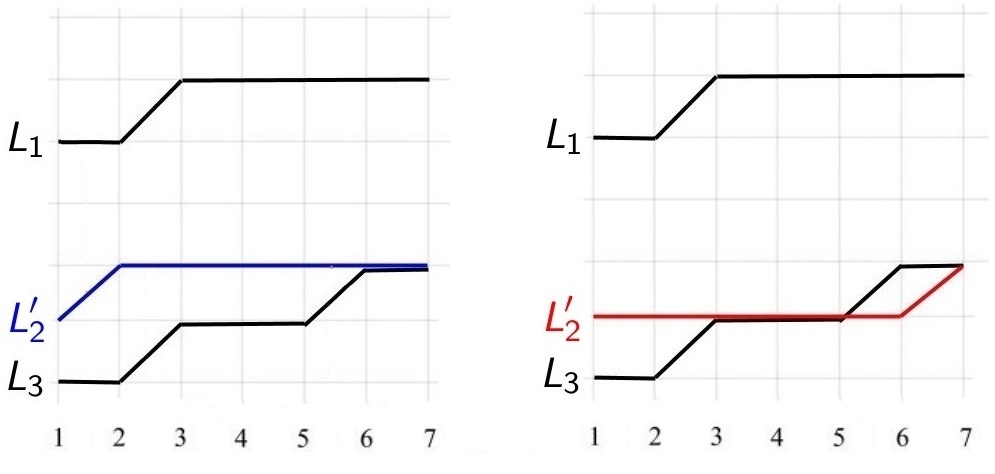}}
\caption{The black lines are a sample from a discrete line ensemble $\{L_i\}_{i = 1}^k$ with $k = 3$ ($L_2$ is not drawn and coincides with the blue line above). Each line is a continuous curve on $I = [1,7]$ that is piecewise constant or has slope $1$. The red and blue lines are uniformly sampled up-right paths connecting the endpoints $(1,1)$ and $(7, 2)$ of $L_2$. \\
}
\label{S1_1}
\end{figure}
Suppose we are given a probability distribution $\mu$ on the set of ensembles $\{L_i\}_{i = 1}^k$. We will consider the following resampling procedure. Fix any $i \in \{1,\dots,k-1\}$ and denote by $f = L_{i-1}$ and $g = L_{i+1}$ with the convention that $L_0 = +\infty$. Sample $\{L_i\}_{i = 1}^k$ according to $\mu$ and afterwards erase the line $L_i$, between its endpoints $A = L_i(a)$ and $B = L_i(b)$. Sample a new path $L_i'$, connecting the points $(a,A)$ and $(b,B)$ from the uniform distribution on all up-right paths that connect these points, and independently accept the path with probability $W_t(L_i', f,g)$. If the new path is not accepted the same procedure is repeated until a path is accepted. We say that $\mu$ has the Hall-Littlewood Gibbs property with parameter $t \in (0,1)$ if given $\{L_i\}_{i = 1}^k$ distributed according to $\mu$, the random path ensemble obtained from the above resampling procedure again has distribution $\mu$. The acceptance probability is given by
\begin{equation}\label{AP}
W_t(L_i', f,g) = \prod_{s = a+1}^b  \hspace{-2mm} \left(1 -  {\bf 1}_{\{\Delta^+(s-1) - \Delta^+(s) = 1\}}\cdot t^{\Delta^+(s-1)} \right) \hspace{-1mm} \cdot \hspace{-1mm}\left(1 -  {\bf 1}_{\{\Delta^-(s-1) - \Delta^-(s) = 1\}}\cdot t^{\Delta^-(s-1)} \right) ,
\end{equation}
where $ \Delta^+(s) = f(s) - L'_i(s)$ and $\Delta^-(s) = L'_i(s) -g(s)$. The above expression can be understood as follows. Follow the path $L'_i$ from left to right and any time $f - L'_i $ decreases from $\Delta^+$ to $\Delta^+ - 1$ at location $s-1$ we multiply by a new factor $1 - t^{\Delta^+(s-1)}$. Similarly,  any time $ L'_i - g$ decreases from $\Delta^-$ to $\Delta^- - 1$ at location $s-1$ we multiply by a new factor $1 - t^{\Delta^-(s-1)}$. Observe that by our assumption on $t$ we have that $W_t(L_i',f,g) \in [0, 1]$, which is why we can interpret it as a probability. 

We make a couple of additional observations about the acceptance probability $W_t(L_i',f,g)$. By assumption $f(a) \geq L'_i(a) \geq g(a)$ and $f(b) \geq L'_i(b) \geq g(b)$. If for some $s$ we fail to have $f(s) \geq L'_i(s) \geq g(s)$, we see that one of the factors in $W_t(L_i',f,g)$ is zero and we will never accept such a path. Consequently, the resampling procedure always maintains the relative order of lines in the ensemble. An additional point we make is that if $L'_i$ is very well separated from $f$ and $g$ (in particular, when $f = +\infty$) we have that $\Delta^{\pm}$ is very large and so the factors in the definition of $W_t(L_i', f,g)$ are close to $1$. In this sense, we can interpret $W_t(L_i',f,g)$ as a deformed indicator function of the paths $f, L_i',g$ having the correct order, the deformation being very slight if the paths are well-separated.
\vspace{2mm}

{\raggedleft \bf Example:} We give a short example of resampling $L_2$ to explain the resampling procedure, using Figure \ref{S1_1} as a reference. We will calculate the acceptance probability if the uniform path we sampled is the red or blue one in Figure \ref{S1_1}. If $L_{red}'$ denotes the red line, we have $W_t(L_{red}', L_1, L_3) = 0$ because the lines $L_3$ and $L_{red}'$ go out of order. In particular, we see that $\Delta^-(s-1) = 0$ and $\Delta^-(s) = -1$ when $s = 6$, which means that the factor $\left(1 -  {\bf 1}_{\{\Delta^-(s-1) - \Delta^-(s) = 1\}}\cdot t^{\Delta^-(s-1)} \right)$ is zero. Such a path is never accepted in the resampling procedure.

If $L_{blue}'$ denotes the blue line, we have $W_t(L_{blue}', L_1, L_3) = (1-t)(1-t^2)(1-t^3).$ To see the latter notice that $\Delta^+$ decreases at location $1$ from $3$ to $2$, producing the factor $(1-t^3)$. On the other hand, $\Delta^-$ decreases from $2$ to $1$ and from $1$ to $0$ at locations $2$ and $5$ respectively, producing factors $(1-t^2)$ and $(1-t)$. The product of all these factors equals $W_t(L_{blue}', L_1, L_3)$ and with this probability we accept the new path.\\

The main result we prove for the Hall-Littlewood Gibbsian line ensembles appears as Theorem \ref{PropTightGood} in the main text. It is a general result showing how one-point tightness for the top curve of a sequence of Hall-Littlewood Gibbsian line ensembles translates into tightness for the entire top curve. This theorem can be considered the main technical contribution of this work, and we deduce tightness statements for different models like the ASEP by appealing to it. It is possible that under some stronger (than tightness) assumptions, one might be able to extend the results of that theorem to tightness of the entire ensemble (i.e. all subsequent curves too) -- but since we do not need this for our applications, we do not pursue it here.

This idea of using the Gibbs property to propagate one-point tightness to tightness of the entire ensemble was developed in \cite{CorHamA,CorHamK}. In those works, the Gibbs property was either non-intersecting or an exponential repulsion. In other words, curves are penalized by either an infinite energetic cost or an exponential energetic cost for moving out of their indexed order. Those works rely fundamentally upon certain stochastic monotonicity enjoyed by such Gibbsian line ensemble. Namely, if you consider a given curve and either shift the starting/ending points of that curve up, or shift the above/below curves up, then the conditional measure of the given curve will stochastically shift up too. Since the Hall-Littlewood Gibbs property relies on not just the distance between curves, but on their relative slope (or derivative of the distance), this type of monotonicity is lost. Indeed, it is not just the proof of the monotonicity, but the actual result which no longer holds true in our present setting (see Remark \ref{MonCoup}).

Faced with the loss of the above form of monotonicity, we had to find a weak enough variant of it which would actually be true, while being strong enough to allow us to rework various types of arguments from \cite{CorHamA,CorHamK}. Lemma \ref{LemmaMon} (and its corollaries) ends up fitting this need. In essence, it says that the acceptance probability of the top curve increases (though only in terms of its expected value and up to a factor of $c(t) = \prod_{i=1}^{\infty} (1-t^i)$) as the curve is raised. Informally, this result is a weaker version of the stochastic monotonicity of \cite{CorHamA,CorHamK} in that pointwise inequalities are replaced with ones that hold on average and upto an additional factor. Armed with this result, we are able to redevelop a route to prove tightness of the entire top line of the ensemble from its one-point tightness. Our approach should apply for more general Gibbs properties which rely upon not just the relative separation of lines, but also their relative slopes. Indeed, the constant $c(t)$ arises in our case as a relatively crude estimate needed to handle the dependence of our weights on the derivative of the distance between the top two curves. If the dependence of the weights becomes different, one should be able to reproduce the same arguments, with only the constant $c(t)$ changing its value.

\subsubsection{The homogeneous ascending Hall-Littlewood process} The prototypical model behind the Hall-Littlewood Gibbsian line ensemble of the previous section is the (homogeneous ascending) {\em Hall-Littlewood process} (HAHP). The HAHP (a special case of the ascending Macdonald processes \cite{BorCor}) is a probability distribution on interlacing sequences $\varnothing \prec \lambda(1) \prec \lambda(2) \prec \cdots \prec \lambda(M)$, where $\lambda(i)$ are partitions (see the beginning of Section \ref{Section2.1} for some background on partitions, Young diagrams etc.). It depends on two positive integers $M$ and $N$ as well as two parameters $t, \zeta \in (0,1)$. We will provide a careful definition in terms of symmetric functions in Section \ref{Section2.1} later, but here we want to give a more geometric interpretation of this measure. 
In what follows we will describe a measure on interlacing sequences of partitions $\varnothing \prec \lambda^{-M+1} \prec \cdots \prec \lambda^0 \succ \lambda^1 \succ \cdots \succ \lambda^{N-1} \succ \varnothing$. The HAHP is then recovered by restriction to the first $M$ partitions of this sequence. The description we give dates back to \cite{Vul2}, and we emphasize it here as it is the origin of the Hall-Littlewood Gibbs property that we use.

We can associate an interlacing sequence of partitions with a boxed plane partition or 3d Young diagram, which is contained in the $M \times N$ rectangle --  Figure \ref{S1_2} provides an illustration of this correspondence. Consequently, measures on interlacing sequences are equivalent to measures on boxed plane partitions and we focus on the latter. If a plane partition $\pi$ is given, we define its weight by
\begin{equation}\label{Apit}
W(\pi) = A_{\pi}(t) \times \zeta^{diag(\pi)},
\end{equation}
where $diag(\pi)$ denotes the sum of the entries on the main diagonal of $\pi$ (alternatively this is the sum of the parts of $\lambda^0$ or the number of cubes on the diagonal $x= y$ in the 3d Young diagram). The function $ A_{\pi}(t)$ depends on the geometry of $\pi$ and is described in Figure \ref{S1_2} (see also Section 1 of \cite{ED} for a more detailed explanation). With the above notation, we have that the probability of a plane partition is given by the weight $W(\pi)$, divided by the sum of the weights of all plane partitions. 
\begin{figure}[h]
\centering
\scalebox{0.6}{\includegraphics{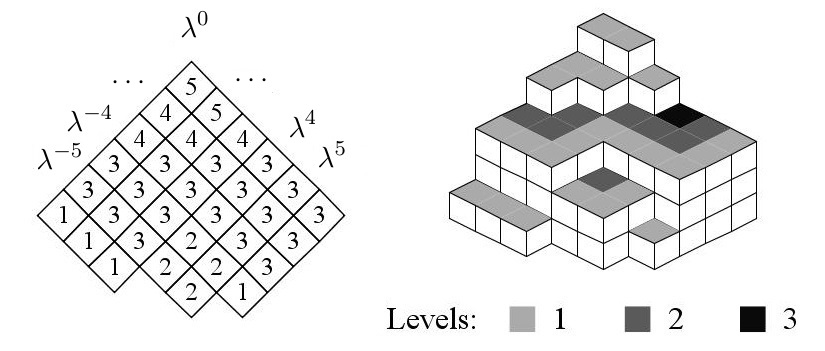}}
\caption{If given a sequence $\varnothing \prec \lambda^{-M+1} \prec \cdots \prec \lambda^0 \succ  \cdots \succ \lambda^{N-1} \succ \varnothing$ we write the parts of $\lambda^i$  downward -- in this way we obtain a plane partition. The left part of the figure shows how to do this when $\lambda^{-5} = (3)$, $\lambda^{-4} = (3,1)$, $\lambda^{-3}= (3,3,1)$ and so on. In this example $N = M = 6$. The right part of the figure shows the corresponding 3d Young diagram. The entry in a cell of the plane partition corresponds to the number of cubes in a vertical stack of the 3d diagram.\\
For the above diagram we have $diag(\pi) = 5 + 4 + 3 +2 + 2 = 16$.\\
To find $A_{\pi}(t)$ we do the coloring in the right part of the figure. Each cell gets a level, which measures the distance of the cell to the boundary of the terrace on which it lies. We consider connected components (formed by cells of the same level that share a side) and for each one we have a factor $(1 - t^i)$, where $i$ is the level of the cells in the component. The product of all these factors is $A_{\pi}(t)$. For the example above we have $7$ components of level $1$, $3$ of level $2$ and one of level $3$ -- thus $A_{\pi}(t)= (1-t)^7 (1-t^2)^3(1-t^3)$.
}
\label{S1_2}
\end{figure}

Let us denote $\lambda(i) = \lambda^{i - M}$ for $i = 1,...,M$. Then the HAHP is the probability distribution induced from the weights (\ref{Apit}) and projected to the first $M$ terms $\varnothing \prec \lambda(1) \prec \lambda(2) \prec \cdots \prec \lambda(M)$. Denoting by $\lambda'$ the transpose of a partition $\lambda$ we observe that $\{ \lambda'_j( \cdot) \}_{j = 1}^N$ defines a discrete line ensemble on the interval $[0,M]$. In the above geometric setting, the lines in the discrete line ensemble $\{ \lambda'_j( \cdot) \}_{j = 1}^N$ can be associated to the level lines of $\pi$ (in particular, $\lambda'_1(\cdot)$ corresponds to the bottom slice of the plane partition $\pi$). The important point we emphasize is that the geometric interpretation of $A_{\pi}(t)$ above can be seen to be equivalent with the statement that the line ensemble $\{ \lambda'_j( \cdot) \}_{j = 1}^N$ satisfies the Hall-Littlewood Gibbs property of the previous section. The latter is proved in Proposition \ref{HLGPP} in the main text.

The main result we prove for the HAHP is that as $M,N$ tend to infinity the top line $\lambda'_1(\cdot)$ (or alternatively the bottom slice of $\pi$), appropriately shifted and scaled, is tight -- this is Theorem \ref{HLTight} in the text. The bottom slice of a similar (though slightly different) Hall-Littlewood random plane partition was recently investigated in \cite{ED}, using ideas from \cite{BorCor}, where it was shown that the one-point marginals are governed by the Tracy-Widom distribution. In Theorem \ref{HLCT} we combine arguments from that paper as well as \cite{BCG14} to show that the same is true for the model we described above. This convergence implies in particular one-point tightness for the top line of the ensemble $\{ \lambda'_j( \cdot) \}_{j = 1}^N$. Once the one-point tightness and Hall-Littlewood Gibbs property are established we enter the setup Theorem \ref{PropTightGood}, from which Theorem \ref{HLTight} is deduced.
\subsubsection{Connection to the ASEP and S6V model} In this section we explain how the ASEP and S6V model fit into the setup of Hall-Littlewood Gibbsian line ensembles.

For the S6V model, the key ingredient comes from the remarkable recent work in \cite{BBW}. In particular, Theorem 4.1 in \cite{BBW} (recalled as Theorem \ref{SVHL} in the main text),  shows that the top curve $\lambda'_1$ of the line ensemble $\{ \lambda'_j( \cdot) \}_{j = 1}^N$ of the previous section has the same distribution as the height function on a horizontal slice of the S6V model, with appropriately matched parameters. This equivalence relies on the use of the $t$-Boson vertex model, as well as the infinite volume limit of the Yang-Baxter equation (as developed, for instance, in \cite{Kor13,Bor14,BWh}). Alternatively, \cite{BM} relates this distributional equality to a Hall-Littlewood version of the RSK correspondence. Through this identification one deduces the predicted transversal $2/3$ exponent for the height function of the S6V model as a corollary of the HAHP result Theorem \ref{HLTight} -- the exact statement is given in Corollary \ref{SVTight} in the text.

We now explain how to relate the ASEP to our line ensemble framework. Recall from Section \ref{Section1.1} that $\mathfrak{h}_T(x)$ denotes the height function of the ASEP with rates $R$ and $L$, started from step initial condition at time $T$. Set $R = 1$ and $L = t \in (0,1)$. Since we use linear interpolation to define $\mathfrak{h}_T(x)$ for non-integer $x$, one observes that $-\mathfrak{h}_T(x)$ either stays constant or goes up linearly with slope $1$ as $x$ increases, i.e. $-\mathfrak{h}_T(x)$ is an up-right path. In Proposition \ref{HLASEP} we show that for any $T > 0$ and $k,K \in \mathbb{N}$ there is a random discrete line ensemble $\{L^{ASEP}_i\}_{i = 1}^k$ on $I = [-K, K]$ such that (1) the law of $\{L^{ASEP}_i\}_{i = 1}^k$ satisfies the Hall-Littlewood Gibbs property and (2) $L^{ASEP}_1$ has the same law as $-\mathfrak{h}_T(x)$, restricted to $x \in [-K,K]$. The realisation of $-\mathfrak{h}_T(x)$ as the top line in a Hall-Littlewood Gibbsian line ensemble is an important step in our arguments and we will provide some details how this is accomplished in a moment. For now let us explain the implications of this fact.

Once we have that  $\{L^{ASEP}_i\}_{i = 1}^k$ satisfies the Hall-Littlewood Gibbs property, we can use Theorem \ref{PropTightGood} to reduce the spatial tightness of the top curve $L^{ASEP}_1$ (i.e. the negative height function $-\mathfrak{h}_T(\cdot)$) to the one-point tightness of its height function. The latter is a well-known fact -- it follows from the celebrated theorem of Tracy-Widom  \cite[Theorem 3]{TWASEP}, and is recalled as Theorem \ref{ASEPCT} in the main text. Consequently, once $-\mathfrak{h}_T(x)$ is understood as the top line of a discrete line ensemble with the Hall-Littewood Gibbs property, the general machinery of Theorem \ref{PropTightGood} takes over and produces the tightness statement of Theorem \ref{TMain}. 

Let us briefly explain how we construct the line ensemble $\{L^{ASEP}_i\}_{i = 1}^k$  from earlier -- see Proposition \ref{HLASEP} for the details. One starts from a sequence of HAHP with parameters $\zeta_N = 1 - \frac{1-t}{N}$. Under suitable shifts and truncations, these line ensembles give rise to a sequence of line ensembles $\{L^{N}_i\}_{i = 1}^k$, which one can show to be tight. One defines $\{L^{ASEP}_i\}_{i = 1}^k$ as a subsequential limit of this sequence.  Since the HAHP satisfies the Hall-Littlewood Gibbs property one deduces the same for $\{L^{ASEP}_i\}_{i = 1}^k$. The property that $L^{ASEP}_1$ has the same law as $-\mathfrak{h}_T(x)$ follows from the connection between the HAHP and the S6V model height function we discussed above and the convergence of the height function of the S6V model to $\mathfrak{h}_T(x)$. The fact that one can obtain the ASEP height function through a limit transition of the S6V model was suggested in \cite{Gwa, BCG14} with a complete proof given in \cite{Agg16}. \\

We end this section with a brief discussion on possible extensions of our results. In Theorem \ref{HLTight},  Corollary \ref{SVTight} and Theorem \ref{ASEPTight} we construct sequences of random continuous curves, which are tight in the space of continuous curves. We believe that the same sequences should converge to the Airy$_2$ process -- that is how the particular scaling constants in those results were chosen. The missing ingredient necessary to establish this is the convergence of several-point marginals of these curves (currently only one-point convergence is known). It is possible that such several point-convergence will come from integrable formulas for these models but we also mention here a possible alternative approach. One could try to enhance the arguments of this paper to show that the one-point convergence of the top line of a Hall-Littlewood Gibbsian line ensemble in fact implies tightness of the entire line ensemble (not just the top curve). This was done in a continuous setting in \cite{CorHamA,CorHamK}. If one achieves the latter and \cite[Conjecture 3.2]{CorHamA} were proved, this would provide a means to prove that the entire line ensemble corresponding to the ascending Hall-Littlewood process converges to the Airy line ensemble. In particular, this would demonstrate the Airy$_2$ process limit for the ASEP and S6V height functions too.

\subsubsection*{Outline} The introductory section above provided background context for our work and a general overview of the paper. In Section \ref{Section2} we define the HAHP, S6V model and the ASEP and supply some known one-point convergence results for the latter. Section \ref{Section3} introduces the necessary definitions in the paper, states the main technical result -- Theorem \ref{PropTightGood}, as well as the main results we prove about the HAHP, the S6V model and the ASEP in Theorem \ref{HLTight},  Corollary \ref{SVTight} and Theorem \ref{ASEPTight} respectively. Section \ref{Section4} summarizes the primary set of results we need to prove Theorem \ref{PropTightGood}. In Section \ref{Section5} we give the proof of Theorem \ref{PropTightGood} by reducing it to three key lemmas, whose proofs are presented in Section \ref{Section6}. In Section \ref{Section7} we demonstrate that all subsequential limits of the tight sequence of Theorem \ref{PropTightGood} are absolutely continuous with respect to Brownian bridges of appropriate variance. Section \ref{Section8} is an appendix, which contains the proof of a strong coupling between random walks and Brownian bridges, used in Section \ref{Section4}.

\subsubsection*{Acknowledgements}
I.C. would like to thank Alexei Borodin and Michael Wheeler for advanced discussions of their work \cite{BBW} at the workshop ``Quantum Integrable Systems, Conformal Field Theories and Stochastic Processes'' held at the Institut D'Etudes Scientifiques de Cargese (and funded partially by NSF DMS:1637087). I.C. was partially supported by the NSF through DMS-1208998 and DMS-1664650, the Clay Mathematics Institute through a Clay Research Fellowship and the Packard Foundation through a Packard Fellowship for Science and Engineering. E.D. would like to thank Alexei Borodin for numerous useful conversations.

\section{Three stochastic models}\label{Section2}
The results of our paper have applications to three different but related probabilistic objects -- the ascending Hall-Littlewood process, the stochastic six-vertex model in a quadrant and the ASEP. In this section we recall the definitions of these models, some known one-point convergence results about them and explain how they are connected.

\subsection{The ascending Hall-Littlewood process}\label{Section2.1}
In this section we briefly recall the definition of the Hall-Littlewood process (a special case of the Macdonald process \cite{BorCor}). We will isolate a particular case that will be important for us, which we call the {\em homogeneous ascending Hall-Littlewood process} (HAHP) and derive a certain one-point convergence result for it. We start by fixing terminology and notation, using \cite{Mac} as a main reference. 

A {\em partition} is a sequence $\lambda = (\lambda_1, \lambda_2,\cdots)$ of non-negative integers such that $\lambda_1 \geq \lambda_2 \geq \cdots$ and all but finitely many elements are zero. We denote the set of all partitions by $\mathbb{Y}$. The {\em length} $\ell (\lambda)$ is the number of non-zero $\lambda_i$ and the {\em weight} is given by $|\lambda| = \lambda_1 + \lambda_2 + \cdots$ . There is a single partition of $0$, which we denote by $\varnothing$. We write $m_i(\lambda)$ for the {\em multiplicity } of $i$ in $\lambda$, i.e. $m_i(\lambda) = |\{ j \in \mathbb{N}: \lambda_j = i\}|$.

A {\em Young diagram} is a graphical representation of a partition $\lambda$, with $\lambda_1$ left justified boxes in the top row, $\lambda_2$ in the second row and so on. In general, we do not distinguish between a partition $\lambda$ and the Young diagram representing it. The {\em conjugate} of a partition $\lambda$ is the partition $\lambda'$ whose Young diagram is the transpose of the diagram $\lambda$. In particular, we have the formula $\lambda_i' = |\{j \in \mathbb{N}: \lambda_j \geq i\}|$.

Given two diagrams $\lambda$ and $\mu$ such that $\mu \subset \lambda$ (as a collection of boxes), we call the difference $\theta = \lambda - \mu$ a {\em skew Young diagram}. A skew Young diagram $\theta$ is a {\em horizontal $m$-strip} if $\theta$ contains $m$ boxes and no two lie in the same column. If $\lambda - \mu$ is a horizontal $m$-strip for some $m\geq 0$, we write $\lambda \succ \mu$. Some of these concepts are illustrated in Figure \ref{S2_1}.

\begin{figure}[h]
\centering
\scalebox{0.45}{\includegraphics{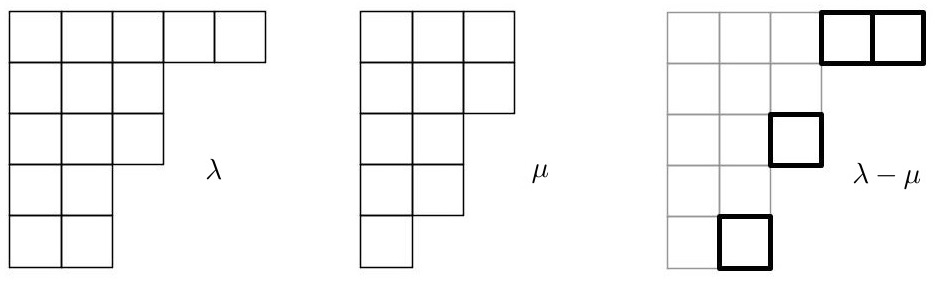}}
\caption{The Young diagram $\lambda = (5,3,3,2,2)$ and its transpose (not shown) $\lambda' = ( 5,5,3,1,1)$. The length $\ell(\lambda) = 5$ and weight $|\lambda| = 15$. The Young diagram $\mu = (3,3,2,2,1)$ is such that $\mu \subset \lambda$. The skew Young diagram $\lambda - \mu$ is shown in {\em black bold lines} and is a horizontal $4$-strip}
\label{S2_1}
\end{figure}

A {\em plane partition} is a two-dimensional array of nonnegative integers
$$\pi = (\pi_{i,j}), \hspace{3mm} i,j = 0,1,2,...,$$
such that $\pi_{i,j} \geq \max (\pi_{i,j+1}, \pi_{i+1,j})$ for all $i,j \geq 0$ and the {\em volume} $|\pi| = \sum_{i,j \geq 0} \pi_{i,j}$ is finite. Alternatively, a plane partition is a Young diagram filled with positive integers that form non-increasing rows and columns. A graphical representation of a plane partition $\pi$ is given by a {\em $3$-dimensional Young diagram}, which can be viewed as the plot of the function 
$$(x,y) \rightarrow \pi_{\lfloor x \rfloor, \lfloor y \rfloor} \hspace{3mm} x,y > 0.$$
Given a plane partition $\pi$ we consider its diagonal slices $\lambda^t$ for $t\in \mathbb{Z}$, i.e. the sequences
$$\lambda^t = (\pi_{i, i + t}) \hspace{3mm} \mbox{ for } i \geq \max(0, -t).$$
One readily observes that $\lambda^t$ are partitions and satisfy the following interlacing property
$$\cdots \prec \lambda^{-2} \prec \lambda^{-1} \prec \lambda^0 \succ \lambda^1 \succ \lambda^2 \succ \cdots.$$
Conversely, any (terminating) sequence of partitions $\lambda^{t}$, satisfying the interlacing property, defines a partition $\pi$ in the obvious way. Concepts related to plane partitions are illustrated in Figure \ref{S2_2}.\\
\begin{figure}[h]
\centering
\scalebox{0.45}{\includegraphics{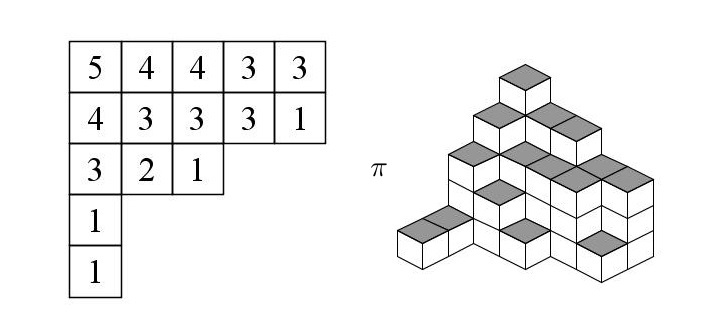}}
\caption{The plane partition $\pi = \varnothing \prec (1) \prec (1) \prec (3) \prec (4,2) \prec (5,3,1) \succ (4,3) \succ (4,3) \succ (3,1) \succ(3) \succ \varnothing$ . The volume $|\pi| = 41$}
\label{S2_2}
\end{figure}

Fix $t \in (0,1)$. For partitions $\mu, \lambda \in \mathbb{Y}$ we let $P_{\lambda/ \mu}$ and $Q_{\lambda/ \mu}$ denote the (skew) Hall-Littlewood symmetric functions with parameter $t$ (see e.g. Chapter 3 in \cite{Mac} for the definition and properties of these functions). Let us fix $M,N \in \mathbb{N}$ and suppose $(\mathcal{L}, \mathcal{M})$ is a pair of sequences of partitions $\mathcal{L} = \{ \lambda^k\}_{k = -M+1}^{N-1}$ and $\mathcal{M} = \{ \mu^k\}_{k = -M+1}^{N-2}$. Define the weight of such a pair as
\begin{equation}\label{shurweight}
\mathcal{W}(\mathcal{L}, \mathcal{M}) := \prod_{n = -M+1}^{N-1} P_{\lambda^n / \mu^{n-1}}(x_n)  Q_{\lambda^n / \mu^{n}}(y_n),
\end{equation}
where $x_i, y_i \in [0,1]$ for all $i \in \{-M+1,...,N-1\}$ and we have $\mu^{-M} = \mu^{N-1} = \varnothing$.  From $(5.8)$ and $(5.8')$ in Chapter III of \cite{Mac} we have
\begin{equation}\label{HLM}
\begin{split}
P_{\lambda/ \mu}(x;t) =  \psi_{\lambda/\mu}(t)x^{|\lambda| - |\mu|} &\mbox{ and } Q_{\lambda/ \mu}(x;t) = \phi_{\lambda / \mu}(t) x^{|\lambda| - |\mu|}, \mbox{ where }\\
 \psi_{\lambda/\mu}(t) =  {\bf 1}_{ \lambda \succ \mu}  \prod_{j \in J}(1 - t^{m_j(\mu)}) \hspace{3mm} &\mbox{ and }\phi_{\lambda / \mu}(t) ={\bf 1}_{ \lambda \succ \mu} \prod_{i \in I}(1 - t^{m_i(\lambda)});\\
I = \{ i \in \mathbb{N}: \lambda'_{i+1} = \mu'_{i+1} \mbox{ and }\lambda'_{i} > \mu'_{i}\} &\mbox{ and }J = \{ j \in \mathbb{N}: \lambda'_{j+1} > \mu'_{j+1} \mbox{ and }\lambda'_{j} = \mu'_{j}\}.
\end{split}
\end{equation}
Observe that the weights are non-negative (as $t \in (0,1)$) and provided $Z: = \sum_{\mathcal{L}, \mathcal{M}} \mathcal{W}(\mathcal{L}, \mathcal{M})$, is finite we have that $\mathbb{P}(\mathcal{L}, \mathcal{M}): =Z^{-1} \cdot \mathcal{W}(\mathcal{L}, \mathcal{M})$ defines probability measure on $(\mathcal{L}, \mathcal{M})$, which we call a {\em Hall-Littlewood process}.
\begin{remark}
One can generalize the approach we described above by considering the Macdonald symmetric functions, instead of the Hall-Littlewood ones, and by considering more general (than single variable) specializations. The resulting object is called the {\em Macdonald process} and was defined and studied in \cite{BorCor}.
\end{remark}

In this paper we will consider the following variable specialization 
\begin{equation}\label{schurspec}
\begin{split}
x_{n+1} = 1,\mbox{ } y_n = 0 \mbox{ if } n \leq -1  ; \mbox{ } x_{n+1} = 0, \mbox{ } y_n = \zeta \mbox{ if } 0 \leq n , \mbox{ where } \zeta \in (0,1) \mbox{ is fixed.}
\end{split}
\end{equation}
Using (\ref{HLM}) and Proposition 2.4 in \cite{BorCor} we conclude that for the above variables we have
\begin{enumerate}[label = \arabic{enumi}., leftmargin=1.5cm]
\item $Z = \left( \frac{1 - t\zeta}{1 - \zeta} \right)^{NM} < \infty$ so that the measure is well defined;
\item $\mu^n = \lambda^n$ for $n < 0$ and $\mu^n = \lambda^{n+1}$ for $n\geq 0$;
\item $\varnothing \prec \lambda^{-M + 1} \cdots \prec\lambda^{-1}  \prec \lambda^0 \succ \lambda^1 \succ \cdots \succ \lambda^{N-1} \succ \varnothing$.
\end{enumerate}
The last statement shows that $\mathcal{L}$ defines a plane partition $\pi$, whose base is contained in an $M \times N$ rectangle (i.e. such that $\pi_{i,j} = 0$ for $i \geq M$ or $j \geq N$). Denoting the set of such plane partitions by $\mathcal{P}(M,N)$ we see that the projection of the Hall-Littlewood process on $\mathcal{L}$ induces a measure on $\mathcal{P}(M,N)$. 

Substituting $P_{\lambda/ \mu}(x)$ and $Q_{\lambda/ \mu}(x)$ from (\ref{HLM}) one arrives at
$$\mathbb{P}(\mathcal{L}) = \left( \frac{1 - \zeta}{1 - t\zeta} \right)^{NM}\cdot  \zeta^{|\lambda^0|} \cdot B_{\mathcal{L}} (t) \mbox{, where } B_{\mathcal{L}} (t) =  \prod_{n = -M+1}^0\psi_{\lambda^n/\lambda^{n-1}}(t) \times \prod_{n = 1}^N \phi_{\lambda^{n-1}/ \lambda^{n}}(t).$$
What is remarkable is that if $\pi$ is the plane partition associated to $\mathcal{L}$, then $ B_{\mathcal{L}}(t) = A_\pi(t)$  from (\ref{Apit}), i.e. $B_{\mathcal{L}}$ admits the geometric interpretation from Figure \ref{S1_2}. The latter is very far from obvious from the definition of $B_{\mathcal{L}}$, since the functions $\phi$ and $\psi$ are somewhat involved, and we refer the reader to \cite{Vul} where this identification was first discovered.\\

The above formulation aimed to reconcile the definition of the Hall-Littlewood process in terms of symmetric functions with the geometric formulation given in Section \ref{sec.HLGLE}. In the remainder of the paper; however, we will be mostly interested in the projection of this measure to the partitions $\lambda^{-M+1},..., \lambda^0$. We perform a shift of the indices by $M$ and denote the latter by $\lambda(1),...,\lambda(M)$. Using results from Section 2.2 in \cite{BorCor} we have the following (equivalent) definition of the measure on these sequences, which we isolate for future reference.
\begin{definition}\label{HLPDef}
 Let $M,N \in \mathbb{N}$ and $\zeta \in(0,1)$.  The {\em homogeneous ascending Hall-Littlewood process} (HAHP) is a probability distribution on sequences of partitions $\varnothing \prec \lambda(1) \prec \lambda(2) \prec \cdots \prec \lambda(M)$ such that 
\begin{equation}\label{HLDefeq}
\mathbb{P}^{M,N}_\zeta (\lambda(1),...,\lambda(M)) = \left( \frac{1 - \zeta}{1 - t \zeta}\right)^{NM} \times \prod_{i = 1}^M P_{\lambda(i)/ \lambda({i-1})}(1) \times Q_{\lambda(M)}(\zeta^N),
\end{equation}
where we use the convention that $\lambda(0) = \varnothing$ is the empty partition and $\zeta^N$ denotes the specialization of $N$ variables to $\zeta$. We also write $\mathbb{E}^{M,N}_{\zeta}$ for the expectation with respect to $\mathbb{P}^{M,N}_{\zeta}$.
\end{definition}

We end this section with an important asymptotic statement for the measures $\mathbb{P}^{M,N}_\zeta $.
\begin{theorem}\label{HLCT} Let $r > 0$, $\zeta, t \in (0,1)$ be given and fix $\mu \in (\zeta, \zeta^{-1})$. Suppose $N,M \in \mathbb{N}$ are sufficiently large so that $\mu N > (r+2) N^{2/3}$ and $M > \mu N + (r+2) N^{2/3}$. Let $\lambda_1'(\cdot)$ be sampled from $\mathbb{P}^{M,N}_{\zeta}$ and set for $x\in [-r-1,r+1]$
\begin{equation}
f^{HL}_N(x) :=  \sigma_\mu^{-1}N^{-1/3}\left(\lambda_1'(\mu N + xN^{2/3}) - f_1(\mu)N - f_1'(\mu) x N^{2/3} -\frac{x^2}{2} f_1''(\mu) N^{1/3}\right),
\end{equation}
where we define $\lambda_1'$ at non-integer points by linear interpolation. The constants above are given by  $ \sigma_\mu = \frac{(\zeta\mu)^{1/6} \left(1 - \sqrt{\zeta\mu} \right)^{2/3} \left( 1 - \sqrt{\zeta/\mu}\right)^{2/3}}{1 - \zeta},$ $f_1(\mu) = 1 - \frac{(1 - \sqrt{\zeta \mu} )^2}{1 -\zeta}, $ $f_1'(\mu) =  \frac{\sqrt{\zeta}(1 - \sqrt{\zeta\mu})}{\sqrt{\mu}(1 - \zeta)},$ $ f_1''(\mu) = \frac{-\sqrt{\zeta}}{2 \mu^{3/2} (1 - \zeta)}.$ Then for any $x \in [-r - 1,r + 1]$ and $y \in \mathbb{R}$ we have
\begin{equation}\label{HLConv}
\lim_{N \rightarrow \infty} \mathbb{P}^{M, N}_{\zeta} \left( f^{HL}_N(x) \leq y\right) = F_{GUE}(y),
\end{equation}
where $F_{GUE}$ is the GUE Tracy-Widom distribution \cite{TWPaper}.
\end{theorem}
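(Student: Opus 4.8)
The plan is to translate the statement about $\lambda_1'(\cdot)$ under the HAHP into a statement about the stochastic six vertex model (or directly about a Hall--Littlewood random plane partition), where one-point Tracy--Widom asymptotics are available, and then transfer the fluctuation theorem through that identification. Concretely, I would first recall that $\lambda_1'$ is the transpose of $\lambda(i)$ in the first column, i.e. $\lambda_1'(\mu N + xN^{2/3})$ counts, among the partitions $\lambda(1)\prec\cdots\prec\lambda(M)$, how many of the $\lambda(i)$ with index near $\mu N + xN^{2/3}$ have a nonempty first part (more precisely $\lambda_1'(k) = \#\{i \le M : \lambda_1(i) \ge k\}$ after the index shift, using $\lambda_1'(k) = |\{j: \lambda_j \ge k\}|$ applied columnwise to the stacked diagram). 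By the Borodin--Bufetov--Wheeler correspondence (Theorem~\ref{SVHL} in the main text), the bottom slice of this plane partition — equivalently $\lambda_1'(\cdot)$ — is distributed as the height function along a horizontal line of the stochastic six vertex model with parameters matched to $t$ and $\zeta$. So it suffices to prove the corresponding GUE Tracy--Widom limit for the S6V height function with the stated centering and scaling; that is a known result of Borodin--Corwin--Gorin \cite{BCG14} and Borodin--Olshanski-type analysis, and an essentially identical computation appears in \cite{ED} for a closely related Hall--Littlewood plane partition.

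The heart of the matter is therefore a $q$-Laplace / Fredholm determinant asymptotic analysis, which I would organize as follows. First, use the $q$-moment or $q$-Laplace transform formula for the observable $t^{\lambda_1'(k)}$ (equivalently $\mu^{-\text{height}}$) available for Hall--Littlewood / Macdonald processes from \cite{BorCor}; this expresses $\mathbb{E}^{M,N}_\zeta[ \text{(a suitable $q$-exponential of } \lambda_1'(k))]$ as a Fredholm determinant of an explicit kernel with integral-representation entries. Second, perform a steepest descent analysis: one locates the critical point of the exponential part of the integrand, and the condition that the critical point be a double critical point is precisely what fixes the law-of-large-numbers constant $f_1(\mu)$ and the scales $N^{1/3}$ (fluctuation) and $N^{2/3}$ (the spatial variable $x$) — the linear and quadratic corrections $f_1'(\mu)$, $f_1''(\mu)$ come from Taylor expanding the critical-point location as $k$ moves by $xN^{2/3}$, and $\sigma_\mu$ is the cube root of the third derivative of the action at the critical point. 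Third, after rescaling contours to pass through the critical point at the correct local angles, show the kernel converges to the Airy kernel and that the Fredholm determinant converges to $F_{GUE}$; this requires the usual tail/decay estimates to justify dominated convergence of the Fredholm expansion. Finally, a standard Tauberian-type argument (as in \cite{BorCor,BCG14}) converts the convergence of the $q$-Laplace transform of $t^{\lambda_1'(k)}$ into the convergence of the distribution function $\mathbb{P}(f^{HL}_N(x)\le y)$ itself, using that $t\in(0,1)$ so the $q$-exponential is a genuine approximate indicator of the rescaled variable.

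The conditions $\mu N > (r+2)N^{2/3}$ and $M > \mu N + (r+2)N^{2/3}$ should enter exactly to guarantee that for every $x\in[-r-1,r+1]$ the point $\mu N + xN^{2/3}$ stays in the ``bulk'' region where the above double-critical-point picture is valid and uniform in $x$ — i.e. away from the edges of the $M\times N$ rectangle — so that one genuinely gets convergence for each fixed $x$ with the same constants. I would also note that since only one-point convergence (for each fixed $x$, not jointly) is claimed, no process-level argument is needed here; the Gibbsian line ensemble machinery of Theorem~\ref{PropTightGood} is what later upgrades this pointwise statement to tightness, so in the proof of this theorem I only need the single-$x$ asymptotics. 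The main obstacle is bookkeeping: matching the several-term centering $f_1(\mu)N + f_1'(\mu)xN^{2/3} + \tfrac{x^2}{2}f_1''(\mu)N^{1/3}$ and the constant $\sigma_\mu$ to the output of the steepest descent expansion, and checking that the error terms in that expansion are $o(N^{1/3})$ uniformly over the relevant range of $x$. I expect to assemble this from the formulas in \cite{BorCor}, the S6V analysis in \cite{BCG14}, and the plane-partition computation in \cite{ED}, citing the first for the exact observable formula and the latter two for the contour-deformation details, rather than redoing the entire saddle-point analysis from scratch.
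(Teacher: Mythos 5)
Your proposal is correct and follows essentially the same route as the paper: the paper's proof takes the $t$-Laplace transform / Fredholm determinant formula for $\lambda_1'$ from \cite{ED} (Proposition 3.5 there), reduces the steepest-descent asymptotics to Proposition 5.3 of \cite{BCG14} after a change of variables, and then converts convergence of the $t$-Laplace transform into convergence of the distribution function via Lemma 5.2 of \cite{BCG14} — exactly the three-step scheme you describe. The only difference is cosmetic: the paper works directly with the HAHP formula rather than first passing through the S6V identification of Theorem \ref{SVHL}.
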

\begin{remark}
Owing to the recent work in \cite{Bor16}, the result of Theorem \ref{HLCT} can be established by reduction to the Schur process (corresponding to $t = 0$). For the Schur process a proof of the convergence in (\ref{HLConv}) for the case $s = 0$ can be found in the proof of Theorem 6.1 in \cite{Bor16}. For the sake of completeness we will present a different (more direct) proof below, relying on ideas from \cite{BCG14} and \cite{ED}, which in turn date back to \cite{BorCor}.
\end{remark}
\begin{proof}
Fix $x \in [-r - 1,r + 1]$ and $y \in \mathbb{R}$ throughout. For clarity we split the proof into several steps.\\

{\raggedleft {\bf Step 1.}}
 From Section 2.2 in \cite{BorCor} we know that for $1 \leq K \leq M$ we have
$$ \mathbb{P}_{\zeta}^{M,N}(\lambda(K) = \nu) = \mathbb{P}_{\zeta}^{K,N}(\lambda(K) = \nu) =\left( \frac{1 - \zeta}{1 - t\zeta} \right)^{KN}\hspace{-3mm}P_\nu(1^K)\cdot Q_\nu(\zeta^N),$$
In the last equality we used the homogeneity of $P_\nu$ and $Q_\nu$. Setting $\lambda_1' = \lambda_1'(K)$, we have as a consequence of Proposition 3.5 in \cite{ED} that if $\phi \in \mathbb{C} \backslash \mathbb{R}^+$ then
\begin{equation}\label{finlengthform}
\mathbb{E}_{\zeta}^{M,N} \left[ \frac{1}{(\phi t^{1-\lambda_1'}; t)_\infty} \right] = \det (I + K^{K,N}_\phi)_{L^2(C_\rho)}.
\end{equation}
The contour $C_\rho$ is the positively oriented circle of radius $\rho \in(\zeta t^{-1}, t^{-1})$, centered at $0$, and the operator $K^{K,N}_\phi$ is defined in terms of its integral kernel
$$K^{K,N}_\phi(w,w') = \frac{1}{2\pi \iota} \int_{1/2 - \iota\infty}^{1/2 + \iota\infty} ds \Gamma(-s)\Gamma(1 + s)(-\phi)^s g^{K,N}_{w,w'}(t^s),$$
where  $\Gamma$ is the Euler gamma function and 
$$g^{K,N}_{w,w'}(t^s) = \frac{1}{wt^{-s} - w'}\left( \frac{1 - \zeta(wt)^{-1}}{1 - \zeta(wt)^{-1}t^s}\right)^K \left(\frac{1 - (wt)t^{-s}}{1 - (wt)}\right)^N.$$
We also recall that $(x; t)_\infty = \prod_{i = 1}^\infty ( 1 - x t^{i-1})$ is the $t$-Pochhammer symbol and 
$$\det (I + K^{K,N}_\phi)_{L^2(C_\rho)} = 1 + \sum_{n = 1}^\infty \frac{1}{n!} \int_{C_\rho} \cdots \int_{C_\rho} \det \left[ K^{K,N}_\phi(w_i,w_j) \right]_{i,j = 1}^n \prod_{i = 1}^n dw_i$$
is the Fredholm determinant of the kernel $K^{K,N}_\phi$ (see Section 2 in \cite{ED} for details). \\

{\raggedleft {\bf Step 2.}} For the remainder of the proof we set 
$$K =  \mu N + xN^{2/3}  + O(1) \mbox{ and  }\phi(N) =( -t^{-1}) \times   t^{f_1(\mu)N + f_1'(\mu)xN^{2/3} + (1/2)f_1''(\mu)x^2N^{1/3} + y \sigma_\mu },$$
 where $\sigma_\mu$ and $f(\mu)$ are as in the statement of the theorem.
Our goal in this step is to show that
\begin{equation}\label{Lim1}
\lim_{N \rightarrow \infty}\det (I + K^{K,N}_\phi)_{L^2(C_\rho)} = F_{GUE}(y).
\end{equation}

We use the following change of variables and functional identities
$$w_i \rightarrow \frac{-1}{\tilde w_i},  \hspace{5mm} \rho \rightarrow \tilde\rho^{-1}, \hspace{5mm} \Gamma(-s)\Gamma(1 + s) = \frac{\pi}{\sin(\pi s)}$$
to rewrite
$$\det (I + K^{K,N}_\phi)_{L^2(C_\rho)} = \det (I + \tilde K^{K,N}_{ \phi})_{L^2(C_{\tilde \rho})}.$$
In the above we have that  
\begin{equation}
\begin{split}
&\tilde K^{K,N}_{ \phi}(w,w') = \frac{1}{2 \iota} \int_{1/2 - \iota\infty}^{1/2 + \iota\infty} \frac{t^{-\tilde m_{\nu}K + y \tilde \sigma_\nu K^{1/3}}}{\sin(\pi s)}\frac{g(\tilde w;\zeta, \nu K, K)}{g(t^s \tilde w; \zeta, \nu K, K)} \cdot \frac{ds}{\tilde wt^{s} -\tilde w'}, \mbox{ where }\\
&g(\tilde w; b_1,b_2, x, t) = \left( 1 + zt^{-1}\zeta \right)^x \left(\frac{1}{1 + t^{-1} \tilde z}\right)^{t} \mbox{, } \tilde \sigma_\nu = \frac{\zeta^{1/2} \nu^{-1/6}}{1 - \zeta} \left( (1 - \sqrt{\nu \zeta}) ( \sqrt{\nu  / \zeta} -1)\right)^{2/3}\\
&  \tilde m_{\nu} = \frac{(\sqrt{\nu} - \sqrt{\zeta})^2}{1- \zeta},\mbox{ and }\nu = \mu^{-1} - x \mu^{-5/3} K^{-1/3} + \frac{2x^2}{3} \mu^{-7/3} K^{-2/3} + O(1)   .
\end{split}
\end{equation}
The validity of (\ref{Lim1}) is now equivalent to Proposition 5.3 in \cite{BCG14}. To make the connection clearer we reconcile the notation from equation (65) in that paper with our own below:
$$y \leftrightarrow h, \hspace{3mm} t \leftrightarrow \tau, \hspace{3mm} \tilde \rho \leftrightarrow r, \hspace{3mm} K \leftrightarrow L, \hspace{3mm} b_1 \leftrightarrow \frac{1 - \zeta}{1 - t\zeta}, \hspace{3mm} b_2 \leftrightarrow t \frac{1 - \zeta}{1 - t\zeta}, \hspace{3mm} \zeta \leftrightarrow \kappa.$$
We remark that in \cite{BCG14} the variable $\nu$ is constant, while in our case it changes with $K$ and quickly converges to $\mu^{-1}$ -- this does not affect the validity of Proposition 5.3 and the same arguments can be repeated verbatim.\\

{\raggedleft {\bf Step 3.}} Combining (\ref{finlengthform})  and  (\ref{Lim1}) we see that
\begin{equation}\label{Lim2}
\begin{split}
&\lim_{N \rightarrow \infty} \mathbb{E}_{\zeta}^{M,N} \left[g_N(X_N - y) \right] = F_{GUE}(y), \mbox{ where } g_N(z) =  \frac{1}{( -t^{-N^{1/3} z}; t)_\infty} \mbox{ and } \\
&X_N = \sigma_\mu^{-1}N^{-1/3}(\lambda_1'(\mu N + xN^{2/3}) - f_1(\mu)N - f_1'(\mu)xN^{2/3} - (1/2)f_1''(\mu)x^2N^{1/3}).
\end{split}
\end{equation}
As discussed in the proof of Theorem 5.1 in \cite{BCG14}, we have that $g_N(z)$ satisfy the conditions of Lemma 5.2 of the same paper, which implies that $X_N$ weakly converges to a random variable $X$ such that $\mathbb{P}(X \leq y) = F_{GUE}(y)$. This suffices for the proof.

\end{proof}

\subsection{The stochastic six-vertex model in a quadrant}\label{Section2.2}
In this section we recall the definition of a stochastic inhomogeneous six-vertex model in a quadrant, considered in \cite{Spohn, BCG14,BP16}. There are several (equivalent) ways to define the model and we will, for the most part, adhere to the one presented in Section 1.1.2 in \cite{Agg16B}. We also refer the reader to Section 1 of \cite{BP16} for the definition of a more general higher spin version of this model.

A {\em six-vertex directed path ensemble} is a family of up-right directed paths drawn in the first quadrant $\mathbb{Z}^2_{\geq 1}$ of the square lattice, such that all the paths start from a left-to-right arrow entering each of the points $\{(1,m): m \geq 1 \}$ on the left boundary (no path enters from the bottom boundary) and no two paths share any horizontal or vertical edge (but common vertices are allowed); see Figure \ref{S2_3}. In particular, each vertex has six possible {\em arrow configurations}, presented in Figure \ref{S2_4}.

\begin{figure}[h]
\centering
\scalebox{0.6}{\includegraphics{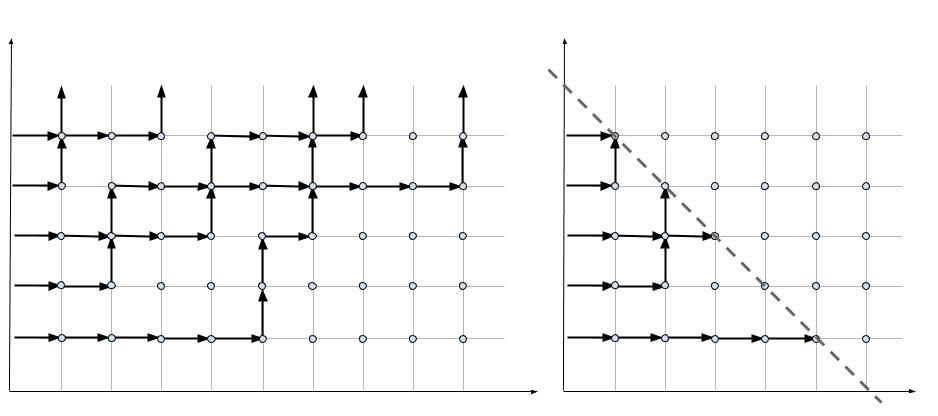}}
\caption{The left picture shows an example of a six-vertex directed path ensemble. The right picture shows an element in $P_n$ for $n = 6$. The vertices on the dashed line belong to $D_n$ and are given half of an arrow configuration if a directed path ensemble from $P_n$ is drawn. Vertices in $D_n$ with zero (two) incoming arrows from the left and bottom can be completed in a unique way - by having zero (two) outgoing arrows. Compare vertices $(4,2)$  in both pictures, also vertices $(1,5)$. Vertices in $D_n$ with a single incoming arrow can be completed by having exactly one outgoing arrow, which can go either to the right or up. Compare vertices $(5,1)$ in both pictures, also vertices $(2,4)$.  }
\label{S2_3}
\end{figure}

\begin{figure}[h]
\centering
\scalebox{0.5}{\includegraphics{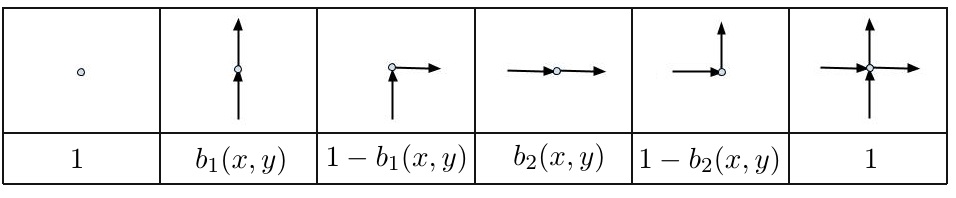}}
\caption{The top row shows the six possible arrow configurations at a vertex $(x,y)$. The bottom row shows the probabilities of top-right completion, given the bottom-left half of a configuration. The probabilities $b_1(x,y)$ and $b_2(x,y)$ depend on $\xi_x, u_y$ and $q$ and are given in (\ref{ProbSV}). }
\label{S2_4}
\end{figure}

The stochastic inhomogeneous six-vertex model is a probability distribution $\mathcal{P}$ on six-vertex directed path ensembles, which depends on a set of parameters $\{\xi_x\}_{x \geq 1}$, $\{ u_y\}_{y \geq 1}$ and $q$, which satisfy
\begin{equation}\label{SVP}
q \in (0,1), \hspace{5mm} \xi_x > 0, u_y > 0, \hspace{5mm} \xi_x u_y > q^{-1/2} \mbox{ for all $x,y \geq 1$}.
\end{equation}
It is defined as the infinite-volume limit of a sequence of probability measures $\mathcal{P}_n$, which are constructed as follows.

For $n \geq 1$ we consider the triangular regions $T_n = \{ (x,y) \in \mathbb{Z}_{\geq 1}^2: x + y \leq n\}$ and let $P_n$ denote the set of six-vertex directed path ensembles whose vertices are all contained in $T_n$. By convention, the set $P_1$ consists of a single empty ensemble. We construct a consistent family of probability distributions $\mathcal{P}_n$ on $P_n$ (in the sense that the restriction of a random element sampled from $\mathcal{P}_{n+1}$ to $T_{n}$ has law $\mathcal{P}_n$) by induction on $n$, starting from $\mathcal{P}_1$, which is just the delta mass at the single element in $P_1$. 

For any integer $n \geq 1$ we define $\mathcal{P}_{n+1}$ from $\mathcal{P}_n$ in the following Markovian way. Start by sampling a directed path ensemble $\mathcal{E}_n$ on $T_n$ according to $\mathcal{P}_n$. This gives arrow configurations (of the type presented in Figure \ref{S2_4}) to all vertices in $T_{n-1}$. In addition, each vertex in $D_n = \{ (x,y) \in \mathbb{Z}^2_{\geq 1}: x+ y = n\}$  is given ``half'' of an arrow configuration, meaning that the arrows entering the vertex from the bottom or left are specified, but not those leaving from the top or right; see the right part of Figure \ref{S2_3}. 

To extend $\mathcal{E}_n$ to a path ensemble on $T_{n+1}$, we must ``complete'' the configurations, i.e. specify the top and right arrows, for the vertices on $D_n$. Any half-configuration at a vertex $(x,y)$ can be completed in at most two ways; selecting between these completions is done independently for each vertex in $D_n$ at random according to the probabilities given in the second row of Figure \ref{S2_4}, where the probabilities $b_1(x,y)$ and $b_2(x,y)$ are defined as
\begin{equation}\label{ProbSV}
b_1(x,y) = \frac{1 - q^{1/2} \xi_x u_y}{1 - q^{-1/2} \xi_x u_y} \hspace{5mm} b_2(x,y) = \frac{q^{-1} - q^{-1/2} \xi_x u_y}{1 - q^{-1/2} \xi_x u_y}.
\end{equation}
In this way we obtain a random ensemble $\mathcal{E}_{n+1}$ in $P_{n+1}$ and we denote its law by $\mathcal{P}_{n+1}$. One readily verifies that the distributions $\mathcal{P}_n$ are consistent and then we define $\mathcal{P} = \lim_{n \rightarrow \infty} \mathcal{P}_n$. 

A particular case that will be of interest to us is setting $\xi_x = \xi$ and $u_y = u$ for all $x \geq 1$ and $y\geq 1$, where $\xi, u > 0$ are such that $\xi u > q^{-1/2}$. We refer to this model as the {\em homogeneous stochastic six-vertex model} and denote the corresponding measure as $\mathbb{P}_{\xi, u, q}$. Let us remark that (upto a reflection with respect to the diagonal $x = y$) this model was investigated in \cite{Gwa} and much more recently in \cite{BCG14} under the name ``stochastic six-vertex model''. \\

Given a six-vertex directed path ensemble on $\mathbb{Z}_{\geq 1}^2$, we define the {\em height function } $h(x,y)$ as the number of up-right paths, which intersect the 
horizontal line through $y$ at or to the right of $x$. We end this section by recalling the following important connection between the height function of the homogeneous stochastic six-vertex model and the homogeneous ascending Hall-Littewood process. The following result is a special case of Theorem 4.1 in \cite{BBW} and plays a central role in our arguments.
\begin{theorem}[Theorem 4.1 in \cite{BBW}]\label{SVHL}
Let $\xi,u,q > 0$ be given such that $q \in (0,1)$, $\zeta = \xi^{-1} u ^{-1} q^{-1/2}< 1$ and fix $\mu \in (\zeta, \zeta^{-1})$. Let $h(x,y)$ denote height function sampled from $\mathbb{P}_{\xi, u ,q}$ and $\varnothing \prec \lambda(1) \prec \cdots \prec \lambda(M)$ be distributed as $\mathbb{P}^{M,N}_\zeta$ from Definition \ref{HLPDef}, where $t = q$. Then we have the following equality in distribution of random vectors
$$ \left(N - \lambda_1'(0),...,N - \lambda'_1(M) \right) \eqd \left(h(1,N), \cdots, h(M+1,N) \right), \mbox{ where by convention $\lambda_1'(0) = 0$.}$$
\end{theorem}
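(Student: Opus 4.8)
This statement is the special case $t=q$, $\zeta=\xi^{-1}u^{-1}q^{-1/2}$ of Theorem~4.1 in \cite{BBW}, and the proof I would give is a distillation of the Yang--Baxter argument there (in the paper it suffices to cite \cite{BBW}). The plan is to present both sides of the claimed equality as boundary observables of lattice models built from a common quantum-integrable $R$-matrix. On the Hall--Littlewood side I would use that the skew branching factor $P_{\lambda(i)/\lambda(i-1)}(1)$ in \eqref{HLDefeq} (and the dual specialization $Q_{\lambda(M)}(\zeta^N)$) is the one-row partition function of the $t$-Boson (or $q$-Boson) vertex model with prescribed horizontal occupation data. Stacking $M$ such rows for the $P$-factors and $N$ rows with spectral parameter tuned to $\zeta$ for the $Q$-factor exhibits $\mathbb{P}^{M,N}_\zeta$ as the law of the occupation variables of a $t$-Boson lattice model, in which $\lambda_1'(i)=\ell(\lambda(i))$ (recall $\lambda_1'=|\{j\in\mathbb{N}:\lambda_j\ge 1\}|$) is read off as the vertical flux through a horizontal cut. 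On the other side, $\mathbb{P}_{\xi,u,q}$ is the stochastic specialization of the spin-$\tfrac12$ six-vertex model, itself a specialization of the same higher-spin vertex model that degenerates to the $t$-Boson model, and $h(x,N)$ is likewise such a flux.

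The core step is the ``zipper'' commutation argument: set up a single lattice model on a large rectangle whose left and bottom boundaries encode the step data, assembled from six-vertex row transfer operators in one direction and $t$-Boson column transfer operators in the other. Since both families intertwine with a common $R$-matrix (an $RLL=LLR$-type relation), repeated use of the Yang--Baxter equation lets one evaluate the partition function either ``rows first'', which produces $\mathbb{P}_{\xi,u,q}$ on the slice $y=N$, or ``columns first'', which produces $\mathbb{P}^{M,N}_\zeta$; following the flux variables through this rearrangement delivers exactly $(N-\lambda_1'(0),\dots,N-\lambda_1'(M))\eqd(h(1,N),\dots,h(M+1,N))$, with $N-\lambda_1'(0)=N=h(1,N)$ as a consistency check. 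The parameter dictionary $t=q$, $\zeta=\xi^{-1}u^{-1}q^{-1/2}$ then comes out of matching the explicit stochastic weights $b_1,b_2$ of \eqref{ProbSV} against the $t$-Boson vertex weights, the hypothesis $\zeta<1$ (equivalently $\xi u>q^{-1/2}$) being exactly what makes both measures well-defined probability measures.

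I expect the main obstacle to be the passage to infinite volume: $\mathbb{P}_{\xi,u,q}$ is only defined as the limit $\lim_n\mathcal{P}_n$ of measures on growing triangles, while the Hall--Littlewood process involves the infinite specialization $Q_{\lambda(M)}(\zeta^N)$ and partitions of a priori unbounded length, so one must check that the finite-volume Yang--Baxter identities survive these limits --- this is the role of the infinite-volume Yang--Baxter / $t$-Boson machinery of \cite{Kor13,Bor14,BWh} cited in the introduction. An alternative, more combinatorial route would be the Hall--Littlewood RSK correspondence of \cite{BM}: one constructs an explicit bijection between six-vertex path configurations and interlacing arrays $\varnothing\prec\lambda(1)\prec\cdots\prec\lambda(M)$ under which $\mathbb{P}_{\xi,u,q}$ pushes forward to $\mathbb{P}^{M,N}_\zeta$ and the flux $h(x,N)$ is carried to $N-\ell(\lambda(x))$; there the difficulty migrates to verifying that the bijection is weight-preserving, i.e.\ that the product of six-vertex vertex weights along a configuration equals the corresponding product of the $\psi$- and $\phi$-factors appearing in \eqref{HLM}.
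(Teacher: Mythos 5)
Your proposal matches the paper exactly: the paper offers no proof of this statement, simply citing it as a special case of Theorem 4.1 in \cite{BBW}, and your sketch of the underlying Yang--Baxter/$t$-Boson argument (with the Hall--Littlewood RSK route of \cite{BM} as an alternative) is precisely the route the paper's introduction attributes to those references. Nothing further is required beyond the citation.
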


\subsection{The asymmetric simple exclusion process}\label{Section2.3} The {\em asymmetric simple exclusion process} (ASEP) is a continuous time Markov process, which was introduced in the mathematical community by Spitzer in \cite{Spi}. In this paper we consider ASEP started from the so-called step initial condition, which can be described as follows. Particles are initially (at time $0$) placed on $\mathbb{Z}$ so that there is a particle at each location in $\mathbb{Z}_{\leq 0}$ and all positions in $\mathbb{Z}_{\geq 1}$ are vacant. There are two exponential clocks, one with rate $L$ and one with rate $R$, associated to each particle; we assume that $R > L \geq 0$ and that all clocks are independent. When some particle's left clock rings, it attempts to jump to the left by one; similarly when its right clock rings, it attempts to jump to the right by one. If the adjacent site in the direction of the jump is unoccupied, the jump is performed; otherwise it is not. For a more careful description of the model, as well as a proper definition of this dynamics with infinitely many particles, we refer the reader to \cite{Lig2}.

Given a particle configuration on $\mathbb{Z}$, we define the {\em height function } $\mathfrak{h}(x)$ as the number of particles at or to the right of the position $x$, when $x \in \mathbb{Z}$. For non-integral $x$, we define $\mathfrak{h}(x)$ by linear interpolation of $\mathfrak{h}(\lfloor x \rfloor)$ and $\mathfrak{h}(\lceil x \rceil)$. For $R > L \geq 0$ and $T \geq 0$ we denote by $\mathbb{P}^T_{L,R}$ the law of the height function $\mathfrak{h}$ of the random particle configuration sampled from the ASEP (started from the step initial condition) with parameters $R$ and $L$ after time $T$. 

We isolate the following one-point convergence result for future use.
\begin{theorem}\label{ASEPCT}Suppose $r > 0$, $R = 1$, $L \in (0,1)$, $\gamma = R- L$ and fix $\alpha \in (0, 1)$.  Let $\mathfrak{h}(x)$ denote height function sampled from $\mathbb{P}^{N/\gamma}_{L,R}$ and for $s \in [-r-1,r+1]$ set
\begin{equation}
f^{ASEP}_N(s) =  \sigma_\alpha^{-1}N^{-1/3} \left(f_3(\alpha)N + f'_3(\alpha)s N^{2/3} + (1/2) s^2 f_3''(\alpha)N^{1/3} - \mathfrak{h} \left( \alpha N + sN^{2/3}\right) \right),
\end{equation}
where we define $\mathfrak{h}(\cdot)$ at non-integer points by linear interpolation. The constants above are given by $\sigma_\alpha = 2^{-4/3}(1 - \alpha^2)^{2/3}$, $f_3(\alpha) = \frac{(1-\alpha)^2}{4}$, $f'_3(\alpha) = -\frac{1 - \alpha}{2}$, $f''_3(\alpha) = \frac{1}{2}$. Then for any $s \in [-r - 1,r + 1]$ and $y \in \mathbb{R}$ we have
\begin{equation}\label{ASEPConv}
\lim_{N \rightarrow \infty} \mathbb{P}^{N/\gamma}_{L,R} \left( f^{ASEP}_N(s) \leq y\right) = F_{GUE}(y),
\end{equation}
where $F_{GUE}$ is the GUE Tracy-Widom distribution \cite{TWPaper}.
\end{theorem}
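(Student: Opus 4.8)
The plan is to deduce Theorem~\ref{ASEPCT} from the already-established Hall-Littlewood convergence (Theorem~\ref{HLCT}) by going through the stochastic six vertex model, using the two bridges recalled in the introduction: the Borodin-Bufetov-Wheeler identity (Theorem~\ref{SVHL}) between the S6V height function and $\lambda_1'$ under the HAHP, and the known convergence of the S6V model to the ASEP under an appropriate scaling limit (the result of \cite{Agg16}, suggested in \cite{Gwa,BCG14}). Concretely, the route is: (i) pick homogeneous S6V parameters $\xi,u,q$ so that $q=t$, $\zeta=\xi^{-1}u^{-1}q^{-1/2}$ is a fixed number in $(0,1)$, and so that a limit $q\to 1$ (equivalently letting $\xi u \to q^{-1/2}$ at the right rate) degenerates the S6V height function $h(\cdot,N)$, run up to horizontal level $N$, into the ASEP height function $\mathfrak{h}_{N/\gamma}(\cdot)$ with $R=1$, $L=t$, and $\gamma=1-t$; (ii) match the spatial and fluctuation scalings of Theorem~\ref{HLCT} (written in terms of $\mu N$, $N^{2/3}$, and the constants $f_1,\sigma_\mu$) with those of Theorem~\ref{ASEPCT} (written in terms of $\alpha N$, $N^{2/3}$, and $f_3,\sigma_\alpha$); (iii) conclude the $F_{GUE}$ limit.

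First I would set up the parameter dictionary. Under Theorem~\ref{SVHL}, with $t=q$ and $N$ the horizontal level, $N-\lambda_1'(m)\eqd h(m+1,N)$, so the $N^{1/3}$-fluctuation statement for $\lambda_1'(\mu N + x N^{2/3})$ in Theorem~\ref{HLCT} transfers verbatim to a statement about $h(\mu N + x N^{2/3},N)$; note the shift of the argument by one is negligible after the $N^{2/3}$ rescaling and the sign flip $N-\lambda_1'$ just reflects $h$ into $-h$, consistent with the sign conventions in $f^{ASEP}_N$ versus $f^{HL}_N$. Then I would invoke the S6V-to-ASEP limit: as $q=t$ is held fixed and one takes the diffusive-type scaling of \cite{Agg16} (horizontal coordinate $N$, with $\xi u$ tuned toward $q^{-1/2}$, equivalently $\zeta\to 1$ at rate $1-\zeta \sim (1-t)/N$, which is exactly the specialization $\zeta_N = 1 - \frac{1-t}{N}$ mentioned in the introduction), the S6V height function along the $N$-th horizontal line converges to $\mathfrak{h}_{N/\gamma}$ with $\gamma = 1-t$. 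Under this limit $\mu$ — the free parameter in $(\zeta,\zeta^{-1})$ — maps to $\alpha\in(0,1)$, and one checks that $f_1(\mu)N$, $f_1'(\mu)$, $f_1''(\mu)$, $\sigma_\mu$ converge (after the appropriate $N$-dependent substitution of $\zeta=\zeta_N$) to $f_3(\alpha)$, $f_3'(\alpha)$, $f_3''(\alpha)$, $\sigma_\alpha = 2^{-4/3}(1-\alpha^2)^{2/3}$; this is a finite, if slightly tedious, Taylor-expansion check of the explicit constants. Since $F_{GUE}$ is continuous, combining the weak convergence $f^{HL}_N(x)\Rightarrow F_{GUE}$ with the convergence of the rescaled prelimit random variables gives $f^{ASEP}_N(s)\Rightarrow F_{GUE}$ for each fixed $s$ and $y$, which is the claim.

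An alternative, and arguably cleaner, route is to cite directly the one-point Tracy-Widom theorem for the ASEP height function with step data — Tracy-Widom \cite[Theorem 3]{TWASEP} (referenced in the introduction as the source of one-point tightness) — and simply verify that the centering and scaling there, after the change of variables to the present normalization ($T = N/\gamma$, position $\alpha N + sN^{2/3}$, fluctuations $\sigma_\alpha N^{1/3}$), reduces to the statement \eqref{ASEPConv}. In that case the proof is purely a matter of reconciling conventions: expressing the Tracy-Widom law of $\mathfrak{h}_T(x)$ in the scaling window $x = \alpha T + \cdots$, $T$ large, and reading off that the deterministic shift is $f_3(\alpha)N + f_3'(\alpha)sN^{2/3} + \tfrac12 s^2 f_3''(\alpha)N^{1/3} + o(N^{1/3})$ with fluctuation scale $\sigma_\alpha N^{1/3}$. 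I would likely present the proof this way — deriving \eqref{ASEPConv} as a corollary of the cited ASEP one-point theorem, with the constant-matching done explicitly — and remark that it is also consistent, via Theorem~\ref{SVHL} and the S6V-to-ASEP limit, with Theorem~\ref{HLCT}.

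The main obstacle is the bookkeeping of constants and the justification of interchanging the S6V-to-ASEP scaling limit with the $N\to\infty$ KPZ limit: one must confirm that the $\zeta = \zeta_N = 1 - (1-t)/N$ substitution in the HAHP constants, pushed through Theorem~\ref{HLCT}, produces precisely the quadratic-in-$s$ expansion with coefficients $f_3, f_3', f_3''$ and amplitude $\sigma_\alpha$, and that the $O(1)$ discrepancies (the $+1$ in $h(m+1,N)$, the linear interpolation at non-integer arguments, the $O(1)$ error in $K$ already present in the proof of Theorem~\ref{HLCT}) are all of lower order than $N^{1/3}$ and hence vanish in the limit. None of these steps is deep, but getting the algebra of the four constants to line up exactly — in particular $\sigma_\alpha = 2^{-4/3}(1-\alpha^2)^{2/3}$ emerging from $\sigma_\mu = \frac{(\zeta\mu)^{1/6}(1-\sqrt{\zeta\mu})^{2/3}(1-\sqrt{\zeta/\mu})^{2/3}}{1-\zeta}$ as $\zeta \to 1$ — requires care, and is where I would focus the written proof.
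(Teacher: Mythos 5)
Your preferred route --- deducing (\ref{ASEPConv}) directly from Tracy--Widom's \cite[Theorem 3]{TWASEP} by matching centering and scaling --- is exactly the paper's proof: the paper converts the height-function statement into a particle-position statement via the equality of events $\{\mathfrak{h}(n)\geq m\}=\{x_m\geq n\}$, chooses $m(N)$ and $n(N)$ according to the scaling window, and checks that the Tracy--Widom centering $c_1T$ and scale $c_2T^{1/3}$ reproduce $f_3,f_3',f_3'',\sigma_\alpha$ up to errors of order $o(N^{1/3})$. One caution about your first route, had you pursued it: Theorem \ref{HLCT} is proved for a \emph{fixed} $\zeta\in(0,1)$, while the S6V-to-ASEP degeneration of \cite{Agg16} requires $\zeta_N=1-(1-t)/N\to1$, and Theorem \ref{SVASEP} is only a finite-window (fixed $K$) convergence, so it cannot by itself carry a KPZ-scale one-point limit; justifying that interchange of limits would be a genuine additional argument, which is why the paper uses the HAHP/S6V connection only to build the Gibbsian line ensemble (Proposition \ref{HLASEP}) and proves the one-point ASEP limit directly from \cite{TWASEP}.
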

\begin{proof}
The above result follows immediately from the celebrated theorem of Tracy-Widom  \cite[Theorem 3]{TWASEP}, which says that
\begin{equation}\label{TWeq} 
\lim_{T \rightarrow \infty} \mathbb{P}^{T/\gamma}_{L,R} \left( \frac{ c_1 T - x_m}{c_2 T^{1/3}} \leq y\right) = F_{GUE}(y),
\end{equation}
where $\sigma = \frac{m}{T} \in (0,1), \hspace{2mm} c_1 = 1 - 2\sqrt{\sigma}, \hspace{1mm} c_2 = \sigma^{-1/6}(1 - \sqrt{\sigma})^{2/3}.$ In the above relation $x_m$ denotes the position of the $m$-th right-most ASEP particle (notice there is a sign change with the result in \cite{TWASEP}, due to the fact that in that paper $L > R$ and the particles initially occupy the positive integers). Below we briefly explain why the above statement implies the theorem.

One observes that at each fixed time and for any positive integers $m,n$ we have the equality of events  $\{  \mathfrak{h}(n) \geq m  \} = \{ x_m \geq n \},$ which implies that for any $\tau \geq 0$ we have
\begin{equation}\label{ALim1}
 \mathbb{P}^{\tau}_{L,R} \left(  \mathfrak{h}(n)\geq m \right) =  \mathbb{P}^{\tau}_{L,R} \left(  x_m\geq n \right)
\end{equation}
Let $m(N) = \lfloor f_3(\alpha)N + f'_3(\alpha)s N^{2/3} + (1/2) s^2 f_3''(\alpha)N^{1/3} - y N^{1/3} \sigma_\alpha \rfloor$, $n(N) = \lfloor \alpha N + sN^{2/3} \rfloor$ and observe that
\begin{equation}\label{ALim2}
\mathbb{P}^{N/\gamma}_{L,R}\left( \mathfrak{h}(n(N) ) \geq  m(N) + 1 \right) \leq  \mathbb{P}^{N/\gamma}_{L,R}\left( f^{ASEP}_N(s) \leq y \right) \leq\mathbb{P}^{N/\gamma}_{L,R}\left( \mathfrak{h}(n(N) + 1) \geq  m(N) \right).
\end{equation}
 From  (\ref{ALim1}) we have
\begin{equation}\label{ALim3}
\mathbb{P}^{N/\gamma}_{L,R}\left( \mathfrak{h}(n(N) + 1) \geq  m(N) \right) = \mathbb{P}^{N/\gamma}_{L,R} \left( x_{m(N)} \geq c_1N -c_2 N^{1/3} + O(1)\right),
\end{equation}
where we also used that $\sigma = \frac{m}{N} = f_3(\alpha) + f_3'(\alpha)sN^{-1/3} + (1/2)s^2 N^{-2/3}f_3''(\alpha) - y\sigma_\alpha N^{-2/3} + O(N^{-1})$. One similarly obtains
\begin{equation}\label{ALim4}
\mathbb{P}^{N/\gamma}_{L,R}\left( \mathfrak{h}(n(N) ) \geq  m(N) + 1 \right) = \mathbb{P}^{N/\gamma}_{L,R} \left( x_{m(N)+1} \geq c_1N -c_2 N^{1/3} + O(1)\right).
\end{equation}
The right sides in (\ref{ALim3}) and (\ref{ALim4})  converge to $F_{GUE}(y)$ from (\ref{TWeq}) as $N \rightarrow \infty$, which together with (\ref{ALim2}) proves the theorem. 
\end{proof}

We end this section by recalling the following important connection between the height function of the homogeneous stochastic six-vertex model and the height function of the ASEP started from step initial condition. This connection was observed in \cite{Gwa,BCG14} and carefully proved for general initial conditions in \cite{Agg16}.
\begin{theorem}[Theorem 1 in \cite{Agg16}]\label{SVASEP}
Let $\xi(N),u(N),q > 0$ be given such that $q \in (0,1)$, $\zeta(N) = \xi(N)^{-1} u(N)^{-1} q^{-1/2}< 1$ and
$$b_1(N) = \frac{1 - q^{1/2} \xi(N) u(N)}{1 - q^{-1/2} \xi(N) u(N)} = qN^{-1} + O(N^{-2}) \hspace{3mm} b_2(N) = \frac{q^{-1} - q^{-1/2} \xi(N) u(N)}{1 - q^{-1/2} \xi(N) u(N)} =  N^{-1}+ O(N^{-2}).$$
In addition, fix $K \in \mathbb{N}$, $T > 0$ and set $N_T = \lfloor N \cdot T\rfloor$. Let $h^N(x,y)$ denote height function sampled from $\mathbb{P}_{\xi(N), u(N) ,q}$ and $\mathfrak{h}$ have law $\mathbb{P}_{L,R}^T$, where $R = 1$ and $L = q$. Then we have the following convergence in distribution of random vectors
$$ (h^N(N_T-K+1,N_T),..., h^N(N_T+K+1,N_T)) \implies (\mathfrak{h}(-K+1), ..., \mathfrak{h}(K+1)) \mbox{ as $N \rightarrow \infty$}.$$
\end{theorem}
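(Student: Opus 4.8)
Since this statement is quoted verbatim as \cite[Theorem 1]{Agg16}, no new proof is required here; we only sketch the strategy one would follow, which is the one carried out there. A purely analytic route -- matching the nested contour integral formulas for the two models -- is not available, since the joint (multi-point) distribution of the ASEP height function has no usable closed form, so the argument must be probabilistic.

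The plan is to realize the S6V height function as a discrete-time Markov chain and to degenerate it to continuous-time ASEP under the stated scaling. Following \cite{Gwa, BCG14}, reading off the occupied vertical edges of the S6V path ensemble along the successive horizontal lines $y = 1, 2, \dots$ produces configurations $\eta^{(y)} \in \{0,1\}^{\mathbb{Z}_{\ge 1}}$, and the map $y \mapsto \eta^{(y)}$ is Markovian with an explicit one-step transition -- a left-to-right sequential ``sweep'' update governed by the weights $b_1(N), b_2(N)$ -- with the step boundary data turning into step-type initial data. Via the dictionary between $\eta^{(y)}$ and $h^N(\cdot, y)$, and given an a-priori bound placing the front of $\eta^{(N_T)}$ near column $N_T$, the vector $\bigl(h^N(N_T-K+1, N_T), \dots, h^N(N_T+K+1, N_T)\bigr)$ is, with probability $\to 1$, a deterministic function of $\eta^{(N_T)}$ inside a bounded window around column $N_T$, so it suffices to control that window after $N_T = \lfloor NT\rfloor$ steps of the chain.

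The core is then a localization-plus-generator-convergence argument. First, because $b_1(N) = qN^{-1} + O(N^{-2})$ and $b_2(N) = N^{-1} + O(N^{-2})$, one shows that in a single step the ``bad'' events -- a particle in the window moving by more than one site, or a sweep propagating across the window -- have probability $O(N^{-2})$, hence $O(N^{-1}) \to 0$ after summing over the $\Theta(N)$ steps; this needs quantitative, uniform-in-$N$ control of the particle density and of the ensemble geometry near the diagonal $x = y$, so that sweeps stay short. On the complementary event the window process is, up to $o(1)$ errors, an exclusion-type jump chain in which a particle with a vacant right neighbour moves right with per-step probability $\sim 1/N$ and one with a vacant left neighbour moves left with per-step probability $\sim q/N$ (the constants $1$ and $q$ being read off from $b_2(N)$ and $b_1(N)$). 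A standard Trotter--Kurtz semigroup comparison then identifies its limit, run to continuous time $T$, as the ASEP with $R = 1$, $L = q$, i.e.\ as $\mathbb{P}^T_{L,R}$; since $K$ and $T$ are fixed, convergence in distribution of the finite vector follows.

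The main obstacle is the first step of that argument: unlike the nearest-neighbour or exponentially-repelling Gibbs properties of \cite{CorHamA, CorHamK}, the S6V one-step update is genuinely non-local -- a single sweep may traverse many sites -- so it is not a priori a small local perturbation of the identity, and controlling its influence on a fixed $O(1)$ window over $\Theta(N)$ steps (which requires uniform control of the density profile and of the front) is precisely where \cite{Agg16} does the substantive work.
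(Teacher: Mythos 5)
The paper itself offers no proof of this statement: it is imported verbatim as Theorem 1 of \cite{Agg16} (see the sentence preceding it, crediting \cite{Gwa,BCG14} for the observation and \cite{Agg16} for the proof), so your choice to defer to the citation matches the paper's treatment exactly. Your sketch of the intended argument (sequential-update Markov chain for the S6V height function, localization of the window near the front under the $b_1,b_2\sim N^{-1}$ scaling, and convergence of the local dynamics to ASEP with $R=1$, $L=q$) is consistent with the strategy of the cited work, and nothing further is required here.
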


\section{Definitions, notations and main results}\label{Section3}
In this section we introduce the necessary definitions and notations that will be used in the paper as well as our main technical result -- Theorem \ref{PropTightGood} below. Afterwards we give several applications of Theorem \ref{PropTightGood} to the three models discussed in the previous section. 

\subsection{Discrete line ensembles and the Hall-Littlewood Gibbs property}\label{Section3.1}
In this section we introduce the concept of a discrete line ensemble and the Hall-Littlewood Gibbs property. Subsequently, we state the main result of the paper.
\begin{definition}\label{DefDLE}
Let $N \in \mathbb{N}$, $T_0, T_1 \in \mathbb{Z}$ with $T_0 < T_1$ and denote $\Sigma = \{1,\dots,N\}$, $\llbracket T_0, T_1 \rrbracket = \{T_0, T_0 + 1,\dots,T_1\}$. Consider the set $Y$ of functions $f: \Sigma \times \llbracket T_0, T_1 \rrbracket \rightarrow \mathbb{Z}$ such that $f(j, i+1) - f(j,i) \in \{0, 1\}$ when $j \in \Sigma$ and $i \in\llbracket T_0, T_1 -1 \rrbracket$ and let $\mathcal{D}$ denote the discrete topology on $Y$. We call elements in $Y$ {\em up-right paths}.

A $\Sigma \times \llbracket T_0, T_1 \rrbracket$-{\em indexed (up-right) discrete line ensemble $\mathfrak{L}$ }  is a random variable defined on a probability space $(\Omega, \mathcal{B}, \mathbb{P})$, taking values in $Y$ such that $\mathfrak{L}$ is a $(\mathcal{B}, \mathcal{D})$-measurable function. 
\end{definition}
\begin{remark}
Notice that the definition of an up-right path we use here differs from the one in the six-vertex model. Namely, for the six-vertex model an up-right path is one that moves either to the right or up, while in discrete line ensembles up-right paths move to the right or with slope $1$. This should cause no confusion as it will be clear from context, which paths we mean.
\end{remark}

The way we think of discrete line ensembles is as random collections of up-right paths on the integer lattice, indexed by $\Sigma$ (see Figure \ref{S3_1}). Observe that one can view a path $L$ on $\llbracket T_0, T_1 \rrbracket \times \mathbb{Z}$ as a continuous curve by linearly interpolating the points $(i, L(i))$. This allows us to define $ (\mathfrak{L}(\omega)) (i, s)$ for non-integer $s \in [T_0,T_1]$ and to view discrete line ensembles as line ensembles in the sense of \cite{CorHamA}. In particular, we can think of $L(s), s\in [T_0,T_1]$ as a random variable in $(C[T_0,T_1], \mathcal{C})$  -- the space of continuous functions on $[T_0,T_1]$ with the uniform topology and Borel $\sigma$-algebra $\mathcal{C}$ (see e.g. Chapter 7 in \cite{Bill}).
\begin{figure}[h]
\centering
\scalebox{0.40}{\includegraphics{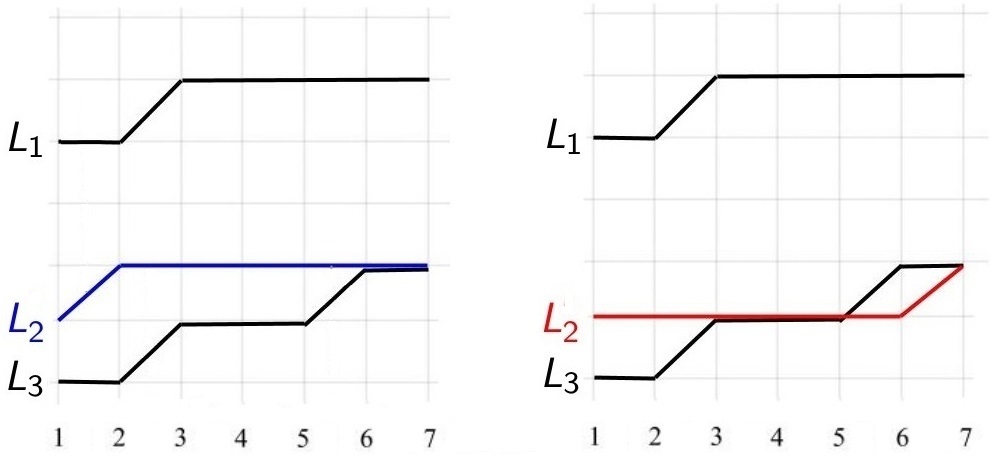}}
\caption{Two samples of $\{1,2,3\}\times \llbracket 1,7 \rrbracket$-indexed discrete line ensembles. }
\label{S3_1}
\end{figure}

We will often slightly abuse notation and write $\mathfrak{L}: \Sigma \times \llbracket T_0, T_1 \rrbracket \rightarrow \mathbb{Z}$, even though it is not $\mathfrak{L}$ which is such a function, but rather $\mathfrak{L}(\omega)$ for each $\omega \in \Omega$. Furthermore we write $L_i = (\mathfrak{L}(\omega)) (i, \cdot)$ for the index $i \in \Sigma$ path. \\

In what follows we fix a parameter $t \in (0,1)$ and make several definitions. Suppose we are given three up-right paths $f,g,L$ on $\llbracket T_0, T_1 \rrbracket \times \mathbb{Z}$. Given a (finite) subset $S \subset \llbracket T_0 +1, T_1 \rrbracket$ we define the following weight function
\begin{equation}\label{S1eq0}
\begin{split}
W_t(T_0, T_1, L, f,g; S) =& \prod_{i \in S}\left(1 -  {\bf 1}_{\{\Delta^+(i-1) - \Delta^+(i) = 1\}}\cdot t^{\Delta^+(i-1)} \right) \times \\  &\prod_{i \in S}\left(1 -  {\bf 1}_{\{\Delta^-(i-1) - \Delta^-(i) = 1\}}\cdot t^{\Delta^-(i-1)} \right) ,
\end{split}
\end{equation}
if $f(i) \geq L(i) \geq g(i)$ for $i \in S$ and $0$ otherwise. In the above $\Delta^+(s) = f(s) - L(s)$ and $\Delta^-(s) = L(s) -g(s)$. In words (\ref{S1eq0}) means that we follow the paths $f,g,L$ from left to right and any time $f - L$ (resp. $L - g$) decreases from $\Delta^+$ to $\Delta^+ - 1$ (resp. $\Delta^-$ to $\Delta^- - 1$) at a location in the set $S$ we multiply by a factor of $1 - t^{\Delta^+}$ (resp. $1 - t^{\Delta^-}$). Observe that by our assumption on $t$ we have that $W_t \in (0, 1]$ unless $L(i) > f(i)$ or $g(i) > L(i)$ for some $i \in S$, in which case the weight is $0$. Typically $S$ will be a finite union of disjoint intervals (i.e. consecutive integer points). 
\begin{remark}
Observe that (\ref{S1eq0}) makes sense even if $f = \infty$. In the latter case as $t \in (0,1)$ the product on the first line of (\ref{S1eq0}) becomes $1$ -- in fact, this will be the most common way $W_t(T_0, T_1, L, f,g; S)$ will appear in the text.
\end{remark}

{\raggedleft \bf Example.} Take the left sample in Figure \ref{S3_1}. If $S = \{2,\dots,7\}$ then we have $W_t(1,7,L_2, L_1,L_3;S) = (1-t)(1-t^2)(1-t^3)$ and $W_t(1,7,L_1, \infty, L_2;S) =  (1-t^3)$. If $S = \{3,\dots,5\}$ then $W_t(1,7,L_2, L_1,L_3;S) = (1-t^2)$ and $W_t(1,7,L_1, \infty, L_2;S) =  1$. If we take the right sample in  Figure \ref{S3_1} with $S= \{2,\dots,7\}$ then we have $W_t(1,7,L_2, L_1,L_3;S) = 0$ and $W_t(1,7,L_1, \infty, L_2;S) =  (1-t^4)$.
\vspace{2mm}

Let $t_i, z_i \in \mathbb{Z}$ for $i = 1,2$ be given such that $t_1 < t_2$ and $0 \leq z_2 - z_1 \leq t_2 - t_1$. We denote by $\Omega(t_1,t_2;z_1,z_2)$ the collection of up-right paths that start from $(t_1,z_1)$ and end at $(t_2,z_2)$, by $\mathbb{P}_{free}^{t_1,t_2; z_1, z_2}$ the uniform distribution on $\Omega(t_1,t_2;z_1,z_2)$ and write $\mathbb{E}^{t_1,t_2;z_1,z_2}_{free}$ for the expectation with respect to this measure. One thinks of the distribution $\mathbb{P}_{free}^{t_1,t_2; z_1, z_2}$ as the law of a simple random walk with i.i.d. Bernoulli increments with parameter $p \in (0,1)$ that starts from $z_1$ at time $t_1$ and is conditioned to end in $z_2$ at time $t_2$. Notice that by our assumptions on the parameters the state space is non-empty. 

The key definition of this section is the following.
\begin{definition}\label{DefHLGP}
Fix $N \geq 2$, $t \in (0,1)$, two integers $T_0 < T_1$ and set $\Sigma = \{1,\dots,N\}$. Suppose $\mathbb{P}$ is a probability distribution on $\Sigma \times \llbracket T_0, T_1 \rrbracket$-indexed discrete line ensembles $\mathfrak{L} = (L_1,\dots,L_N)$ and adopt the convention $L_0 = \infty$. We say that $\mathbb{P}$ satisfies the {\em Hall-Littlewood Gibbs property} with parameter $t$ for a subset $S \subset \llbracket T_0+1, T_1 \rrbracket$ if the following holds. Fix an arbitrary index $i \in \{1,\dots,N-1\}$ and let $\ell_{i-1}, \ell_i, \ell_{i+1}$ be three paths drawn in  $\{ (r,z) \in \mathbb{Z}^2 : T_0 \leq r \leq T_1\}$ such that $\mathbb{P}(L_{i-1} = \ell_{i-1}, L_{i+1} = \ell_{i+1} ) > 0$ (if $i = 1$ we set $\ell_0 = \infty$). Then for any path $\ell$ such that $\ell(T_0) = a = \ell_i(T_0)$ and $\ell(T_1) = b = \ell_i(T_1)$ we have
\begin{equation}\label{S2eq2}
\mathbb{P}(L_i =  \ell |L_i(T_0) = a, L_i(T_1)  = b, L_{i-1} = \ell_{i-1}, L_{i+1} = \ell_{i+1}) = \frac{W_t(T_0, T_1, \ell,\ell_{i-1}, \ell_{i+1}; S)}{Z_t(  T_0, T_1, a,b, \ell_{i-1}, \ell_{i+1};S)},
\end{equation}
where $Z_t(  T_0, T_1, a,b, \ell_{i-1}, \ell_{i+1};S)$ is a normalization constant. We refer to the measure in (\ref{S2eq2}) as $\mathbb{P}_{S}^{T_0, T_1,a,b}( \cdot | \ell_{i-1}, \ell_{i+1})$.
\end{definition}
\begin{remark}\label{RemHLGP}
An equivalent formulation of the above definition is that the law of $L_i$, conditioned on its endpoints $a = L_i(T_0)$ and $b = L_i(T_1)$, $L_{i-1} = \ell_{i-1}$ and $L_{i+1} = \ell_{i+1}$  is given by the Radon-Nikodym derivative
$$ \frac{d \mathbb{P}}{d \mathbb{P}_{free}^{T_0, T_1; a,b}} ( \ell) = \frac{W_t(T_0, T_1,\ell, \ell_{i-1}, \ell_{i+1}; S)}{Z_t(  T_0, T_1, a,b, \ell_{i-1}, \ell_{i+1};S)}.$$
With the above reformulation we get that
$$Z_t(  T_0, T_1, a,b, \ell_{i-1}, \ell_{i+1};S) = \mathbb{E}^{T_0,T_1;a,b}_{free}\left[ W_t(T_0, T_1,\ell, \ell_{i-1}, \ell_{i+1}; S)\right],$$ 
where the expectation is over $\ell$, distributed according to $\mathbb{P}^{T_0,T_1;a,b}_{free}$.

If a measure $\mathbb{P}$ satisfies the Hall-Littlewood Gibbs property, it enjoys the following sampling property. Start by (jointly) sampling $L_i(T_0), L_i(T_1)$ and $L_{j}(r)$ for $j \neq i$ and $r \in \llbracket T_0, T_1 \rrbracket$ according to $\mathbb{P}$ (i.e. according to the restriction of $\mathbb{P}$ to these random variables). Set $a = L_i(T_0)$ and $b = L_i(T_1)$ and let $ L^N_i, N \in \mathbb{N}$  be a sequence of i.i.d. up-right paths distributed according to $\mathbb{P}_{free}^{T_0, T_1; a,b}$. Let $U$ be a uniform random variable on $(0,1)$, which is independent of all else. For each $N \in \mathbb{N}$ we check if $W_t(T_0, T_1,  L^N_i, L_{i-1}, L_{i+1}; S) > U$ and set $Q$ to be the minimal index $N$ for which the inequality holds.  Observe that $Q$ is a geometric random variable with parameter $Z_t(  T_0, T_1, a,b, L_{i-1}, L_{i+1};S)$, which we call the {\em acceptance probability}. In view of the above Radon-Nikodym derivative formulation, it is clear that the random ensemble of up-right paths $(L_1,\dots,L_{i-1}, L^{Q}_i, L_{i+1}, \dots, L_N)$ is distributed according to $\mathbb{P}$. 
\end{remark}

\begin{remark} \label{restrict} We mention that the resampling property of Remark \ref{RemHLGP} for a $\{1,\dots, N\}\times \llbracket T_0, T_1 \rrbracket$-indexed line ensemble $\{L_i\}_{i = 1}^N$ only holds for the first $N-1$ lines. The latter, in particular, implies that for $M \leq N$, we have that the induced law on $\{L_i\}_{i = 1}^M$ also satisfies the Hall-Littlewood Gibbs property with parameter $t$ and subset $S$ as an $\{1,\dots,M\}\times \llbracket T_0, T_1 \rrbracket$-indexed line ensemble.
\end{remark}

In this paper, we will be primarily concerned with the case when $\Sigma = \{1, 2\}$ and the discrete line ensemble is {\em non-crossing}, meaning that $L_1(r) \geq L_2(r)$ for all $r \in  
\llbracket T_0, T_1 \rrbracket$. For brevity we will call $\{1,2\}\times\llbracket T_0, T_1 \rrbracket$-indexed non-crossing discrete line ensembles {\em simple}. These line ensembles will typically arise by restricting a discrete line ensemble with many lines to the top two lines. If the original line ensemble satisfies a Hall-Littlewood Gibbs property with parameter $t$ and set $S$, the same will be true for the restriction to the simple line ensemble at the top (see Remark \ref{restrict}). To simplify notation, whenever we are working with a simple discrete line ensemble we will omit the $i-1$ index from all of the earlier formulas and notation, as $L_0,\ell_0$ are deterministically $\infty$.

In the remainder of this section we describe a general framework that can be used to prove tightness for the top curve of a sequence of simple discrete line ensembles. We start with the following useful definition.
\begin{definition}\label{Def1} Fix $t \in (0,1)$, $\alpha > 0$, $p \in (0,1)$ and $T > 0$.  Suppose we are given a sequence $\{ T_N \}_{N = 1}^\infty$ with $T_N \in \mathbb{N}$ and that $\{\mathfrak{L}^N\}_{N = 1}^\infty$, $\mathfrak{L}^N = (L^N_1, L^N_2)$ is a sequence of simple discrete line ensembles on $ \llbracket -T_N, T_N \rrbracket$. We call the sequence $(\alpha,p,T)$-{\em good} if there exists $N_0(\alpha,p,T)$ such that for $N \geq N_0$ we have
\begin{itemize}
\item $T_N > T N^{\alpha}$ and $\mathfrak{L}^N$ satisfies the Hall-Littlewood Gibbs property with parameter $t$ for  $S = \llbracket -T_N + 1, T_N \rrbracket$;  
\item  for each $s \in [-T, T]$ the sequence of random variables $\{ N^{-\alpha/2}(L_1^N(sN^{\alpha}) - p s N^{\alpha})  \}$ is tight (i.e. we have one-point tightness of the top curves).
\end{itemize}
\end{definition}

The main technical result of the paper is as follows.
\begin{theorem}\label{PropTightGood}
Fix $\alpha, r > 0$ and $p \in (0,1)$ and let $\mathfrak{L}^N = (L^N_1, L^N_2)$ be an $(\alpha, p, r+1)$-good sequence.  For $N \geq N_0(\alpha, p, r + 1)$ (as in Definition \ref{Def1}) set 
$$f_N(s) =  N^{-\alpha/2}(L_1^N(sN^{\alpha}) - p s N^{\alpha}), \mbox{ for $s\in [-r,r]$}$$
 and denote by $\mathbb{P}_N$ the law of $f_N(s)$ as a random variable in $(C[-r,r], \mathcal{C})$. Then the sequence $\mathbb{P}_N$ is tight.
\end{theorem}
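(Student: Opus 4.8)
The plan is to establish tightness in $(C[-r,r],\mathcal{C})$ by the standard two-part criterion (see Chapter 7 of \cite{Bill}): (1) tightness of $f_N(0)$ as a sequence of real random variables, and (2) control of the modulus of continuity, i.e. for every $\epsilon>0$, $\lim_{\delta\to 0}\limsup_N \mathbb{P}_N\big(\sup_{|s-s'|\le\delta}|f_N(s)-f_N(s')|>\epsilon\big)=0$. Part (1) is immediate from the $(\alpha,p,r+1)$-goodness hypothesis at $s=0$. The entire content is Part (2), together with the auxiliary fact that one-point tightness at a single point plus the modulus-of-continuity bound upgrades to tightness of the whole curve; for this one also wants uniform (in $N$) control of $\sup_{s\in[-r,r]}|f_N(s)|$ and $\inf_{s\in[-r,r]}f_N(s)$, which follow from the modulus bound once one has tightness at the endpoints $\pm(r+1)$ — again furnished by goodness.

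The core mechanism is the Hall-Littlewood Gibbs property. Conditionally on $L_2^N$ and on the endpoints $L_1^N(-T_N), L_1^N(T_N)$, the top curve $L_1^N$ on the interval $\llbracket -T_N+1, T_N\rrbracket$ has Radon-Nikodym derivative $W_t(-T_N,T_N,\ell,\infty,L_2^N;S)/Z_t(\cdots)$ with respect to the Bernoulli random walk bridge $\mathbb{P}_{free}^{-T_N,T_N;a,b}$ (with parameter $p$), as spelled out in Remark \ref{RemHLGP} with $f=\infty$. Since $W_t\le 1$ always and, by the last paragraph of Section \ref{sec.HLGLE}, $W_t\ge c(t)=\prod_{i\ge1}(1-t^i)>0$ whenever the paths are suitably ordered, the acceptance probability $Z_t$ is bounded below by $c(t)$ times the (free-bridge) probability that $\ell$ stays above $L_2^N$. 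The strategy — following \cite{CorHamA,CorHamK} and adapted here — is: first use the one-point tightness at the endpoints of a slightly larger interval, together with the weak monotonicity of Lemma \ref{LemmaMon} and its corollaries (which replaces the failed stochastic monotonicity of the non-intersecting/exponential case), to show that with high probability the acceptance probability $Z_t(\cdots)$ on the relevant block is bounded below by a constant independent of $N$; second, on that high-probability event, compare $L_1^N$ restricted to $[-r-1,r+1]$ directly to a Bernoulli bridge via the bounded Radon-Nikodym derivative $W_t/Z_t\le 1/c(t)\cdot(\text{stuff})$, so that any modulus-of-continuity estimate for the random walk bridge transfers, up to the constant $1/Z_t$, to $L_1^N$; third, invoke the strong coupling of Section \ref{Section8} between Bernoulli bridges and Brownian bridges to get the modulus-of-continuity bound for the bridge, which under diffusive scaling $N^{-\alpha/2}$ and spatial scaling $N^{\alpha}$ is exactly the Brownian modulus of continuity.

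More concretely, I would proceed as follows. First, reduce to a statement on a bounded interval: using goodness, show that the "corner" values $L_1^N(\pm(r+1)N^\alpha)$, recentered, are tight, and that it suffices to control $f_N$ on $[-r-1,r+1]$. Second, prove an "acceptance probability is not small" lemma: on a high-probability event $E_N$ determined by the endpoints and by $L_2^N(\pm(r+1)N^\alpha)$ being at appropriately diffusive-scale separated locations (tightness of these again from goodness, now applied to the second curve via the Gibbs resampling and Lemma \ref{LemmaMon}'s corollaries), one has $Z_t\big(-(r+1)N^\alpha,(r+1)N^\alpha,\ldots\big)\ge \kappa>0$ uniformly. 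This is where the weak monotonicity genuinely does work for us: we only need a lower bound on the \emph{expected} acceptance probability, and $c(t)$ absorbs the slope-dependence of $W_t$. Third, on $E_N$, bound $\mathbb{P}\big(\text{modulus of }f_N>\epsilon \mid \mathcal{F}_{ext}\big)\le \kappa^{-1}\,\mathbb{P}_{free}\big(\text{modulus of scaled bridge}>\epsilon\big)$, where $\mathcal{F}_{ext}$ is the $\sigma$-algebra of the data being conditioned on; then apply the strong coupling of Section \ref{Section8} and the Brownian modulus-of-continuity estimate to make the right side go to $0$ as $\delta\to0$ uniformly in $N$. Combining with $\mathbb{P}(E_N^c)\to0$ and monotonicity of the modulus under restriction finishes Part (2).

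\medskip

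\noindent\textbf{Main obstacle.} The hard part is the uniform lower bound on the acceptance probability $Z_t$. Unlike the non-intersecting or exponential-interaction ensembles of \cite{CorHamA,CorHamK}, here the weight $W_t$ depends not just on the gap $L_1^N-L_2^N$ but on its increments (slopes), so the usual stochastic-monotonicity comparisons — "raise the boundary data, the curve goes up" — fail at the level of the actual distribution, not merely its proof (cf. Remark \ref{MonCoup}). The substitute is Lemma \ref{LemmaMon}: the acceptance probability increases, in expectation and up to the factor $c(t)$, when the top curve is raised. Turning this weaker statement into the "$Z_t\ge\kappa$ on a high-probability event" bound — and doing so on a bounded window while the Gibbs property is naturally stated on the full interval $\llbracket-T_N,T_N\rrbracket$ (requiring a localization / restriction argument, as in Remark \ref{restrict}) — is the technical crux, and I expect this is precisely what the three key lemmas of Section \ref{Section5} are designed to deliver.
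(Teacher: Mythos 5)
Your plan reproduces the architecture of the paper's own proof: reduce tightness to one-point tightness of $f_N(0)$ plus a modulus-of-continuity estimate (the criterion from Chapter 7 of \cite{Bill}); establish a high-probability, uniform-in-$N$ lower bound on the acceptance probability over the window (this is exactly Proposition \ref{PropMain}, proved from the weak monotonicity of Lemma \ref{LemmaMon} and its corollaries, which you correctly identify as the crux); and then transfer the modulus estimate for the Bernoulli bridge --- obtained from the strong coupling of Section \ref{Section8}, i.e.\ Lemma \ref{MOCLemma} --- to $L_1^N$ through the Gibbs property. The only real difference is cosmetic: on the good event you bound the conditional law of $L_1^N$ directly by $Z_t^{-1}\le \kappa^{-1}$ times the free-bridge law, whereas the paper runs the acceptance--rejection sampling of Remark \ref{RemHLGP}, bounds the geometric number of proposals by Chebyshev, and takes a union bound over boundedly many independent bridges; the two estimates are interchangeable at this step.

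One assertion in your description of the mechanism is false and should not be relied on, even though your plan does not ultimately lean on it: it is not true that $W_t\ge c(t)$ whenever the paths are ordered, and consequently $Z_t$ is \emph{not} bounded below by $c(t)$ times the free-bridge probability of staying above $L_2^N$. Each time the gap $\ell-\ell_{bot}$ decreases from level $d$ a factor $(1-t^{d})$ is collected, and the same low level can recur arbitrarily often: an ordered path that comes within distance one of $\ell_{bot}$ on $K$ separate occasions has weight at most $(1-t)^{K}$, which is not bounded below. (The last paragraph of Section \ref{sec.HLGLE} asserts only that the \emph{expected} acceptance probability increases, up to the factor $c(t)$, when the curve is raised --- that is Lemma \ref{LemmaMon}, not a pointwise bound.) This is precisely why the paper's proof of the acceptance-probability bound (Lemmas \ref{LemmaAP1} and \ref{LemmaAP2}) must show that with not-too-small probability the resampled curve stays at distance of order $N^{\alpha/2}$ above $L_2^N$, where the weights are uniformly close to $1$, rather than merely above it; the separation is produced by raising the midpoint and endpoints via Corollary \ref{CorMon2} together with the random-walk estimates of Section \ref{Section4.2}. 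A smaller point: the localization of the Gibbs property to the sub-window $[-s_1,s_1]$ follows from the product structure of $W_t$ over $S$ after conditioning on the window endpoints and on $L_2^N$ there (the events $E(a,b,\ell_{bot},N)$ in the paper's Step 3), not from Remark \ref{restrict}, which concerns dropping lower-indexed curves.
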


Roughly, Theorem \ref{PropTightGood} states that if a process can be viewed as the top curve of a simple discrete line ensemble and under some shift and scaling the process's one-point marginals are tight, then under the same shift and scaling the trajectory of the process is tight in the space of continuous curves. We will show later in Theorem \ref{ACBB} that any subsequential limit of the measures $\mathbb{P}_N$ in Theorem \ref{PropTightGood} is absolutely continuous with respect to a Brownian bridge of a certain variance -- see Section \ref{Section7} for the details. We also want to remark that both Theorem \ref{PropTightGood} and Theorem \ref{ACBB} do not depend strongly on any particular structure of the Hall-Littlewood Gibbs property. Indeed, the main ingredient that is used in deriving these results is a lower bound on the acceptance probability $Z_t(  T_0, T_1, a,b, L_2;S)$ (see Remark \ref{RemHLGP}), which is the content of Proposition \ref{PropMain}. It is our belief that our arguments can be extended to other (similar) discrete Gibbs properties without significant modifications. \\

\subsection{Applications to the three models}\label{Section3.2}
In this section we use Theorem \ref{PropTightGood} to prove our main results for the three models in Section \ref{Section2}, given in Theorem \ref{HLTight}, Corollary \ref{SVTight} and Theorem \ref{ASEPTight} below. In order to apply Theorem \ref{PropTightGood} we will need to rephrase the ascending Hall-Littlewood process and the ASEP in the language of discrete line ensembles, to which we first turn.

Suppose we are given a sequence $\varnothing = \lambda(0)  \prec \lambda(1)\prec \lambda(2) \prec \cdots \prec \lambda(M)$. The condition $\lambda(i) \prec \lambda(i+1)$ is equivalent to $\lambda_j'(i+1) - \lambda_j'(i) \in \{0,1 \}$ for any  $j \geq 1$. The latter implies that we can view the sequence $\varnothing =  \lambda(0)   \prec \lambda(1)\prec \lambda(2) \prec \cdots \prec \lambda(M)$ as a collection of up-right paths $\{ \lambda'_j(\cdot) \}_{j = 1}^N$ drawn in the sector $\{0,\dots,M\} \times \mathbb{Z}$ (see Figure \ref{S3_3}). In particular, this allows us to interpret the ascending Hall-Littlewood process as a probability distribution of $\{1,\dots,N\} \times \llbracket 0, M \rrbracket$-indexed discrete line ensembles in the sense of Definition \ref{DefDLE}, where $L_j(i) = \lambda'_j(i)$ for $i = 0,\dots,M$ and $j = 1,\dots,N$. 
\begin{figure}[h]
\centering
\scalebox{0.7}{\includegraphics{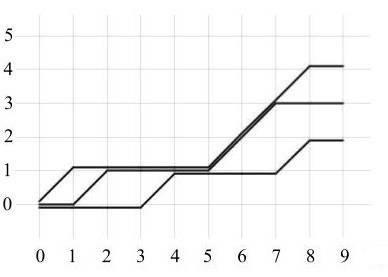}}
\caption{The up-right paths corresponding to $\lambda_1'(i), \lambda_2'(i),\lambda_3'(i)$, for $0 \leq i \leq 9$, where $\lambda(i)$ is the $i$-the element in the sequence
$\varnothing \prec (1) \prec (2) \prec (2) \prec (4) \prec (4,2) \prec (5,2,2) \prec (5,3,2) \prec (8,5,2,1)$.}
\label{S3_3}
\end{figure}

The key observation we make is that if $\varnothing = \lambda(0)  \prec \lambda(1)\prec \lambda(2) \prec \cdots \prec \lambda(M)$ is distributed according to $\mathbb{P}^{M,N}_\zeta$ from Definition \ref{HLPDef}, then the discrete line ensemble $L_j(i) = \lambda'_j(i)$ for $i = 0,\dots,M$ and $j = 1,\dots,N$ satisfies the Hall-Littlewood Gibbs property (this is the origin of the name of this property). We isolate this in the following proposition.
\begin{proposition}\label{HLGPP}
Fix $M,N \in \mathbb{N}$ and $\zeta, t \in (0,1)$. Let $\varnothing = \lambda(0)  \prec \lambda(1)\prec \lambda(2) \prec \cdots \prec \lambda(M)$ be  sampled from $\mathbb{P}^{M,N}_{\zeta}$ (see Definition \ref{HLPDef}). Then $\left(\lambda_1'(\cdot), \lambda_2'(\cdot),\dots,\lambda_N'(\cdot)\right)$ satisfies the Hall-Littlewood Gibbs property with parameter $t$ for $S = \llbracket 1,\dots,M \rrbracket$. 
\end{proposition}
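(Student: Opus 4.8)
The Hall-Littlewood Gibbs property is a statement about the conditional law of a single intermediate curve $\lambda_i'(\cdot)$ given the adjacent curves $\lambda_{i-1}'(\cdot)$ and $\lambda_{i+1}'(\cdot)$ and the endpoints of $\lambda_i'(\cdot)$. The natural strategy is therefore to compute this conditional law directly from the explicit density in Definition \ref{HLPDef}, and to match the resulting Radon-Nikodym derivative (against the uniform/free law on up-right paths with prescribed endpoints) with the weight $W_t$ from \eqref{S1eq0}. First I would fix $i \in \{1,\dots,N-1\}$ and rewrite the density $\mathbb{P}^{M,N}_\zeta(\lambda(1),\dots,\lambda(M))$ using \eqref{HLM}: since all the $P_{\lambda(n)/\lambda(n-1)}(1)$ factors are $\psi_{\lambda(n)/\lambda(n-1)}(t)$ times a power of $1$ (and the $Q$-factor depends only on $|\lambda(M)|$), the density is, up to a constant not depending on the interior structure, a product $\prod_{n=1}^M \psi_{\lambda(n)/\lambda(n-1)}(t)$ over the interlacing chain. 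In the line-ensemble picture the chain $\lambda(0)\prec\cdots\prec\lambda(M)$ becomes the collection of up-right paths $L_j(i)=\lambda_j'(i)$, and crucially $\psi_{\lambda(n)/\lambda(n-1)}(t)$ is a \emph{local} function of consecutive columns (it depends only on the multiplicities $m_j(\lambda(n-1))$ of the partition, i.e.\ on the gaps $\lambda_j'(n-1)-\lambda_{j+1}'(n-1)$, for $j$ in the index set $J$ where $\lambda'_{j}(n)=\lambda'_{j}(n-1)$ but $\lambda'_{j+1}(n)>\lambda'_{j+1}(n-1)$). This is exactly the combinatorial content of the Vuletić identification $B_{\mathcal L}(t)=A_\pi(t)$ recalled around \eqref{Apit}.

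\textbf{Key steps.} (1) Reduce to a conditional density: fix all curves $L_j$ for $j\neq i$ and the endpoints $L_i(0)=a$, $L_i(M)=b$; the conditional law of $L_i$ is proportional (in $L_i$) to the ratio of the full density with $L_i$ varying to its integral, so it suffices to identify the part of $\prod_{n=1}^M \psi_{\lambda(n)/\lambda(n-1)}(t)$ that depends on $L_i$. Only the factors involving the partitions $\lambda(n)$ adjacent in the interlacing order to where $L_i$ enters matter; tracking indices carefully, $L_i$ (as the $i$-th transposed row) affects the $\psi$-factors at each column $n$ through its position relative to $L_{i-1}(n)=L_{i-1}(n)$ and $L_{i+1}(n)$, and moreover the contribution factorizes as a product over columns $n=1,\dots,M$. (2) Compute this column-$n$ contribution explicitly: following the coloring/terrace interpretation of Figure \ref{S1_2} (or directly expanding $\psi$), the dependence on $L_i$ at column $n$ is precisely a factor $\bigl(1-\mathbf 1_{\{\Delta^+(n-1)-\Delta^+(n)=1\}}t^{\Delta^+(n-1)}\bigr)\bigl(1-\mathbf 1_{\{\Delta^-(n-1)-\Delta^-(n)=1\}}t^{\Delta^-(n-1)}\bigr)$ with $\Delta^+=L_{i-1}-L_i$, $\Delta^-=L_i-L_{i+1}$ — here I would carefully verify the exponent: a drop in the gap $\Delta^+$ from column $n-1$ to column $n$ corresponds to $L_i$ having slope $1$ while $L_{i-1}$ is flat at that step, which creates a new level-$\Delta^+(n-1)$ terrace cell contributing $(1-t^{\Delta^+(n-1)})$. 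Taking the product over all $n\in\llbracket 1,M\rrbracket$ reproduces exactly $W_t(0,M,L_i,L_{i-1},L_{i+1};\llbracket 1,M\rrbracket)$, with the indicator that $L_{i-1}(n)\ge L_i(n)\ge L_{i+1}(n)$ for all $n$ being automatic from the interlacing (and forcing the weight to $0$ otherwise, as needed). (3) Since the free (uniform) measure $\mathbb{P}^{0,M;a,b}_{free}$ is the counting measure on up-right paths with the given endpoints, the conditional density is $W_t/Z_t$ with $Z_t=\mathbb{E}^{0,M;a,b}_{free}[W_t]$, which is precisely Definition \ref{DefHLGP} with $T_0=0$, $T_1=M$, $S=\llbracket 1,M\rrbracket$. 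For $i=1$ one has $L_0=\infty$ and the $\Delta^+$ factors all become $1$, consistent with the convention; this case corresponds simply to the first $P$-factor $P_{\lambda(1)}(1)=\psi_{\lambda(1)/\varnothing}(t)$.

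\textbf{Main obstacle.} The genuine work is Step (2): correctly translating the index set $J$ (and $I$) in the definition \eqref{HLM} of $\psi$ (and $\phi$) into the slope/gap conditions on the transposed paths, and pinning down the exponents $\Delta^\pm(n-1)$ versus $\Delta^\pm(n)$. The combinatorics here is exactly the Vuletić plane-partition identity $B_{\mathcal L}(t)=A_\pi(t)$ restricted to its dependence on a single slice, and the cleanest route may be to invoke that identity: write $B_{\mathcal L}(t)=A_\pi(t)$ as a product of $(1-t^{\mathrm{level}})$ over connected terrace components, observe that varying only $L_i$ while fixing the other curves changes only those components adjacent to the $i$-th level line of $\pi$, and check that the product of the affected factors is precisely $W_t(0,M,L_i,L_{i-1},L_{i+1};\llbracket 1,M\rrbracket)$ — the two "sides" $\Delta^+$ and $\Delta^-$ corresponding to the terraces immediately above and below the $i$-th level line. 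Modulo this bookkeeping, which I expect to occupy the bulk of the written proof, everything else is formal manipulation of conditional densities.
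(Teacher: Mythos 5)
Your proposal is correct and follows essentially the same route as the paper: the paper also rewrites $\psi_{\lambda/\mu}(t)$ as a local product $\prod_{j\geq 1}\bigl(1-\mathbf{1}_{\{\Delta(\mu,j)-\Delta(\lambda,j)=1\}}t^{\Delta(\mu,j)}\bigr)$ with $\Delta(\mu,j)=\mu_j'-\mu_{j+1}'$ (your Step (2), i.e.\ the Vuleti\'c-type bookkeeping), observes that only the factors with $j\in\{i-1,i\}$ depend on $\lambda_i'$, and identifies their product over columns with $W_t(0,M,\ell,\lambda_{i-1}',\lambda_{i+1}';S)$. The only step you leave implicit is passing from conditioning on all external data $\mathcal{F}_{ext}$ to conditioning only on the adjacent curves and the endpoints of $\lambda_i'$, which the paper handles by a routine tower-property argument using that $W_t$ is measurable with respect to the smaller $\sigma$-algebra.
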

\begin{proof}
By Definition \ref{HLPDef} we know that 
\begin{equation*}
\mathbb{P}^{M,N}_{\zeta} (\lambda(1),\dots,\lambda(M)) = \left( \frac{1 - \zeta}{1 - t \zeta}\right)^{NM} \times \prod_{i = 1}^M P_{\lambda(i)/ \lambda({i-1})}(1) \times Q_{\lambda(M)}(\zeta^N).
\end{equation*}
The latter equation implies that $\lambda_1'(0) = 0$ and $0 \leq \lambda_1'(M) \leq \min (M,N)$ with probability $1$. Using (\ref{HLM}) we see that 
\begin{equation}\label{EqO1}
\begin{split}
&\mathbb{P}^{M,N}_{\zeta} (\lambda(1),\dots,\lambda(M)) = Q_{\lambda(M)}(\zeta^N)  \cdot \left( \frac{1 - \zeta}{1 - t \zeta}\right)^{NM}\cdot  \prod_{i = 1}^M\psi_{\lambda(i)/\lambda(i-1)}(t), \mbox{ where } \\
&  \psi_{\lambda/\mu}(t) = {\bf 1}_{\{ \lambda \succ \mu \} } \cdot \prod_{j= 1}^\infty \left(1 - {\bf 1}_{\{\Delta(\mu,j) - \Delta(\lambda,j) = 1\}} t^{\Delta(\mu,j)} \right), \mbox{ and $\Delta(\mu,j) = \mu_j' - \mu_{j+1}'$}.
\end{split}
\end{equation}

Fix $i \in \{1,\dots,N-1\}$ and notice that (\ref{EqO1}) and (\ref{S1eq0}) imply that for any $\ell \in \Omega(0, M; 0, k)$ with $0 \leq k \leq \min(M, N)$ we have
$$\mathbb{P}^{M,N}_{\zeta}\left( \lambda_i'(\cdot) = \ell  | \mathcal{F}_{ext}\left( \{i\}, (0,M) \right) \right) = C \cdot  W_t(0, M, \ell, \lambda_{i-1}'(\cdot), \lambda_{i+1}'(\cdot);S),$$
where  $\mathcal{F}_{ext}\left( \{i\}, (0,M) \right) $ is the $\sigma$-algebra generated by $\lambda_j'(a)$ for $a= 0,\dots,M$ and $j \neq i$ as well as $\lambda_i'(0) = 0$ and $\lambda'_i(M) = k$, and $C$ is an $\mathcal{F}_{ext}\left( \{i\}, (0,M) \right) $-measurable normalization constant. Let $\mathcal{F}_1$ be the $\sigma$-algebra generated by $\lambda_i'(0) = 0$, $\lambda'_i(M) = k$, $\lambda_{i-1}'(\cdot)$ and $\lambda_{i+1}'(\cdot)$  and observe that $\mathcal{F}_1 \subset  \mathcal{F}_{ext}\left( \{i\}, (0,M) \right)$. It follows from the tower property for conditional expectation that
$$\mathbb{P}^{M,N}_{\zeta}\left( \lambda_i'(\cdot) = \ell | \lambda'_i(0) = 0, \lambda'_i(M) = k, \lambda_{i-1}'(\cdot),\lambda_{i+1}'(\cdot)  \right)  = \mathbb{P}^{M,N}_{\zeta}\left(C   W_t(0, M, \ell, \lambda_{i-1}'(\cdot), \lambda_{i+1}'(\cdot);S) | \mathcal{F}_1\right) $$
$$ = W_t(0, M, \ell, \lambda_{i-1}'(\cdot), \lambda_{i+1}'(\cdot);S) \cdot \mathbb{P}^{M,N}_{\zeta}\left(C  | \mathcal{F}_1\right),$$
where in the last equality we used that $W_t(0, M, \ell, \lambda_{i-1}'(\cdot), \lambda_{i+1}'(\cdot);S)$ is $\mathcal{F}_1$-measurable. The latter equation is equivalent to (\ref{S2eq2}), which proves the proposition.
\end{proof}

With the help of Proposition \ref{HLGPP} we deduce the following results for the homogeneous ascending Hall-Littlewood process and stochastic six-vertex model.
\begin{theorem}\label{HLTight}
Assume the same notation as in Theorem \ref{HLCT}. If $\mathbb{P}_N$ denotes the law of $f^{HL}_N(\cdot)$ as a random variable in $(C[-r,r], \mathcal{C})$, then the sequence $\mathbb{P}_N$ is tight.
\end{theorem}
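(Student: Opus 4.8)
The plan is to deduce Theorem \ref{HLTight} as a direct application of the main technical result, Theorem \ref{PropTightGood}. The strategy is to recognize that the top curve $\lambda_1'(\cdot)$ of the Hall-Littlewood line ensemble, suitably centered and scaled, is precisely the kind of object Theorem \ref{PropTightGood} is designed to handle, so the work reduces to verifying the hypothesis of being an $(\alpha,p,r+1)$-good sequence, with $\alpha = 2/3$.

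\textbf{Setting up the line ensemble.} First I would fix the parameters $r,\zeta,t$ and the value $\mu \in (\zeta,\zeta^{-1})$ as in Theorem \ref{HLCT}, and choose sequences $M=M(N),N$ growing so that the constraints $\mu N > (r+2)N^{2/3}$ and $M > \mu N + (r+2)N^{2/3}$ hold for all large $N$ (e.g. $M = \lceil (\mu+1)N\rceil$). For each such $N$, sample $\varnothing \prec \lambda(1) \prec \cdots \prec \lambda(M)$ from $\mathbb{P}^{M,N}_\zeta$ and form the discrete line ensemble $L_j^N(i) = \lambda_j'(i)$ for $i \in \llbracket 0,M \rrbracket$, $j \in \{1,\dots,N\}$. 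By Proposition \ref{HLGPP}, this ensemble satisfies the Hall-Littlewood Gibbs property with parameter $t$ for $S = \llbracket 1,M\rrbracket$. Restricting to the top two curves (using Remark \ref{restrict}) gives a simple discrete line ensemble which still satisfies the Hall-Littlewood Gibbs property. The one technical wrinkle is that the index set $\llbracket 0,M\rrbracket$ is not symmetric around the point $\mu N$ where we want to zoom in; I would translate coordinates, setting $\tilde L_j^N(i) := L_j^N(i + \lfloor \mu N\rfloor)$ on the centered interval $\llbracket -T_N, T_N\rrbracket$ with $T_N := \min(\lfloor \mu N\rfloor, M - \lfloor \mu N\rfloor)$, which by the choice of $M$ satisfies $T_N > (r+1)N^{2/3}$ for large $N$; this is the role of $T = r+1$ in Definition \ref{Def1}. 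The Hall-Littlewood Gibbs property is preserved under this deterministic shift.

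\textbf{Identifying the centering and verifying goodness.} With $\alpha = 2/3$, goodness requires one-point tightness of $N^{-\alpha/2}(L_1^N(sN^\alpha) - p s N^\alpha)$ after the shift, i.e. tightness of $N^{-1/3}\big(\lambda_1'(\lfloor\mu N\rfloor + sN^{2/3}) - p\,(\lfloor\mu N\rfloor + sN^{2/3})\big)$ once we absorb the constant shift $\lambda_1'(\lfloor \mu N\rfloor)$-related term; here $p = f_1'(\mu) \in (0,1)$ is forced by matching the linear term of the centering in Theorem \ref{HLCT} (one checks $f_1'(\mu) = \sqrt{\zeta}(1-\sqrt{\zeta\mu})/(\sqrt{\mu}(1-\zeta)) \in (0,1)$ for $\mu$ in the stated range, which is the constraint on $\mu$ that makes the top curve have a genuine interior slope). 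Theorem \ref{HLCT} gives more than one-point tightness: it gives one-point weak convergence to $F_{GUE}$, of $f_N^{HL}(s) = \sigma_\mu^{-1}N^{-1/3}(\lambda_1'(\mu N + sN^{2/3}) - f_1(\mu)N - f_1'(\mu)sN^{2/3} - \tfrac{s^2}{2}f_1''(\mu)N^{1/3})$. Since $f_N^{HL}(s)$ and $N^{-1/3}(L_1^N(sN^{2/3}) - p s N^{2/3})$ (in shifted coordinates) differ only by the deterministic affine-in-$s$ quantities $f_1(\mu)N$, the linear term, the $O(N^{1/3})$ parabolic correction, the constant $\sigma_\mu$, and the deterministic shift constant, tightness of one follows from convergence of the other. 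Thus the shifted sequence $\tilde{\mathfrak{L}}^N = (\tilde L_1^N, \tilde L_2^N)$ is $(2/3, f_1'(\mu), r+1)$-good.

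\textbf{Conclusion and the remaining obstacle.} Applying Theorem \ref{PropTightGood} to this good sequence yields tightness in $(C[-r,r],\mathcal{C})$ of $s \mapsto N^{-1/3}(\tilde L_1^N(sN^{2/3}) - f_1'(\mu)sN^{2/3})$. Finally, I would observe that $f_N^{HL}(\cdot)$ is obtained from this process by adding a deterministic continuous function of $s$ (the residual parabola $-\sigma_\mu^{-1}\tfrac{s^2}{2}f_1''(\mu)N^{1/3}$ plus affine terms) and multiplying by the constant $\sigma_\mu^{-1}$; however the parabolic term carries an unbounded factor $N^{1/3}$, so one must be slightly careful — but in fact this term is already absorbed: in the good-sequence formulation only the \emph{linear} centering $psN^\alpha$ is subtracted, and the quadratic correction term $\tfrac{s^2}{2}f_1''(\mu)N^{1/3}$ in $f_N^{HL}$ is of order $N^{1/3} = N^{\alpha/2}$, which after the global prefactor $N^{-\alpha/2}$ becomes the \emph{bounded} deterministic function $-\sigma_\mu^{-1}\tfrac{s^2}{2}f_1''(\mu)$ on $[-r,r]$. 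Adding a fixed bounded continuous function and rescaling by a constant preserves tightness in the uniform topology, so $\mathbb{P}_N$ — the law of $f_N^{HL}$ — is tight. The main conceptual point (and where all the real difficulty is hidden) is Theorem \ref{PropTightGood} itself; granting it, the proof of Theorem \ref{HLTight} is essentially bookkeeping: translating coordinates, matching constants so that the appropriate slope $p = f_1'(\mu)$ lands in $(0,1)$, and checking that the leftover centering terms in $f_N^{HL}$ are, after scaling, bounded deterministic perturbations.
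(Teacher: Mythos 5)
Your overall route is exactly the paper's: restrict to the top two curves, invoke Proposition \ref{HLGPP} (with Remark \ref{restrict}) for the Hall-Littlewood Gibbs property, get one-point tightness from Theorem \ref{HLCT}, conclude the shifted ensemble is $(2/3, f_1'(\mu), r+1)$-good, apply Theorem \ref{PropTightGood}, and absorb the residual parabola and the constant $\sigma_\mu^{-1}$ at the end. However, the one piece of bookkeeping you flag as the substance of the proof — the centering — is garbled as written, and in a way that makes the verification of goodness fail. Your construction $\tilde L_j^N(i) = L_j^N(i+\lfloor\mu N\rfloor)$ shifts only the index, not the values. Definition \ref{Def1} then demands tightness of $N^{-1/3}\left(\lambda_1'(\lfloor\mu N\rfloor + sN^{2/3}) - f_1'(\mu)\, s N^{2/3}\right)$, which diverges like $f_1(\mu)N^{2/3}$, so this ensemble is not $(2/3,f_1'(\mu),r+1)$-good. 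Your proposed remedy of subtracting $p\left(\lfloor\mu N\rfloor + sN^{2/3}\right)$ uses the wrong constant: one computes $f_1(\mu) - \mu f_1'(\mu) = \frac{\sqrt{\zeta\mu}-\zeta}{1-\zeta} \neq 0$ for $\mu\in(\zeta,\zeta^{-1})$, so that quantity still diverges linearly in $N^{2/3}$ after scaling. Likewise, in the final paragraph you list "$f_1(\mu)N$" among deterministic differences that do not affect tightness, but adding a deterministic sequence that diverges after division by $N^{1/3}$ certainly destroys tightness; it is harmless only if it is cancelled by the vertical shift built into the curve definition.

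The repair is exactly what the paper does: define the simple ensemble with a deterministic integer vertical shift, $L_j^N(i) = \lambda_j'(\lfloor\mu N\rfloor + i) - \lfloor f_1(\mu)N\rfloor$ on $\llbracket -T_N,T_N\rrbracket$ (the paper takes $T_N=\lfloor (r+2)N^{2/3}\rfloor$; your $T_N$ also works). A deterministic shift preserves the Hall-Littlewood Gibbs property, and Theorem \ref{HLCT} then gives precisely the one-point tightness of $N^{-1/3}\left(L_1^N(sN^{2/3}) - f_1'(\mu) s N^{2/3}\right)$ for $s\in[-r-1,r+1]$, since $f_N^{HL}(s)$ differs from this only by the bounded deterministic parabola $\sigma_\mu^{-1}\tfrac{s^2}{2}f_1''(\mu)$, an $O(N^{-1/3})$ rounding error, and the factor $\sigma_\mu^{-1}$. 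With that correction (and your correct observation that $f_1'(\mu)\in(0,1)$), the rest of your argument goes through and coincides with the paper's proof.
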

\begin{proof}
Consider the $\{1,2\} \times \llbracket -T_N, T_N\rrbracket$-indexed simple discrete line ensemble with $T_N = \lfloor (r+2)N^{2/3} \rfloor$, given by
$$(L^N_1(i),L^N_2(i)) = \left(\lambda_1'(\lfloor \mu N \rfloor + i) -  \lfloor f_1(\mu)N \rfloor , \lambda_2'(\lfloor \mu N \rfloor + i) -  \lfloor f_1(\mu)N \rfloor\right).$$ 
It follows from Proposition \ref{HLGPP} that $(L^N_1,L^N_2) $ is a simple discrete line ensemble, which satisfies the Hall-Littlewood Gibbs property with parameter $t$ for $S = \llbracket -T_N + 1, T_N\rrbracket$. In addition, by Theorem \ref{HLCT} we know that for each $s \in [-r-1, r+1]$ the sequence of random variables $N^{-1/3} \left( L_1^N(sN^{2/3}) - sN^{2/3} f_1'(\mu) \right)$ is tight. The latter statements imply that the sequence $(L^N_1,L^N_2) $  is $(2/3, f_1'(\mu), r+1)$-good. It follows from Theorem \ref{PropTightGood} that if
$$g_N^{HL}(s) = N^{-1/3}\left(\lambda_1'(\lfloor \mu N \rfloor + sN^{2/3}) - \lfloor f_1(\mu)N \rfloor - f'_1(\mu) s N^{2/3} \right ), \mbox{ for $s\in [-r,r]$},$$
then $g_N^{HL}(\cdot)$ form a tight sequence of random variables in $(C[-r,r], \mathcal{C})$. The latter clearly implies the statement of the theorem.
\end{proof}

\begin{corollary}\label{SVTight}
Let $\xi,u,q,r > 0$ be given such that $q \in (0,1)$, $\zeta = \xi^{-1} u ^{-1} q^{-1/2}< 1$ and fix $\mu \in (\zeta, \zeta^{-1})$. Let $h(x,y)$ denote height function sampled from $\mathbb{P}_{\xi, u ,q}$ and set for $s\in [-r,r]$
\begin{equation}
f^{SV}_N(s) =  \sigma_\mu^{-1}N^{-1/3}\left(  f_2(\mu)N + f_2'(\mu) s N^{2/3} +  (1/2)s^2 f_2''(\mu) N^{1/3} -  h(1 + \mu N + sN^{2/3}, N)\right),
\end{equation}
where we define $h(\cdot, N)$ at non-integer points by linear interpolation. The constants above are given by  $ \sigma_\mu = \frac{(\zeta\mu)^{1/6} \left(1 - \sqrt{\zeta\mu} \right)^{2/3} \left( 1 - \sqrt{\zeta/\mu}\right)^{2/3}}{1 - \zeta},$ $f_2(\mu) = \frac{(1 - \sqrt{\zeta \mu} )^2}{1 -\zeta}, $ $f_2'(\mu) =  -\frac{\sqrt{\zeta}(1 - \sqrt{\zeta\mu})}{\sqrt{\mu}(1 - \zeta)},$ $ f_2''(\mu) = \frac{\sqrt{\zeta}}{2 \mu^{3/2} (1 - \zeta)}.$\\
If $\mathbb{P}_N$ denotes the law of $f^{SV}_N(s)$ as a random variable in $(C[-r,r], \mathcal{C})$, then the sequence $\mathbb{P}_N$ is tight.
\end{corollary}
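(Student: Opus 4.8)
The plan is to reduce the statement directly to the tightness of the Hall--Littlewood top curve established in Theorem~\ref{HLTight}, using the distributional identification between the S6V height function and the HAHP provided by Theorem~\ref{SVHL}. Since the scaling constants $\sigma_\mu$, $f_2, f_2', f_2''$ appearing in the corollary were chosen to be compatible with $\sigma_\mu$, $f_1, f_1', f_1''$ from Theorem~\ref{HLCT}, the whole argument amounts to bookkeeping of centering polynomials together with an elementary extension of a finite-dimensional distributional equality to interpolated continuous curves.

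First I would fix the matching of parameters. Given $\xi, u, q, \mu$ as in the corollary, set $t = q$ and $\zeta = \xi^{-1}u^{-1}q^{-1/2}$, so that $\mathbb{P}^{M,N}_\zeta$ of Definition~\ref{HLPDef} is the measure appearing in Theorem~\ref{SVHL}. Because the height function $h(\cdot,N)$ is defined on all of $\mathbb{Z}_{\ge 1}$, I am free to choose $M = M(N)$ as large as needed; taking, say, $M(N) = \lceil \mu N \rceil + \lceil (r+2)N^{2/3}\rceil + 1$ ensures, for $N$ large, that the conditions $\mu N > (r+2)N^{2/3}$ and $M > \mu N + (r+2)N^{2/3}$ of Theorem~\ref{HLCT} hold and that $\mu N + sN^{2/3} \in [0,M]$ for all $s \in [-r,r]$. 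Next I would record the elementary algebraic identities $f_1(\mu) + f_2(\mu) = 1$, $f_1'(\mu) = -f_2'(\mu)$, $f_1''(\mu) = -f_2''(\mu)$, with $\sigma_\mu$ identical in both statements, all immediate from the explicit formulas.

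Theorem~\ref{SVHL} gives the equality in law of random vectors $(h(1,N),\dots,h(M+1,N)) \eqd (N - \lambda_1'(0),\dots,N - \lambda_1'(M))$; since linear interpolation is a deterministic continuous map, this upgrades to equality in law of the interpolated continuous functions $h(1+\cdot, N) \eqd N - \lambda_1'(\cdot)$ on $[0,M]$, hence $h(1 + \mu N + sN^{2/3}, N) \eqd N - \lambda_1'(\mu N + sN^{2/3})$ as processes in $s \in [-r,r]$. Substituting this into the definition of $f^{SV}_N(s)$ and using the three identities above, the centering $f_2(\mu)N + f_2'(\mu)sN^{2/3} + (s^2/2)f_2''(\mu)N^{1/3} - N$ turns precisely into $-f_1(\mu)N - f_1'(\mu)sN^{2/3} - (s^2/2)f_1''(\mu)N^{1/3}$, so that $f^{SV}_N \eqd f^{HL}_N$ as random elements of $(C[-r,r],\mathcal{C})$ --- up to replacing $\mu N$ and $f_1(\mu)N$ by their integer parts, which shifts the argument and value of $\lambda_1'$ by $O(1)$ and hence modifies $f^{HL}_N$ uniformly by $O(N^{-1/3}) \to 0$, a negligible perturbation that preserves tightness. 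Tightness of $\mathbb{P}_N$ then follows immediately from Theorem~\ref{HLTight}.

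The only genuine care needed --- and hence the ``main obstacle,'' though it is a mild one --- is the correct bookkeeping of the three centering constants together with the observation that the finite-dimensional identity of Theorem~\ref{SVHL} extends to the continuous interpolated processes (and the harmless integer-part corrections). Once these are in place there is no analytic content beyond what is already contained in Theorems~\ref{SVHL}, \ref{HLCT} and \ref{HLTight}.
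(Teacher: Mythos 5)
Your proposal is correct and is essentially the paper's own proof: the paper likewise observes that by Theorem \ref{SVHL} the law of $f_N^{SV}$ coincides with that of $f_N^{HL}$ from Theorem \ref{HLCT} (the identities $f_1+f_2=1$, $f_1'=-f_2'$, $f_1''=-f_2''$ making the centerings match exactly, so even the integer-part correction you mention is not needed), and then invokes Theorem \ref{HLTight}. Your added bookkeeping of the parameter matching $t=q$, $\zeta=\xi^{-1}u^{-1}q^{-1/2}$ and the choice of $M(N)$ is fine and consistent with the paper.
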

\begin{proof}
 From Theorem \ref{SVHL} we know that the law of $f_N^{HL}$ as in the statement of Theorem \ref{HLTight} is the same as $f_N^{SV}$. The result now follows from Theorem \ref{HLTight}.
\end{proof}

Before we apply Theorem \ref{PropTightGood} to the ASEP, we need to rephrase the latter in the language of discrete line ensembles that satisfy the Hall-Littlewood Gibbs property. We achieve this in the following proposition, whose proof is deferred to the next section.
\begin{proposition}\label{HLASEP}Suppose $R = 1$, $L = t \in (0,1)$ are given, fix $K_1, K_2\in \mathbb{N}$, $T > 0$ and set $\Sigma = \{1,\dots,K_1\}$. Then there exists a probability space, on which a $\Sigma \times \llbracket -K_2,K_2 \rrbracket$-indexed discrete line ensemble $(L_1, L_2,\dots,L_{K_1})$ is defined such that
\begin{itemize}
\item the law of $(L_1, L_2,\dots,L_{K_1})$ satisfies the Hall-Littlewood Gibbs property with parameter $t$ for the set $S = \llbracket-K_2+1,K_2\rrbracket$;
\item the law of $(L_1(-K_2),\dots,L_1(K_2))$ is the same as $(-\mathfrak{h}(-K_2+1), \dots, -\mathfrak{h}(K_2+1))$, viewed as random vectors in $\mathbb{R}^{2K_2+1}$, where $\mathfrak{h}$ has law $\mathbb{P}_{L,R}^T$ (see Section \ref{Section2.3}).
\end{itemize}
\end{proposition}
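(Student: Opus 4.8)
The plan is to realize the desired line ensemble as a subsequential limit of the Hall-Littlewood line ensembles $\{\lambda'_j(\cdot)\}_{j=1}^N$ coming from the HAHP with parameters chosen to match the S6V-to-ASEP limit transition, as sketched in Section \ref{sec.HLGLE}. First I would fix a parameter $q = t$ and, for each $N$, choose $\xi(N), u(N)$ so that the conditions of Theorem \ref{SVASEP} hold, namely $b_1(N) = qN^{-1} + O(N^{-2})$ and $b_2(N) = N^{-1} + O(N^{-2})$; this forces $\zeta(N) = \xi(N)^{-1}u(N)^{-1}q^{-1/2}$ to satisfy $\zeta(N) = 1 - \frac{1-t}{N} + O(N^{-2})$, as mentioned after the statement of Proposition \ref{HLASEP} in the introduction. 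For these $\xi(N), u(N)$, Theorem \ref{SVHL} identifies the HAHP line ensemble (restricted to its top curve) with the S6V height function along a horizontal slice $y = N_T = \lfloor N T \rfloor$, and Theorem \ref{SVASEP} then says the $(2K_2+1)$-dimensional marginal $(h^N(N_T - K_2 + 1, N_T), \dots, h^N(N_T + K_2 + 1, N_T))$ converges in distribution to $(\mathfrak{h}(-K_2+1), \dots, \mathfrak{h}(K_2+1))$.

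Next I would set up the candidate prelimit line ensemble. Take the HAHP with $\zeta = \zeta(N)$ and with $M = M(N)$, $\widetilde N = \widetilde N(N)$ growing so that $\widetilde N \to \infty$, $M(N) \geq N_T + 2K_2 + 1$, and $N_T = \lfloor N T \rfloor$ (here $\widetilde N$ is the ``$N$'' of Definition \ref{HLPDef}, which plays a different role than the scaling parameter); by Remark \ref{restrict}, restricting to the top $K_1$ curves preserves the Hall-Littlewood Gibbs property. Define
$$\bigl(L^N_j(i)\bigr)_{j=1}^{K_1} = \bigl(\widetilde N - \lambda'_j(N_T + i)\bigr)_{j=1}^{K_1}, \qquad i \in \llbracket -K_2, K_2 \rrbracket.$$
By Theorem \ref{SVHL} (with the convention $\lambda'_1(0) = 0$ matching the boundary), the top curve $L^N_1(i)$ has the same law as $-h^N(N_T + i + 1, N_T)$. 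One checks that the affine shift $\lambda' \mapsto \widetilde N - \lambda'$ and horizontal translation by $N_T$ do not affect the Gibbs property (the weights $W_t$ in (\ref{S1eq0}) depend only on successive differences $f(i) - L(i)$ and $L(i) - g(i)$, and reflecting $\lambda' \mapsto \widetilde N - \lambda'$ turns up-right paths into up-right paths while swapping $\Delta^+ \leftrightarrow \Delta^-$ between consecutive curves in a way that permutes the two product factors), so $\{L^N_j\}_{j=1}^{K_1}$ satisfies the Hall-Littlewood Gibbs property with parameter $t$ for $S = \llbracket -K_2+1, K_2 \rrbracket$.

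The remaining task is to extract a limit. Since each $L^N_j$ takes values in a finite set (up-right paths on $\llbracket -K_2, K_2 \rrbracket$ starting within a bounded range — bounded because $\lambda'_j(N_T + i) \in \{0, \dots, \widetilde N\}$ and, for the top few curves, one-point tightness via Theorems \ref{SVHL}, \ref{SVASEP} and \ref{ASEPCT} pins down $L^N_1$; for lower curves $L^N_j \geq L^N_1 - (\text{number of curves}) \cdot |S|$ is not automatic, so one truncates to a fixed window), the laws of $\{L^N_j\}_{j=1}^{K_1}$ live on a fixed finite (hence compact) space once we condition on a high-probability event that the top curve stays in a bounded band; by tightness of the top curve we can pass to a subsequence along which the full $K_1$-tuple converges in distribution, and we call the limit $(L_1, \dots, L_{K_1})$. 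The Hall-Littlewood Gibbs property is a statement about finitely many conditional probabilities on a discrete space, so it passes to the distributional limit; and the top-curve marginal converges to $(-\mathfrak{h}(-K_2+1), \dots, -\mathfrak{h}(K_2+1))$ by Theorems \ref{SVHL} and \ref{SVASEP}. This yields both bullet points. The main obstacle is the compactness/tightness bookkeeping for the lower curves: unlike the top curve, curves $L^N_2, \dots, L^N_{K_1}$ are not directly controlled by a one-point limit theorem, so one must either (a) argue that conditionally on $L^N_1$ the lower curves are confined (using that the HAHP plane partition has a bounded number of diagonal boxes near the relevant location, controlled by $\zeta$), or (b) observe that in the relevant asymptotic regime the lower curves near the window $\llbracket -K_2, K_2 \rrbracket$ concentrate, so that the finitely many lower curves we retain also live in a bounded region with probability tending to $1$. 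Making this confinement precise — so that the subsequential limit is an honest discrete line ensemble of $K_1$ curves rather than some curves escaping to $-\infty$ — is where the real work lies; everything else is the routine verification that affine reparametrizations preserve the Gibbs property and that discrete conditional laws pass to limits.
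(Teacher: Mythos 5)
Your overall route is the same as the paper's: build prelimit ensembles from the HAHP with $\zeta(N)=1-\frac{1-t}{N}$ and second parameter $N_T=\lfloor NT\rfloor$, identify the top curve with the S6V height function via Theorem \ref{SVHL} and hence with $-\mathfrak{h}$ in the limit via Theorem \ref{SVASEP}, extract a subsequential limit, and note that the Hall-Littlewood Gibbs property (a finite collection of conditional-probability identities on a discrete space) survives the limit. However, there are two problems. First, a bookkeeping error: you define $L^N_j(i)=\widetilde N-\lambda'_j(N_T+i)$, but this vertical reflection turns up-right paths (increments in $\{0,1\}$) into paths with increments in $\{0,-1\}$ and reverses the ordering $\lambda'_1\geq\lambda'_2\geq\cdots$, so the result is not a discrete line ensemble in the sense of Definition \ref{DefDLE}; moreover, by Theorem \ref{SVHL} your top curve would equal $+h^N(N_T+i+1,N_T)$ in law, converging to $+\mathfrak{h}$ rather than $-\mathfrak{h}$. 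The correct choice (as in the paper) is the plain shift $\Lambda^N_j(i)=\lambda'_j(N_T+i)-N_T$, which stays an up-right ensemble, manifestly preserves the Gibbs property, and has top curve $\eqd -h^N(N_T+i+1,N_T)$.

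Second, and more seriously, the step you yourself flag as ``where the real work lies'' is exactly the content of the paper's proof and is not supplied by either of your fallback strategies. Conditioning on the top curve staying in a bounded band gives only an \emph{upper} bound on the lower curves (they sit below $\Lambda^N_1$), not the needed lower bound, and truncating or conditioning on such an event would in general destroy the exact Gibbs property you want to pass to the limit; nor is there any a priori ``concentration'' statement for $\lambda'_j(N_T)-N_T$, $j\geq 2$, available from the integrable results quoted. The paper closes this gap by an induction on the curve index (Steps 3--6 of Section \ref{Section3.3}): assuming tightness of $\lambda'_{k-1}(N_T)-N_T$ and $\lambda'_k(N_T)-N_T$, one argues that if $\lambda'_{k+1}(N_T)-N_T$ were very negative, then resampling $\lambda'_k$ on a window $[N_T-2A,N_T]$ (using the Gibbs structure, with the extra weight $\zeta(N)^{\ell(N_T)-x}$ coming from the free right endpoint, and $\zeta(N)\to1$) would force $\lambda'_k(N_T)$ below $N_T-A$ with probability at least $(1-t)^B/4$ -- the quantitative lower bound itself requiring a combinatorial comparison of path counts proved via the FKG inequality -- contradicting the assumed tightness of $\lambda'_k(N_T)$. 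Without an argument of this type (or a genuine substitute), your subsequential limit is not known to exist as an honest $K_1$-curve ensemble, so the proposal as written has a real gap at its central step.
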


With the help of Proposition \ref{HLASEP} we deduce the following results for the ASEP.
\begin{theorem}\label{ASEPTight}
Assume the same notation as in Theorem \ref{ASEPCT}. If $\mathbb{P}_N$ denotes the law of $f^{ASEP}_N(s)$ as a random variable in $(C[-r,r], \mathcal{C})$, then the sequence $\mathbb{P}_N$ is tight.
\end{theorem}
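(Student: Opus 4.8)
The plan is to reduce Theorem~\ref{ASEPTight} to the general tightness criterion Theorem~\ref{PropTightGood} by packaging the ASEP height function into an $(\alpha,p,T)$-good sequence of simple discrete line ensembles, in complete analogy with the proof of Theorem~\ref{HLTight}. The two inputs we need are: a Hall-Littlewood Gibbsian line ensemble whose top curve is (the negative of) the ASEP height function, and one-point tightness of that top curve under the correct shift and scaling. The first is exactly the content of Proposition~\ref{HLASEP}, and the second follows from the one-point convergence in Theorem~\ref{ASEPCT} (convergence in distribution to $F_{GUE}$ in particular implies tightness of the one-point marginals).

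First I would fix $\alpha\in(0,1)$, $r>0$, set $R=1$, $L=t\in(0,1)$, $\gamma=R-L$ and $T=N/\gamma$, and apply Proposition~\ref{HLASEP} with $K_1$ an arbitrary integer $\geq 2$ (say $K_1=2$, since we only need the top two curves), $K_2=K_2(N)=\lfloor(r+2)N^{2/3}\rfloor$, and the above $T$. This produces, for each $N$, a $\{1,2\}\times\llbracket -K_2(N),K_2(N)\rrbracket$-indexed simple discrete line ensemble whose law satisfies the Hall-Littlewood Gibbs property with parameter $t$ for $S=\llbracket -K_2(N)+1,K_2(N)\rrbracket$, and whose top curve $L_1$ agrees in law (at integer points, hence by linear interpolation everywhere) with $-\mathfrak{h}(\cdot+1)$ restricted to $\llbracket -K_2(N),K_2(N)\rrbracket$, where $\mathfrak{h}$ has law $\mathbb{P}^{N/\gamma}_{L,R}$. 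I would then define the recentered ensemble
\begin{equation*}
\big(L^N_1(i),L^N_2(i)\big)=\big(L_1(i)+\lfloor f_3(\alpha)N\rfloor,\ L_2(i)+\lfloor f_3(\alpha)N\rfloor\big),\qquad i\in\llbracket -T_N,T_N\rrbracket,
\end{equation*}
with $T_N=K_2(N)=\lfloor(r+2)N^{2/3}\rfloor$; a deterministic vertical shift does not affect the Hall-Littlewood Gibbs property, so $(L^N_1,L^N_2)$ still satisfies it for $S=\llbracket -T_N+1,T_N\rrbracket$. Here $\alpha$ plays the role of the exponent variable, but the scaling exponent is $\tfrac23$: we have $T_N>(r+1)N^{2/3}$ for $N$ large, which gives the first bullet of Definition~\ref{Def1}.

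Next I would verify the one-point tightness bullet. For fixed $s\in[-(r+1),r+1]$, Theorem~\ref{ASEPCT} gives that $f^{ASEP}_N(s)$ converges in distribution to $F_{GUE}$, hence is tight; unwinding the definition of $f^{ASEP}_N(s)$, this says precisely that
\begin{equation*}
N^{-1/3}\Big(L^N_1(sN^{2/3})-f'_3(\alpha)sN^{2/3}\Big)
\end{equation*}
is tight in $N$ (the parabolic term $(1/2)s^2 f_3''(\alpha)N^{1/3}$ and the $O(1)$ errors from floors and the shift-by-one in the coupling are deterministic and bounded for each fixed $s$, so they do not affect tightness). Therefore the sequence $(L^N_1,L^N_2)$ is $(2/3,\,f'_3(\alpha),\,r+1)$-good in the sense of Definition~\ref{Def1}. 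Applying Theorem~\ref{PropTightGood} with $\alpha\rightsquigarrow 2/3$, $p=f'_3(\alpha)$, yields tightness in $(C[-r,r],\mathcal C)$ of
\begin{equation*}
g_N(s)=N^{-1/3}\big(L^N_1(sN^{2/3})-f'_3(\alpha)sN^{2/3}\big).
\end{equation*}
Finally I would observe that $f^{ASEP}_N(s)=\sigma_\alpha^{-1}\big(g_N(s)+\tfrac12 s^2 f_3''(\alpha)N^{1/3}\cdot N^{-1/3}\cdot(-1)+O(N^{-1/3})\big)$ up to the affine-in-$s$ deterministic corrections coming from floors and the coupling shift; more carefully, $f^{ASEP}_N$ is obtained from $g_N$ by subtracting a fixed (deterministic, $N$-independent up to vanishing errors) quadratic polynomial in $s$ and multiplying by the constant $\sigma_\alpha^{-1}$, and such operations are continuous on $C[-r,r]$, hence preserve tightness. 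This gives tightness of $\mathbb{P}_N$, completing the proof.

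There is no serious obstacle here: the entire argument is a wrapper around Proposition~\ref{HLASEP} and Theorem~\ref{PropTightGood}, mirroring the proof of Theorem~\ref{HLTight} almost verbatim. The one point requiring a little care is bookkeeping the various deterministic $O(1)$ discrepancies — the $\lfloor\cdot\rfloor$ floors in the recentering, the shift $\mathfrak{h}(\cdot)\mapsto\mathfrak{h}(\cdot+1)$ in the coupling of Proposition~\ref{HLASEP}, and the affine-versus-quadratic recentering built into $f^{ASEP}_N$ — and checking that each contributes only a deterministic perturbation that is uniformly bounded on compact $s$-intervals (indeed $o(1)$ after division by $N^{1/3}$ for the non-quadratic pieces), so that none of them can destroy tightness. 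All the genuine work — propagating one-point tightness to process-level tightness through the Hall-Littlewood Gibbs property — is already done inside Theorem~\ref{PropTightGood}.
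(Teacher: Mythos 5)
Your overall strategy is the paper's: package the top two curves from Proposition \ref{HLASEP} into an $(\alpha,p,T)$-good sequence, feed the one-point tightness from Theorem \ref{ASEPCT} into Theorem \ref{PropTightGood}, and clean up the deterministic recentering at the end. But as written your construction has a genuine gap: you take $K_2=\lfloor(r+2)N^{2/3}\rfloor$ and perform only a \emph{vertical} shift by $\lfloor f_3(\alpha)N\rfloor$, so your ensemble's top curve is $-\mathfrak{h}(\cdot+1)$ on a window of width $O(N^{2/3})$ around the \emph{origin}. Theorem \ref{ASEPCT}, however, concerns $\mathfrak{h}(\alpha N+sN^{2/3})$, i.e.\ the height function in a window around $\alpha N$, which is not even contained in the domain $\llbracket-K_2,K_2\rrbracket$ of your coupled ensemble. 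Consequently your key claim that tightness of $f^{ASEP}_N(s)$ ``says precisely'' that $N^{-1/3}(L^N_1(sN^{2/3})-f_3'(\alpha)sN^{2/3})$ is tight is false for the object you defined: $L^N_1(sN^{2/3})=-\mathfrak{h}(sN^{2/3}+1)+\lfloor f_3(\alpha)N\rfloor$, whose law-of-large-numbers centering is $\mathfrak{h}(sN^{2/3})\approx N/4$, not $f_3(\alpha)N=(1-\alpha)^2N/4$, so this quantity in fact diverges at order $N^{2/3}$ and the second bullet of Definition \ref{Def1} fails. The fix is exactly what the paper does: apply Proposition \ref{HLASEP} with $K_2=\alpha N+T_N$ and define $\tilde L^N_j(i)=L_j(\lfloor\alpha N\rfloor+i)+\lfloor f_3(\alpha)N\rfloor$, i.e.\ recentre \emph{horizontally} by $\lfloor\alpha N\rfloor$ as well (the Gibbs property survives restriction to the subwindow, cf.\ Remark \ref{restrict} and the fact that the Gibbs resampling on $\llbracket-K_2+1,K_2\rrbracket$ conditions on the curve outside any subinterval).

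A secondary but real issue: you take $p=f_3'(\alpha)=-(1-\alpha)/2<0$, which violates the hypothesis $p\in(0,1)$ of Definition \ref{Def1} and Theorem \ref{PropTightGood}, and correspondingly your centered quantity $L^N_1(sN^{2/3})-f_3'(\alpha)sN^{2/3}$ has the wrong sign in the drift. Since the top curve is $-\mathfrak{h}$ (an up-right path with slopes in $\{0,1\}$ and mean slope $-f_3'(\alpha)$ in the relevant window), the correct choice is $p=-f_3'(\alpha)=(1-\alpha)/2\in(0,1)$, and the tight one-point quantity is $N^{-1/3}\bigl(\tilde L^N_1(sN^{2/3})+f_3'(\alpha)sN^{2/3}\bigr)$, exactly as in the paper's proof. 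With these two corrections (window around $\lfloor\alpha N\rfloor$ with $K_2=\alpha N+T_N$, and $p=-f_3'(\alpha)$), the remainder of your argument — including the final bookkeeping of floors, the $\mathfrak{h}(\cdot)$ versus $\mathfrak{h}(\cdot+1)$ shift, and the deterministic parabola and constant $\sigma_\alpha^{-1}$ — goes through as you describe.
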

\begin{proof}
Consider the $\{1,2\} \times \llbracket -T_N, T_N\rrbracket$-indexed simple discrete line ensemble with $T_N = \lfloor (r+2)N^{2/3} \rfloor$, given by
$$(\tilde L_1^N(i),\tilde L_2^N(i)) = \left(L_1(\lfloor \alpha N \rfloor + i) +  \lfloor f_3(\alpha)N \rfloor , L_2(\lfloor \alpha N \rfloor + i) +  \lfloor f_3(\alpha)N \rfloor\right),$$ 
with $(L_1,L_2)$ defined as in Proposition \ref{HLASEP} with $K_1 = 2$, $K_2 = \alpha N + T_N$ and $T = N/\gamma$.

By construction, we have that $(\tilde L_1^N,\tilde L_2^N) $ satisfies the Hall-Littlewood Gibbs property with parameter $t$ for $S = \llbracket -T_N + 1, T_N\rrbracket$. In addition, by Theorem \ref{ASEPCT} and the fact that $L_1$ has the same law as $-\mathfrak{h}$, we know that for each $s \in [-r-1, r+1]$ the sequence of random variables $N^{-1/3} \left( \tilde L_1^N(sN^{2/3}) + sN^{2/3} f'_3(\alpha) \right)$ is tight. The latter statements imply that the sequence
$(\tilde L_1^N,\tilde L_2^N) $  is $\left(2/3, -f'_3(\alpha), r+1\right)$-good. It follows from Theorem \ref{PropTightGood} that if
$$g_N^{ASEP}(s) = N^{-1/3}\left(L_1(\lfloor \alpha N \rfloor + sN^{2/3}) + \lfloor f_3(\alpha)N \rfloor + f_3'(\alpha) s N^{2/3} \right ), \mbox{ for $s\in [-r,r]$},$$
then $g_N^{ASEP}(\cdot)$ form a tight sequence of random variables in $(C[-r,r], \mathcal{C})$. The latter clearly implies the statement of the theorem.
\end{proof}

\begin{remark} In Corollary \ref{ACC} we show that any subsequential limit of either of the sequences $f_N^{HL}$, $f_N^{SV}$ and $f_N^{ASEP}$ as in the text above, when shifted by an appropriate parabola, is absolutely continuous with respect to a Brownian bridge of appropriate variance. This, in particular, implies that the subsequential limits of these random curves are non-trivial.
\end{remark}

\subsection{Proof of Proposition \ref{HLASEP}}\label{Section3.3} In this section we present the proof of Proposition \ref{HLASEP}, which we split into several steps for clarity. Before we go into the main argument let us briefly outline the main ideas of the proof. We begin by considering a particular sequence of  $\{1,\dots,K_1\} \times \llbracket -K_2,K_2 \rrbracket$-indexed discrete line ensemble $(\Lambda^N_1,\dots, \Lambda^N_{K_1})$. The latter are defined through appropriately truncated and shifted discrete line ensembles associated to ascending Hall-Littlewood processes with parameters $\zeta(N)$ such that $\zeta(N)$ converges to $1$. In Step 1 below we carefully explain the construction of  $(\Lambda^N_1, \dots, \Lambda^N_{K_1})$ and assume that the sequence is tight and that $(\Lambda^N_1(-K_2),\dots,\Lambda^N_1(K_2))$ weakly converges to $(-\mathfrak{h}(-K_2+1), \dots, -\mathfrak{h}(K_2+1))$. Using the tightness assumption we can pick some subsequential limit $(\Lambda^\infty_1,\dots, \Lambda^\infty_{K_1})$ and show it satisfies the conditions of the proposition. The weak convergence of $(\Lambda^N_1(-K_2),\dots,\Lambda^N_1(K_2))$ to $(-\mathfrak{h}(-K_2+1), \dots, -\mathfrak{h}(K_2+1))$ is proved in Step 2 and it relies on Theorems \ref{SVHL} and \ref{SVASEP}. The tightness of $(\Lambda^N_1, \dots, \Lambda^N_{K_1})$ is demonstrated in Steps 3, 4, 5 and 6, by combining the already known tightness of $\Lambda^N_1$ and the Hall-Littlewood Gibbs property.   \\

{\raggedleft \bf Step 1.} For each $N \in \mathbb{N}$ consider the homogeneous ascending Hall-Littlewood process $\mathbb{P}^{ M, N_T}_{\zeta(N)}$ where $N_T = \lfloor N \cdot T \rfloor$, $\zeta(N) = 1 - \frac{1-t}{N}$ and $M = N_T + K$. For $N$ such that $N_T \geq K_1$ we let $(\Lambda_1^{N},\dots, \Lambda_{K_1}^{N})$  be the $\Sigma \times \llbracket -K_2,K_2 \rrbracket$-indexed discrete line ensemble, given by
\begin{equation}\label{ASLE}
\Lambda_j^N(i) = \lambda'_j(i + N_T) - N_T, \mbox{ for } i\in \{-K_2,-K_2+1,\dots,K_2\} \mbox{ and } j \in \{1,\dots,K_1\}
\end{equation}
where $(\lambda'_1(\cdot) ,\dots, \lambda'_{K_1}(\cdot))$ is sampled from $\mathbb{P}^{M, N_T}_{\zeta(N)}$. We isolate the following claims.

{\raggedleft {\bf Claims:}} \begin{itemize}
\item the sequence $(\Lambda_1^N, \dots, \Lambda_{K_1}^N)$ is tight as random vectors in $\mathbb{Z}^{K_1 \cdot (2K_2 + 1)}$
\item  the sequence $(\Lambda^N_1(-K_2),\dots,\Lambda^N_1(K_2))$ weakly converges to $(-\mathfrak{h}(-K_2+1), \dots, -\mathfrak{h}(K_2+1))$ as random vectors in $\mathbb{Z}^{2K_2+1}$ as $N \rightarrow \infty$.
\end{itemize}
The latter statements are proved in the steps below. In what follows we assume their validity and finish the proof of the proposition.

Let $(\Lambda_1^\infty,\dots, \Lambda_{K_1}^\infty)$ be any subsequential limit of $(\Lambda_1^N,\dots, \Lambda_{K_1}^N)$ and assume that $N_k$ is an increasing sequence of integers such that 
\begin{equation}
(\Lambda_1^{N_k},\dots, \Lambda_{K_1}^{N_k}) \implies (\Lambda_1^\infty,\dots,  \Lambda_{K_1}^\infty) \mbox{ as $k \rightarrow \infty$},
\end{equation}
We know that $(\Lambda_1^{N_k},\dots, \Lambda_{K_1}^{N_k}) $ is a $\Sigma \times \llbracket -K_2,K_2 \rrbracket$-indexed discrete line ensemble, which by Proposition \ref{HLGPP} satisfies the Hall-Littlewood Gibbs property with parameter $t$ on $S$ and we conclude that the same is true for $(\Lambda_1^\infty,\dots,  \Lambda_{K_1}^\infty) $. By our earlier assumptions we know that  $(\Lambda^\infty_1(-K_2),\dots,\Lambda^\infty_1(K_2))$ has the same law as $(-\mathfrak{h}(-K_2+1), \dots, -\mathfrak{h}(K_2+1))$ and so $ (\Lambda_1^\infty,\dots,  \Lambda_{K_1}^\infty) $ satisfies the conditions of the proposition.\\

{\raggedleft \bf Step 2.} We show that $(\Lambda^N_1(-K_1),\dots,\Lambda^N_1(K_1))$ weakly converges to $(-\mathfrak{h}(-K_1+1), \dots, -\mathfrak{h}(K_1+1))$. Let us put $q = t$, $\xi(N) = t^{1/2} $ and $u = t^{-1} \zeta^{-1}$. From Theorem \ref{SVHL} we have the following equality in distribution
$$(\Lambda_1^{N}(-K_2),\dots,\Lambda_1^{N}(K_2)) \eqd (- h(N_T-K_2+1,N_T ),\dots, - h(N_T+K_2+1,N_T)),$$
 where $h$ is the height function of a homogeneous stochastic six-vertex model sampled from $\mathbb{P}_{\xi(N), u(N), q}$. From (\ref{ProbSV}) we have the following formulas for the probabilities $b_1(N)$ and $b_2(N)$:
$$
b_1(N) = \frac{1 - q^{1/2} \xi(N) u(N)}{1 - q^{-1/2} \xi(N) u(N)} = tN^{-1} + O(N^{-2}) \hspace{3mm} b_2(N) = \frac{q^{-1} - q^{-1/2} \xi(N) u(N)}{1 - q^{-1/2} \xi(N) u(N)} =  N^{-1}+ O(N^{-2}).$$
As a consequence of Theorem \ref{SVASEP} we have that $(- h(N_T-K_2+1,N_T),\dots, - h(N_T+K_2+1,N_T))$ converges weakly to $(-\mathfrak{h}(-K_2+1), \dots, -\mathfrak{h}(K_2+1))$, where $\mathfrak{h}$ has law $\mathbb{P}_{L,R}^T$. \\

{\raggedleft \bf Step 3.} In this step we show that $(\Lambda_1^N, \dots, \Lambda_{K_1}^N)$ is tight, by showing that $\Lambda_k^N$ is tight for each $k = 1,\dots, K_1$. We proceed by induction on $k$ with base case $k = 1$ being true by Step 2. In what follows assume that $\Lambda^N_1,\dots, \Lambda^N_k$ are tight and want to show that $\Lambda^N_{k+1}$ is also tight. Notice that because $L^N_i(j) - L^N_i(j+1) \in \{0,1\}$ it is enought to show that $\lambda'_{k+1}( N_T) - N_T$ is tight.

Let $\epsilon > 0$ be given. Set $D_N(B) := \{| \lambda'_{k-1}( N_T) - N_T| \geq B\}$. If $k \geq 2$ we have from the tightness of the sequence $\lambda'_{k-1}(N_T) - N_T$ that there exists $B \in \mathbb{N}$ sufficiently large so that
\begin{equation}\label{W0}
\mathbb{P} \left(D^c_N(B)\right) < \epsilon/16.
\end{equation}
By convention, $\lambda_0 = \infty$ and so $D_N(B)$ is a set of full measure and (\ref{W0}) holds even if $k = 1$.

 From the tightness of the sequence $\lambda'_k( N_T) - N_T$, we know that there exists $A \in \mathbb{N}$ sufficiently large so that
\begin{equation}\label{W1}
\mathbb{P} \left(| \lambda'_k( N_T) - N_T| \geq A\right) < \epsilon (1 -t)^B/16 \mbox{ and } 1 \geq (1 - t^A)^{2A} \geq 1/2 .
\end{equation}
We make the following definitions
$$E_N := \{ \lambda'_k(N_T - 2A) - N_T > -4A \} \mbox{ and } F_N := \{ \lambda'_{k+1}(N_T ) - N_T < -8A \}.$$
Let us denote by $\mathcal{F}^k_N = \mathcal{F}_{ext} \left( \{k\} \times (N_T- 2A, N_T]\right)$ the $\sigma$-algebra generated by the up-right paths $\lambda_j'(\cdot)$ for $j \neq k$ and $\lambda_k'(\cdot)$ on the interval $[0,N_T- 2A]$. Observe that all three events $D_N(B)$, $E_N$ and $F_N$ are $\mathcal{F}^k_N$-measurable. Using the above notation we claim that for all $N$ sufficiently large we have
\begin{equation}\label{W5}    
4 \cdot \mathbb{E} \left[ {\bf 1}\{ \lambda'_k(N_T) \leq N_T - A\}| \mathcal{F}^k_N\right] \geq (1-t)^B \cdot {\bf 1}_{D_N \cap E_N \cap F_N}.
\end{equation}
The above statement will be proved in Step 4 below. For now we assume it and finish the proof.

Taking expectations on both sides  of (\ref{W5}) and using (\ref{W1}), we conclude that $\epsilon/4\geq \mathbb{P}(D_N \cap E_N \cap F_N).$
Notice that $E_N \subset \{ 0 \geq  \lambda'_k( N_T) - N_T > -2A \}$, which implies by (\ref{W1}) that $\mathbb{P}(E_N^c) \leq \epsilon/16$.
Combining the last two estimates with (\ref{W0}) we see that for all large $N$ we have
$$\mathbb{P}(F_N) \leq \mathbb{P}(D_N \cap E_N \cap F_N) + \mathbb{P}(E_N^c) + \mathbb{P}(D^c_N(B))  \leq \epsilon/4 + \epsilon/16  +  \epsilon/16< \epsilon.$$
The latter means that for all large $N$ we have
$$\mathbb{P}\left(0 \geq \lambda'_{k+1}(N_T ) - N_T \geq -8A \right) > 1 - \epsilon.$$
Since $\epsilon > 0$ was arbitrary this proves that $\lambda'_{k+1}(N_T ) - N_T$ is tight. \\

{\raggedleft \bf Step 4.} For $t_1, t_2, x \in \mathbb{Z}$ and $t_1 < t_2$ we let $\Omega_x(t_1,t_2)$ denote the set of up-right paths drawn in $\{t_1,\dots,t_2\}\times \mathbb{Z}$, which start from $(t_1,x)$. In addition, we fix two up-right path $\ell_{bot} \in \Omega_y(t_1,t_2)$ and $\ell_{top} \in \Omega_z(t_1,t_2) $, where $y <x - 4A$, $y \leq z$ and $K(\ell_{top}) \leq B$ where $K(\ell_{top}):= |\{ N_T -2A+ 1 \leq i \leq N_T: \ell_{top}(i) - \ell_{top}(i-1) = 0 \}|$. If $k = 1$ we set $\ell_{top} = \infty$ and $K(\ell_{top}) = 0$.

For $N \in \mathbb{N}$ we consider the measure $\mathbb{P}^{x, \ell_{top},\ell_{bot}}_N$ on $\Omega_x(N_T - 2A,N_T)$, given by
$$\mathbb{P}^{x, \ell_{top},\ell_{bot}}_N(\ell) = Z_N^{-1} \cdot W_t(N_T- 2A, N_T, \ell, \ell_{top}, \ell_{bot}; S_N)\cdot  \zeta(N)^{\ell(N_T) - x}, 
$$
$\mbox{where } S_N =  \llbracket N_T - 2A + 1, N_T \rrbracket $ and $Z_N \mbox{ is a normalization constant}.$ 
With the above notation we define $P( x, N, \ell_{top},\ell_{bot}) = \mathbb{P}^{x, \ell_{top}, \ell_{bot}}_N \left( \ell(N_T) \leq x + A\right)$ and claim that for all $N$ sufficiently large (depending on $t$ and $A$) we have that 
\begin{equation}\label{WS1}
P( x, N, \ell_{top},\ell_{bot}) \geq (1-t)^B/4.
\end{equation}
The latter will be proved in Step 5 below. For now we assume its true and finish the proof of (\ref{W5}).

Let $\ell^N_{k\pm 1} \in \Omega_{\lambda_{k\pm 1}'(N_T - 2A)}(N_T - 2A,N_T)$ be such that $\ell^N_{k\pm 1}(i) = \lambda_{k\pm 1}'(i)$ for $i = N_T -2A, \dots, N_T$, where $ \ell_{k-1}^N = \ell_0^N = \infty$ when $k = 1$. As a consequence of Proposition \ref{HLGPP} (see also (\ref{EqO1})) we have the following a.s. equality of $\mathcal{F}^k_N $ random variables 
\begin{equation*}
\begin{split}
&{\bf 1}_{D_N(B) \cap E_N \cap F_N} \cdot \mathbb{E} \left[ {\bf 1}\{ \lambda'_k(N_T)  \leq \lambda_k'(N_T - 2A) + A  \}| \mathcal{F}^k_N\right] = \\&{\bf 1}_{D_N(B) \cap E_N \cap F_N} \cdot P(  \lambda_k'(N_T - 2A) , N,  \ell_{k-1}^N, \ell^N_{k+1}) .
\end{split}
\end{equation*}
In deriving the above equality we used that for $\omega \in D_N(B)$ we have $K(\ell_{k-1}^N(\omega)) \leq B$ by definition of $D_N(B)$.

Notice that a.s. $ \lambda_k'(N_T - 2A) + A  \leq  N_T- A$, from which we conclude that we have the following a.s. inequality
\begin{equation}\label{W4}
\begin{split}
&{\bf 1}_{D_N(B) \cap E_N \cap F_N} \cdot \mathbb{E} \left[ {\bf 1}\{ \lambda'_k(N_T) \leq N_T - A\}| \mathcal{F}^k_N\right] \geq \\
&{\bf 1}_{D_N(B) \cap E_N \cap F_N} \cdot P(  \lambda_k'(N_T - 2A) , N,  \ell_{k-1}^N, \ell^N_{k+1}).
\end{split}
\end{equation}
From (\ref{WS1}) we have for all large $N$ that $ P(  \lambda_k'(N_T - 2A) , N,  \ell_{k-1}^N, \ell^N_{k+1}) \geq (1-t)^B/4$, which together with $1 \geq {\bf 1}_{E_N \cap F_N} $ and (\ref{W4}) imply (\ref{W5}). \\

{\raggedleft \bf Step 5.} In this step we establish (\ref{WS1}), but first we briefly explain our idea. By assumption, we know that $\ell$ is a random path that lies at least a distance $A$ above $\ell_{bot}$ and that $\ell_{top}(i)$ increases by $1$ when $i$ increases by $1$ on $[N_T-2A, N_T]$ with at most $B$ exceptions. The latter implies that
$$1 \geq W_t(N_T- 2A, N_T, \ell, \ell_{top}, \ell_{bot}; S_N) \geq (1-t)^B (1 - t^{A})^{2A} \geq (1-t)^B/2,$$
where in the last inequality we used (\ref{W1}). On the other hand, we know that $\zeta(N) \rightarrow 1$ as $N \rightarrow \infty$. This implies that $\mathbb{P}^{x, \ell_{top},\ell_{bot}}_N(\ell) $ is essentially the uniform measure on up-right paths of length $2A$ started from $x$, conditioned to stay below $\ell_{top}$ and distorted by a well-behaved Radon-Nikodym derivative. At least half of the paths that start from $x$ and have length $2A$ end in a position below $x+A$, and since each path carries roughly the same weight we can obtain the desired estimate.\\

We make the following definitions
\begin{equation*}
\begin{split}
&\Omega^+(\ell_{top}) := \{ \ell \in \Omega_x(N_T - 2A,N_T) : \ell(N_T) > x+ A \mbox{ and } \ell_{top}(i) \geq \ell(i) \mbox{ for } N_T - 2A\leq i \leq N_T\},\\
&\Omega^-(\ell_{top}) := \{ \ell \in \Omega_x(N_T - 2A,N_T) : \ell(N_T) \leq x+ A \mbox{ and } \ell_{top}(i) \geq \ell(i) \mbox{ for } N_T - 2A\leq i \leq N_T\}.
\end{split}
\end{equation*}
We claim that we have
\begin{equation}\label{W8}
|\Omega^-| \geq |\Omega^+|.
\end{equation}
The latter will be proved in Step 6 below. For now we assume it and finish the proof of (\ref{WS1}).

Write $\mathbb{P}_N$ instead of $\mathbb{P}^{x, \ell_{top}, \ell_{bot}}_N$ for brevity. We can find $N_0$ (depending on $t$ and $A$ ) such that for all $N \geq N_0$ we have $1 \geq \zeta(N)^{2A} \geq 1/2 $. The latter together with our assumption on $\ell_{top}$ implies
$$1 \geq W_t(N_T- 2A, N_T, \ell, \ell_{top}, \ell_{bot}; S_N) \geq (1-t)^B(1 - t^{A})^{2A} \geq (1-t)^B/2$$
Consequently, for any $\ell_1, \ell_2 \in \Omega_x(N_T - 2A,N_T)$ we have
$$\mathbb{P}_N(\ell_1) \geq \left[(1-t)^B/2\right]\cdot \mathbb{P}_N(\ell_2) .$$
In view of (\ref{W8}) we have
$$\mathbb{P}_N( \Omega^-) = \sum_{ \ell \in \Omega^-} \mathbb{P}_N(\ell) \geq \left[(1-t)^B/2\right] \cdot \sum_{ \ell \in \Omega^+} \mathbb{P}_N(\ell) = \left[(1-t)^B/2\right] \cdot \mathbb{P}_N( \Omega^+). $$
The latter implies that 
$$\mathbb{P}_N( \Omega^-) \geq (1/2) \cdot \mathbb{P}_N( \Omega^-) + \left[(1-t)^B/4\right]\cdot \mathbb{P}_N( \Omega^+) \geq \left[(1-t)^B/4\right].$$

{\raggedleft \bf Step 6.} In this final step we establish the validity of (\ref{W8}). It is easy to see that (\ref{W8}) is equivalent to the following purely probabilistic question:

Let $X_i$ be i.i.d. random variables such that $\mathbb{P}(X_1 = 0) =  \mathbb{P}(X_1 = 1) = 1/2$ and $S_k = X_1 + \cdots + X_k$ be a random walk with increments $X_i$. Fix an up-right path $\ell_{top}$ such that $\ell_{top}(0) \geq 0$ and $A \in \mathbb{N}$. Then we have the following inequality
\begin{equation}\label{W9}
\mathbb{P}(S_{2A} \leq A| S_k \leq \ell_{top}(k) \mbox{ for } k = 1,\dots, 2A) \geq  1/2.
\end{equation}
Observe that if $\ell_{top} = \infty$ then the above is trivial by symmetry. For finite $\ell_{top}$, conditioning the walk to stay below $ \ell_{top}$ stochastically pushes the walk lower and so the probability it ends up below $A$ only increases.

A rigorous way to prove the above is using the FKG inequality. To be more specific, let $L_{2A}$ be the set of up-right paths starting from $0$ of length $2A$. The natural partial order on $L_{2A}$ is given by 
$$\ell_1 \leq \ell_2  \iff  \ell_1(i) \leq \ell_2(i) \mbox{ for $i = 1,\dots, 2A$}.$$
 With this $L_{2A}$ has the structure of a lattice and so the FKG inequality reads
$$\left(\sum_{\ell \in L_{2A}}\frac{{\bf 1} \{ \ell \leq \ell_{top} \} {\bf 1} \{ \ell(2A) \leq A \}}{|L_{2A}|} \right)  \geq \left(\sum_{\ell \in L_{2A}} \frac{{\bf 1} \{\ell(2A) \leq A\}}{|L_{2A}|}  \right) \cdot \left(\sum_{\ell \in L_{2A}} \frac{{\bf 1} \{\ell \leq \ell_{top}\}}{|L_{2A}|} \right),$$
and clearly implies (\ref{W9}). This concludes the proof of the proposition.

\section{Basic lemmas}\label{Section4}
This section contains the primary set of results we will need to prove Theorem \ref{PropTightGood}. For the remainder of the paper we will only work with simple discrete line ensembles and as discussed in Section \ref{Section3.1} we will drop references to $\ell_0$ and $L_0$ from our notation.

\subsection{Monotone weight lemma}\label{Section4.1}
In this section we isolate the key result that allows us to analyze measures that satisfy the Hall-Littlewood Gibbs property -- Lemma \ref{LemmaMon} below. In addition, we derive two easy corollaries, which are more suitable for our arguments later in the text.

Let  $z_1,z_2, t_1, t_2 \in \mathbb{Z}$ be such that $t_1 < t_2$ and $0 \leq z_2 - z_1 \leq t_2 - t_1$ and recall from Section \ref{Section3.1} that  $\Omega(t_1,t_2; z_1 z_2)$ denotes the set of up-right paths from $(t_1,z_1)$ to $(t_2,z_2)$. Each $\ell \in \Omega(t_1,t_2; z_1 z_2)$ can be encoded by a sequence $R(\ell)$ of $t_2 - t_1$ signs: $+$'s and $-$'s indexed from $t_1 + 1$ to $t_2$, so that $R(i) = +$ if and only if $\ell(i) - \ell(i-1) = 1$. The latter is depicted in Figure \ref{S4_1}. The total number of $+$'s is fixed and equals $z_2 - z_1$ and the number of $-$'s equals $t_2 - t_1 - z_2 + z_1$. 
\begin{figure}[h]
\centering
\scalebox{0.6}{\includegraphics{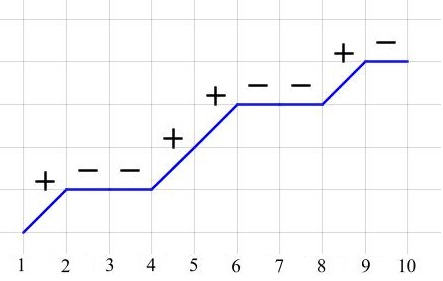}}
\caption{A path identified with a sequence of $+$ and $-$ signs. For the above path we have  $z_2 - z_1 = 4$, $t_2 - t_1 = 9$ and $R(\ell) = (+,-,-,+,+,-,-,+,-)$.  }
\label{S4_1}
\end{figure}

The main result of this section is the following.
\begin{lemma}\label{LemmaMon}
Fix $t \in (0,1)$ and let $c(t) = \prod_{i = 1}^{\infty} ( 1- t^i) \in (0,1)$. Suppose  $a,b,z_1,z_2,t_1,t_2$ are given such that $t_1 + 1 < t_2$, $0 \leq z_2 - z_1 \leq t_2 - t_1$, $0 \leq b - a \leq t_2 - t_1$, $z_1 \leq a$, $z_2 \leq b$. Fix any $\ell_{bot} \in \Omega(t_1,t_2;z_1,z_2)$, $S \subset \{t_1+1,...,t_2\}$ and $T \in \{t_1 + 1,..., t_2 - 1\}$. Let $m(T)$ and $M(T)$ denote the minimal and maximal values of the set $\{ \ell(T): \ell \in  \Omega(t_1,t_2;a,b)\}$ and let $m(T) \leq k_1 \leq k_2 \leq M(T)$. Then we have
\begin{equation}\label{monEq}
c(t)\cdot \mathbb{E}^{t_1,t_2;a,b}_{free}\left[ W_t(t_1, t_2, \ell, \ell_{bot}; S) | \ell(T) = k_1 \right] \leq   \mathbb{E}^{t_1,t_2;a,b}_{free}\left[ W_t(t_1, t_2, \ell, \ell_{bot}; S) | \ell(T) = k_2 \right].
\end{equation}
\end{lemma}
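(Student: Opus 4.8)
I would prove (\ref{monEq}) by a pointwise coupling. Writing $W_t(\ell)$ for $W_t(t_1,t_2,\ell,\ell_{bot};S)$, the goal is to construct on one probability space a pair $(\ell^{(1)},\ell^{(2)})$ with $\ell^{(j)}$ distributed according to $\mathbb{P}^{t_1,t_2;a,b}_{free}(\,\cdot\mid \ell(T)=k_j)$ and such that, almost surely, $\ell^{(2)}\ge \ell^{(1)}$ pointwise and $W_t(\ell^{(2)})\ge c(t)\,W_t(\ell^{(1)})$. Taking expectations then yields (\ref{monEq}). First I would record that, under $\mathbb{P}^{t_1,t_2;a,b}_{free}$ conditioned on $\ell(T)=k$, the path is the independent gluing of a uniform up-right path from $(t_1,a)$ to $(T,k)$ and a uniform up-right path from $(T,k)$ to $(t_2,b)$; equivalently, its set of up-steps is the disjoint union of a uniform $(k-a)$-subset $U^L$ of $\llbracket t_1+1,T\rrbracket$ and an independent uniform $(b-k)$-subset $U^R$ of $\llbracket T+1,t_2\rrbracket$. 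The hypotheses $m(T)\le k_1\le k_2\le M(T)$ exactly guarantee that all subset sizes occurring below are admissible (in particular $k_2-a\le T-t_1$ and $b-k_1\le t_2-T$).

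\textbf{The coupling.} I would build $(\ell^{(1)},\ell^{(2)})$ by the standard monotone coupling of uniform subsets: sample $U^L_{(2)}$ uniform of size $k_2-a$ and then $U^L_{(1)}\subseteq U^L_{(2)}$ uniform of size $k_1-a$; independently sample $U^R_{(1)}$ uniform of size $b-k_1$ and then $U^R_{(2)}\subseteq U^R_{(1)}$ uniform of size $b-k_2$; let $\ell^{(j)}$ be the path with up-step set $U^L_{(j)}\sqcup U^R_{(j)}$. A uniform subset of a uniform superset of the prescribed sizes is again uniform, so the marginal laws are correct. Setting $p=k_2-k_1$, one checks that $\ell^{(2)}=\ell^{(1)}+\gamma$, where $\gamma\ge 0$ is ``tent-shaped'': it rises (by the $p$ inserted left up-steps) from $\gamma(t_1)=0$ to $\gamma(T)=p$ and then descends by exactly $1$ at each of the $p$ deleted right up-steps $T<j_1<\dots<j_p$, so that $\gamma(j_\ell-1)=p-\ell+1$. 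The up-right path constraints for $\ell^{(2)}$ hold because every inserted up-step sits on a flat step of $\ell^{(1)}$ and every deleted one on an up-step of $\ell^{(1)}$.

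\textbf{Weight comparison.} Here only the $\Delta^-=\ell-\ell_{bot}$ factors of $W_t$ are present (the upper neighbour being $\infty$), and $\Delta^-_{(2)}=\Delta^-_{(1)}+\gamma$. If $W_t(\ell^{(1)})=0$ the bound is trivial, so assume $\Delta^-_{(1)}\ge 0$ on $S$; then $\Delta^-_{(2)}\ge 0$ on $S$ as well and both weights are genuine products over $x\in S$. A short case check on the increments at $x$ of $\Delta^-_{(1)}$ (call it $d_x\in\{-1,0,1\}$) and of $\gamma$ (call it $g_x\in\{-1,0,1\}$) shows that the ratio of the $x$-th factors is $\ge 1$ unless $(d_x,g_x)=(0,1)$, which in our construction can only occur at $x=j_\ell$, and there $\ell^{(1)}$ and $\ell_{bot}$ both have an up-step, so $\Delta^-_{(1)}(j_\ell-1)=\Delta^-_{(1)}(j_\ell)\ge 0$ (using $j_\ell\in S$ and positivity of $W_t(\ell^{(1)})$). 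At such a $j_\ell\in S$ the $x$-th factor of $W_t(\ell^{(1)})$ is $1$ while that of $W_t(\ell^{(2)})$ is $1-t^{\Delta^-_{(1)}(j_\ell-1)+\gamma(j_\ell-1)}\ge 1-t^{p-\ell+1}$. Multiplying over $x\in S$ and discarding the factors that are $\ge 1$,
\[
\frac{W_t(t_1,t_2,\ell^{(2)},\ell_{bot};S)}{W_t(t_1,t_2,\ell^{(1)},\ell_{bot};S)}\;\ge\;\prod_{\ell\in B}\bigl(1-t^{\,p-\ell+1}\bigr)\;\ge\;\prod_{i=1}^{p}\bigl(1-t^{i}\bigr)\;\ge\;\prod_{i=1}^{\infty}\bigl(1-t^{i}\bigr)\;=\;c(t),
\]
where $B\subseteq\{1,\dots,p\}$ indexes the $j_\ell$ lying in $S$ (the exponents $p-\ell+1$ are distinct). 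This is the required pointwise inequality, and (\ref{monEq}) follows by averaging over the coupling.

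\textbf{Main obstacle.} The crux — and the place where the factor $c(t)$ (rather than $1$) is unavoidable — is that the Hall-Littlewood weight sees the \emph{relative slope} of $\ell$ and $\ell_{bot}$: pushing $\ell$ up can turn a flat-step-over-an-$\ell_{bot}$-up-step into a genuine descent of $\Delta^-$, losing a factor. The two observations that contain this loss are that each such newly created descent occurs at $\Delta^-$-height $\Delta^-_{(1)}(j_\ell-1)+\gamma(j_\ell-1)$ with $\gamma(j_\ell-1)=p-\ell+1$ strictly decreasing in $\ell$, and that $\Delta^-_{(1)}(j_\ell-1)\ge 0$ there; together they force the accumulated loss to telescope into $\prod_{i\ge 1}(1-t^i)=c(t)$ instead of growing with $p=k_2-k_1$. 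Getting the bookkeeping of this case analysis right is the only delicate point; everything else is routine.
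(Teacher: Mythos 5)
Your proof is correct and takes essentially the same route as the paper: the identical coupling (add $p=k_2-k_1$ uniformly chosen up-steps on the left half, delete $p$ on the right half) and the same pointwise inequality $W_t(\ell^{(1)})\le c(t)^{-1}W_t(\ell^{(2)})$ followed by taking expectations, with your one-shot comparison via the tent function $\gamma$ replacing the paper's telescoping through intermediate paths $\ell^0,\dots,\ell^r$ in which the stage-$j$ loss is bounded by $(1-t^{j+1})^{-1}$. The only blemish is a sign typo: the lossy case in your case check is $(d_x,g_x)=(0,-1)$, not $(0,1)$, as your own subsequent discussion of the positions $j_\ell$ makes clear.
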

\begin{proof}
For brevity we write $W(\ell)$ for $W_t(t_1, t_2, \ell , \ell_{bot}; S).$ Let $\ell_1$ be a random path sampled according to $\mathbb{P}^{t_1,t_2;a,b}_{free}$, conditioned on $\ell_1(T) = k_1$. We identify this path with a sequence of $+$'s and $-$'s and observe that we have $(k_1 - a)$ $+$'s in the first $T - t_1$ slots and the rest are filled with $-$'s. Similarly, we have exactly $(b - k_2)$ $+$'s in the rest $t_2 - T$ slots. Let us pick uniformly at random $(k_2 - k_1)$ $-$'s in the first $T-t_1$ slots and change them to $+$, and also we pick uniformly at random $(k_2 - k_1)$ $+$'s in the last $t_2 - T$ slots and change them to $-$. In this way we build a new sequence of $+$'s and $-$'s that naturally corresponds to an element $\ell_2 \in \Omega(t_1,t_2;a,b)$ such that $\ell_2(T) = k_2$. Moreover it is clear that the random path $\ell_2$ is distributed according to $\mathbb{P}^{t_1,t_2;a,b}_{free}$, conditioned on $\ell_2(T) = k_2$. We are interested in proving the following statement
\begin{equation}\label{LemmaMon1}
W(\ell_1) \leq c(t)^{-1} \cdot W(\ell_2).
\end{equation}
The statement of the lemma is obtained by taking expectations on both sides of (\ref{LemmaMon1}).\\

Since $W(\ell_1) = 0$ otherwise (and then (\ref{LemmaMon1}) is immediate) we may assume that $\ell_1(i) \geq \ell_{bot}(i)$ for all $i \in S$. Let $r = k_2 - k_1$ and denote by $x_1< x_2 < \cdots < x_r$ and $y_1 > y_2 \cdots > y_r$ the positions of $-$'s and $+$'s respectively that we changed when we transformed $\ell_1$ to $\ell_2$. We also let $\ell^{j}$ for $j = 0,...,r$ denote the paths in $\Omega(t_1,t_2;a,b)$ obtained by flipping only the signs at locations $x_1,...,x_j$ and $y_1,...,y_j$ (in particular $\ell^0 = \ell_1$ and $\ell^r = \ell_2$). An example is depicted in Figure \ref{S4_2}. 

\begin{figure}[h]
\centering
\scalebox{0.8}{\includegraphics{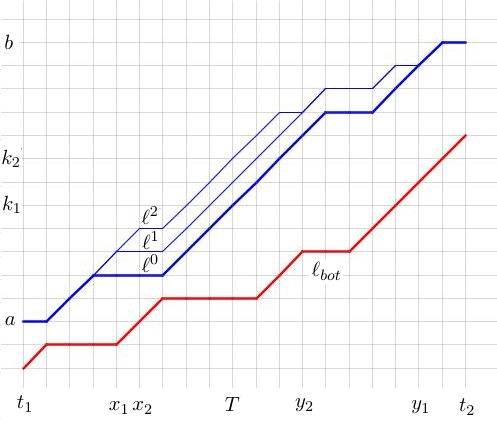}}
\caption{An example of $\ell^0, \ell^1$ and $\ell^2$ for the case $k_2 - k_1 = 2$.  }
\label{S4_2}
\end{figure}

Recall from (\ref{S1eq0}) that $W(\ell)= \prod_{j \in S}\left(1 -  {\bf 1}_{\{\Delta(j-1) - \Delta(j) = 1\}}\cdot t^{\Delta(j-1)} \right) ,\mbox{ where } \Delta(j) =  \ell(j) - \ell_{bot}(j).$ Let us explain how $W(\ell^{j+1})$ differs from $W(\ell^j)$. When we flip the signs at $x_{j+1}$ and $y_{j+1}$, we raise the path $\ell^j$ by $1$ in the interval $[x_{j+1},y_{j+1}-1]$, while outside $(x_{j+1} - 1, y_{j+1})$ it remains the same (see Figure \ref{S4_2}). The latter operation modifies the factors in $W(\ell^j)$ as follows.
\begin{itemize}
\item If $x_{j+1} \in S$ then $W(\ell^j)$ has a factor $\left(1 -  {\bf 1}_{\{\Delta(x_{j+1}) - \Delta(x_{j+1} - 1) = 1\}}\cdot t^{\Delta(x_{j+1}-1)} \right)$, which changes to $1$. 
\item All the factors $\left(1 -  {\bf 1}_{\{\Delta(i) - \Delta(i - 1) = 1\}}\cdot t^{\Delta(i-1)} \right)$ become $\left(1 -  {\bf 1}_{\{\Delta(i) - \Delta(i - 1) = 1\}}\cdot t^{\Delta(i-1) + 1} \right)$ whenever $i \in S \cap [x_{j+1}, y_{j+1}-1]$.
\item If $y_{j+1} \in S$ then $W(\ell^j)$ has a factor $\left(1 -  {\bf 1}_{\{\Delta(y_{j+1}) - \Delta(y_{j+1} - 1) = 1\}}\cdot t^{\Delta(y_{j+1}-1)} \right)$, which becomes $\left(1 -  {\bf 1}_{\{\Delta(y_{j+1}) - \Delta(y_{j+1}- 1) = 0\}}\cdot t^{\Delta(y_{j+1}-1) + 1} \right)$.
\end{itemize}
The first two changes only increase the weight $W(\ell^j)$, while the last can decrease it but at most by a factor $1 - t^{m_{j}}$, where $m_j = 1 + \min_{i \in S \cap [x_{j+1}, y_{j+1}-1 ]} \left[\ell^j(i) - \ell_{bot}(i)\right]$. This implies
$$W(\ell^j) \leq (1- t^{m_j})^{-1} \cdot W(\ell^{j+1}).$$

Notice that $m_0 \geq 1$ since we assumed that $ \ell^0(i) = \ell_1(i) \geq \ell_{bot}(i)$ for $i \in S$. In addition, since at step $j+1$ we raise the path on $[x_{j+1}, y_{j+1}-1 ]$ by $1$ it is clear that $m_{j+1} \geq 1 + m_j$, which implies that $m_j \geq j + 1$ for each $j \geq 0$. We conclude that 
$$W(\ell^0)\leq \prod_{j = 1}^r (1 - t^j)^{-1} \cdot W(\ell^r) \leq c(t)^{-1} \cdot W(\ell^r).$$
As  $\ell^0 = \ell_1$ and $\ell^r = \ell_2$ the above proves (\ref{LemmaMon1}) and hence the lemma.
\end{proof}
\begin{remark}\label{MonCoup}
If $t = 0$ the acceptance probability $W_0(t_1, t_2, \ell, \ell_{bot}; S)$ is equal to $1$ if $\ell$ does not cross $\ell_{bot}$ on the set $S$, and $0$ otherwise. In this case one can use the arguments in the proof of Lemmas 2.6 and 2.7 in \cite{CorHamA} to show that we can construct on the same probability space $\ell'$ and $\ell''$ such that 
$$\mbox{ $\mathbb{P}(\ell' = \ell) = \mathbb{P}^{t_1,t_2;a,b}_{free}(\ell  | \ell(T) = k_1)$, \hspace{5mm} $\mathbb{P}(\ell'' = \ell) = \mathbb{P}^{t_1,t_2;a,b}_{free}(\ell  | \ell(T) = k_2)$}$$
 and $\ell'(j) \leq \ell''(j)$ for $t_1 \leq j \leq t_2$ with probability $1$. The latter statement implies that we have the following almost sure inequality $ W_0(t_1, t_2, \ell', \ell_{bot}; S) \leq W_0(t_1, t_2, \ell'', \ell_{bot}; S)$, which means that higher curves are accepted with higher probability. This statement fits well with the continuous setup in \cite{CorHamA}.

For general $t \in (0,1)$ we no longer have the above inequality almost surely. For example, we can take $t_1 = 0$, $t_2 =2n$, $a = k_1 = 0$, $b = k_2 = n$, $S = \llbracket t_1 + 1, t_2 \rrbracket$, $\ell_{bot} = \ell'$ to be the path that is flat on the interval $[0,n]$ and goes up on $[n,2n]$, while $\ell''$ the path that goes up on $[0,n]$ and is flat on $[n, 2n]$. For this choice one calculates 
$$W_t(t_1, t_2, \ell', \ell_{bot}; S) = 1 > \prod_{i = 1}^n (1 -t^i)  = W_t(t_1, t_2, \ell'', \ell_{bot}; S).$$
Consequently, even though $\ell'$ is below $\ell''$ it is accepted with higher probability and the reason is that the acceptance probability depends not only on the distance between lines but also on their relative slope. In this context, the result of Lemma \ref{LemmaMon} is that the acceptance probability of the top line does increase as it is raised, although only in terms of its expected value and up to a factor of $c(t) = \prod_{i=1}^{\infty} (1-t^i)$. This monotonicity is much weaker than the almost sure monotonicity in the $t = 0$ case, but it turns out to be sufficient for our methods to work.
\end{remark}

Using the above lemma we prove two useful corollaries.
\begin{corollary}\label{CorMon1}
Assume the same notation as in Lemma \ref{LemmaMon}. Suppose $A,B$ are non-empty subsets of $\{m(T), m(T) + 1,..., M(T)\}$, such that $\alpha \geq \beta$ for all $\alpha \in A$ and $\beta \in B$. Then we have
\begin{equation}\label{monEq2}
c(t)\cdot \mathbb{E}^{t_1,t_2;a,b}_{free}\left[ W_t(t_1, t_2, \ell, \ell_{bot}; S) | \ell(T) \in B \right] \leq   \mathbb{E}^{t_1,t_2;a,b}_{free}\left[ W_t(t_1, t_2, \ell, \ell_{bot}; S) | \ell(T) \in A \right].
\end{equation}
\end{corollary}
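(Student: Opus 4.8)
The plan is to deduce Corollary \ref{CorMon1} from the pointwise monotonicity in Lemma \ref{LemmaMon} by writing each of the two conditional expectations as a mixture over the individual levels $\{\ell(T) = k\}$, $k \in \{m(T),\dots,M(T)\}$. First I would record the elementary fact that for every integer $k$ with $m(T) \le k \le M(T)$ the event $\{\ell(T) = k\}$ has positive $\mathbb{P}^{t_1,t_2;a,b}_{free}$-probability --- indeed the number of up-right paths from $(t_1,a)$ to $(t_2,b)$ passing through $(T,k)$ equals $\binom{T - t_1}{k - a}\binom{t_2 - T}{b - k}$, which is positive exactly on the contiguous block $m(T) \le k \le M(T)$. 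Since $A$ and $B$ are non-empty subsets of this block, every conditional probability and conditional expectation appearing below is well defined.

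Next, abbreviating $W(\ell) := W_t(t_1, t_2, \ell, \ell_{bot}; S)$ and $p_A(k) := \mathbb{P}^{t_1,t_2;a,b}_{free}(\ell(T) = k \mid \ell(T) \in A)$ for $k \in A$ (and $p_B(k')$ analogously for $k' \in B$), the tower property gives the decomposition
\begin{equation*}
\mathbb{E}^{t_1,t_2;a,b}_{free}\left[W(\ell) \mid \ell(T) \in A\right] = \sum_{k \in A} p_A(k)\, \mathbb{E}^{t_1,t_2;a,b}_{free}\left[W(\ell) \mid \ell(T) = k\right],
\end{equation*}
and likewise for $B$. The hypothesis on the ordering of $A$ and $B$ enters at exactly one point: fixing any $k_0 \in A$, we have $k_0 \ge k'$ for every $k' \in B$, so Lemma \ref{LemmaMon} (applied with $k_1 = k'$, $k_2 = k_0$) yields $c(t)\, \mathbb{E}^{t_1,t_2;a,b}_{free}[W(\ell) \mid \ell(T) = k'] \le \mathbb{E}^{t_1,t_2;a,b}_{free}[W(\ell) \mid \ell(T) = k_0]$. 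Multiplying the $B$-analogue of the above decomposition by $c(t)$ and bounding each summand, I obtain $c(t)\, \mathbb{E}^{t_1,t_2;a,b}_{free}[W(\ell) \mid \ell(T) \in B] \le \mathbb{E}^{t_1,t_2;a,b}_{free}[W(\ell) \mid \ell(T) = k_0]$, and this bound holds for \emph{every} $k_0 \in A$.

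Finally, I would average the last inequality against the weights $p_A(k_0)$ over $k_0 \in A$ and invoke the displayed $A$-decomposition, which immediately produces (\ref{monEq2}). I do not anticipate any real obstacle here: all of the analytic content is already contained in Lemma \ref{LemmaMon}, and the remaining steps use only that a convex combination of real numbers, each of which is at most a common value after multiplication by the constant $c(t)$, still satisfies that bound.
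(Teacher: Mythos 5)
Your argument is correct and is essentially the paper's own proof: both decompose the conditional expectations over the level sets $\{\ell(T)=k\}$ and apply Lemma \ref{LemmaMon} pairwise to $\beta\in B$, $\alpha\in A$, the only difference being that you fix $k_0\in A$ first and then average, whereas the paper writes the double mixture at once. Your additional verification that each level has positive probability (via the count $\binom{T-t_1}{k-a}\binom{t_2-T}{b-k}$) is a harmless extra check that the paper leaves implicit.
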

\begin{proof}
For brevity we write $W(\ell)$ for $W_t(t_1, t_2, \ell , \ell_{bot}; S)$. We have that 
$$c(t) \cdot \mathbb{E}^{t_1,t_2;a,b}_{free}\left[ W(\ell) | \ell(T) \in B \right] = c(t)\cdot\sum_{ \beta \in B} \mathbb{E}^{t_1,t_2;a,b}_{free}\left[ W(\ell)| \ell(T) = \beta \right] \cdot \frac{\mathbb{P}^{t_1,t_2;a,b}_{free}(\ell(T) = \beta)}{\mathbb{P}^{t_1,t_2;a,b}_{free}(\ell(T) \in B)} = $$
$$  c(t)\cdot\sum_{ \beta \in B}  \sum_{ \alpha \in A}  \mathbb{E}^{t_1,t_2;a,b}_{free}\left[ W(\ell) | \ell(T) =\beta \right] \cdot\frac{\mathbb{P}^{t_1,t_2;a,b}_{free}(\ell(T) = \alpha)}{\mathbb{P}^{t_1,t_2;a,b}_{free}(\ell(T) \in A)} \cdot \frac{\mathbb{P}^{t_1,t_2;a,b}_{free}(\ell(T) = \beta)}{\mathbb{P}^{t_1,t_2;a,b}_{free}(\ell(T) \in B)} \leq $$
$$ \sum_{ \beta \in B}  \sum_{ \alpha \in A}  \mathbb{E}^{t_1,t_2;a,b}_{free}\left[ W(\ell) | \ell(T) = \alpha \right] \cdot\frac{\mathbb{P}^{t_1,t_2;a,b}_{free}(\ell(T) = \alpha)}{\mathbb{P}^{t_1,t_2;a,b}_{free}(\ell(T) \in A)} \cdot \frac{\mathbb{P}^{t_1,t_2;a,b}_{free}(\ell(T) = \beta)}{\mathbb{P}^{t_1,t_2;a,b}_{free}(\ell(T) \in B)} = $$
$$   \sum_{ \alpha \in A}  \mathbb{E}^{t_1,t_2;a,b}_{free}\left[ W(\ell) | \ell(T) = \alpha \right] \cdot\frac{\mathbb{P}^{t_1,t_2;a,b}_{free}(\ell(T) = \alpha)}{\mathbb{P}^{t_1,t_2;a,b}_{free}(\ell(T) \in A)}  =  \mathbb{E}^{t_1,t_2;a,b}_{free}\left[ W(\ell)| \ell(T) \in A \right].$$
The middle inequality follows from Lemma \ref{LemmaMon}.
\end{proof}

\begin{corollary}\label{CorMon2}
Assume the same notation as in Lemma \ref{LemmaMon} and let $\alpha \leq M(T)$. Denote by $\mathbb{P}$ the probability distribution $\mathbb{P}_S^{t_1,t_2,a,b}(\cdot | \ell_{bot})$ from Definition \ref{DefHLGP}.
Then we have
\begin{equation}\label{monEq3}
\mathbb{P}(\ell(T) \geq \alpha) \geq c(t) \cdot \mathbb{P}^{t_1,t_2;a,b}_{free}(\ell(T) \geq  \alpha).
\end{equation}
\end{corollary}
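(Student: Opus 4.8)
The plan is to combine the Radon--Nikodym description of $\mathbb{P} = \mathbb{P}_S^{t_1,t_2,a,b}(\,\cdot\,|\,\ell_{bot})$ recorded in Remark~\ref{RemHLGP} with the averaged monotonicity of Corollary~\ref{CorMon1}. Throughout write $W(\ell) = W_t(t_1,t_2,\ell,\ell_{bot};S)$, so that for any event $E$ one has
$$\mathbb{P}(\ell\in E) = \frac{\mathbb{E}^{t_1,t_2;a,b}_{free}\big[W(\ell)\,{\bf 1}\{\ell\in E\}\big]}{\mathbb{E}^{t_1,t_2;a,b}_{free}\big[W(\ell)\big]},$$
the denominator being positive because $\mathbb{P}$ is assumed well defined. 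First I would dispose of the degenerate case $\alpha \leq m(T)$: then $\{\ell(T)\geq\alpha\}$ is all of $\Omega(t_1,t_2;a,b)$, so the left side of \eqref{monEq3} equals $1 \geq c(t) = c(t)\cdot\mathbb{P}^{t_1,t_2;a,b}_{free}(\ell(T)\geq\alpha)$ and nothing further is needed. The hypothesis $\alpha\leq M(T)$ then leaves only the substantive range $m(T) < \alpha \leq M(T)$.

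In that range I would split the possible values of $\ell(T)$ into $A := \{k\in\mathbb{Z} : \alpha \leq k \leq M(T)\}$ and $B := \{k\in\mathbb{Z} : m(T) \leq k < \alpha\}$; both are non-empty subsets of $\{m(T),\dots,M(T)\}$, every element of $A$ dominates every element of $B$, and $A\cup B$ covers all values taken by $\ell(T)$. Writing $p := \mathbb{P}^{t_1,t_2;a,b}_{free}(\ell(T)\in A) = \mathbb{P}^{t_1,t_2;a,b}_{free}(\ell(T)\geq\alpha)$, $q := 1-p = \mathbb{P}^{t_1,t_2;a,b}_{free}(\ell(T)\in B)$, and $E_A, E_B$ for the conditional expectations of $W(\ell)$ given $\ell(T)\in A$ resp.\ $\ell(T)\in B$, decomposing numerator and denominator above over $A$ and $B$ gives the identity $\mathbb{P}(\ell(T)\geq\alpha) = pE_A/(pE_A + qE_B)$.

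The crux is then a single application of Corollary~\ref{CorMon1} to the pair $(A,B)$, which yields $c(t)E_B \leq E_A$. Since also $c(t)\in(0,1)$ and $p+q=1$, this bounds the denominator by $pE_A + qE_B \leq pE_A + q\,c(t)^{-1}E_A \leq c(t)^{-1}(p+q)E_A = c(t)^{-1}E_A$, whence
$$\mathbb{P}(\ell(T)\geq\alpha) = \frac{pE_A}{pE_A + qE_B} \geq \frac{pE_A}{c(t)^{-1}E_A} = c(t)\,p = c(t)\,\mathbb{P}^{t_1,t_2;a,b}_{free}(\ell(T)\geq\alpha),$$
which is \eqref{monEq3}. (One should note $E_A>0$ here, since otherwise $c(t)E_B\le E_A=0$ would force $\mathbb{E}^{t_1,t_2;a,b}_{free}[W(\ell)]=0$, contradicting well-definedness of $\mathbb{P}$; this also justifies the divisions.) I do not anticipate a real obstacle: essentially all the work has been pushed into Corollary~\ref{CorMon1}, hence into Lemma~\ref{LemmaMon}, and the only points requiring care are the bookkeeping of the degenerate cases and the positivity above.
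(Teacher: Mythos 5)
Your proposal is correct and follows essentially the same route as the paper: the same split of $\ell(T)$ into $A=[\alpha,M(T)]$ and $B=[m(T),\alpha)$, the same single application of Corollary \ref{CorMon1}, and the same use of $c(t)<1$ to absorb the $B$-contribution into the denominator. The only difference is cosmetic bookkeeping — you bound the normalized denominator $pE_A+qE_B$ by $c(t)^{-1}E_A$ directly, whereas the paper writes the same estimate with unnormalized sums over paths and path counts $|D_1|,|D_2|$ and then enlarges the denominator; your positivity remark about $E_A$ (and the attainability of $m(T),M(T)$ guaranteeing $p,q>0$) covers the implicit assumptions in both versions.
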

\begin{proof}
If $\alpha \leq  m(T)$ then (\ref{monEq3}) becomes $1 \geq c(t)$, which is clearly true. We thus may assume that $M(T) \geq \alpha > m(T)$. Let $A = [\alpha,M(T)]$ and $B = [m(T), \alpha)$. Define $D_1 := \{ \ell \in \Omega(t_1,t_2;a,b) : \ell(T) \in A\}$  and $D_2 := \{ \ell \in \Omega(t_1,t_2;a,b) : \ell(T) \in B\}$. Observe that $A$ and $B$ satisfy the conditions of Corollary \ref{CorMon1} and hence
$$ \sum_{\ell \in D_1}  W_t(t_1, t_2, \ell, \ell_{bot}; S) \geq c(t) \cdot \frac{|D_1|}{|D_2|} \sum_{\ell \in D_2}  W_t(t_1, t_2,\ell, \ell_{bot}; S).$$
Dividing both sides by $ \sum_{\ell \in \Omega(t_1,t_2;a,b) }  W_t(t_1, t_2,\ell, \ell_{bot}; S)  $ we see that
$$\mathbb{P}(\ell(T) \geq \alpha) \geq c(t) \cdot \frac{|D_1|}{|D_2|} ( 1 -\mathbb{P}(\ell(T) \geq \alpha)) \mbox{ or equivalently } \mathbb{P}(\ell(T) \geq \alpha)  \geq c(t) \cdot \frac{ |D_1|}{|D_2| + c(t) |D_1|}. $$
Since $c(t) \in (0,1)$ we can increase the denominator by replacing it with $|D_1| + |D_2|$, which makes the RHS above precisely $c(t) \cdot \mathbb{P}^{t_1,t_2;a,b}_{free}(\ell(T) \geq  \alpha)$ as desired.
\end{proof}

\subsection{Properties of random paths}\label{Section4.2}
In this section we derive several lemmas about random paths distributed as $\mathbb{P}^{0,n;0,z}_{free}$ for $z \in \{0,...,n\}$, which are essential for the proof of our main results. Recall that if $L$ is such a path, we define $L(s)$ for non-integral $s$ by linear interpolation (see Section \ref{Section3.1}). The key ingredient we use to derive the lemmas below is a strong coupling between random walk bridges and Brownian bridges, which is presented as Theorem \ref{KMT} below. 

If $W_t$ denotes a standard one-dimensional Brownian motion and $\sigma > 0$, then the process
$$B^{\sigma}_t = \sigma^2 (W_t - t W_1), \hspace{5mm} 0 \leq t \leq 1,$$
is called a {\em Brownian bridge (conditioned on $B_0 = 0, B_1 = 0$)  with variance $\sigma^2$.}  With this notation we state the main result we use and defer its proof to Section \ref{Section8}.

\begin{theorem}\label{KMT}
Let $p \in (0,1)$. There exist constants $0 < C, a, \alpha < \infty$ (depending on $p$) such that for every positive integer $n$, there is a probability space on which are defined a Brownian bridge $B^\sigma$ with variance $\sigma^2 = p(1-p)$ and a family of random paths $\ell^{(n,z)} \in \Omega(0,n; 0, z)$ for $z = 0,...,n$ such that $\ell^{(n,z)}$ has law $\mathbb{P}^{0,n;0,z}_{free}$ and
\begin{equation}\label{KMTeq}
\mathbb{E}\left[ e^{a \Delta(n,z)} \right] \leq C e^{\alpha (\log n)^2}e^{|z- p n|^2/n}, \mbox{ where $\Delta(n,z):=  \sup_{0 \leq t \leq n} \left| \sqrt{n} B^\sigma_{t/n} + \frac{t}{n}z - \ell^{(n,z)}(t) \right|.$}
\end{equation}
\end{theorem}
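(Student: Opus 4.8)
The plan is to construct all of the walk bridges $\ell^{(n,z)}$, $z=0,\dots,n$, together with a single Brownian bridge $B^\sigma$ on one common probability space by a dyadic ``divide--and--conquer'' scheme in the spirit of the Koml\'os--Major--Tusn\'ady embedding, and to propagate exponential moment bounds on the discrepancies level by level. First I would reduce to the case where $n$ is a power of $2$; the general case follows by embedding $\llbracket 0,n\rrbracket$ into $\llbracket 0, 2^{\lceil \log_2 n\rceil}\rrbracket$ and absorbing the (at most $n$ extra) steps by a crude estimate, at the cost of enlarging the constants. The two structural facts I would use are: (i) a walk bridge from $(0,0)$ to $(n,z)$, conditioned to take the value $w$ at the midpoint $n/2$, decomposes into two \emph{independent} walk bridges on $\llbracket 0,n/2\rrbracket$ and $\llbracket n/2,n\rrbracket$, and the midpoint value has a centered hypergeometric-type law with mean $z/2$; and (ii) the analogous midpoint decomposition for a Brownian bridge, whose midpoint value is Gaussian. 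These let one define $B^\sigma$ by sequentially sampling its dyadic midpoint increments, and then define each $\ell^{(n,z)}$ by sampling \emph{its} dyadic midpoint increments conditionally on the $B^\sigma$-midpoints already drawn, through the coupling described next.

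The quantitative heart of the argument is a one-step coupling lemma: for a sub-interval of dyadic length $m$ with prescribed endpoints $u,v$, couple the hypergeometric midpoint variable $V$ of the walk bridge with the Gaussian midpoint variable $G$ of the Brownian bridge with the same endpoints so that
\[
\mathbb{E}\!\left[\exp\!\big(a\,| (V-\mathbb{E}V) - (G - \mathbb{E}G)|\big)\right] \;\le\; C\exp\!\big(\kappa\, m^{-1}\rho(u,v,m)\big),
\]
uniformly in the admissible $(u,v,m)$, where $\rho(u,v,m)$ measures the squared distance of the sub-bridge's endpoints from the line of slope $p$, and $C,a,\kappa$ depend only on $p$. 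This is proved via the Koml\'os--Major--Tusn\'ady quantile (monotone) coupling: one compares the quantile functions of $V$ and $G$ using sharp Stirling / local-central-limit estimates for the hypergeometric law, which yield $|(V-\mathbb{E}V) - (G-\mathbb{E}G)| \lesssim 1 + m^{-1}\big((V-\mathbb{E}V)^2+1\big)$ together with sub-Gaussian tails for $V-\mathbb{E}V$; the endpoint-dependent factor arises because the sub-bridge is a binomial conditioned on an event of probability $\gtrsim m^{-1/2}e^{-\kappa\rho(u,v,m)/m}$. I expect this uniform hypergeometric quantile coupling --- and, above all, correctly accounting for how the constants degrade as a sub-bridge's endpoints approach the boundary of the allowed range --- to be \textbf{the main obstacle}; it is precisely here that the factor $e^{|z-pn|^2/n}$ in the statement is manufactured.

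With the one-step lemma in hand I would run the recursion. Writing $\Delta_j$ for the largest, over the $2^j$ sub-intervals at dyadic level $j$, of the level-$j$ coupling discrepancies between the piecewise-linear interpolations of $\ell^{(n,z)}$ and of $\sqrt n B^\sigma_{\cdot/n}+\tfrac{\cdot}{n}z$, a telescoping estimate over the $\log_2 n$ levels gives $\Delta(n,z)\le C\sum_{j=0}^{\log_2 n}\Delta_j$. Since, conditionally on the coarser levels, the $2^j$ sub-bridges at level $j$ are independent, a union bound against the sub-Gaussian/exponential tails of the one-step lemma gives $\mathbb{E}[e^{a'\Delta_j}]\le \exp\!\big(O(\log n) + (\text{level-}j\text{ endpoint loss})\big)$; combining the levels by H\"older's inequality, the $O(\log n)$ terms accumulate to $\exp(O((\log n)^2))$ while the endpoint losses telescope to $\exp(O(|z-pn|^2/n))$, so $\mathbb{E}[e^{a''\Delta(n,z)}]\le C e^{\alpha(\log n)^2}e^{\kappa'|z-pn|^2/n}$. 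Replacing $a''$ by $a''/\kappa'$ (Jensen's inequality) normalizes the coefficient of $|z-pn|^2/n$ to $1$, and since every $\ell^{(n,z)}$ was built from the same realization of the $B^\sigma$-midpoints through monotone couplings, they all live on one probability space, as required. As an alternative to the direct dyadic scheme, one may instead invoke the classical KMT strong approximation for i.i.d.\ sums as a black box and condition on $\{S_n=z\}$, paying the local-CLT density ratio $\mathbb{P}(S_n=z)^{-1}\le C\sqrt n\, e^{\kappa|z-pn|^2/n}$; but extracting a genuine Brownian bridge from the conditioned Brownian motion still requires care, so the direct construction seems cleaner.
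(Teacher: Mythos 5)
Your outline is essentially the paper's own argument (Section \ref{Section8}, adapted from [LF]): a dyadic midpoint construction in which the conditioned-walk midpoint is quantile-coupled to the Gaussian midpoint of the Brownian bridge, the one-step coupling estimate is proved by Stirling/local-CLT bounds for the midpoint law together with its sub-Gaussian concentration, and the $e^{\alpha(\log n)^2}$ factor accumulates from an $O(\sqrt n)$-type loss per level over $\log_2 n$ levels; you also correctly identify the quantile-coupling lemma (the paper's Lemma \ref{LQ1}) as the technical crux. Two places in your sketch, however, would not survive as written. First, the reduction to $n$ a power of two fails: the family $\ell^{(n,z)}$, $z=0,\dots,n$, of prescribed-endpoint bridges on $\llbracket 0,n\rrbracket$ cannot be extracted from a coupling built on $\llbracket 0,2^{\lceil\log_2 n\rceil}\rrbracket$ by ``absorbing the extra steps'' --- restricting a bridge changes the endpoint law, the Brownian bridge for the $[0,n]$ problem is not the restriction of the longer one, and a crude bound over $n$ extra steps would cost $e^{cn}$, which destroys (\ref{KMTeq}). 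The fix is simply to split at $\lfloor n/2\rfloor$ (the paper writes $n=k+(k+1)$ for odd $n$); your midpoint decompositions (i)--(ii) work verbatim at near-midpoints, so nothing else changes.

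Second, the assertion that ``the endpoint losses telescope to $\exp(O(|z-pn|^2/n))$'' is precisely where the real bookkeeping lies, and a naive H\"older-across-levels accumulation does not give it: at each halving the inherited factor $e^{b|w-pk|^2/k}=e^{2b|w-pk|^2/n}$ carries a doubled coefficient relative to the parent scale and must be integrated against the random midpoint $w$, so without further care the coefficient of $|z-pn|^2/n$ grows geometrically over the $\log_2 n$ levels. The paper closes the induction by proving the bound for every sufficiently small coefficient $b$ (small relative to the concentration rate $b_2$ of Lemma \ref{LQ2}) and splitting the sum over $w$ according to whether $|w-z/2|\le |z-pn|/6$: on that event the larger of the two halves' losses is at most $(4/9)(z-pn)^2$, so the coefficient $b$ is reproduced exactly at the next scale, while off it the sub-Gaussian midpoint bound absorbs the loss and contributes only to the $A_n^{s}$ prefactor. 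Your final Jensen renormalization of the coefficient to $1$ is legitimate, but only once some fixed coefficient has been propagated through all scales, which is exactly this step. A smaller point: in the paper's one-step lemma the $(z-pn)^2/n$ term comes from the variance mismatch between the Brownian-bridge midpoint (variance $p(1-p)n/4$) and the walk-bridge midpoint (variance $\approx (z/n)(1-z/n)n/4$), while the conditioning-probability factor $\mathbb{P}(W=w)^{-1}\lesssim \sqrt n\,e^{C(w-z/2)^2/n}$ produces the $(w-pm)^2/n$ term and the $\sqrt n$; your heuristic attributes both to the conditioning probability. With these repairs your proposal coincides with the paper's proof.
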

\begin{remark} When $p = 1/2$ the above theorem follows (after a trivial affine shift) from Theorem 6.3 in \cite{LF}. The proof we present in Section \ref{Section8} for the more general $p \in(0,1)$ case is based on (suitably adapted) arguments from the same paper.
\end{remark}

We will also need the following monotone coupling lemma for random walks, which can readily be established from the arguments used in the proof of Lemma 2.6 in \cite{CorHamA}. 
\begin{lemma}\label{LemmaMonRW}
Suppose  $a_1,b_1,a_2,b_2,t_1,t_2$ are given such that $t_1 < t_2$, $0 \leq b_2 - a_2 \leq t_2 - t_1$, $0 \leq b_1 - a_1 \leq t_2 - t_1$, $a_1 \leq a_2$, $b_1 \leq b_2$. Then there exists a probability space on which are defined random paths $\ell_1$ and $\ell_2$ such that the law of $\ell_i$ is $\mathbb{P}_{free}^{t_1, t_2, a_i,b_i}$ for $i = 1,2$ and 
$\mathbb{P}(\ell_1(s) \leq \ell_2(s), \mbox{ for } s = t_1,...,t_2) = 1$.
\end{lemma}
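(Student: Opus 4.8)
The plan is to build a monotone coupling by constructing $\ell_1$ and $\ell_2$ step by step on the same probability space, in the spirit of the proof of Lemma 2.6 in \cite{CorHamA}. Recall that a path in $\Omega(t_1,t_2;a,b)$ is encoded by a sequence of $t_2 - t_1$ signs (the $\pm$ encoding from Section \ref{Section4.1}), with exactly $b-a$ plus signs; equivalently, a uniform path is a uniformly random sequence with the prescribed number of up-steps. First I would recall the basic \emph{one-step} stochastic monotonicity for Bernoulli bridges: if $a_1 \le a_2$ and $b_1 - a_1 \le b_2 - a_2$ (i.e.\ the second bridge has at least as many remaining up-steps over the same number of remaining steps), then the conditional probability that the next increment is $+1$ is at least as large for the second bridge. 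This is an elementary hypergeometric comparison: the probability of a $+$ at the current step for a bridge that must make $k$ further up-steps out of $m$ remaining steps is $k/m$, and $k_1/m \le k_2/m$ whenever $k_1 \le k_2$.

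Next I would carry out the coupling inductively on the time coordinate $s = t_1, t_1+1, \dots, t_2$. Set $\ell_1(t_1) = a_1 \le a_2 = \ell_2(t_1)$. Suppose after $s$ steps we have constructed $\ell_1(s) \le \ell_2(s)$ and that, crucially, the number of up-steps still owed by $\ell_1$ (namely $b_1 - \ell_1(s)$) is at most that owed by $\ell_2$ (namely $b_2 - \ell_2(s)$); I would maintain both inequalities as the induction hypothesis. At step $s+1$ draw a single uniform random variable $U \sim \mathrm{Unif}(0,1)$ and let $\ell_i$ make an up-step iff $U \le p_i$, where $p_i$ is the conditional chance of an up-step for bridge $i$ given its current position and the remaining budget. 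By the one-step monotonicity above, $p_1 \le p_2$, so whenever $\ell_1$ steps up, so does $\ell_2$; this preserves $\ell_1(s+1) \le \ell_2(s+1)$. One then checks that the ordering of remaining up-step budgets is also preserved: the gap $\big(b_2 - \ell_2(s)\big) - \big(b_1 - \ell_1(s)\big)$ only changes when exactly one of the two paths steps up, and by the coupling that can only be $\ell_2$ (increasing $\ell_2$, hence decreasing its budget) — wait, this would \emph{decrease} the gap; so I must argue more carefully: the gap decreases by one only when $\ell_2$ steps up and $\ell_1$ does not, and since both paths have the same number of remaining steps and the same relation must hold at the terminal time (both gaps hit $0$ at $s = t_2$), the gap stays nonnegative throughout. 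A clean way to see this last point is that the coupled pair can never violate $\ell_1 \le \ell_2$ while both are legitimate bridges to their endpoints, and legitimacy forces the budget inequality. By construction each marginal $\ell_i$ performs, at each step, an up-step with exactly the correct conditional probability, so $\ell_i$ has law $\mathbb{P}_{free}^{t_1,t_2,a_i,b_i}$; and by induction $\mathbb{P}(\ell_1(s) \le \ell_2(s)$ for all $s) = 1$.

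The main obstacle, and the point requiring the most care, is verifying that the auxiliary ordering of \emph{remaining up-step budgets} is genuinely preserved by the coupling, since this is what guarantees $p_1 \le p_2$ persists at every step — a naive coupling could let $\ell_1$ catch up in position while falling behind in budget, breaking the induction. The resolution is to track the pair $\big(\ell_2(s) - \ell_1(s),\, (b_2-b_1) - (\ell_2(s)-\ell_1(s))\big)$ and observe both coordinates are nonnegative integers that can only change by the coupled single-coin mechanism; combined with the boundary conditions $\ell_i(t_1)=a_i$, $\ell_i(t_2)=b_i$, this forces both to stay $\ge 0$. Alternatively, one can bypass the explicit induction entirely by invoking the standard fact (used already in \cite{CorHamA}) that $\mathbb{P}_{free}^{t_1,t_2,a_2,b_2}$ stochastically dominates $\mathbb{P}_{free}^{t_1,t_2,a_1,b_1}$ in the coordinate-wise partial order on paths — which follows from the FKG / Holley inequality for the uniform measure on monotone lattice paths — and then applying Strassen's theorem to obtain the monotone coupling; I would mention this as the quicker route and give the step-by-step construction only if an explicit coupling is wanted downstream.
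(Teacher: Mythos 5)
Your single-coin sequential coupling is the right mechanism, but the justification as written has a genuine gap: your induction hypothesis requires the remaining up-step budgets to be ordered, $b_1-\ell_1(s)\le b_2-\ell_2(s)$, and this is neither implied by the hypotheses of the lemma nor preserved in the form you state. At time $t_1$ it reads $b_1-a_1\le b_2-a_2$, which does not follow from $a_1\le a_2$, $b_1\le b_2$ (take $t_2-t_1=4$, $a_1=0$, $b_1=3$, $a_2=2$, $b_2=3$: then $k_1=3>1=k_2$). Moreover your attempted patch is false: the second coordinate of your tracked pair, $(b_2-b_1)-(\ell_2(s)-\ell_1(s))$, is exactly the budget gap $k_2(s)-k_1(s)$ and it can be negative while both paths are perfectly legitimate bridges (in the example above, the state $\ell_1(s)=1$, $\ell_2(s)=2$ is reachable and has budget gap $-1$), so the appeal to "legitimacy forces the budget inequality" does not work. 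The correct invariant is weaker and suffices: you only need $g(s)=\ell_2(s)-\ell_1(s)\ge 0$. Since $g(s)=(b_2-b_1)+(k_1(s)-k_2(s))$, the only step at which $g$ can decrease under your coupling is when $p_1>p_2$, i.e.\ $k_1(s)>k_2(s)$, and in that case $g(s)\ge (b_2-b_1)+1\ge 1$; as $g$ changes by at most one per step, it never becomes negative. With this one-line replacement of the induction hypothesis your construction is complete, and the marginals are correct exactly as you argue.

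For comparison, the paper does not write out a proof at all: it invokes the arguments of Lemma 2.6 of \cite{CorHamA}, which produce the monotone coupling by a Markov chain (heat-bath resampling) argument for bridge measures rather than by a step-by-step coin coupling. Your closing alternative --- stochastic domination of $\mathbb{P}_{free}^{t_1,t_2,a_2,b_2}$ over $\mathbb{P}_{free}^{t_1,t_2,a_1,b_1}$ via Holley/FKG on the lattice of up-right paths (pointwise max and min of two up-right paths are again up-right, and the max lands in $\Omega(t_1,t_2;a_2,b_2)$, the min in $\Omega(t_1,t_2;a_1,b_1)$, so the Holley condition holds with equality on the relevant pairs), followed by Strassen's theorem --- is also a valid route, with the usual caveat that one needs a version of Holley's inequality accommodating measures that are not strictly positive on the whole lattice. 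Either of these fixes yields the lemma; only the budget-ordering induction as literally stated would fail.
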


Using facts about Brownian motion and the above coupling results we establish the following statements for random paths.
\begin{lemma}\label{LemmaHalf}
Let $M > 0$ and $p \in (0,1)$ be given. Then we can find $N_0(M,p)$ such that for $N \geq N_0$, $N \geq z \geq p N +  MN^{1/2}$ and $s \in [0,N]$ we have 
\begin{equation}\label{halfEq1}
\mathbb{P}^{0,N;0,z}_{free}\left( \ell(s)  \geq \frac{s}{N}( pN+ MN^{1/2}) - N^{1/4} \right) \geq 1/3.
\end{equation}
\end{lemma}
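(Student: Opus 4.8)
The plan is to deduce the estimate from the strong coupling of Theorem \ref{KMT}, together with the monotone coupling Lemma \ref{LemmaMonRW} and the symmetry of a Brownian bridge about zero. First I would reduce to the smallest admissible endpoint. Set $z_0 = \lceil pN + MN^{1/2}\rceil$ and $c = c(N,s) = \frac{s}{N}(pN + MN^{1/2}) - N^{1/4}$; note that $c$ does not depend on $z$, and that any integer $z$ with $pN + MN^{1/2} \le z \le N$ satisfies $z_0 \le z \le N$. Since the event $\{\ell : \ell(s) \ge c\}$ is increasing with respect to the natural partial order on up-right paths, Lemma \ref{LemmaMonRW} (applied with $t_1 = 0$, $t_2 = N$, $a_1 = a_2 = 0$, $b_1 = z_0 \le b_2 = z$) gives $\mathbb{P}^{0,N;0,z}_{free}(\ell(s) \ge c) \ge \mathbb{P}^{0,N;0,z_0}_{free}(\ell(s) \ge c)$, so it suffices to prove $\mathbb{P}^{0,N;0,z_0}_{free}(\ell(s) \ge c) \ge 1/3$ for all large $N$ and all $s \in [0,N]$.

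Next I would invoke Theorem \ref{KMT} with $n = N$ and endpoint $z_0$: on a common probability space we obtain a Brownian bridge $B^\sigma$ of variance $\sigma^2 = p(1-p)$ and a path $\ell^{(N,z_0)}$ of law $\mathbb{P}^{0,N;0,z_0}_{free}$ such that $\Delta(N,z_0) = \sup_{0 \le t \le N}\bigl|\sqrt{N} B^\sigma_{t/N} + \frac{t}{N}z_0 - \ell^{(N,z_0)}(t)\bigr|$ satisfies $\mathbb{E}[e^{a\Delta(N,z_0)}] \le C e^{\alpha(\log N)^2} e^{(z_0 - pN)^2/N}$. Because $0 \le z_0 - pN \le MN^{1/2} + 1$, the last exponent is at most $M^2 + 1$ once $N$ is large, so Markov's inequality yields $\mathbb{P}\bigl(\Delta(N,z_0) \ge \frac12 N^{1/4}\bigr) \le C e^{M^2 + 1} e^{\alpha(\log N)^2 - \frac{a}{2} N^{1/4}}$, which tends to $0$ as $N \to \infty$. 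I would then choose $N_0(M,p)$ so that this probability is at most $1/6$ for all $N \ge N_0$ (and so that $(z_0 - pN)^2/N \le M^2 + 1$ there).

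Finally I would combine these with a deterministic containment. On the event $\{\Delta(N,z_0) < \frac12 N^{1/4}\}$ one has, using $z_0 \ge pN + MN^{1/2}$, that $\ell^{(N,z_0)}(s) \ge \sqrt{N} B^\sigma_{s/N} + \frac{s}{N}(pN + MN^{1/2}) - \frac12 N^{1/4}$, so on the further intersection with $\{B^\sigma_{s/N} \ge -\frac12 N^{-1/4}\}$ one gets $\ell^{(N,z_0)}(s) \ge c$. Since $B^\sigma_{s/N}$ is a centered (possibly degenerate) Gaussian, $\mathbb{P}(B^\sigma_{s/N} \ge -\frac12 N^{-1/4}) \ge \mathbb{P}(B^\sigma_{s/N} \ge 0) \ge 1/2$, and this bound is uniform over $s \in [0,N]$. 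A union bound then gives $\mathbb{P}^{0,N;0,z_0}_{free}(\ell(s) \ge c) \ge \frac12 - \frac16 = \frac13$ for $N \ge N_0$, which together with the first step proves the lemma.

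There is no substantial obstacle here beyond bookkeeping: the essential content is Theorem \ref{KMT}, and the only points requiring attention are that the decay of the Markov bound survives the factor $e^{(z_0 - pN)^2/N} \le e^{M^2+1}$, and that the final lower bound be uniform in the free parameters $s$ and $z$ — the former is handled by the coupling reduction to $z_0$, and the latter by the Gaussian symmetry bound holding for every value of $s/N \in [0,1]$.
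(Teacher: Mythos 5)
Your proof is correct and follows essentially the same route as the paper's: reduce to the minimal endpoint $z_0=\lceil pN+MN^{1/2}\rceil$ via the monotone coupling of Lemma \ref{LemmaMonRW}, then use the strong coupling of Theorem \ref{KMT} together with an exponential-moment/Markov bound on $\Delta(N,z_0)$ and the bound $\mathbb{P}(B^\sigma_{s/N}\ge 0)\ge 1/2$ to get $1/2-1/6=1/3$. The only differences (splitting the $N^{1/4}$ slack between the coupling error and the bridge, and using $z_0\ge pN+MN^{1/2}$ instead of the $\pm1$ correction) are cosmetic bookkeeping.
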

\begin{proof}
Assume that $N_0 \geq 2M^2$ and $N \geq N_0$. In view of Lemma \ref{LemmaMonRW}, we know that 
$$ \mathbb{P}^{0,N;0,z_2}_{free}\left( \ell(s)  \geq \frac{s}{N}( pN+ MN^{1/2}) - N^{1/4} \right) \geq   \mathbb{P}^{0,N;0,z_1}_{free}\left( \ell(s)  \geq \frac{s}{N}( pN+ MN^{1/2}) - N^{1/4} \right),$$
whenever $z_2 \geq z_1$ and so it suffices to prove the lemma when $z =  \lceil pN +  MN^{1/2} \rceil$. Suppose we have the same coupling as in Theorem \ref{KMT} and let $\mathbb{P}$ denote the probability measure on the space afforded by the theorem. Then we have for $\sigma^2 = p(1-p)$ that
$$ \mathbb{P}^{0,N;0,z}_{free}\left( \ell(s) \geq \frac{s}{N}( pN + MN^{1/2}) - N^{1/4} \right) = \mathbb{P}\left( \ell^{(N,z)}(s) \geq \frac{s}{N}(pN + MN^{1/2} ) - N^{1/4} \right) \geq$$
$$\geq \mathbb{P}\left( N^{1/2}B^{\sigma}_{s/N} \geq 0 \mbox{ and } \Delta(N,z) \leq (N^{1/4} - 1)\right ) \geq 1/2 - \mathbb{P}\left(\Delta(N,z) > N^{1/4}-1\right).$$
In the next to last inequality we used that $|z - (pN + MN^{1/2} )| \leq 1$ and in last inequality we used that $\mathbb{P}(B^{v}_{s/N} \geq 0) = 1/2$ for every $v > 0$ and $s \in [0,N]$. Next by Theorem \ref{KMT} and Chebyshev's inequality  we know
$$\mathbb{P}\left(\Delta(N,z) > N^{1/4}-1\right) \leq Ce^{\alpha (\log N)^2} e^{M^2} e^{-a N^{1/4}}.$$
The latter is at most $1/6$ if we take $N_0$ sufficiently large and $N \geq N_0$, which would imply that $\mathbb{P}^{0,N;0,z}_{free}\left( \ell(s) \geq (s/N)( pN + MN^{1/2}) - N^{1/4} \right)  \geq 1/3$ for such $N$, as desired.
\end{proof}

\begin{lemma}\label{LemmaTail}
Let $M_1,M_2 > 0$ and $p \in (0,1)$ be given. Then we can find $N_0(M_1,M_2,p)$ such that for $N \geq N_0$, $ z_1 \geq -M_1N^{1/2}$, $ z_2 \geq pN -  M_1N^{1/2}$ we have 
\begin{equation}\label{halfEq2}
\mathbb{P}^{0,N;z_1,z_2}_{free}\left( \ell(N/2)  \geq \frac{M_2N^{1/2} + p N}{2} - N^{1/4} \right) \geq (1/2) (1 - \Phi^{p(1-p)/2}(M_1 + M_2) ),
\end{equation}
where $\Phi^{v}$ is the cdf of a Gaussian random variable with mean $0$ and variance $v$.
\end{lemma}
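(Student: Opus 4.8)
\textbf{Proof plan for Lemma \ref{LemmaTail}.} The plan is to reduce the bridge statement to a direct Gaussian computation via the strong coupling of Theorem \ref{KMT}, after first using the monotone coupling of Lemma \ref{LemmaMonRW} to replace the boundary data $(z_1,z_2)$ by its worst (lowest) allowed values. Concretely, since raising either endpoint stochastically pushes the path up, Lemma \ref{LemmaMonRW} lets us assume without loss of generality that $z_1 = \lceil -M_1 N^{1/2}\rceil$ and $z_2 = \lceil pN - M_1 N^{1/2}\rceil$ (one has to be slightly careful that these integers still satisfy $0 \le z_2 - z_1 \le N$, which holds for $N$ large). After this reduction the event in question only needs to be bounded below for this single choice of endpoints.

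Next I would transport the problem to a standard Brownian bridge of variance $\sigma^2 = p(1-p)$ on $[0,1]$. Writing $\ell^{(N,z)}$ for the coupled path of Theorem \ref{KMT} — but note that theorem is stated for paths starting at $0$, so I would first affinely shift: a path in $\Omega(0,N;z_1,z_2)$ is $z_1$ plus a path in $\Omega(0,N;0,z_2-z_1)$, and $z_2 - z_1 = pN + (\text{something of order }N^{1/2})$. Applying Theorem \ref{KMT} with this endpoint, on the coupling probability space $\mathbb{P}$ we have, for $s = N/2$,
\begin{equation*}
\ell(N/2) \;\ge\; z_1 + \tfrac12(z_2 - z_1) + \sqrt{N}\, B^{\sigma}_{1/2} - \Delta(N, z_2 - z_1),
\end{equation*}
and since $z_1 + \tfrac12(z_2-z_1) = \tfrac12(z_1 + z_2) \ge \tfrac12(pN - 2M_1 N^{1/2})$, the event $\{\ell(N/2) \ge \tfrac12(M_2 N^{1/2} + pN) - N^{1/4}\}$ contains the event
\begin{equation*}
\Bigl\{ \sqrt{N}\, B^{\sigma}_{1/2} \;\ge\; \tfrac12(M_1 + M_2) N^{1/2} - N^{1/4} + \Delta(N, z_2 - z_1) \Bigr\}.
\end{equation*}
Intersecting with the good event $\{\Delta(N, z_2-z_1) \le N^{1/4}\}$ and dividing by $\sqrt N$, it suffices to bound below $\mathbb{P}\bigl(B^\sigma_{1/2} \ge \tfrac12(M_1+M_2) - 2N^{-3/4}\bigr) - \mathbb{P}(\Delta(N,z_2-z_1) > N^{1/4})$.

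Finally I would estimate each term. The random variable $B^\sigma_{1/2}$ is centered Gaussian with variance $\sigma^2/4 = p(1-p)/4$, so $\mathbb{P}(B^\sigma_{1/2} \ge \tfrac12(M_1+M_2) - 2N^{-3/4})$ converges, as $N\to\infty$, to $1 - \Phi^{p(1-p)/4}(\tfrac12(M_1+M_2)) = 1 - \Phi^{p(1-p)/2}(M_1+M_2)$ by the scaling relation $\Phi^{v}(x) = \Phi^{4v}(2x)$; in particular for $N$ large it exceeds, say, $\tfrac34(1 - \Phi^{p(1-p)/2}(M_1+M_2))$ — and if the right-hand side target quantity is not strictly smaller than this one simply absorbs the gap into the choice of $N_0$. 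For the error term, Theorem \ref{KMT} together with Chebyshev's inequality gives
\begin{equation*}
\mathbb{P}\bigl(\Delta(N, z_2-z_1) > N^{1/4}\bigr) \;\le\; C e^{\alpha(\log N)^2} e^{|(z_2-z_1) - pN|^2/N} e^{-a N^{1/4}} \;\le\; C e^{\alpha(\log N)^2} e^{M_1^2} e^{-aN^{1/4}},
\end{equation*}
using $|(z_2-z_1) - pN| \le M_1 N^{1/2} + O(1)$, and this tends to $0$; so for $N \ge N_0(M_1,M_2,p)$ large enough it is at most $\tfrac14(1 - \Phi^{p(1-p)/2}(M_1+M_2))$. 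Combining the two bounds yields the claimed lower bound $\tfrac12(1 - \Phi^{p(1-p)/2}(M_1+M_2))$. The only mildly delicate point — and the one I would be most careful about — is the bookkeeping of the $O(1)$ integer-rounding errors in the endpoints and the verification that the Gaussian limit is approached uniformly enough to beat the target constant for all admissible $N_0$; everything else is routine.
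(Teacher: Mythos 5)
Your route is the same as the paper's: reduce to the extreme endpoints via the monotone coupling of Lemma \ref{LemmaMonRW}, shift to a bridge started at $0$, invoke the strong coupling of Theorem \ref{KMT}, control $\Delta(N,z_2-z_1)$ by Chebyshev's inequality and the exponential moment bound, and finish with a Gaussian computation for $B^\sigma_{1/2}$. The problem is in the arithmetic of the key containment. With $z_1=\lceil -M_1N^{1/2}\rceil$ and $z_2=\lceil pN-M_1N^{1/2}\rceil$ one only has $\tfrac12(z_1+z_2)\ge \tfrac12 pN - M_1N^{1/2}$, so the target event $\{\ell(N/2)\ge \tfrac12(M_2N^{1/2}+pN)-N^{1/4}\}$ contains $\{\sqrt N B^\sigma_{1/2}\ge (M_1+\tfrac12 M_2)N^{1/2}-N^{1/4}+\Delta\}$, i.e.\ the coefficient of $M_1$ is $1$, not $1/2$; your event with threshold $\tfrac12(M_1+M_2)N^{1/2}$ is strictly larger and the claimed containment fails for any $M_1>0$ (the paper's proof has $\tfrac{M_2+2M_1}{2}$ at this point). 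You also misapply the scaling identity: $\Phi^{v}(x)=\Phi^{4v}(2x)$ gives $\Phi^{p(1-p)/4}\bigl(\tfrac12(M_1+M_2)\bigr)=\Phi^{p(1-p)}(M_1+M_2)$, not $\Phi^{p(1-p)/2}(M_1+M_2)$.

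These are not slips you can absorb into $N_0$. With the correct threshold $M_1+\tfrac12 M_2$ and the midpoint variance $p(1-p)/4$ that you (correctly) use, the limiting probability at the extreme endpoints is $1-\Phi^{p(1-p)/4}(M_1+\tfrac12 M_2)=1-\Phi^{p(1-p)}(2M_1+M_2)$, and when $M_1$ is large compared to $M_2$ this is smaller than any fixed fraction of $1-\Phi^{p(1-p)/2}(M_1+M_2)$ (compare the Gaussian tails at $(2M_1+M_2)/\sqrt{p(1-p)}$ and $\sqrt2(M_1+M_2)/\sqrt{p(1-p)}$); a constant-factor deficit depending on $M_1,M_2$ cannot be repaired by taking $N_0$ larger, which was your stated fallback. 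For what it is worth, the paper's own proof reaches the stated constant by asserting that $B^\sigma_{1/2}$ has variance $\sigma^2/2$ rather than $\sigma^2/4$, so the constant in (\ref{halfEq2}) is tied to that bookkeeping; what your argument honestly delivers is a bound such as $\tfrac12\bigl(1-\Phi^{p(1-p)/4}(M_1+M_2)\bigr)$, which is perfectly adequate for the only place the lemma is used (Lemma \ref{LemmaAP1} needs some explicit positive function of $M_1,M_2,p$), but as written your proposal does not prove the inequality with the constant stated in Lemma \ref{LemmaTail}, and the two arithmetic errors above are what make it appear to.
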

\begin{proof}
Assume that $N_0 \geq 2(M_1 + M_2)^2$ and $N \geq N_0$. In view of Lemma \ref{LemmaMonRW} it suffices to prove the lemma when $z_1 = \lceil -M_1N^{1/2} \rceil$ and $ z_2  = \lceil p N -  M_1N^{1/2} \rceil$. Set $\Delta z = z_2 - z_1$ and observe that
$$ \mathbb{P}^{0,N;z_1,z_2}_{free}\left(\ell(N/2)  \geq \frac{M_2N^{1/2} + p N}{2}- N^{1/4} \right) =  \mathbb{P}^{0,N;0,\Delta z}_{free}\left(\ell(N/2)  \geq \frac{M_2N^{1/2} + p N}{2}-z_1 - N^{1/4} \right).$$
Suppose we have the same coupling as in Theorem \ref{KMT} and let $\mathbb{P}$ denote the probability measure on the space afforded by the theorem. Then we have
$$\mathbb{P}^{0,N;0,\Delta z}_{free}\left(\ell(N/2)  \geq \frac{M_2N^{1/2} + p N}{2}- z_1 - N^{1/4} \right) = \mathbb{P}\left( \ell^{(N,\Delta z)}(N/2) \geq   \frac{M_2N^{1/2} + p N}{2} - z_1 - N^{1/4} \right) $$
$$ \geq \mathbb{P}\left( \ell^{(N,\Delta z)}(N/2) \geq   \frac{(2M_1 + M_2)N^{1/2} + \Delta z}{2}- N^{1/4} + 2 \right), $$ 
where we used that $|z_1 + M_1N^{1/2}| \leq 1$ and $|z_2 + M_1N^{1/2} - pN| \leq 1$. We now note that the expression in the second line above is bounded from below by
$$\mathbb{P}\left( B^{\sigma}_{1/2} \geq  \frac{M_2 + 2M_1}{2} \mbox{ and } \Delta(N,z) \leq N^{1/4} - 2 \right) , \mbox{ where } \sigma^2 = p (1-p).$$
Since $B^{\sigma}_{1/2}$ has the distribution of a normal random variable with mean $0$ and variance $\sigma^2/2$, and $\Phi^{v}$ is decreasing on $\mathbb{R}_{ > 0}$  we conclude that the last expression is bounded from below by
$$ 1 - \Phi^{p(1-p)/2}(M_1 + M_2) - \mathbb{P}\left(\Delta(N,z) > N^{1/4}-2\right) \geq 1 - \Phi^{p(1-p)/2}(M_1 + M_2)  - Ce^{\alpha(\log N)^2} e^{M^2} e^{-a N^{1/4}}.$$
In the last inequality we used Theorem \ref{KMT} and Chebyshev's inequality. The above is at least $(1/2) (1 - \Phi^{p(1-p)/2}(M_1 + M_2) ) $ if $N_0$ is taken sufficiently large and $N \geq N_0$. 
\end{proof}

\begin{lemma}\label{LemmaAway}
Let $p \in (0,1)$ be given. Then we can find $N_0(p)$ such that for $N \geq N_0$, $ z_1 \geq N^{1/2}$, $ z_2 \geq pN +  N^{1/2}$ we have 
\begin{equation}\label{away}
\mathbb{P}^{0,N;z_1,z_2}_{free}\left( \min_{s \in [0,N]} \left[ \ell(s) -ps \right]+ N^{1/4} \geq 0 \right) \geq \frac{1}{2} \left(1 - \exp\left(\frac{-2}{p(1-p)}\right)\right).
\end{equation}
\end{lemma}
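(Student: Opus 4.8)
The plan is to follow the template of Lemmas \ref{LemmaHalf} and \ref{LemmaTail}: reduce to the extremal choice of endpoints via the monotone coupling, transfer the question to a Brownian bridge through the strong coupling of Theorem \ref{KMT}, and finish with the reflection formula for the minimum of a Brownian bridge. First I would note that the event $E_N:=\{\min_{s\in[0,N]}[\ell(s)-ps]+N^{1/4}\ge 0\}=\{\ell(s)\ge ps-N^{1/4}\text{ for all }s\in[0,N]\}$ is increasing for the pointwise partial order on up-right paths. Since $z_1,z_2\in\mathbb{Z}$ satisfy $z_1\ge N^{1/2}$ and $z_2\ge pN+N^{1/2}$, we have $z_1\ge z_1':=\lceil N^{1/2}\rceil$ and $z_2\ge z_2':=\lceil pN+N^{1/2}\rceil$, with $\Omega(0,N;z_1',z_2')$ non-empty for $N$ large (as $z_2'-z_1'=pN+O(1)\in(0,N)$). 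By Lemma \ref{LemmaMonRW} one can couple a path of law $\mathbb{P}^{0,N;z_1',z_2'}_{free}$ below a path of law $\mathbb{P}^{0,N;z_1,z_2}_{free}$, so it suffices to prove (\ref{away}) for $z_1=z_1'$ and $z_2=z_2'$.

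Next, writing $\ell(s)=z_1'+\tilde\ell(s)$ with $\tilde\ell$ of law $\mathbb{P}^{0,N;0,\Delta z}_{free}$, $\Delta z:=z_2'-z_1'$, I would invoke the coupling of Theorem \ref{KMT} with $n=N$ and $z=\Delta z$: on that space, with $\sigma^2=p(1-p)$ and $\Delta(N,\Delta z)$ as there, $\tilde\ell(s)\ge \sqrt N\,B^\sigma_{s/N}+\tfrac sN\Delta z-\Delta(N,\Delta z)$ for all $s\in[0,N]$. Because $z_1'\ge N^{1/2}$ and $\Delta z-pN=\lceil pN+N^{1/2}\rceil-\lceil N^{1/2}\rceil-pN\in(-1,1)$ (so $\tfrac sN(\Delta z-pN)\ge -1$ on $[0,N]$), this yields
\[ \min_{s\in[0,N]}\bigl[\ell(s)-ps\bigr]\;\ge\;\sqrt N\Bigl(1+\min_{u\in[0,1]}B^\sigma_u\Bigr)-1-\Delta(N,\Delta z), \]
so that $E_N$ occurs whenever $\min_{u\in[0,1]}B^\sigma_u\ge -1$ and $\Delta(N,\Delta z)\le N^{1/4}-1$.

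Finally, the reflection principle for the Brownian bridge (with the normalization $\mathrm{Var}(B^\sigma_t)=p(1-p)t(1-t)$) gives $\mathbb{P}(\min_{u\in[0,1]}B^\sigma_u\ge -1)=1-\exp(-2/(p(1-p)))$, while Theorem \ref{KMT} and Chebyshev's inequality give $\mathbb{P}(\Delta(N,\Delta z)>N^{1/4}-1)\le C e^{a}e^{\alpha(\log N)^2}e^{|\Delta z-pN|^2/N}e^{-aN^{1/4}}$; since $|\Delta z-pN|^2/N<1$ this tends to $0$, hence is at most $\tfrac12\bigl(1-\exp(-2/(p(1-p)))\bigr)$ for $N\ge N_0(p)$. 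Subtracting and combining with the reduction step gives (\ref{away}). This is essentially a routine variant of Lemmas \ref{LemmaHalf} and \ref{LemmaTail}; the only places needing care — the closest thing to an obstacle — are the $O(1)$ bookkeeping from the ceilings (to keep the drift term $\tfrac sN(\Delta z-pN)$ and the exponent $|\Delta z-pN|^2/N$ bounded by constants) and tracking the normalization of $B^\sigma$ so that the reflection formula produces precisely the stated constant $\exp(-2/(p(1-p)))$.
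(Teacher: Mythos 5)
Your proposal is correct and follows essentially the same route as the paper's proof: reduction to the extremal endpoints $z_1=\lceil N^{1/2}\rceil$, $z_2=\lceil pN+N^{1/2}\rceil$ via the monotone coupling of Lemma \ref{LemmaMonRW}, transfer to the Brownian bridge through the strong coupling of Theorem \ref{KMT} with the tail bound on $\Delta(N,\Delta z)$, and the reflection formula $\mathbb{P}(\min_{u\in[0,1]}B^\sigma_u\ge -1)=1-e^{-2/(p(1-p))}$. The only differences are cosmetic bookkeeping (you absorb the shift by $z_1$ directly into the pathwise lower bound rather than restating the event under $\mathbb{P}^{0,N;0,\Delta z}_{free}$), so nothing further is needed.
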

\begin{proof}
In view of Lemma \ref{LemmaMonRW} it suffices to prove the lemma when $z_1 = \lceil N^{1/2} \rceil$ and $z_2 =  \lceil  pN +  N^{1/2} \rceil$. Set $\Delta z = z_2 - z_1$ and observe that
$$\mathbb{P}^{0,N;z_1,z_2}_{free}\left( \min_{s \in [0,N]}\left[ \ell(s) -ps \right]+ N^{1/4} \geq 0 \right) = \mathbb{P}^{0,N;0,\Delta z}_{free}\left( \min_{s \in [0,N]} \left[ \ell(s) -ps \right]+ N^{1/4} \geq -z_1 \right).$$
Suppose we have the same coupling as in Theorem \ref{KMT} and let $\mathbb{P}$ denote the probability measure on the space afforded by the theorem. Then we have
$$\mathbb{P}^{0,N;0,\Delta z}_{free}\left( \min_{s \in [0,N]} \left[ \ell(s) -ps \right] + N^{1/4} \geq -z_1 \right) = \mathbb{P}\left( \min_{s \in [0,N]} \left[ \ell^{(N, \Delta z)}(s) - ps \right]\geq  - N^{1/4}  -z_1 \right) \geq $$
$$\mathbb{P}\left( \min_{s \in [0,N]} \left[ \ell^{(N, \Delta z)}(s) - \frac{s}{N} \Delta z \right]  \geq  - N^{1/4} - N^{1/2} + 2 \right),$$
where in the last inequality we used that $|z_1 - N^{1/2}| \leq 1$ and $|z_2 - pN - N^{1/2}| \leq 1$. We now note that the expression in the second line above is bounded from below by
$$\mathbb{P}\left( \min_{s \in [0,1]} B^{\sigma}_s \geq - 1 \mbox{ and } \Delta(N,z) \leq N^{1/4} -2  \right), \mbox{ where } \sigma^2 = p(1-p).$$
We can lower-bound the above expression by $\mathbb{P}\left( \min_{s \in [0,1]} B^{\sigma}_s \geq - 1 \right) - \mathbb{P}\left(\Delta(N,z) \leq N^{1/4} -2  \right)$. By basic properties of Brownian bridges we know that 
$$ \mathbb{P}\left( \min_{s \in [0,1]} B^{\sigma}_s \geq - 1 \right)  = \mathbb{P}\left( \min_{s \in [0,1]} B^{1}_s \geq - \sigma^{-1} \right) =  \mathbb{P}\left( \max_{s \in [0,1]} B^{1}_s \leq \sigma^{-1} \right) = 1 - e^{-2\sigma^{-2}},$$
where the last equality can be found for example  in (3.40) of Chapter 4 of \cite{KS}. On the other hand, by Theorem \ref{KMT} and Chebyshev's inequality we have
$$\mathbb{P}\left(\Delta(N,z) > N^{1/4}-2\right)  \leq Ce^{\alpha(\log N)^2} e^{M^2} e^{-a N^{1/4}},$$
and the latter is at most $(1/2)(1 - e^{-2\sigma^{-2}})$ if $N_0$ is taken sufficiently large and $N \geq N_0$. Combining the above estimates we conclude that if $N_0$ is sufficiently large and $N \geq N_0$, we have $\mathbb{P}^{0,N;z_1,z_2}_{free}\left( \min_{s \in [0,N]} \left[ \ell(s) -ps \right] + N^{1/4} \geq 0 \right)  \geq (1/2)(1 - e^{-2\sigma^{-2}})$ as desired.

\end{proof}

\subsection{Modulus of continuity for random paths}\label{Section4.3}
For a function $f \in C[a,b]$ we define the {\em modulus of continuity} by
\begin{equation}\label{MOC}
w(f,\delta) = \sup_{\substack{x,y \in [a,b]\\ |x-y| \leq \delta}} |f(x) - f(y)|.
\end{equation}
In this section we derive estimates on the modulus of continuity of paths distributed according to $\mathbb{P}^{0,n;0,z}_{free}$ for $z \in \{0,...,n\}$, which are essential for the proof of Theorem \ref{PropTightGood}. Recall that if $L$ is such a path, we define $L(s)$ for non-integral $s$ by linear interpolation (see Section \ref{Section3.1}). The main result we want to show is as follows.
\begin{lemma}\label{MOCLemma}
Let $M > 0$ and $p \in (0,1)$ be given. For each positive $\epsilon$ and $\eta$, there exist a $\delta > 0$ and an $N_0 \in \mathbb{N}$ (depending on $\epsilon, \eta, M$ and $p$) such that  for $N \geq N_0$ we have
\begin{equation}\label{MOCeq}
\mathbb{P}^{0,N;0,z}_{free}\left( w({f^\ell},\delta) \geq \epsilon \right) \leq \eta,
\end{equation}
where $f^\ell(x) = N^{-1/2}(\ell(xN) - pxN)$  for $x \in [0,1]$ and $|z - pN| \leq MN^{1/2}$. 
\end{lemma}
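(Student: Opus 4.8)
The plan is to reduce the statement to the corresponding tightness/equicontinuity fact for Brownian bridges by invoking the strong coupling of Theorem \ref{KMT}. First I would observe that by Lemma \ref{LemmaMonRW} (the monotone coupling), or more simply by translating the endpoint, it suffices to treat the worst case $z = \lceil pN + MN^{1/2}\rceil$, or even to keep $z$ general with $|z-pN|\le MN^{1/2}$; this uniformity in $z$ is harmless because the coupling bound in \eqref{KMTeq} depends on $z$ only through $e^{|z-pn|^2/n}$, which is bounded by $e^{M^2}$ on our range. So fix such a $z$ and place $\ell = \ell^{(N,z)}$ on the probability space of Theorem \ref{KMT} together with a Brownian bridge $B^\sigma$ of variance $\sigma^2 = p(1-p)$, so that $\Delta(N,z) = \sup_{0\le t\le N}|\sqrt N B^\sigma_{t/N} + \tfrac tN z - \ell(t)|$ has exponential moments controlled by $Ce^{\alpha(\log N)^2}e^{M^2}$.

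Next I would write, for $x\in[0,1]$,
\begin{equation*}
f^\ell(x) = N^{-1/2}\big(\ell(xN) - pxN\big) = B^\sigma_x + x\,N^{-1/2}(z - pN) - N^{-1/2}\big(\sqrt N B^\sigma_x + x z - \ell(xN)\big).
\end{equation*}
The middle term $x\,N^{-1/2}(z-pN)$ is a linear function of $x$ with slope bounded by $M$ in absolute value, so its contribution to $w(\cdot,\delta)$ is at most $M\delta$. The last term is bounded in sup-norm by $N^{-1/2}\Delta(N,z)$, hence contributes at most $2N^{-1/2}\Delta(N,z)$ to the modulus of continuity. Therefore
\begin{equation*}
w(f^\ell,\delta) \le w(B^\sigma,\delta) + M\delta + 2N^{-1/2}\Delta(N,z).
\end{equation*}
Given $\epsilon,\eta>0$, choose $\delta$ small enough that $M\delta<\epsilon/3$ and, using the standard fact that $\mathbb{P}(w(B^\sigma,\delta)\ge \epsilon/3)\to 0$ as $\delta\to 0$ (equicontinuity of Brownian bridge sample paths, e.g. via the Lévy modulus or Chapter 7 of \cite{Bill}), small enough that this probability is $\le \eta/2$. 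Then by Chebyshev's inequality and the moment bound of Theorem \ref{KMT},
\begin{equation*}
\mathbb{P}\big(2N^{-1/2}\Delta(N,z) \ge \epsilon/3\big) \le e^{-a\epsilon N^{1/2}/6}\,\mathbb{E}\big[e^{a\Delta(N,z)}\big] \le C e^{\alpha(\log N)^2} e^{M^2} e^{-a\epsilon N^{1/2}/6},
\end{equation*}
which tends to $0$; pick $N_0$ so that it is $\le \eta/2$ for $N\ge N_0$. Combining the three bounds via a union bound gives $\mathbb{P}^{0,N;0,z}_{free}(w(f^\ell,\delta)\ge\epsilon)\le\eta$, as desired.

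I do not expect a serious obstacle here: the only mild point requiring care is getting the estimate uniform over all admissible $z$ with $|z-pN|\le MN^{1/2}$ (rather than for a single endpoint), but as noted the $z$-dependence in \eqref{KMTeq} is exactly the Gaussian factor $e^{|z-pN|^2/N}\le e^{M^2}$, so the coupling bound is already uniform on this range, and the linear-term contribution $M\delta$ is likewise uniform. The other ingredient — equicontinuity of Brownian bridge paths — is entirely standard. So the argument is essentially bookkeeping on top of Theorem \ref{KMT}.
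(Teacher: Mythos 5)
Your proposal is correct and follows essentially the same route as the paper: couple $\ell^{(N,z)}$ with a Brownian bridge via Theorem \ref{KMT}, bound $w(f^\ell,\delta)\le w(B^\sigma,\delta)+M\delta+2N^{-1/2}\Delta(N,z)$, and control the three terms by the a.s.\ continuity of the bridge, the choice of $\delta$, and a Chernoff/Chebyshev bound from the exponential moment estimate (the paper phrases the last step via the event $\{\Delta(N,z)>N^{1/4}\}$, but this is the same bookkeeping). The uniformity in $z$ through the factor $e^{|z-pN|^2/N}\le e^{M^2}$ is handled exactly as in the paper, so there is nothing to correct.
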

\begin{proof}
 The strategy is to use the strong coupling between $\ell$ and a Brownian bridge afforded by Theorem \ref{KMT}. This will allow us to argue that with high probability the modulus of continuity of $f^\ell$ is close to that of a Brownian bridge, and since the latter is continuous a.s., this will lead to the desired statement of the lemma. We now turn to providing the necessary details.

Let $\epsilon, \eta > 0$ be given and fix $\delta \in (0,1)$, which will be determined later. Suppose we have the same coupling as in Theorem \ref{KMT} and let $\mathbb{P}$ denote the probability measure on the space afforded by the theorem. Then we have
\begin{equation}\label{MOC1}
\mathbb{P}^{0,N;0,z}_{free}\left( w({f^\ell},\delta) \geq \epsilon \right) = \mathbb{P}\left(w({f^{\ell^{(N,z)}}} ,\delta) \geq \epsilon \right).
\end{equation}
By definition, we have
\begin{equation*}
w({f^{\ell^{(N,z)}}} ,\delta) = N^{-1/2} \sup_{\substack{x,y \in [0,1]\\ |x-y| \leq \delta}} \left|\ell^{(N,z)}(xN) - pxN - \ell^{(N,z)}(yN) + pyN \right|.
\end{equation*}
From Theorem \ref{KMT} and  the above we conclude that for $\sigma^2 = p(1-p)$ we have
\begin{equation}\label{MOC2}
w({f^{\ell^{(N,z)}}} ,\delta) \leq N^{-1/2} \sup_{\substack{x,y \in [0,1]\\ |x-y| \leq \delta}} \left|N^{1/2}B^\sigma_{x}  -N^{1/2}B^\sigma_{y} +(z- pN)(x-y)  \right| +2 N^{-1/2}\Delta(N,z) .
\end{equation}
From (\ref{MOC1}), (\ref{MOC2}), the triangle inequality and our assumption that $|z - pN| \leq MN^{1/2}$ we see that 
\begin{equation}\label{MOC3}
\mathbb{P}^{0,N;0,z}_{free}\left( w({f^\ell},\delta) \geq \epsilon \right) \leq  \mathbb{P}\left(w({B^\sigma},\delta) + \delta M +2 N^{-1/2}\Delta(N,z) \geq \epsilon \right).
\end{equation}

Let $(I) = \mathbb{P}\left(w({B^\sigma},\delta) \geq \epsilon /3\right)$ , $(II) =  \mathbb{P}\left(\delta M  \geq \epsilon /3\right) $ and $(III) = \mathbb{P}\left(2 N^{-1/2}\Delta(N,z)  \geq \epsilon /3 \right) $, then we have
$$\mathbb{P}\left(w({B^\sigma},\delta) + \delta M +2 N^{-1/2}\Delta(N,z) \geq \epsilon \right) \leq (I) + (II) + (III).$$
By Theorem \ref{KMT} and Chebyshev's inequality  we know
$$\mathbb{P}\left(\Delta(N,z) > N^{1/4}\right) \leq Ce^{\alpha(\log N)^2} e^{M^2} e^{-a N^{1/4}}.$$
Consequently, if we pick $N_0$ sufficiently large and $N \geq N_0$ we can ensure that $2N^{-1/4} < \epsilon/3$ and $ Ce^{\alpha(\log N)^2} e^{M^2} e^{-a N^{1/4}} < \eta/3$, which would imply $(III) \leq \eta /3$. 

Since $B^{\sigma}$ is a.s. continuous we know that $w({B^\sigma},\delta) $ goes to $0$ as $\delta$ goes to $0$, hence we can find $\delta_0$ sufficiently small so that if $\delta < \delta_0$, we have $(I) < \eta/3$. Finally, if $\delta M < \epsilon /3$ then $(II) = 0$. Combining all the above estimates with (\ref{MOC3}) we see that for $\delta$ sufficiently small, $N_0$ sufficiently large and $N \geq N_0$, we have $\mathbb{P}^{0,N;0,z}_{free}\left( w_{f^\ell}(\delta) \geq \epsilon \right) \leq (2/3) \eta < \eta$ as desired.

\end{proof}


\section{Proof of Theorem \ref{PropTightGood}}\label{Section5}

The goal of this section is to prove Theorem \ref{PropTightGood} and for the remainder we assume that $\mathfrak{L}^N = (L^N_1,L^N_2)$ is an $(\alpha,p,r + 1)$-good sequence for some $r > 0$, defined on a probability space with measure $\mathbb{P}$. The main technical result we will require is contained in Proposition \ref{PropMain} below and its proof is the content of Section \ref{Section5.1}. The proof of Theorem \ref{PropTightGood} is given in Section \ref{Section5.2} and relies on Proposition \ref{PropMain} and Lemma \ref{MOCLemma}.

\subsection{Bounds on acceptance probabilities}\label{Section5.1}
 The main result in this section is the following.
\begin{proposition}\label{PropMain}
Fix $r  >0 $ and  denote $s_1 = \lfloor r N^{\alpha} \rfloor$. Then for any $\epsilon > 0$ there exist $\delta > 0$ and $N_1$ (both depending on $r,\epsilon, t, \alpha, p)$ such that for all $N \geq N_1$ we have
$$\mathbb{P}\left(Z_t( -s_1,s_1,L_1^N(- s_1),L_1^N(s_1), L_2; S') < \delta\right) < \epsilon,$$
where $S' = \llbracket - s_1+ 1, s_1\rrbracket$ and $Z_t$ is the acceptance probability defined in Definition \ref{DefHLGP} (see also Remark \ref{RemHLGP}).
\end{proposition}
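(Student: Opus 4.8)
The plan is to bound the acceptance probability $Z_t=Z_t(-s_1,s_1,L_1^N(-s_1),L_1^N(s_1),L_2;S')$ from below by exhibiting, on an event of probability at least $1-\epsilon$, a family $\mathcal G$ of up-right paths $\ell$ from $(-s_1,L_1^N(-s_1))$ to $(s_1,L_1^N(s_1))$ such that $\mathbb{P}_{free}^{-s_1,s_1;L_1^N(-s_1),L_1^N(s_1)}(\mathcal G)\ge 2\delta$ and $W_t(-s_1,s_1,\ell,L_2;S')\ge 1/2$ for every $\ell\in\mathcal G$; by the Radon--Nikodym reformulation in Remark \ref{RemHLGP} this gives $Z_t\ge\delta$ on that event. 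The probability-$(1-\epsilon)$ event comes from the one-point tightness hypothesis: since $\pm r$ and any fixed finite grid of points all lie in $[-(r+1),r+1]$, we may choose $M$ so that with probability at least $1-\epsilon/2$ we have $|L_1^N(sN^\alpha)-psN^\alpha|\le MN^{\alpha/2}$ at $s=\pm r$ and at the grid points; in particular $L_1^N(\pm s_1)=\mp ps_1+O(N^{\alpha/2})$, and, using that $L_1^N$ is non-decreasing together with $L_2\le L_1^N$, we obtain a deterministic upper envelope $L_2(s)\le ps+o(N^{\alpha})$ for $s\in[-s_1,s_1]$.

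On this good event I would take $m_N=\lceil C\log N\rceil$ with $C$ chosen so large that $(1-t^{m_N})^{2s_1}\ge 1/2$ for $N$ large (possible since $2s_1=O(N^\alpha)$), and let $\mathcal G$ consist of those free-bridge paths $\ell$ that stay at least $m_N$ above $L_2$ at every site of $S'$. For such $\ell$ every one of the at most $2s_1$ factors appearing in $W_t(-s_1,s_1,\ell,L_2;S')$ is at least $1-t^{m_N}$, hence $W_t\ge(1-t^{m_N})^{2s_1}\ge 1/2$, as required. The remaining --- and main --- task is to show $\mathbb{P}_{free}(\mathcal G)$ is bounded below by a fixed constant. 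Away from the two endpoints the envelope bound places $L_2$ below the line $x\mapsto px$ by a macroscopic margin, so here I would invoke the monotone coupling Lemma \ref{LemmaMonRW} to reduce to a deterministic straight-line barrier and then the ``bridge stays near a line'' estimate Lemma \ref{LemmaAway} (rescaled from interval length $N$ to $2s_1\asymp N^\alpha$, with fluctuation scale $N^{\alpha/2}$ and slack of order $N^{\alpha/4}$) to get a constant lower bound on the corresponding free-bridge event.

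The delicate point, which I expect to be the crux, is the behaviour in the two buffer zones of length $\asymp(\log N)^2$ adjacent to $\pm s_1$: there we only know $\ell(\pm s_1)=L_1^N(\pm s_1)\ge L_2(\pm s_1)$, with the gap possibly zero, so $\mathcal G$ as literally stated can fail at those sites and one cannot simply demand that $\ell$ lie $m_N$ above $L_2$ throughout. Instead I would argue that, with probability bounded below under the free-bridge law, $\ell$ agrees with $L_2$ on an initial stretch (where $L_2$ is forced to rise at slope $1$ because its endpoint value is pinned and $L_1^N(\pm s_1)$ is typical) and then detaches and climbs; on the stretch where $\ell\equiv L_2$ one has $\Delta^-\equiv 0$, so all the $W_t$-factors there equal $1$, while the detachment is possible because the envelope forbids $L_2$ from running at slope $1$ for more than a bounded fraction of the interval. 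Here the weak monotonicity of Corollary \ref{CorMon2} (and the resulting $c(t)=\prod_{i\ge 1}(1-t^i)$ losses) is what lets us transfer such lower bounds from the free bridge to the Hall--Littlewood reweighted measure. Chaining the bulk estimate with the two buffer estimates, and absorbing the finitely many constant factors into $\delta$, yields the proposition, with $\delta$ and $N_1$ depending only on $r,\epsilon,t,\alpha,p$ since every constant produced along the way does.
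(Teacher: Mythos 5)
Your bulk mechanism (paths staying $m_N\asymp\log N$ above $L_2$ have weight $(1-t^{m_N})^{2s_1}\ge 1/2$, so it suffices to lower bound the free-bridge probability of such paths) is indeed the engine of the paper's argument (compare Step 1 of the proof of Lemma \ref{LemmaAP1}). But there are two genuine gaps. First, the envelope you extract from one-point tightness on a finite grid plus monotonicity is only $L_2(s)\le ps+o(N^{\alpha})$ (the error is of order $\eta N^{\alpha}$ for a grid of mesh $\eta$), and that is not enough: a barrier sitting at height $\eta N^{\alpha}\gg N^{\alpha/2}$ above the line makes the event in Lemma \ref{LemmaAway} exponentially unlikely rather than of constant probability. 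What is actually needed is $\sup_s\left[ L_1^N(s)-ps\right]=O(N^{\alpha/2})$ uniformly over $[-(r+1)N^{\alpha},(r+1)N^{\alpha}]$, which is Lemma \ref{PropSup}; that bound is not a soft consequence of tightness and monotonicity but is itself proved using the Hall--Littlewood Gibbs property (via Corollary \ref{CorMon2}) on the larger interval.

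Second, and more fundamentally, your fix at the pinned endpoints cannot work in the form ``exhibit a good family $\mathcal G$ for the given boundary data.'' Nothing in your good event excludes the configuration $L_1^N(-s_1)=L_2(-s_1)$ with $L_2$ rising at slope $1$ on a stretch of length of order $N^{\alpha/2}$ immediately to the right of $-s_1$; this is fully compatible with one-point tightness of $L_1^N$ and with $L_2\le L_1^N$. For such data every bridge with nonzero weight must take an up-step at every site of that stretch, so $Z_t$ is bounded above by the free-bridge probability of $\asymp N^{\alpha/2}$ forced up-steps, which is exponentially small; in particular your event ``$\ell$ agrees with $L_2$ on an initial stretch and then detaches'' has exponentially small free-bridge probability, not a constant one. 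Hence no deterministic lower bound on $Z_t$ is possible on your event; one must instead show that such boundary values of $L_1^N(\pm s_1)$ are improbable under $\mathbb{P}$. This is exactly what the paper does and what your outline omits: it enlarges the window to $[-t_1,t_1]$ with $t_1=\lfloor(r+1)N^{\alpha}\rfloor$, conditions on the data outside via the Gibbs property there, and compares the fully reweighted law $\mathbb{P}_L$ with the law $\mathbb{P}_{\tilde L}$ reweighted only on $\llbracket -t_1+1,-s_1\rrbracket\cup\llbracket s_1+1,t_1\rrbracket$; the Radon--Nikodym derivative between their restrictions to $[-t_1,-s_1]\cup[s_1,t_1]$ is proportional to the very acceptance probability $Z_t(-s_1,s_1,\cdot,\cdot,\ell_{bot};S')$ being studied (Lemma \ref{LemmaAP2}), so configurations with small $Z_t$ are automatically down-weighted, and Lemma \ref{LemmaAP1} supplies the needed estimate under $\mathbb{P}_{\tilde L}$. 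Corollary \ref{CorMon2}, which you invoke, only transfers monotone one-point events at a cost $c(t)$ and does not substitute for this size-biasing step; moreover it can only be brought to bear once the Gibbs property is used on an interval strictly containing $\pm s_1$ as interior points, which your argument never introduces.
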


The general strategy we use to prove Proposition \ref{PropMain} is inspired by the proof of Proposition 6.5 in \cite{CorHamK}. We begin by stating three key lemmas that will be required. Their proofs are postponed to Section \ref{Section6}. All constants in the statements below will, in addition, depend on $\alpha, t$ and $p$, which are fixed throughout. We will not list this dependence explicitly.

\begin{lemma}\label{PropSup} For each $\epsilon > 0$ there exist $R(r, \epsilon) > 0$ and $N_1(r,\epsilon)$ such that for all $N \geq N_1$ we have 
$$\mathbb{P}\left( \sup_{s \in [-(r+1)N^{\alpha},(r+1)N^{\alpha}]}\left[ L^N_1(s) - p s \right] \geq  RN^{\alpha/2} \right) < \epsilon.$$
\end{lemma}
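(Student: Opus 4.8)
We want to show that the (centered, unscaled) top curve $L^N_1(s)-ps$ over $[-(r+1)N^\alpha,(r+1)N^\alpha]$ does not exceed $RN^{\alpha/2}$ with high probability, for a well-chosen $R$. The plan is to combine the one-point tightness of the top curve (part of the definition of $(\alpha,p,r+1)$-good) with the Hall-Littlewood Gibbs property and the monotone weight estimates of Section \ref{Section4.1} to upgrade the pointwise control to a supremum control.

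First, by one-point tightness and the assumption that $\mathfrak{L}^N$ is $(\alpha,p,r+1)$-good, I would fix a large $M$ and $N_0$ so that for all $N\geq N_0$ and for the finitely many endpoints $s\in\{-(r+1)N^\alpha,0,(r+1)N^\alpha\}$ (or perhaps a finite mesh of points) we have $\mathbb{P}(|L^N_1(sN^\alpha)-psN^\alpha|\geq MN^{\alpha/2})$ small. In particular, with high probability $L^N_1(\pm(r+1)N^\alpha)$ lies within $MN^{\alpha/2}$ of its typical value. Conditioning on this good event, the key step is to bound the conditional probability that the top curve somewhere in between rises much higher. Here I would invoke the Hall-Littlewood Gibbs property (Definition \ref{DefHLGP}): conditionally on $L^N_1(T_0)=a$, $L^N_1(T_1)=b$ and $L^N_2=\ell_{bot}$, the law of $L^N_1$ is that of a free Bernoulli bridge $\mathbb{P}^{T_0,T_1;a,b}_{free}$ reweighted by $W_t(\cdots)/Z_t(\cdots)$. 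Since $W_t\in[0,1]$ always, for any upper event $\{\sup(L^N_1(s)-ps)\geq RN^{\alpha/2}\}$ (which is an increasing event), the conditional probability under the Gibbsian law is at most $Z_t^{-1}$ times the probability of the same event under the free bridge law. To control $Z_t^{-1}$ from above (i.e. $Z_t$ from below) I would use Corollary \ref{CorMon2} — or rather its lower bound $\mathbb{P}(\ell(T)\geq\alpha)\geq c(t)\mathbb{P}_{free}(\ell(T)\geq\alpha)$ in combination with the fact that $W_t\geq$ (something bounded below when the curve is well separated from $L^N_2$) — but the cleanest route is probably: $Z_t\geq c(t)\cdot\mathbb{P}_{free}(\text{the free bridge stays below }L^N_2\text{-level plus a constant buffer})$, which is bounded below uniformly once we also control $L^N_2$ from below on the interval (using non-crossing, $L^N_2\leq L^N_1$, plus a lower tail bound — though for an upper bound on $L^N_1$ we actually don't need a lower bound on $L^N_2$, since $W_t$ is automatically in $(0,1]$ as long as $L^N_1\geq L^N_2$, which holds by non-crossing). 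Thus $Z_t$ is bounded below by $c(t)$ times a probability that a free bridge with endpoints within $O(N^{\alpha/2})$ of typical stays within the allowed region, which is order one.

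So the estimate reduces to controlling $\mathbb{P}^{T_0,T_1;a,b}_{free}(\sup_{s}[L^N_1(s)-ps]\geq RN^{\alpha/2})$ for a free Bernoulli bridge with $a,b$ within $MN^{\alpha/2}$ of their typical values. This is now a standard random-walk-bridge fluctuation estimate: using the strong coupling of Theorem \ref{KMT} between the free bridge and a Brownian bridge of variance $p(1-p)$ (the coupling error $\Delta(N,z)$ has exponential moments, hence is $o(N^{\alpha/2})$ with overwhelming probability since $\alpha>0$ and $\Delta=O((\log N)^2)$-typical), the supremum of $L^N_1(s)-ps$ over the interval is, up to a negligible coupling error and the $O(MN^{\alpha/2})$ contribution from the affine term $\tfrac{t-T_0}{T_1-T_0}(b-L^N_1(T_0))$-type correction, equal in distribution to $N^{\alpha/2}$ times the sup of a Brownian bridge over $[-(r+1),(r+1)]$ (rescaled). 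Since $\sup$ of a Brownian bridge over a compact interval is an a.s.-finite random variable, we can choose $R=R(r,\epsilon)$ large enough that $\mathbb{P}(\sqrt{p(1-p)}\sup|B|\geq R - M - 1)<\epsilon/2$, and then choose $N_1$ large enough to absorb the coupling error and the one-point-tightness error. Putting these pieces together gives the claimed bound.

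\textbf{Main obstacle.} I expect the technical heart to be getting a \emph{uniform} lower bound on the acceptance probability $Z_t$ (equivalently the normalization constant) over the event that the endpoints $a=L^N_1(T_0)$, $b=L^N_1(T_1)$ are within $O(N^{\alpha/2})$ of their typical positions. The subtlety is that the Gibbsian weight $W_t$ depends not only on the separation between $L^N_1$ and $L^N_2$ but on their relative slopes, so the naive stochastic-monotonicity argument fails (Remark \ref{MonCoup}); one must instead route through the weaker, averaged monotonicity of Lemma \ref{LemmaMon} and Corollary \ref{CorMon2}, which costs a factor of $c(t)=\prod_{i\geq 1}(1-t^i)$. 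One also has to be a little careful that the lower-tail behavior of $L^N_2$ (needed to ensure the free bridge for $L^N_1$ has decent probability of landing in the non-crossing region) doesn't require a separate argument — but since $L^N_2\leq L^N_1$ by non-crossing and we are bounding an \emph{upper} event for $L^N_1$, I believe one can arrange the argument so that no lower bound on $L^N_2$ is needed; if it is, one would first establish a lower-tail bound for $L^N_2$ analogous to what Lemma \ref{LemmaAway} provides at the random-walk level, propagated via the Gibbs property. Modulo this bookkeeping, the argument is a fairly direct adaptation of the strategy behind Proposition 6.5 in \cite{CorHamK}.
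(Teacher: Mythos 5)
Your proposal hinges on a uniform lower bound for the acceptance probability $Z_t$: you bound the conditional probability of the supremum event by $Z_t^{-1}$ times the free-bridge probability, so everything rests on showing $Z_t$ is of order one on a high-probability event. But in the paper's logical structure that lower bound is exactly Proposition \ref{PropMain}, whose proof takes Lemma \ref{PropSup} as one of its three inputs (the event $E_N$ there requires $\sup_{s}[L^N_2(s)-ps]\leq M_2 N^{\alpha/2}$, which is obtained from Lemma \ref{PropSup} via $L^N_2\leq L^N_1$). Your own sketch of the $Z_t$ bound runs into the same circularity: for the free bridge between the endpoints of $L^N_1$ to stay above $\ell_{bot}=L^N_2$ with probability of order one, you need an \emph{upper} bound on $L^N_2(s)-ps$ over the whole interval (your remark that no \emph{lower} bound on $L^N_2$ is needed is true but beside the point), and the only control on $L^N_2$ available from the hypotheses is $L^N_2\leq L^N_1$ — so the needed input is precisely the supremum bound on $L^N_1$ you are trying to prove. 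Knowing $L^N_1\geq L^N_2$ only keeps $W_t\in[0,1]$; it does not prevent $Z_t$ from being exponentially small when $L^N_2$ rises in the middle of the interval while the endpoints of $L^N_1$ are typical, and in that regime the bound $Z_t^{-1}\mathbb{P}_{free}(\cdot)$ is vacuous. (Note also that the event $\{\sup_s[L^N_1(s)-ps]\geq RN^{\alpha/2}\}$ contains the corresponding event for $L^N_2$, which is deterministic under your conditioning and cannot be addressed by resampling the top curve at all.)

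The paper's proof avoids any bound on $Z_t$ and runs the comparison in the opposite direction. It decomposes the bad event $\{\sup_{[0,t_1]}[L^N_1(s)-ps]>(6r+10)(M+1)N^{\alpha/2}\}\cap\{|L^N_1(-t_1)+pt_1|\leq MN^{\alpha/2}\}$ over the location $s$ and height $b$ of a high point, together with the data $a=L^N_1(-t_1)$ and $\ell_{bot}=L^N_2$ on $[-t_1,s]$. On each piece, the Hall-Littlewood Gibbs property combined with Corollary \ref{CorMon2} — which costs only the factor $c(t)$ and needs no lower bound on the normalization — and Lemma \ref{LemmaHalf} show that a bridge from the typical value $a$ at $-t_1$ to the very high value $b$ at $s$ is, with conditional probability at least $c(t)/3$, already high at the fixed point $-s_1$. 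Summing over the decomposition yields $\mathbb{P}\left(L^N_1(-s_1)+ps_1>MN^{\alpha/2}\right)\geq (c(t)/3)\,\mathbb{P}(\text{bad event})$, and one-point tightness at the single deterministic location $-s_1$ bounds the left-hand side, closing the argument. So the correct route propagates a hypothetical high excursion down to a fixed point where one-point tightness applies, rather than comparing the Gibbs measure to the free bridge through $Z_t^{-1}$; your KMT/Brownian-bridge step is then unnecessary for this lemma (it is used elsewhere, e.g. in Lemmas \ref{LemmaHalf}--\ref{LemmaAway} and \ref{MOCLemma}).
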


Set  $s_1 =\lfloor r N^{\alpha} \rfloor$ and $t_1 = \lfloor (r+1)N^{\alpha} \rfloor$ and assume $a,b,z_1,z_2,t_1$ satisfy, $0 \leq z_2 - z_1 \leq 2t_1$, $0 \leq b- a \leq 2t_1$, $z_1 \leq a$, $z_2 \leq b$. Let $\ell_{bot}$ be a fixed path in $\Omega(-t_1,t_1; z_1,z_2)$ and denote $S = \llbracket -t_1 +1 , t_1\rrbracket$, $\tilde{S} = \llbracket -t_1 + 1, -s_1\rrbracket \cup \llbracket s_1 + 1, t_1\rrbracket$. Let $L$ and $\tilde{L}$ be two random paths in $\Omega(-t_1,t_1; a,b)$ , with laws $\mathbb{P}_L$ and $\mathbb{P}_{\tilde L}$ respectively such that
$$\mathbb{P}_L (L = \ell) = \mathbb{P}_S^{-t_1, t_1, a,b} (\ell | \ell_{bot} ) \mbox{ and } \mathbb{P}_{\tilde L }(\tilde L = \ell) = \mathbb{P}_{\tilde  S}^{-t_1, t_1, a,b} (\ell | \ell_{bot} ).$$ 
where the definition of $ \mathbb{P}_S^{T_0, T_1, a,b} (\cdot | \ell_{bot} )$ was given in Definition \ref{DefHLGP}. From (\ref{S1eq0}) we know that $L$ will not cross $\ell_{bot}$ with probability $1$. On the other hand, $\tilde{L}$ can cross $\ell_{bot}$ multiple times in the interval $(-s_1, s_1 +1)$  but it will stay above it on $[-t_1, -s_1] \cup [s_1 + 1 ,t_1]$.

\begin{lemma}\label{LemmaAP1} Fix $M_1, M_2 > 0$, $S' = \llbracket -s_1 + 1,s_1\rrbracket $ and suppose
\begin{enumerate}
\item $\sup_{s \in [-t_1,t_1]}\left[ \ell_{bot}(s)  - ps \right]  \leq M_2N^{\alpha/2}$,
\item  $ a \geq \max( \ell_{bot}(-t_1), -pt_1- M_1 N^{\alpha/2}),$ 
\item $ b \geq \max( \ell_{bot}(t_1), p t_1- M_1N^{\alpha/2}).$
\end{enumerate}
There exists $N_2 \in \mathbb{N}$ and explicit functions $g$ and $h$ (depending on $r,  M_1, M_2$) such that for $N \geq N_2$ 
\begin{equation}\label{eqn57}
\mathbb{P}_{\tilde{{L}}} \left( {Z_t}\left(  - s_1, s_1, \tilde{L}(-s_1) ,\tilde{L}(s_1), \ell_{bot}; S'\right) \geq g    \right) \geq h.
\end{equation}
The functions $g$ and $h$ are given by 
$$g =  \frac{1}{4} \left( 1 - \exp \left( \frac{-2}{p (1-p)} \right) \right) \mbox{ and } h = (c(t)^3/18)(1 - \Phi^{p(1-p)/2}\left(10(1 +r)^2 (M_1 + M_2 + 1) \right),$$
where $c(t) = \prod_{i = 1}^\infty(1 -t^i)$ and $\Phi^v$ is the cdf of a Gaussian random variable with mean zero and variance $v$. 
\end{lemma}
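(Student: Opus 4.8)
This final claim has a deterministic core and a probabilistic core. Set $c^*=\lceil-ps_1+CN^{\alpha/2}\rceil$ and $d^*=\lceil ps_1+CN^{\alpha/2}\rceil$, where the constant $C=C(r,M_1,M_2)$ will be chosen large enough below. \textbf{Part A} will be the (deterministic) assertion that whenever $A\ge c^*$, $B\ge d^*$ and $0\le B-A\le 2s_1$ one has $Z_t(-s_1,s_1,A,B,\ell_{bot};S')\ge g$ for all $N$ past some $N_2$. \textbf{Part B} will be the assertion that $\mathbb P_{\tilde L}\big(\tilde L(-s_1)\ge c^*,\ \tilde L(s_1)\ge d^*\big)\ge h$. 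Since $Z_t(-s_1,s_1,\tilde L(-s_1),\tilde L(s_1),\ell_{bot};S')$ depends on $\tilde L$ only through $\tilde L(\pm s_1)$, Part A applied on the event of Part B gives \eqref{eqn57}.

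\textbf{Part A.} Fix a small $\delta>0$ and write $A=\tilde L(-s_1)$, $B=\tilde L(s_1)$ on the relevant event. Translating $\llbracket-s_1,s_1\rrbracket$ to $\llbracket0,2s_1\rrbracket$, subtracting the slope line $s\mapsto ps$, and then shifting down by $K=\lceil(M_2+\delta)N^{\alpha/2}\rceil+\lceil(2s_1)^{1/4}\rceil$, a path $\ell\sim\mathbb P^{-s_1,s_1;A,B}_{free}$ turns into a free bridge whose two endpoints exceed $(2s_1)^{1/2}$ and $p(2s_1)+(2s_1)^{1/2}$ respectively --- this is precisely where $C$ must be taken large compared with $M_2,\delta,r$. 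Lemma \ref{LemmaAway} then gives, with probability at least $\tfrac12\big(1-\exp(-2/(p(1-p)))\big)$, that $\ell(s)\ge ps+(M_2+\delta)N^{\alpha/2}$ for every $s\in\llbracket-s_1,s_1\rrbracket$, whence $\ell(s)-\ell_{bot}(s)\ge\delta N^{\alpha/2}$ on $S'$ by hypothesis (1). On that event every descent of $\Delta^-=\ell-\ell_{bot}$ appearing in \eqref{S1eq0} starts from level $\ge\delta N^{\alpha/2}$, so $W_t(-s_1,s_1,\ell,\ell_{bot};S')\ge\big(1-t^{\lceil\delta N^{\alpha/2}\rceil}\big)^{2s_1}\ge\tfrac12$ once $N\ge N_2$, since $2s_1\,t^{\lceil\delta N^{\alpha/2}\rceil}\to0$. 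Therefore $Z_t(-s_1,s_1,A,B,\ell_{bot};S')=\mathbb E^{-s_1,s_1;A,B}_{free}[W_t]\ge\tfrac12\cdot\tfrac12\big(1-\exp(-2/(p(1-p)))\big)=g$.

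\textbf{Part B.} The plan is to intersect the event of interest with $\{\tilde L(0)\ge C_0N^{\alpha/2}\}$ for a further large constant $C_0=C_0(r,M_1,M_2)$, and to exploit that, conditionally on $\tilde L(0)$, the restrictions of $\tilde L$ to $\llbracket-t_1,0\rrbracket$ and to $\llbracket0,t_1\rrbracket$ are independent and distributed as the Hall-Littlewood Gibbs measures $\mathbb P^{-t_1,0,a,\tilde L(0)}_{\tilde S_L}(\cdot\mid\ell_{bot})$ and $\mathbb P^{0,t_1,\tilde L(0),b}_{\tilde S_R}(\cdot\mid\ell_{bot})$ --- here $\tilde S_L=\llbracket-t_1+1,-s_1\rrbracket$, $\tilde S_R=\llbracket s_1+1,t_1\rrbracket$, because $\tilde S$ is disjoint from the inner halves. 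This reduces $\mathbb P_{\tilde L}(\tilde L(-s_1)\ge c^*,\ \tilde L(s_1)\ge d^*)$ to
\[
\mathbb E_{\tilde L}\Big[\mathbf 1\{\tilde L(0)\ge C_0N^{\alpha/2}\}\cdot\mathbb P^{-t_1,0,a,\tilde L(0)}_{\tilde S_L}(\ell(-s_1)\ge c^*\mid\ell_{bot})\cdot\mathbb P^{0,t_1,\tilde L(0),b}_{\tilde S_R}(\ell(s_1)\ge d^*\mid\ell_{bot})\Big].
\]
On $\{\tilde L(0)\ge C_0N^{\alpha/2}\}$ I would bound each of the two inner factors below by $c(t)/3$: Corollary \ref{CorMon2} removes the Hall-Littlewood reweighting at the cost of one $c(t)$, after which Lemma \ref{LemmaMonRW} lets one lower the endpoints to their hypothesised minima ($a\mapsto\lceil-pt_1-M_1N^{\alpha/2}\rceil$, $\tilde L(0)\mapsto\lceil C_0N^{\alpha/2}\rceil$, $b\mapsto\lceil pt_1-M_1N^{\alpha/2}\rceil$) and Lemma \ref{LemmaHalf} applies --- for the right block after the time-reversal-and-complementation turning it into a Bernoulli$(1-p)$ bridge --- with parameter $M\sim(C_0+M_1)/\sqrt{r+1}$, which is legitimate since the relevant endpoint, measured against the slope line, then exceeds $p\,t_1+M\,t_1^{1/2}$. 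It remains to bound $\mathbb P_{\tilde L}(\tilde L(0)\ge C_0N^{\alpha/2})$ from below by $\tfrac{c(t)}{2}\big(1-\Phi^{p(1-p)/2}(\,\cdot\,)\big)$: Corollary \ref{CorMon2} costs a third $c(t)$ and reduces to a free bridge on $\llbracket-t_1,t_1\rrbracket$, whose midpoint height Lemma \ref{LemmaTail} controls (hypotheses (2)–(3) ensure $a,b$ lie at most $O(M_1N^{\alpha/2})$ below the slope line, as Lemma \ref{LemmaTail} requires). Collecting the three $c(t)$'s, the two $\tfrac13$'s and the single $\tfrac12(1-\Phi)$, and absorbing the accumulated constants into the generous bound $10(1+r)^2(M_1+M_2+1)$ inside the Gaussian cdf, produces exactly $h=\tfrac{c(t)^3}{18}\big(1-\Phi^{p(1-p)/2}(10(1+r)^2(M_1+M_2+1))\big)$.

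\textbf{Expected main obstacle.} The genuine difficulty is that the weight $W_t$ of \eqref{S1eq0} depends on the \emph{relative slopes} of the curves and not merely their ordering, so the almost-sure stochastic monotonicity available for $t=0$ breaks down (cf.\ Remark \ref{MonCoup}); one must instead work with the much weaker "monotone on average, up to $c(t)$'' statements of Lemma \ref{LemmaMon} and its corollaries. Threading these weak comparisons through the three-block decomposition of Part B while keeping honest track of the $M_1,M_2,r$-dependent thresholds $c^*,d^*,C_0$ is where most of the work lies, and the parallel delicate point in Part A is keeping $W_t$ uniformly bounded below on the separated event, which is exactly what forces the extra $(M_2+\delta)N^{\alpha/2}$ vertical shift (and hence a large $C$) before one can invoke Lemma \ref{LemmaAway}.
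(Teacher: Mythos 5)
Your proposal is correct and follows essentially the same two-step route as the paper's proof: Part A is the paper's Step 1 (on the event that $\tilde L(\pm s_1)$ lie order $N^{\alpha/2}$ above the line $ps+M_2N^{\alpha/2}$, Lemma \ref{LemmaAway} controls the fraction of free bridges staying high and the uniform bound $W_t\ge 1/2$ for well-separated paths yields $Z_t\ge g$), and Part B is the paper's Step 2 (condition on the midpoint $\tilde L(0)$, use the factorization of the Gibbs measure over the two halves, and combine Corollary \ref{CorMon2} with Lemmas \ref{LemmaTail}, \ref{LemmaHalf} and \ref{LemmaMonRW} to collect the factor $c(t)^3\cdot\tfrac13\cdot\tfrac13\cdot\tfrac12$). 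Your explicit time-reversal-and-complementation for the right block is exactly the ``similar argument'' the paper leaves implicit, so there is no substantive difference in method.
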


\begin{lemma}\label{LemmaAP2} Fix $M_1, M_2 > 0$, $S' = \llbracket -s_1 + 1,s_1 \rrbracket$  and suppose
\begin{enumerate}
\item $\sup_{s \in [-t_1,t_1]}\left[ \ell_{bot}(s)  - ps \right] \leq M_2N^{\alpha/2}$,
\item  $ a \geq \max( \ell_{bot}(-t_1), -pt_1- M_1 N^{\alpha/2}),$ 
\item $ b \geq \max( \ell_{bot}(t_1), p t_1- M_1N^{\alpha/2}).$
\end{enumerate}
Let $N_2, g, h$ be as in Lemma \ref{LemmaAP1} and for any $\tilde{\epsilon}  > 0$ set $\delta(\tilde \epsilon)  = \tilde\epsilon \cdot g \cdot h$. Then for $N \geq N_2$ we have
\begin{equation}\label{eqn60}
\mathbb{P}_{{{L}}} \left( {Z_t}\left(  - s_1, s_1, {L}(-s_1) ,{L}(s_1), \ell_{bot}; S'\right) \leq  \delta (\tilde \epsilon) \right) \leq  \tilde{\epsilon}.
\end{equation}
\end{lemma}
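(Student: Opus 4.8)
The plan is to deduce Lemma \ref{LemmaAP2} from Lemma \ref{LemmaAP1} by a change-of-measure (Radon--Nikodym) comparison between $\mathbb{P}_L$ and $\mathbb{P}_{\tilde L}$. Recall that both $L$ and $\tilde L$ are distributed as $\mathbb{P}_{free}^{-t_1,t_1;a,b}$ reweighted by an acceptance weight: $\mathbb{P}_L$ uses $W_t(-t_1,t_1,\ell,\ell_{bot};S)$ while $\mathbb{P}_{\tilde L}$ uses $W_t(-t_1,t_1,\ell,\ell_{bot};\tilde S)$, with $S = \llbracket -t_1+1,t_1\rrbracket \supset \tilde S = \llbracket -t_1+1,-s_1\rrbracket \cup \llbracket s_1+1,t_1\rrbracket$. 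Since the weight factorizes over the index set, for any path $\ell$ that stays above $\ell_{bot}$ on all of $S$ we have $W_t(-t_1,t_1,\ell,\ell_{bot};S) = W_t(-t_1,t_1,\ell,\ell_{bot};\tilde S)\cdot W_t(-s_1,s_1,\ell,\ell_{bot};S')$, and $W_t(-s_1,s_1,\ell,\ell_{bot};S') \in (0,1]$. Hence $\frac{d\mathbb{P}_L}{d\mathbb{P}_{\tilde L}}(\ell) = \frac{Z_{\tilde L}}{Z_L}\,W_t(-s_1,s_1,\ell,\ell_{bot};S')\,\mathbf 1\{\ell \text{ above } \ell_{bot} \text{ on } S'\} \le \frac{Z_{\tilde L}}{Z_L}$ pointwise, where $Z_L, Z_{\tilde L}$ are the normalizing constants. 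The key quantitative input from Lemma \ref{LemmaAP1} is precisely a lower bound on the relevant mass: it tells us $\mathbb{P}_{\tilde L}(A) \ge h$ where $A$ is the event $\{Z_t(-s_1,s_1,\tilde L(-s_1),\tilde L(s_1),\ell_{bot};S') \ge g\}$.

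The main step is to bound $Z_{\tilde L}/Z_L$ from above. I would write $Z_L = \mathbb{E}_{free}^{-t_1,t_1;a,b}[W_t(\cdot;\tilde S)\,W_t(\cdot;S')\,\mathbf 1\{\text{above }\ell_{bot}\text{ on }S'\}]$ and observe that restricting the expectation to paths $\ell$ that additionally have $\ell(-s_1), \ell(s_1)$ in the right location (so that $Z_t(-s_1,s_1,\ell(-s_1),\ell(s_1),\ell_{bot};S') \ge g$) and that actually stay above $\ell_{bot}$ on $S'$ gives, by the Gibbs/resampling identity, $Z_L \ge Z_{\tilde L}\cdot \mathbb{P}_{\tilde L}(\tilde L \text{ stays above }\ell_{bot}\text{ on }S' \text{ and } Z_t(\ldots)\ge g) \cdot g$. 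Actually more carefully: conditioning on the endpoints $\tilde L(-s_1), \tilde L(s_1)$ and the event $A$, the conditional probability that $\tilde L$ stays above $\ell_{bot}$ on $S'$ is, by definition of $Z_t$, itself the acceptance probability $Z_t(-s_1,s_1,\tilde L(-s_1),\tilde L(s_1),\ell_{bot};S') \ge g$; so one gets $Z_L/Z_{\tilde L} \ge g\cdot h$ after also using $\mathbb{P}_{\tilde L}(A)\ge h$. Then $\frac{d\mathbb{P}_L}{d\mathbb{P}_{\tilde L}} \le (gh)^{-1}$, so for any event $B$, $\mathbb{P}_L(B) \le (gh)^{-1}\mathbb{P}_{\tilde L}(B)$. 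Applying this with $B = \{Z_t(-s_1,s_1,L(-s_1),L(s_1),\ell_{bot};S') \le \delta(\tilde\epsilon)\}$: under $\mathbb{P}_{\tilde L}$ this event — wait, here I must be careful, since $Z_t$ depends only on the endpoints at $\pm s_1$ and $\ell_{bot}$, not on the interior of the path, and the endpoint marginals of $L$ and $\tilde L$ at $\pm s_1$ need not agree. Instead I would directly use the pointwise bound $\frac{d\mathbb{P}_L}{d\mathbb{P}_{\tilde L}}(\ell)\le (gh)^{-1}$ applied to the indicator of $B$, which is a function of $\ell$: $\mathbb{P}_L(B) = \mathbb{E}_{\tilde L}[\frac{d\mathbb{P}_L}{d\mathbb{P}_{\tilde L}}\mathbf 1_B] \le (gh)^{-1}\mathbb{P}_{\tilde L}(B)$. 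Finally $\mathbb{P}_{\tilde L}(B) = \mathbb{P}_{\tilde L}(Z_t \le \delta(\tilde\epsilon))$ and $\delta(\tilde\epsilon) = \tilde\epsilon\cdot g\cdot h < g$ so this is at most $\mathbb{P}_{\tilde L}(Z_t < g) \le 1 - h \le 1$; plugging in, $\mathbb{P}_L(B) \le (gh)^{-1}\cdot \delta(\tilde\epsilon)$? That is not quite $\tilde\epsilon$ — I'd want a sharper estimate.

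Let me restructure: the clean way is to note $\mathbb{P}_L(B) = \mathbb{E}_{\tilde L}\big[\tfrac{d\mathbb{P}_L}{d\mathbb{P}_{\tilde L}}\mathbf 1_B\big]$, and on $B$ we have $W_t(-s_1,s_1,\ell,\ell_{bot};S') \le \ldots$; actually the sharp observation is that $\tfrac{d\mathbb{P}_L}{d\mathbb{P}_{\tilde L}}(\ell) = \tfrac{Z_{\tilde L}}{Z_L} W_t(-s_1,s_1,\ell,\ell_{bot};S')\,\mathbf 1\{\cdots\}$, and one can show $\mathbb{E}_{\tilde L}[W_t(-s_1,s_1,\ell,\ell_{bot};S')\mathbf 1\{\cdots\}\mathbf 1_B]$, after conditioning on endpoints, equals $\mathbb{E}_{\tilde L}[Z_t(-s_1,s_1,\tilde L(-s_1),\tilde L(s_1),\ell_{bot};S')\mathbf 1_B] \le \delta(\tilde\epsilon)\,\mathbb{P}_{\tilde L}(B) \le \delta(\tilde\epsilon)$ since on $B$ the acceptance probability is $\le \delta(\tilde\epsilon)$. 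Combined with $Z_L/Z_{\tilde L} \ge g h$ this yields $\mathbb{P}_L(B) \le \delta(\tilde\epsilon)/(gh) = \tilde\epsilon$, exactly as claimed. The main obstacle is getting the conditioning bookkeeping right — specifically, verifying that conditioning $\tilde L$ on its values at $\pm s_1$ makes the restriction of $\tilde L$ to $[-s_1,s_1]$ a free bridge reweighted exactly by $W_t(-s_1,s_1,\cdot,\ell_{bot};S')$ (this is the content of the Gibbs property applied to the sub-interval, i.e. consistency of the weight under restriction, essentially Remark \ref{RemHLGP}), so that the conditional expectation of $W_t(-s_1,s_1,\tilde L,\ell_{bot};S')\mathbf 1\{\tilde L\text{ above }\ell_{bot}\text{ on }S'\}$ given the endpoints is $Z_t(-s_1,s_1,\tilde L(-s_1),\tilde L(s_1),\ell_{bot};S')$. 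Once that identity is in hand the rest is a one-line Markov-type inequality.
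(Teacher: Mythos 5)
Your proposal is correct and, in its final restructured form, is essentially the paper's own argument: the paper simply packages the same computation by restricting $\mathbb{P}_L$ and $\mathbb{P}_{\tilde L}$ to $[-t_1,-s_1]\cup[s_1,t_1]$, where the Radon--Nikodym derivative is $(Z')^{-1}Z_t(-s_1,s_1,B(-s_1),B(s_1),\ell_{bot};S')$ with $Z'=\mathbb{E}_{\tilde L}\left[Z_t\right]\geq gh$ by Lemma \ref{LemmaAP1}, followed by the same Markov-type bound on the event $\{Z_t\leq \delta(\tilde\epsilon)\}$. Your step of conditioning on the values at $\pm s_1$ to replace $W_t(\cdot;S')$ by $Z_t$ (valid because under $\mathbb{P}_{\tilde L}$ the middle portion is an unweighted free bridge, so its conditional expectation of $W_t$ is $Z_t$ by Remark \ref{RemHLGP}) is exactly this restriction in disguise.
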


In the remainder we prove Proposition \ref{PropMain} assuming the validity of Lemmas \ref{PropSup} and \ref{LemmaAP2}. The arguments we present are similar to those used in the proof of Proposition 6.5 in \cite{CorHamK}.
\begin{proof}(Proposition \ref{PropMain}) Define the event 
\begin{equation*}
\begin{split}
&E_N =  \left\{    L_1^N(-t_1) \geq \max( L^N_2(-t_1), - p t_1 - M_1 N^{\alpha/2})\right\} \cap\\
&\left\{   L_1^N(t_1) \geq \max( L^N_2(t_1), p t_1- M_1 N^{\alpha/2})\right\} \cap \left\{ \sup_{s \in [-t_1, t_1]} \left[ {L}^N_2(s) - p s \right]\leq M_2 N^{\alpha/2} \right\},
\end{split}
\end{equation*}
where $M_1$ and $M_2$ are sufficiently large so that for all large $N$ we have $\mathbb{P}(E_N^c) <  \epsilon / 2$. The existence of such $M_1$ and $M_2$ is assured from Lemma \ref{PropSup} (since ${L}^N_1$ dominates ${L}^N_2$ pointwise) and the fact that $\mathfrak{L}^N$ is $(\alpha, p, r+1)$ - good. 

Let $ \delta(\tilde \epsilon)$ be as in Lemma \ref{LemmaAP2} for the values $\tilde \epsilon = \epsilon/2$, $r, M_1, M_2$ in the statement of the lemma. Consider the probability
\begin{equation}\label{eqn63}
\begin{split}
&\mathbb{P}\left(  \left\{Z_t( -s_1, s_1,{L}_1^N(-s_1),{L}_1^N(s_1),{L}^N_2; S' ) < \delta( \tilde \epsilon)\right\} \cap E_N \right) = \\
&= \mathbb{E} \left[ {\bf 1}_{E_N} \mathbb{E}\left[ {\bf 1}\{Z_t( -s_1, s_1,{L}_1^N(-s_1),{L}_1^N(s_1),{L}^N_2;S' ) < \delta( \tilde\epsilon)\} \Big{|} \mathcal{F}_{ext} \left( \{1\} \times (-t_1,t_1)\right)\right] \right].
\end{split}
\end{equation}
In the above equation we have $ \mathcal{F}_{ext} \left( \{1\} \times (-t_1, t_1)\right)$ is the $\sigma$-algebra generated by the up-right paths ${L}_2^N$ and ${L}_1^N$ outside the interval $(-t_1,t_1)$. The equality in (\ref{eqn63}) is justified by the tower property since $E_N$ is measurable with respect to $\mathcal{F}_{ext} \left( \{1\} \times (-t_1,t_1)\right)$ . We next notice that we have the following a.s. equality of $\mathcal{F}_{ext} \left( \{1\} \times (-t_1,t_1)\right)$-measurable random variables
\begin{equation*}
\begin{split}
&\mathbb{E}\left[ {\bf 1}\{Z_t( -s_1, s_1,{L}_1^N(-s_1),{L}_1^N(s_1),{L}^N_2 ; S') < \delta( \tilde\epsilon) \} \Big{|} \mathcal{F}_{ext} \left( \{1\} \times(-t_1,t_1)\right)\right] = \\
&\mathbb{P}_{{L}} \left( Z_t(-s_1, s_1,{L}(-s_1),{L}(s_1),{L}^N_2 ;S' ) < \delta( \tilde\epsilon)\right),
\end{split}
\end{equation*}
where $\mathbb{P}_{{L}}$ is specified as in the setup after Lemma \ref{PropSup} with respect to $a = {L}^N_1(-t_1)$, $b = {L}^N_1(t_1)$, $\ell_{bot} = {L}^N_2$ on $[-t_1,t_1]$. 

When the $\mathcal{F}_{ext} \left( \{1\} \times (-t_1, t_1)\right)$-measurable event $E_N$ holds we have that $\sup_{s \in [-t_1, t_1]} \left[ \ell_{bot}(s) - p s \right] \leq M_2N^{\alpha/2}$ and $a \geq \max( \ell_{bot}(-t_1), -p t_1- M_1 N^{\alpha/2})$, $b \geq \max( \ell_{bot}(t_1), p t_1- M_1N^{\alpha/2})$ (recall that $\mathfrak{L}^N$ is a simple discrete line ensemble by definition so that ${L}^N_1$ lies above ${L}^N_2$). Thus we may apply Lemma \ref{LemmaAP2} on $E_N$ and obtain that 
$$ \mathbb{P}_L \left( Z_t(S', -s_1,s_1,{L}(-s_1),{L}(s_2),{L}^N_2 ) < \delta( \tilde\epsilon)\right) \leq \tilde\epsilon {\bf 1}_{E_N} + {\bf 1}_{E_N^c},$$
where the inequality is understood in the a.s. sense. Putting this into (\ref{eqn63}) we conclude that 
$$\mathbb{P}\left( \{ Z_t(S', -s_1,s_1,{L}_1^N(-s_1),{L}_1^N(s_1),{L}^N_2 ) < \delta( \epsilon/2) \} \cap E_N \right) \leq  \epsilon/2.$$
Using this and $\mathbb{P}(E_N^c) <  \epsilon/2$, we see that for all large $N$ we have
$$\mathbb{P}\left( Z_t(S', -s_1,s_1,{L}_1^N(-s_1),{L}_1^N(s_1),{L}^N_2 ) < \delta( \epsilon/2) \right) < \epsilon.$$
\end{proof}

\subsection{Concluding the proof of Theorem \ref{PropTightGood} }\label{Section5.2}
For clarity we split the proof of Theorem \ref{PropTightGood} into several steps. In the first two steps we reduce the statement of the theorem to establishing a certain estimate on the modulus of continuity of the paths $L_1^N$. In the next two steps we show that it is enough to establish these estimates under the additional assumption that $(L^N_1, L^N_2)$ are well-behaved (in particular, well-behaved implies that the acceptance probability  $Z_t (-s_1,s_1,L_1^N(- s_1),L_1^N(s_1), L_2;S')$ is lower bounded and it is here that we use Proposition \ref{PropMain}). The fact that the acceptance probability is lower bounded is exploited in Step 5, together with the resampling property of Remark \ref{RemHLGP}, to effectively reduce the estimates on the modulus of continuity of $L_1^N$ to those of a uniform random path. The latter estimates are then derived in Step 6, by appealing to Lemma \ref{MOCLemma}.
\vspace{2mm}

{\raggedleft \bf Step 1.} Recall from (\ref{MOC}) that the modulus of continuity of $f \in C[-r,r]$ is defined by
$$ w(f,\delta) = \sup_{\substack{x,y \in [-r,r]\\ |x-y| \leq \delta}} |f(x) - f(y)|.$$
As an immediate generalization of Theorem 7.3 in \cite{Bill}, in order to prove the theorem it suffices for us to show that the sequence of random variables $f_N(0)$ is tight and that for each positive $\epsilon$ and $\eta$ there exist $\delta' > 0 $ and $N_1 \in \mathbb{N}$ such that for $N \geq N_1$, we have
\begin{equation}\label{PTG2}
\mathbb{P} ( w(f_N, \delta') \geq \epsilon ) \leq \eta.
\end{equation}
The tightness of $f_N(0)$ is immediate from our assumption that $\{\mathfrak{L}^N\}_{N = 1}^\infty$ is an $(\alpha, p, r+1)$-good sequence (in fact we know from Definition \ref{Def1} that $f_N(s)$ is tight for each $s \in [-r -1, r+1]$). Consequently, we are left with verifying (\ref{PTG2}).\\

{\raggedleft \bf Step 2.} Suppose $\epsilon,\eta > 0$ are given and also denote $s_1 = \lfloor rN^{\alpha} \rfloor$. We claim that we can find $  \delta > 0$ such that for all $N$ sufficiently large we have
\begin{equation}\label{PTG2.5}
\mathbb{P} \left(  \sup_{\substack{ x,y \in [-s_1, s_1] \\  |x - y| \leq  2  \delta s_1 }}  \left| L_1^N(x) - L_1^N(y) - p(x - y) \right| \geq \frac{\epsilon (2s_1)^{1/2}}{2 (2r)^{1/2}}  \right) \leq \eta.
\end{equation}
Let us assume the validity of (\ref{PTG2.5}) and deduce (\ref{PTG2}).

Let $\delta' = r \delta$. Suppose that $x, y \in [-r,r]$ are such that $|x - y| \leq \delta'$ and without loss of generality assume that $x < y$. Let $X = \lceil xN^{\alpha}\rceil$ and $Y = \lfloor yN^{\alpha} \rfloor$. One readily observes that if $N$ is sufficiently large then $|X - Y| \leq 2  \delta s_1$, and $X,Y \in [-s_1,s_1]$. In addition, we have that 
$$ |f_N(x) - f_N(y)| =  N^{-\alpha/2} \left|  L_1^N(xN^{\alpha}) - L_1^N(yN^{\alpha}) - pN^{\alpha}(x - y) \right| \leq  $$
$$N^{-\alpha/2}  \left|  L_1^N(X) - L_1^N(Y) - p(X - Y) \right| + 2N^{-\alpha/2}(1+p),$$
where we used that $| X - xN^{\alpha}| < 1$, $| Y - yN^{\alpha}| < 1$, the slope of $L_1$ is in absolute value at most $1$, and the triangle inequality. The above inequality shows that for all $N$ sufficiently large we have
$$\mathbb{P} \left( w(f_N, \delta') \geq \epsilon \right) \leq \mathbb{P} \left(  \sup_{\substack{ x,y \in [-s_1, s_1] \\  |x - y| \leq 2  \delta s_1 }}  \left| L_1^N(x) - L_1^N(y) - p(x - y) \right| \geq \epsilon N^{\alpha/2} - 2(1+p) \right).$$
Since $s_1 = \lfloor rN^{\alpha} \rfloor$ we see that $\frac{\epsilon (2s_1)^{1/2}}{2 (2r)^{1/2}} \sim (\epsilon /2) N^{\alpha/2}$ as $N$ becomes large and so we conclude that for all sufficiently large $N$ we have $\frac{\epsilon (2s_1)^{1/2}}{2 (2r)^{1/2}} \leq \epsilon N^{\alpha/2} - 2(1+p) $. This together with (\ref{PTG2.5}) implies that the RHS in the last equation is  bounded from above by $\eta$, which is what we wanted. \\

{\raggedleft \bf Step 3.} The first two steps above reduce the proof of the theorem to establishing (\ref{PTG2.5}), which is the core statement we want to show. In order to prove it we will need additional notation that we summarize in this step.

From the tightness of $N^{-\alpha/2}\left[L^N_1(xN^{\alpha}) - xp N^{\alpha}\right]$ at $x = -r$ and $x = r$ we can find $M_1 > 0$ sufficiently large so that for all large $N$ we have 
$$\mathbb{P} \big{(} (E_1 (M_1,N) \big{)} \geq 1 - \eta/4 \mbox{, where }  E_1(M_1,N) = \left\{ \max \left(\left| L^N_1(-s_1) + ps_1 \right| , \left| L^N_1(s_1) - ps_1 \right| \right) \leq M_1N^{\alpha/2} \right\}.$$
In addition, we know from Proposition \ref{PropMain} that we can find $\delta_1 > 0$ such that for all sufficiently large $N$ we have
$$\mathbb{P} \big{(} E_2 (\delta_1,N) \big{)} \geq 1 - \eta/4 \mbox{, where }  E_2(\delta_1,N) = \left\{ Z_t ( -s_1,s_1,L_1^N(- s_1),L_1^N(s_1), L_2 ;S') > \delta_1  \right\}.$$
Suppose  $a,b,z_1,z_2$ are given such that $0 \leq z_2 - z_1 \leq 2s_1$, $0 \leq b - a \leq 2s_1$, $z_1 \leq a$, $z_2 \leq b$. For a given $\ell_{bot} \in \Omega(-s_1,s_1;z_1,z_2)$, we let 
$$E(a,b,\ell_{bot}, N) := \left\{ (L_1^N, L_2^N) : L_2^N = \ell_{bot}\mbox{ on $[-s_1, s_1]$, $L_1^N(-s_1) = a$ and $L_1^N(s_1) = b$ }\right\}.$$
Observe that $E_1(M_1,N) \cap E_2(\delta_1,N)$ can be written as a {\em countable disjoint} union of sets of the form $E(a,b,\ell_{bot}, N)$, where the triple $(a,b,\ell_{bot})$ satisfies:
\begin{enumerate}
\item $0 \leq b-a \leq 2s_1$, $| a +ps_1| \leq M_1 N^{\alpha/2}$ and $|b - ps_1| \leq M_1 N^{\alpha}$,
\item $z_1 \leq a$, $z_2 \leq b$ and $\ell_{bot} \in \Omega(-s_1, s_1, z_1, z_2)$,
\item $Z_t (S', -s_1,s_1,a,b, \ell_{bot}) > \delta_1$.
\end{enumerate}
Clearly, there are only finitely many choices for $a,b$ that satisfy the conditions above. Then the number of $z_1, z_2$ for each given pair $(a,b)$ is countable, while the cardinality of $\Omega(-s_1, s_1, z_1, z_2)$ is finite. This means that the number of triplets $(a,b,\ell_{bot})$ is indeed countable. The fact that $E(a,b,\ell_{bot}, N)$ are disjoint is again clear, while the first and third condition above show that their union is indeed $E_1(M_1,N) \cap E_2(\delta_1,N)$. Let us denote by $F(\delta_1, M_1, s_1, N)$ the set of triplets $(a,b,\ell_{bot})$ that satisfy the three conditions above.\\

{\raggedleft \bf Step 4.} Let us write $L_1^N( [-s_1, s_1])$ as the restriction of $L_1^N$ to $[-s_1, s_1]$. For $\delta > 0$ and $\ell \in   \Omega(-s_1, s_1; a,b)$ we define 
$$V(\delta, \ell) = \sup_{\substack{ x,y \in [-s_1, s_1] \\  |x - y| \leq  2\delta s_1 }}  \left| \ell(x) - \ell(y) - p(x - y) \right|.$$
We assert that we can find $\delta > 0$ such that for all large $N$ and $(a,b, \ell_{bot}) \in F(\delta_1, M_1, s_1, N)$, we have
\begin{equation}\label{PTG7}
\mathbb{P}\Big{(} V(\delta, L_1^N([-s_1, s_1]) )\geq A \Big{|} E(a,b,\ell_{bot}, N) \Big{)} \leq \eta/4, \mbox{ where } A = \frac{\epsilon (2s_1)^{1/2}}{2 (2r)^{1/2}} .
\end{equation}

Let us assume the validity of (\ref{PTG7}) and deduce (\ref{PTG2.5}). We have
$$\mathbb{P}\Big{(} V(\delta, L_1^N([-s_1, s_1]) )\geq A \Big{)} \leq \mathbb{P} \Big{(}\left\{V(\delta, L_1^N([-s_1, s_1]) )\geq A \right\} \cap E_1(M_1,N) \cap E_2(\delta_1,N) \Big{)} + \eta/2,$$
where we used that $\mathbb{P}( E^c_1(M_1,N) ) \leq \eta/4$ and $\mathbb{P}( E^c_2(\delta_1,N) ) \leq \eta/4$. In addition, we have
$$\mathbb{P} \Big{(}\left\{V(\delta, L_1^N([-s_1, s_1]) )\geq A \right\} \cap E_1(M_1,N) \cap E_2(\delta_1,N) \Big{)} = $$
$$\sum_{(a,b,\ell_{bot}) \in F(\delta_1, M_1, s_1, N)} \mathbb{P} \Big{(}\left\{V(\delta, L_1^N([-s_1, s_1]) )\geq A \right\} \cap E(a,b,\ell_{bot}, N) \Big{)},$$
where we used that $E_1(M_1,N) \cap E_2(\delta_1,N) $ is a disjoint union of $E(a,b,\ell_{bot}, N)$. Finally, we have from (\ref{PTG7}) above that
$$\mathbb{P} \Big{(}\left\{V(\delta, L_1^N([-s_1, s_1]) )\geq A \right\} \cap E(a,b,\ell_{bot}, N) \Big{)} = $$
$$\mathbb{P}\Big{(} V(\delta, L_1^N([-s_1, s_1]) )\geq A\Big{|} E(a,b,\ell_{bot}, N) \Big{)} \mathbb{P} \big{(} E(a,b,\ell_{bot}, N) \big{)} \leq (\eta/4) \cdot  \mathbb{P} \big{(} E(a,b,\ell_{bot}, N) \big{)}.$$
Summing the latter over $(a,b,\ell_{bot}) \in F(\delta_1, M_1, s_1, N)$ and combining it with the earlier inequalities we see that
$$\mathbb{P}\Big{(} V(\delta, L_1^N([-s_1, s_1]) )\geq A \Big{)} \leq \eta/2 + \eta/4 \cdot \hspace{-13mm} \sum_{(a,b,\ell_{bot}) \in F(\delta_1, M_1, s_1, N)}  \hspace{-13mm}\mathbb{P} \big{(} E(a,b,\ell_{bot}, N) \big{)}  = $$
$$ = \eta/2 + (\eta/4) \cdot  \mathbb{P} \big{(}  E_1(M_1,N) \cap E_2(\delta_1,N)\big{)} < \eta,$$
where in the middle equality we again used that $E_1(M_1,N) \cap E_2(\delta_1,N) $ is a disjoint union of $E(a,b,\ell_{bot}, N)$. The last equation implies (\ref{PTG2.5}).\\

{\raggedleft \bf Step 5.} In this step we establish (\ref{PTG7}) and begin by fixing $(a,b, \ell_{bot}) \in F(\delta_1, M_1, s_1, N)$. Since $\mathfrak{L}^N$ satisfies the Hall-Littlewood Gibbs property on $\llbracket -s_1, s_1\rrbracket$ with respect to $S' = \llbracket -s_1 + 1, s_1\rrbracket$ for $N$ sufficiently large we know that  
\begin{equation}\label{PTG3}
\mathbb{P}\big{(} L_1^N([-s_1, s_1]) = \ell \big{|} E(a,b,\ell_{bot}, N) \big{)} =  \mathbb{P}^{-s_1, s_1, a,b}_{S'}\left(\ell| \ell_{bot} \right) \mbox{ for any $\ell \in \Omega(-s_1, s_1; a,b)$} .
\end{equation}

We now recall the sampling property we explained in Remark \ref{RemHLGP}. Let $\ell^K$ be a sequence of i.i.d. up-right paths distributed according to $\mathbb{P}_{free}^{-s_1,s_1;a,b}$. Also let $U$ be a uniform random variable on $(0,1)$, independent of all else. For each $K \in \mathbb{N}$ we check if $W_t(-s_1, s_1, \ell^K, \ell_{bot}; S') > U$ and set $Q$ to be the minimal index $K$, which satisfies the inequality. Then we have that $Q$ is a geometric random variable with parameter $Z_t ( -s_1,s_1,a,b, \ell_{bot} ;S') $ and 
\begin{equation}\label{PTG4}
 \tilde{\mathbb{P}} \left( \ell^Q = \ell\right) = \mathbb{P}^{-s_1, s_1, a,b}_{S'}\left(\ell| \ell_{bot} \right) \mbox{ for any $\ell \in \Omega(-s_1, s_1; a,b)$} ,
\end{equation}
where $ \tilde{\mathbb{P}}$ is the probability measure on a space on which $\ell^K$ and $U$ are defined, we also write $\tilde{\mathbb{E}}$ for the expectation with respect to $\tilde{\mathbb{P}}$.

By our assumption that $(a,b, \ell_{bot}) \in F(\delta_1, M_1, s_1, N)$, we know that $Z_t ( -s_1,s_1,a,b, \ell_{bot};S')  > \delta_1$ and so $\tilde{\mathbb{E}}[ Q] = Z_t ( -s_1,s_1,a,b, \ell_{bot};S')^{-1} \leq \delta_1^{-1}$. It follows that if we take $R =  8 \delta_1^{-1} \eta^{-1} $, then by Chebyshev's inequality we have
\begin{equation}\label{PTG5}
\tilde{\mathbb{P}} ( Q > R) \leq \eta/8.
\end{equation}

Fix $A = \frac{\epsilon(2s_1)^{1/2}}{2 (2r)^{1/2}}$ and observe that
$$\tilde{\mathbb{P}} \left( V(\delta, \ell^Q) \geq A \right) =  \tilde{\mathbb{P}} \left( V(\delta, \ell^Q) \geq A, Q > R   \right) +  \tilde{\mathbb{P}} \left( V(\delta, \ell^Q) \geq A, Q \leq R   \right) \leq \tilde{\mathbb{P}} \left(  Q > R   \right)  + $$
$$ \tilde{\mathbb{P}} \left( \max_{1 \leq i \leq R} V(\delta, \ell^i) \geq A  \right)   = \tilde{\mathbb{P}} \left(  Q > R   \right)  + 1 -  \tilde{\mathbb{P}} \left( \max_{1 \leq i \leq R} V(\delta, \ell^i) <  A  \right) \leq 1 - \tilde{\mathbb{P}} \left(  V(\delta, \ell^1) <  A  \right)^{\lfloor R \rfloor} + \eta/8.$$
In the last inequality we used (\ref{PTG5}) and the independence of $\ell^i$. Combining the latter inequality with (\ref{PTG3}) and (\ref{PTG4}) we see that
\begin{equation}\label{PTG6}
\mathbb{P}\left( V(\delta, L_1^N([-s_1, s_1]) )\geq A| E(a,b,\ell_{bot}, N) \right) \leq 1 - \tilde{\mathbb{P}} \left(  V(\delta, \ell^1) <  A  \right)^{\lfloor R \rfloor} + \eta/8.
\end{equation}
Equation (\ref{PTG7}) would now follow from (\ref{PTG6}) if we can show that for any $\epsilon' > 0$  we can find $\delta > 0$ (depending on $M_1$, $\epsilon'$, $\eta$, $r$ and $p$),  such that for all large $N$ we have
\begin{equation}\label{PTG8}
\tilde{\mathbb{P} }\left(  V(\delta, \ell^1) <  A  \right)\geq 1 - \epsilon'.
\end{equation}

{\raggedleft \bf Step 6.} In this final step we establish (\ref{PTG8}), which is the remaining statement we require. Notice that $A = \tilde \epsilon(2s_1)^{1/2}$, where $\tilde \epsilon =  \frac{\epsilon}{2 (2r)^{1/2}}$. The key observation we make is the following 
\begin{equation}\label{PTG6.5}
\tilde{\mathbb{P}} \left(  V(\delta, \ell^1) <  A  \right) = \mathbb{P}_{free}^{0, 2s_1; 0, b-a} \left( w({f^{\ell^1}},\delta) < \tilde\epsilon \right), 
\end{equation}
where $f^\ell(x) = (2s_1)^{-1/2}(\ell(2x s_1) - 2pxs_1))$  for $x \in [0,1]$ and $w(f,\delta)$ denotes the modulus of continuity on $[0,1]$ as in (\ref{MOC}). 

Notice that since $(a,b, \ell_{bot}) \in F(\delta_1, M_1, s_1, N)$, we know that $|b - a  - 2ps_1 | \leq 2M_1 N^{\alpha/2} \leq \frac{4M_1}{ (2r)^{1/2}} (2s_1)^{1/2}$ for all large $N$. The latter means that we can apply Lemma \ref{MOCLemma}, and find $\delta > 0$ (depending on $M_1$, $\epsilon'$, $\tilde \epsilon$, $\eta$ and $p$),  such that for all large $N$ we have
$$\mathbb{P}_{free}^{0, 2s_1; 0, b-a} \left( w({f^{\ell^1}},\delta) < \tilde\epsilon \right) \geq 1 - \epsilon'.$$ 
Combining the latter with (\ref{PTG6.5}) concludes the proof of (\ref{PTG8}).

\begin{remark}
An important idea in our arguments above is to condition on $ E(a,b,\ell_{bot}, N)$ and obtain estimates on these events, where additional structure is available to us. The latter is possible because of the discrete nature of our problem and substitutes the more involved notions of {\em stopping domains} and {\em strong Brownian Gibbs properties} that were used in \cite{CorHamA} and \cite{CorHamK}.
\end{remark}

\section{Proof of three key lemmas}\label{Section6}
Here we prove the three key lemmas from Section \ref{Section5.1}. The arguments we use below heavily depend on the results from Section \ref{Section4}. 

\subsection{Proof of Lemma \ref{PropSup}}\label{Section6.1}

 Let us start by fixing notation. As in Section \ref{Section5.1} we set  $s_1 =\lfloor r N^{\alpha} \rfloor$ and $t_1 = \lfloor (r+1)N^{\alpha} \rfloor$. Define the events
$$E(M) = \left\{ \left| {L}^N_1(-t_1) + p t_1\right| > MN^{\alpha/2} \right\},F(M) = \left\{ L_1(-s_1) > -ps_1 + MN^{\alpha/2} \right\} \mbox{ and }$$
$$G(M) =  \left\{  \sup_{s \in [0,t_1]} \left[ L_1^N(s) -p s \right] >  (6r + 10)(M+1)N^{\alpha/2}\right\}.$$
For $a,b \in \mathbb{Z}$  and $s \in \{0,1,...,t_1\}$ as well as a path $\ell_{bot}$ in $\Omega(-t_1, s; z_1,z_2)$, where $z_1 \leq a$ and $z_2 \leq b$ we define $E(a,b,s, \ell_{bot})$ to be the event that $L^N_1(-t_1) = a$, $L^N_1(s) = b$, and $L^N_2$ agrees with $\ell_{bot}$ on $[-t_1, s]$. We will also write $L^N_1([m,n])$ for the restriction of $L^N_1$ to the interval $[m,n]$.

Observe that $E^c(M) \cap G(M)$ can be written as a {\em countable disjoint} union of sets of the form $E(a,b,s,\ell_{bot})$, where the quadruple $(a,b,s,\ell_{bot})$ satisfies:
\begin{enumerate}
\item $0 \leq s \leq t_1$,
\item $0 \leq b-a \leq t_1 + s$, $|a +pt_1| \leq  M N^{\alpha/2}$ and $b - ps > (6r + 10)(M +1)N^{\alpha/2}$,
\item $z_1 \leq a$, $z_2 \leq b$ and $\ell_{bot} \in \Omega(-t_1, s, z_1, z_2)$,
\end{enumerate}
Clearly, there are only finitely many choices for $s$ and for any $s$ there are countably many  $a,b$ that satisfy the conditions above. Then the number of $z_1, z_2$ for each given pair $(a,b)$ is again countable, while the cardinality of $\Omega(-t_1, s, z_1, z_2)$ is finite. This means that the number of quadruples $(a,b,s,\ell_{bot})$ is indeed countable. The fact that $E(a,b,s,\ell_{bot})$ are disjoint is again clear, while the first and second condition above show that their union is indeed $E^c(M) \cap G(M)$. Let us denote by $D( M)$ the set of quadruples $(a,b,s,\ell_{bot})$ that satisfy the three conditions above.\\

By $1$-point tightness of ${L}^N_1$ we know that there exists $M> 0$ sufficiently large so that for every $N \in \mathbb{N}$ we have
\begin{equation}\label{S3eqN1}
\mathbb{P}\big{(} E(M) \big{)}  <  \epsilon/4\mbox{ and }\mathbb{P}\big{(} F(M) \big{)}  < \frac{\epsilon c(t)}{ 12},
\end{equation}
where we recall that $c(t) = \prod_{i = 1}^\infty (1 - t^i)$. 
Suppose $(a,b,s,\ell_{bot}) \in D(M)$ and observe that we have
\begin{equation}\label{EqK1}
\begin{split}
&\mathbb{P}^{-t_1, s; a,b}_{free} \left(\ell(-s_1) \geq  - ps_1  +MN^{\alpha/2}  \right) =  \mathbb{P}^{0, t_1 + s; 0,b - a}_{free} \left(\ell(t_1-s_1) + a\geq  -p s_1  +MN^{\alpha/2}  \right) \geq \\
&\mathbb{P}^{0, t_1 + s; 0,b - a}_{free} \left(\ell(t_1-s_1) \geq  p(t_1- s_1)  +2MN^{\alpha/2}  \right),  
\end{split}
\end{equation}
where in the last inequality we used that $a +pt_1 \geq -  M N^{\alpha/2}$. Since $|a +pt_1| \leq  M N^{\alpha/2}$ and $b - ps \geq (6r + 10)(M +1)N^{\alpha/2}$, we conclude that 
$b -a \geq p(t_1 + s) + (6r+9)(M+1)N^{\alpha/2}$. It follows from Lemma \ref{LemmaHalf} that for all large $N$ we have
\begin{equation}\label{EqK2}
\mathbb{P}^{0, t_1 + s; 0,b - a}_{free} \left(\ell(t_1-s_1) \geq  \frac{t_1 - s_1}{t_1 + s} [p(t_1 + s)  + (6r+9)(M+1)N^{\alpha/2}] - (t_1 + s)^{1/4}  \right) \geq 1/3.
\end{equation}
Notice that since $s \in [0, t_1]$, $s_1 =\lfloor r N^{\alpha} \rfloor$ and $t_1 = \lfloor (r+1)N^{\alpha} \rfloor$, we have $ \frac{t_1 - s_1}{t_1 + s} \geq \frac{1}{2r + 3}$ for all large $N$. These estimates together imply that for all large $N$ we have $\frac{t_1 - s_1}{t_1 + s} [p(t_1 + s)  + (6r+9)(M+1)N^{\alpha/2}] - (t_1 + s)^{1/4} \geq p(t_1- s_1) + 2MN^{\alpha/2}$ and so from (\ref{EqK1}) and (\ref{EqK2}) we conclude that
\begin{equation}\label{EqK3}
\mathbb{P}^{-t_1, s; a,b}_{free} \left(\ell(-s_1) \geq  - ps_1  +MN^{\alpha/2}  \right)  \geq 1/3.
\end{equation}

Since the sequence $\mathcal{L}^N$ is $(\alpha, p,  r+1)$-good, we know that for any $\ell \in \Omega(-t_1, s; a,b)$ we have
$$\mathbb{P}(L^N_1([-t_1,s]) = \ell| E(a,b,s,\ell_{bot})) = \frac{W_t(-t_1, s,\ell, \ell_2; S)}{Z_t(-t_1, s, a,b,\ell_{bot}; S)},$$
where $S = \llbracket-t_1 + 1,s\rrbracket$. The latter together with (\ref{EqK3}) and Corollary \ref{CorMon2} allow us to conclude that
\begin{equation}\label{PSeq1}
\mathbb{P} \left(L_1(-s_1) + ps_1 > MN^{\alpha/2} |E(a,b,s, \ell_{bot}) \right) \geq c(t)/3.
\end{equation}
 
We now observe that 
$$\mathbb{P}\big{(} F(M) \big{)} \geq  \sum_{(a,b,s,\ell_{bot}) \in D(M)} \mathbb{P}\big{(}F(M)\cap E(a,b,s,\ell_{bot})\big{)} =  $$
$$ = \sum_{(a,b,s,\ell_{bot}) \in D(M)} \mathbb{P}\big{(}F(M) \big{|}  E(a,b,s,\ell_{bot})\big{)} \mathbb{P}\big{(} E(a,b,s,\ell_{bot})\big{)} \geq(c(t)/3) \mathbb{P}\big{(}E^c(M) \cap G(M)\big{)},$$
where in the last inequality we used (\ref{PSeq1}). Combining the above inequality with the inequalities in (\ref{S3eqN1}) we see that for all large $N$ we have
\begin{equation}\label{EqK4}
\epsilon/2 > \mathbb{P}( G(M)) =  \mathbb{P}\left( \sup_{s \in [0,t_1]} \left[L_1^N(s) -p s \right] >  (6r + 10)(M+1)N^{\alpha/2}\right).
\end{equation}
A similar argument shows that for all large $N$ we have
\begin{equation}\label{EqK5}
\epsilon/2 >  \mathbb{P}\left( \sup_{s \in [-t_1,0]} \left[ L_1^N(s) -p s \right]>  (6r + 10)(M+1)N^{\alpha/2}\right).
\end{equation}
Combining (\ref{EqK4}) and (\ref{EqK5}) we conclude the statement of the lemma for $R = (6r + 10)(M+1) $.

\subsection{Proof of Lemma \ref{LemmaAP1}}\label{Section6.2}

 For clarity we will split the proof into two steps. 

{\raggedleft \bf Step 1.} Define $F = \left\{\min\left( \tilde{L}(-s_1) + p s_1, \tilde{L}(s_1) -ps_1 \right)\geq (M_2 + 1) N^{\alpha/2} + (2s_1)^{1/2}  \right\}$. We claim that for all $N$ sufficiently large we have
\begin{equation}\label{LAPeq4}
\mathbb{P}_{\tilde{L}}\left (F \right) \geq (c(t)^3/18)\left(1 - \Phi^{p(1-p)/2}\left(10(1 +r)^2(M_1 + M_2 + 1) \right) \right).
\end{equation}
Establishing the validity of (\ref{LAPeq4}) will be done in the second step, and in what follows we assume it is true and finish the proof of the lemma. 

We assert that if $N_2$ is sufficiently large and $N \geq N_2$ we have
\begin{equation}\label{LAPeqS}
F \subset A = \left \{  Z\left( -s_1,s_1,\tilde{L}(-s_1),\tilde{L}(s_1),{\ell}_{bot}; S' \right) > \frac{1}{4} \left( 1 - \exp \left( \frac{-2}{p (1-p)}\right) \right)  \right\}.
\end{equation}
Observe that (\ref{LAPeqS}) and (\ref{LAPeq4}) prove the lemma and so it suffices to verify (\ref{LAPeqS}). The details are presented below (see also Figure \ref{S6_1}). 
\begin{figure}[h]
\centering
\scalebox{0.66}{\includegraphics{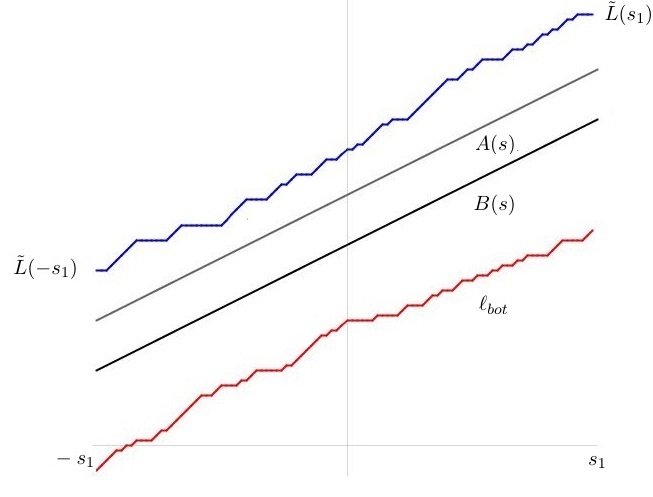}}
\caption{Overview of the arguments in Step 1: \\
 We want to prove that on the event $F$, we have a lower bound on the acceptance probability $Z_t(\tilde{L}(-s_1),\tilde{L}(s_1)) = Z_t( -s_1,s_1,\tilde{L}(-s_1),\tilde{L}(s_1),{\ell}_{bot};S')$. As explained in (\ref{away2}) the acceptance probability is just the average of the weights $W_t(-s_1,s_1, \ell, \ell_{bot}; S')$ over all up-right paths in $\Omega = \Omega(-s_1, s_1;  \tilde{L}(-s_1), \tilde{L}(s_1))$. Consequently, to show that $Z_t(\tilde{L}(-s_1),\tilde{L}(s_1))$ is lower-bounded it suffices to find a big subset $\Omega' \subset \Omega$, such that the weights $W_t(-s_1,s_1, \ell, \ell_{bot}; S')$ for $\ell \in \Omega'$ are lower-bounded.\\
 Let $A(s)$ and $B(s)$ denote the lines $ps + (M_2 + 1)N^{\alpha/2} - (2s_1)^{1/4}$ and $ps + M_2N^{\alpha/2}$, drawn in grey and black respectively above. Then $\Omega'$ denotes the set of up-right paths in $\Omega$, which lie above $A(s)$ on $[-s_1, s_1]$. On the event $F$ we have that $\tilde{L}(\pm s_1)$ are at least a distance $(2s_1)^{1/2} + (2s_1)^{1/4}$ above the points $A(\pm s_1)$ respectively. Since the endpoints of paths in $\Omega$ are well above those of $A(s)$ this means that some positive fraction of these paths will stay above $A(s)$ on the entire interval $[-s_1, s_1]$; i.e. $|\Omega'|/|\Omega|$ is lower bounded. This is what we mean by $\Omega'$ being big and the exact relation is given in (\ref{away3}).\\
To see that $W_t(-s_1,s_1, \ell, \ell_{bot}; S')$ for $\ell \in \Omega'$ are lower bounded, we notice that elements $\ell \in \Omega'$ are well-above $B(s)$, which dominates $\ell_{bot}$ by assumption. This means that $\ell$ is well above $\ell_{bot}$ and for such paths $W_t(-s_1,s_1, \ell, \ell_{bot}; S')$  is lower bounded. The exact relation is given in (\ref{away4}).
}\label{S6_1}
\end{figure}

From Definition \ref{DefHLGP} (see also Remark \ref{RemHLGP}) we have
$$Z\left( -s_1,s_1,\tilde{L}(-s_1),\tilde{L}(s_1),\ell_{bot};S'\right)  = \mathbb{E}_{free}^{-s_1, s_1; \tilde{L}(-s_1), \tilde{L}(s_1)} \left[W_t(-s_1,s_1, \cdot, \ell_{bot}; S') \right].$$
If we set $\Omega = \Omega(-s_1, s_1;  \tilde{L}(-s_1), \tilde{L}(s_1))$ and $Z_t(\tilde{L}(-s_1),\tilde{L}(s_1)) = Z\left(-s_1,s_1,\tilde{L}(-s_1),\tilde{L}(s_1),{\ell}_{bot};S'\right)$ then the above implies
\begin{equation}\label{away2}
Z_t(\tilde{L}(-s_1),\tilde{L}(s_1)) = | \Omega|^{-1} \sum_{ \ell \in \Omega} W_t(-s_1,s_1, \ell, \ell_{bot}; S').
\end{equation}
Denote $\Omega'  = \left \{ \ell \in  \Omega : \ell(s) - ps \geq (M_2 + 1) N^{\alpha/2} - (2s_1)^{1/4} \mbox{ for } s \in [-s_1, s_1]\right\}$. It follows from Lemma \ref{LemmaAway} that on the event $F$, provided $N_2$ is sufficiently large and $N \geq N_2$ we have
\begin{equation}\label{away3}
\frac{|\Omega'|}{|\Omega|} \geq \frac{1}{2} \left( 1 - \exp \left( \frac{-2}{p (1-p)}\right)\right).
\end{equation}
Since $|s_1 - rN^{\alpha}| < 1$ we know that for $N_2$ sufficiently large and $N \geq N_2$, we have that $\ell \in \Omega'$ satisfies $\ell(s) - ps \geq (M_2 + 1/2)N^{\alpha/2} \geq \ell_{bot}(s) - ps + (1/2)N^{\alpha/2}$, where the last inequality holds true by our assumption on $\ell_{bot}$. The conclusion is that for $\ell \in \Omega'$, we have that $\ell(s) - \ell_{bot}(s) \geq m$, where $m = (1/2) N^{\alpha/2}$. In view of (\ref{S1eq0}) we conclude that for $N_2$ sufficiently large, $N \geq N_2$ and $\ell \in \Omega'$, we have
\begin{equation}\label{away4}
W_t(-s_1,s_1, \ell, \ell_{bot}; S') \geq (1 - t^m)^{2s_1} \geq  (1 - t^{(1/2) N^{\alpha/2}})^{2rN^{\alpha}} \geq \frac{1}{2}.
\end{equation}
Combining (\ref{away2}), (\ref{away3}) and (\ref{away4}) we conclude that  provided $N_2$ is sufficiently large and $N \geq N_2$  on the event $F$ we have
$$Z_t(\tilde{L}(-s_1),\tilde{L}(s_1)) \geq  | \Omega|^{-1}\sum_{ \ell \in \Omega'} W_t(-s_1,s_1, \ell, \ell_{bot}; S') \geq  \frac{|\Omega'|}{|\Omega|} \cdot \frac{1}{2}  \geq \frac{1}{4} \left( 1 - \exp \left( \frac{-2}{p (1-p)}\right) \right)  .$$

{\raggedleft \bf Step 2.} In this step we prove (\ref{LAPeq4}). We refer the reader to Figure \ref{S6_2} for an overview of the main ideas in this step and a graphical representation of the notation we use.
\begin{figure}[h]
\centering
\scalebox{0.7}{\includegraphics{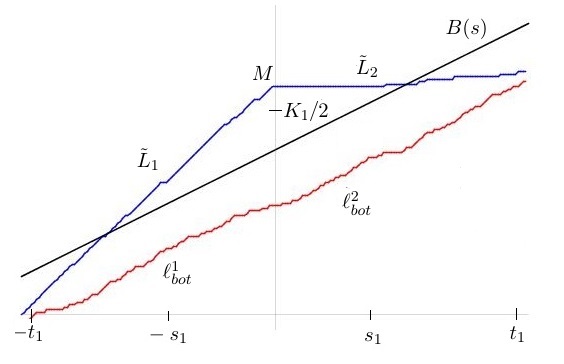}}
\caption{Overview of the arguments in Step 2: \\
$\tilde{L}_1$ and $\tilde{L}_2$ are the restrictions of $\tilde L$ to $[-t_1, 0]$ and $[0, t_1]$ respectively. $\ell^1_{bot}$ and $\ell^2_{bot}$ denote the restrictions of $\ell_{bot}$ to $[-t_1, 0]$ and $[0, t_1]$ respectively.  Let $B(s)$ denote the line $ps + M_2N^{\alpha/2}$, drawn in black above. We have that $F$ denotes the event that $\tilde{L}$ is at least a distance $ N^{\alpha/2} + (2s_1)^{1/2}$ above the line $B(s)$ at the points $\pm s_1$ and we want to find a lower bound on $\mathbb{P}_{\tilde L}(F)$. \\
We first let $E$ denote the event that $\tilde{L}(0)$ is much higher than $B(0)$, and prove that $\mathbb{P}_{\tilde L}(E)$ is lower bounded. The exact statement is given in (\ref{LAP1eq1}). Afterwards, we show that on the event that the midpoint $\tilde{L}(0)$ is very high, the points $\tilde{L}(\pm s_1)$ are also very high with positive probability. The exact statement is given in (\ref{LAPeqS3}). \\
In a sense, by conditioning on the midpoint $\tilde{L}(0)$ we split our problem into two independent subproblems for the left and right half of $\tilde L$  - see (\ref{LAPeqS2}). Establishing the required estimates for each of the subproblems is then a relatively straightforward application of Lemma \ref{LemmaHalf} and Corollary \ref{CorMon2} - see (\ref{LAP1eq2}).
}\label{S6_2}
\end{figure}

Let $K_1 = 8(1+r)^2(M_1 + M_2 + 1)N^{\alpha/2}$.  Define $E= \sqcup_{M \in X} E_M$ for
$$E_M =  \{ \tilde{L}(0)   = M \} \mbox{ and } X = \left\{ M \in \mathbb{N}: M \geq  (1/2)K_1 -  [2(r+1)N]^{\alpha/4} \mbox{ and } \mathbb{P}_{\tilde L}(E_M) > 0 \right\}.$$
It follows from Lemma \ref{LemmaTail} that we can find $N_2$, depending on $r,M_1,M_2$ such that for $N \geq N_2$ we have
$$\mathbb{P}^{-t_1,t_1;a,b}_{free}\left( \ell(0)  \geq (1/2)K_1 - [2(r+1)N]^{\alpha/4} \right) \geq (1/2) (1 - \Phi^{p(1-p)/2}(M_1 + K_1) ).$$
Then by Corollary \ref{CorMon2} we conclude 
\begin{equation}\label{LAP1eq1}
\mathbb{P}_{\tilde{L}}\left (E  \right) \geq (c(t)/2)(1 - \Phi^{p(1-p)/2}(M_1 + K_1) ).
\end{equation}

Denote by $\tilde{L}_1$ and $\tilde{L}_2$ the restriction of $\tilde L$ to $[-t_1, 0]$ and $[0, t_1]$ respectively. Similarly, we let $\ell^1_{bot}$ and $\ell^2_{bot}$ denote the restriction of $\ell_{bot}$ to $[-t_1, 0]$ and $[0, t_1]$ respectively.
The key observation we make is that if $M \in X$ then
\begin{equation}\label{LAPeqS2}
\mathbb{P}_{\tilde{L}} (\tilde{L}_1 = \ell_1, \tilde{L}_2 = \ell_2| E_M) =  \mathbb{P}_{S_1}^{-t_1, 0, a, M}(\ell_1| \ell^1_{bot})\cdot \mathbb{P}_{S_2}^{0, t_1, M, b}(\ell_2| \ell^2_{bot}),
\end{equation}
where $S_1 = \llbracket -t_1 + 1,  -s_1\rrbracket$, $S_2 =\llbracket s_1 + 1, t_1\rrbracket$ and $\ell_1 \in \Omega(-t_1, 0; a, M)$, $\ell_2 \in \Omega(0, t_1; M, b)$.

From Lemma \ref{LemmaHalf}, we know that 
$$\mathbb{P}_{free}^{-t_1,0; a,M}\left(\ell(-s_1) \geq M \frac{t_1 - s_1}{t_1} + a \frac{s_1}{t_1}   - [(r+1)N]^{\alpha/4} \right) \geq 1/3,$$
provided $N_2$ is large enough and $N \geq N_2$. Since $a \geq -p t_1- M_1 N^{\alpha/2}$, $s_1 =\lfloor r N^{\alpha} \rfloor$, $t_1 = \lfloor (r+1)N^{\alpha} \rfloor$, $M \geq (1/2) K_1 - [2(r+1)N]^{\alpha/4}$ and $K_1 = 8(1+r)^2(1 + M_1 + M_2)N^{\alpha/4}$ we conclude that if $N_2$ is sufficiently large and $N \geq N_2$ then
$$\mathbb{P}_{free}^{-t_1,0; a,M}\left(\ell(-s_1) + ps_1  \geq (M_2 + 1) N^{\alpha/2} + (2s_1)^{1/2}  \right) \geq 1/3.$$
From Corollary \ref{CorMon2} and the above inequality we conclude 
\begin{equation}\label{LAP1eq2}
 \mathbb{P}_{S_1}^{-t_1, 0, a, M}\left (\ell_1(-s_1)  + ps_1  \geq (M_2 + 1) N^{\alpha/2} + (2s_1)^{1/2}  \right) \geq c(t)/3.
\end{equation}
Similar arguments show that 
\begin{equation}\label{LAP1eq3}
 \mathbb{P}_{S_2}^{0, t_1, M, b}\left (\ell_2(s_1) - ps_1  \geq(M_2 + 1) N^{\alpha/2} + (2s_1)^{1/2} \right) \geq c(t)/3.
\end{equation}
Combining (\ref{LAPeqS2}), (\ref{LAP1eq2}) and (\ref{LAP1eq3}), we see that for $M \in X$, we have
\begin{equation}\label{LAPeqS3}
\mathbb{P}_{\tilde{L}} \left(F | E_M \right) \geq c(t)^2/9.
\end{equation}
The above inequality implies that
$$\mathbb{P}_{\tilde{L}}(F) \geq \mathbb{P}_{\tilde{L}} \left(F \cap E \right) = \sum_{M \in X} \mathbb{P}_{\tilde{L}} \left(F | E_M\right) \mathbb{P}_{\tilde{L}}(E_M)  \geq  (c(t)^2/9) \cdot \mathbb{P}_{\tilde{L}}\left (E  \right) .$$
The latter inequality together with (\ref{LAP1eq1}) and the monotonicity of $\Phi^v$ on $\mathbb{R}_{> 0}$ prove (\ref{LAPeq4}).

\subsection{Proof of Lemma \ref{LemmaAP2}}\label{Section6.3}

Define $\mathbb{P}_{{L}'}$ and $\mathbb{P}_{\tilde{{L}}'}$ as the measure on up-right paths ${L}'$ and $\tilde{{L}}': [-t_1,-s_1] \cup [s_1 ,t_1] \rightarrow \mathbb{R}$ (with ${L}'(-t_1) = \tilde{{L}}'(-t_1) = a$ and ${L}'(t_1) = \tilde{{L}}'(t_1) = b$ ) induced by the restriction of the measures $\mathbb{P}_{{L}}$ and $\mathbb{P}_{\tilde{{L}}}$ to these intervals. The Radon-Nikodym derivative between these two restricted measures is given on up-right paths $B: [-t_1,-s_1] \cup [s_1,t_1] \rightarrow \mathbb{R}$ by
\begin{equation}\label{eqn61}
\frac{d\mathbb{P}_{{L}'}}{d\mathbb{P}_{\tilde{{L}}'}} (B) = (Z')^{-1}Z_t( -s_1, s_1,B(-s_1),B(s_1),\ell_{bot}; S') ,
\end{equation}
where $Z' = \mathbb{E}_{\tilde{{L}}'} \left[ Z_t( -s_1, s_1,B(-s_1),B(s_1),\ell_{bot};S' ) \right]$. 

Observe that $Z_t(-s_1, s_1,B(-s_1),B(s_1),\ell_{bot} ;S')$ is a (deterministic) function of $\left( B(-s_1),B(s_1)\right)$. In addition, the law of $\left( B(-s_1),B(s_1)\right)$ under $\mathbb{P}_{\tilde L'}$ is the same as $(\tilde{L}(-s_1), \tilde{L}(s_1))$ under $\mathbb{P}_{\tilde L}$ (this is because $\mathbb{P}_{\tilde L'}$ is a restriction of $\mathbb{P}_{\tilde L}$ to intervals that contain $\pm s_1$).  The latter and Lemma \ref{LemmaAP1} imply
$$Z' = \mathbb{E}_{\tilde{{L}}'} \left[ Z_t( -s_1, s_1,B(-s_1),B(s_1),\ell_{bot};S' ) \right] = \mathbb{E}_{\tilde{{L}}} \left[ Z_t( -s_1, s_1,\tilde{L}(-s_1),\tilde{L}(s_1),\ell_{bot};S' ) \right] \geq g h.$$

Similarly, the law of $\left( B(-s_1),B(s_1)\right)$ under $\mathbb{P}_{ L'}$ is the same as $\left({L}(-s_1), {L}(s_1)\right)$ under $\mathbb{P}_{ L}$ (this is because $\mathbb{P}_{ L'}$ is a restriction of $\mathbb{P}_{ L}$ to intervals that contain $\pm s_1$). Since $Z_t( -s_1, s_1,B(-s_1),B(s_1),\ell_{bot};S' )$ is a (deterministic) function of $\left( B(-s_1),B(s_1)\right)$, we conclude that 
$$\mathbb{P}_{{{L}}} \left( Z_t( -s_1, s_1,L(-s_1),L(s_1),\ell_{bot} ;S') \leq \delta(\tilde \epsilon) \right) = \mathbb{P}_{{{L}'}} \left( Z_t( -s_1, s_1,B(-s_1),B(s_1),\ell_{bot};S' )\leq \delta(\tilde \epsilon)\right).$$
Let us denote $E = \left\{ Z_t( -s_1, s_1,B(-s_1),B(s_1),\ell_{bot};S' ) \leq \delta( \tilde \epsilon) \right\} \subset \Omega$ (here $\Omega$ is the space of paths $B$). Then we have that 
$$\mathbb{P}_{{{L}'}} \left( E\right) = \int_\Omega {\bf 1}_E \cdot d\mathbb{P}_{{L}'}(B) = (Z')^{-1}\int_\Omega {\bf 1}_E \cdot Z_t(-s_1, s_1,B(-s_1),B(s_1),\ell_{bot} ;S') \cdot d\mathbb{P}_{\tilde{{L}}'}(B).$$
The above is immediate from (\ref{eqn61}). On $E$ we have that $Z_t(-s_1, s_1,B(-s_1),B(s_1),\ell_{bot};S' ) \leq \delta(\tilde \epsilon)$ and so the above is bounded by
$$(Z')^{-1}\int_\Omega {\bf 1}_E \cdot\delta(\tilde\epsilon) \cdot d\mathbb{P}_{\tilde{{L}}'}(B) \leq  \frac{1}{gh} \int_\Omega {\bf 1}_E \cdot\delta(\tilde\epsilon) \cdot d\mathbb{P}_{\tilde{{L}}'}(B)  \leq \tilde\epsilon.$$
The first inequality used that $Z'  \geq  gh $ and the second one that $\delta(\tilde\epsilon) = \tilde\epsilon \cdot gh$ and ${\bf 1}_E \leq 1$. This concludes the proof of the lemma.

\section{Absolute continuity with respect to Brownian bridges }\label{Section7}
In Theorem \ref{PropTightGood} we showed that under suitable shifts and scalings $(\alpha, p, r+1)$-good sequences give rise to tight sequences of continuous random curves. In this section, we aim to obtain some qualitative information about their subsequential limits and we will show that any subsequential limit is absolutely continuous with respect to a Brownian bridge with appropriate variance. In particular, this demonstrates that we have non-trivial limits and do not kill fluctuations with our rescaling. In Section \ref{Section7.1} we present the main result of the section -- Theorem \ref{ACBB} and explain how it relates to the other results in the paper. The proof of Theorem \ref{ACBB} is given in Section \ref{Section7.2} and for the most part relies on our control of the acceptance probability in Proposition \ref{PropMain} and the Hall-Littlewood Gibbs property.

\subsection{Formulation of result and applications}\label{Section7.1} We begin by introducing some relevant notation and defining what it means for a random curve to be absolutely continuous with respect to a Brownian bridge.

\begin{definition} Let $X = C([0,1])$ and $Y = C([-r,r])$ be the spaces of continuous functions on $[0,1]$ and $[-r,r]$ respectively with the uniform topology. Denote by $d_X$ and $d_Y$ the metrics on the two spaces and by $\mathcal{B}(X)$ and $\mathcal{B}(Y)$ their Borel $\sigma$-algebras. Given $z_1,z_2 \in \mathbb{R}$ we define
$F_{z_1, z_2} : X \rightarrow Y$ and $G_{z_1,z_2}: Y \rightarrow X$ by
\begin{equation}\label{G1}
[F_{z_1,z_2} (g)] (x) =  z_1 + g \left( \frac{x + r}{2r} \right) + \frac{x + r}{2r} (z_2 - z_1) \hspace{3mm} [G_{z_1,z_2} (h) ](\xi) =  h \left( 2r\xi - r \right) -z_1 - (z_2 - z_1)  \xi,
\end{equation}
for $x \in [-r,r]$ and $\xi \in [0,1]$. 
\end{definition} 

One observes that $F_{z_1,z_2}$ and $G_{z_1,z_2}$ are bijective homomorphisms between $X$ and $Y$ that are mutual inverses. Let $X_0 = \{ f \in X : f(0) = f(1) = 0\}$ with the subspace topology and define $G: Y \rightarrow X$ through $G(h) = G_{h(-r), h(r)} ( h)$. Let us make some observations.
\begin{enumerate}
\item $G$ is a continuous function. Indeed, from the triangle inequality we have \\$d_X \left(G_{h_1(-r), h_1(r) }(h_1),  G_{h_2(-r), h_2(r) }(h_2) \right) \leq 2d_Y(h_1, h_2).$
\item If $L$ is a random variable in $(Y, \mathcal{B}(Y))$ then $G(L)$ is a random variable in $(X, \mathcal{B}(X))$, which belongs to $X_0$ with probability $1$. The measurability of $G(L)$ follows from the continuity of $G$, everything else is clearly true.
\end{enumerate}
Recall from Section \ref{Section4.2} that $B^\sigma$ stands for the Brownian bridge on $[0,1]$, with variance $\sigma^2$ -- this is a random variable in $(X, \mathcal{B}(X))$, which belongs to $X_0$ with probability $1$.

With the above notation we make the following definition. 
\begin{definition}\label{DACB}
Let $L$ be a random variable in $(Y, \mathcal{B}(Y))$ with law $\mathbb{P}_L$. We say that $L$ is {\em absolutely continuous } with respect to a Brownian bridge with variance $\sigma^2$ if for any $K \in \mathcal{B}(X)$ we have
$$\mathbb{P}(B^\sigma \in K) = 0 \implies \mathbb{P}_L(G(L) \in K) = 0.$$
\end{definition}

The main result of this section is as follows. 
\begin{theorem}\label{ACBB}
Assume the same notation as in Theorem \ref{PropTightGood} and let $\mathbb{P}_\infty$ be any subsequential limit of $\mathbb{P}_N$. If $f_\infty$ has law $\mathbb{P}_\infty$ then it is absolutely continuous with respect to a Brownian bridge with variance $2rp(1-p)$ in the sense of Definition \ref{DACB}.
\end{theorem}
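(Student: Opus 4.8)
\textbf{Proof plan for Theorem \ref{ACBB}.}

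The plan is to reduce the statement to a quantitative comparison, on a fixed small interval, between the conditional law of the top curve $L_1^N$ (given its endpoints and the curve below it) and the law of a free random walk bridge, and then to invoke the strong coupling of random walk bridges with Brownian bridges (Theorem \ref{KMT}). The overall skeleton follows the argument used to prove Theorem \ref{PropTightGood} in Section \ref{Section5.2}: condition on the event $E(a,b,\ell_{bot},N)$ which fixes the endpoints $L_1^N(\pm s_1)=a,b$ and the second curve $L_2^N=\ell_{bot}$ on $[-s_1,s_1]$, with $s_1=\lfloor rN^\alpha\rfloor$. On such an event the Hall-Littlewood Gibbs property says that $L_1^N$ restricted to $[-s_1,s_1]$ has law $\mathbb{P}^{-s_1,s_1,a,b}_{S'}(\cdot\mid\ell_{bot})$, and by the sampling description in Remark \ref{RemHLGP} this law is absolutely continuous with respect to $\mathbb{P}^{-s_1,s_1;a,b}_{free}$ with Radon-Nikodym derivative $W_t(-s_1,s_1,\ell,\ell_{bot};S')/Z_t(-s_1,s_1,a,b,\ell_{bot};S')$, a density that is bounded above by $Z_t^{-1}$. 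Proposition \ref{PropMain} gives, with probability close to $1$, a uniform lower bound $Z_t\ge\delta_1>0$ on the acceptance probability; on the complementary event (probability $<\epsilon$) we discard. Thus, up to an event of small probability, the conditional law of the rescaled curve is dominated by $\delta_1^{-1}$ times the law of the correspondingly rescaled free bridge.

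The next step is to push absolute continuity through the rescaling maps $G_{z_1,z_2}$. Fix a Borel set $K\subset X$ with $\mathbb{P}(B^\sigma\in K)=0$, where $\sigma^2=2rp(1-p)$; I want to show $\mathbb{P}_\infty(G(f_\infty)\in K)=0$. By the Portmanteau theorem it suffices to bound $\mathbb{P}_N(G(f_N)\in K^{(\rho)})$ for the open $\rho$-neighborhood $K^{(\rho)}$ of $K$ and then let $\rho\downarrow 0$ using continuity of measure (the family $\{G(B^\sigma)\in K^{(\rho)}\}$ shrinks to a null set); here one also needs that the shift/scaling linear maps sending the free bridge on $[-s_1,s_1]$ to an element of $X_0$ carry the (Gaussian) law of the limiting Brownian bridge to $B^\sigma$ — this is where the variance $2rp(1-p)$ comes from: a simple random walk bridge with Bernoulli$(p)$ increments over $2s_1\approx 2rN^\alpha$ steps, centered and scaled by $N^{-\alpha/2}$, converges to a Brownian bridge of variance $2rp(1-p)$ (Donsker for bridges, or directly from Theorem \ref{KMT}), and the map $G$ is continuous so it commutes with this limit. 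Combining with the domination above, for $N$ large
\begin{equation*}
\mathbb{P}_N\big(G(f_N)\in K^{(\rho)}\big)\le \epsilon + \delta_1^{-1}\,\mathbb{P}_{free}\big(G(\text{rescaled free bridge})\in K^{(\rho)}\big) + o(1),
\end{equation*}
and using Theorem \ref{KMT} to couple the free bridge to a Brownian bridge within $O(N^{-\alpha/2}\cdot N^{\alpha/4})=o(1)$ in the uniform norm (with overwhelming probability), the last term is at most $\delta_1^{-1}\,\mathbb{P}(B^\sigma\in K^{(2\rho)})+o(1)$. Taking $N\to\infty$, then $\rho\to 0$, then $\epsilon\to 0$ gives $\mathbb{P}_\infty(G(f_\infty)\in K)=0$, as desired. (A mild technical point: one must handle the conditioning over the countably many values of $(a,b,\ell_{bot})$ exactly as in Step 3–Step 4 of Section \ref{Section5.2}, summing the conditional bounds against $\mathbb{P}(E(a,b,\ell_{bot},N))$; this is routine once the per-event bound is in place.)

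The main obstacle I expect is the interchange of limits and the handling of the neighborhoods $K^{(\rho)}$: absolute continuity is not a closed condition under weak convergence, so one cannot simply pass to the limit in $\mathbb{P}_N(G(f_N)\in K)$. The resolution is the standard one — work with open neighborhoods $K^{(\rho)}$, use Portmanteau (lower semicontinuity of open sets under weak convergence gives the needed upper bound after complementation, or directly apply the inequality for closed sets to $(K^{(\rho)})^c$), obtain a bound uniform in $N$ of the form $\epsilon+\delta_1^{-1}\mathbb{P}(B^\sigma\in K^{(2\rho)})$, and only then shrink $\rho$ using $\mathbb{P}(B^\sigma\in\bigcap_\rho K^{(2\rho)})=\mathbb{P}(B^\sigma\in \bar K)=\mathbb{P}(B^\sigma\in K)=0$ (the last equality because a Gaussian measure on $X_0$ gives no mass to the boundary of a null set — more carefully, one takes $K$ closed WLOG, or uses that $\mathbb{P}(B^\sigma\in K^{(\rho)})\downarrow\mathbb{P}(B^\sigma\in \overline{K})$ and that the $\sigma$-algebra can be generated by sets whose closure is still $B^\sigma$-null). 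A secondary technical nuisance is that $Z_t^{-1}$ is only bounded after removing a small-probability event, so the domination is ``approximate''; this is exactly why the $\epsilon$ appears and why one sends $\epsilon\to 0$ last. Once these measure-theoretic bookkeeping issues are dispatched, the substance is entirely contained in Proposition \ref{PropMain}, the Hall-Littlewood Gibbs property, and Theorem \ref{KMT}, all of which are available.
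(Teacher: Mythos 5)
Your proposal is correct in substance but follows a genuinely different route from the paper. Where you dominate the conditional (Gibbs) law of $L_1^N$ on $[-s_1,s_1]$ directly by $\delta_1^{-1}$ times the free bridge law — using that the Radon--Nikodym derivative $W_t/Z_t$ is bounded by $Z_t^{-1}\le\delta_1^{-1}$ on the good event from Proposition \ref{PropMain} — the paper instead runs the resampling procedure of Remark \ref{RemHLGP} with the pre-coupled paths $\ell^{(2s_1,k,z)}$ of Theorem \ref{KMT}: the accepted index $Q$ is geometric with parameter $Z_t\ge\delta$, it is truncated at $W=\log\epsilon/\log(1-\delta)$, and a union bound over $W$ independent Brownian bridges is compensated by choosing (via outer regularity of the Gaussian law) an open $O\supset K$ with $\mathbb{P}(B^{\sigma_1}\in O)<\epsilon\log(1-\delta)/\log\epsilon$. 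Likewise, where you handle the approximation of $O$ (resp.\ $K^{(\rho)}$) by Portmanteau applied to the pushforwards $G_*\mathbb{P}_{N_j}$ together with metric neighborhoods $K^{(\rho)}\subset K^{(2\rho)}$ absorbing the $O(N^{-\alpha/4})$ coupling error, the paper uses Skorohod's representation to get almost sure convergence and inserts the explicit buffer $d_Y(f_{N_j},G^{-1}(O)^c)>N^{-\alpha/8}$. Your density-domination route is arguably leaner (no geometric variable, no union bound, no quantitative tuning of $O$), at the cost that the factor $\delta_1^{-1}$ must be killed by letting $\rho\to0$ before $\epsilon\to0$ — which your stated order of limits ($N\to\infty$, then $\rho\to0$, then $\epsilon\to0$) does correctly. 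Both routes rest on the same three pillars: Proposition \ref{PropMain}, the Hall--Littlewood Gibbs property conditioned on $E(a,b,\ell_{bot},N)$, and Theorem \ref{KMT}, including the one-point/supremum control giving $|b-a-2ps_1|\lesssim MN^{\alpha/2}$ so that the coupling error is uniformly $o(1)$ over the conditioning.

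One justification in your closing discussion needs repair. The assertion that a Gaussian measure ``gives no mass to the boundary of a null set'' is false in general (a countable dense null set has full closure), so you cannot conclude $\mathbb{P}(B^{\sigma_1}\in\overline K)=0$ from $\mathbb{P}(B^{\sigma_1}\in K)=0$. The correct fix is the other option you mention: reduce to closed $K$ at the outset, justified by inner regularity of the \emph{subsequential limit law} $\mathbb{P}_\infty\circ G^{-1}$ (every finite Borel measure on the Polish space $X$ is closed-inner regular, and every closed subset of a $B^{\sigma_1}$-null set is again null) — note the regularity needed is of the target law, not of the Gaussian. The paper sidesteps this entirely by using outer regularity of the Brownian bridge law to pick the open superset $O$, which works for arbitrary Borel null $K$. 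With that reduction made explicit (and the routine $O(N^{-\alpha/2})$ bookkeeping for $f_N$ on the edge intervals $[-r,-r_N]\cup[r_N,r]$, as in Step 4 of Section \ref{Section7.2}), your argument goes through.
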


We have the following immediate corollary to Theorem \ref{ACBB} about the three stochastic models of Section \ref{Section2}.
\begin{corollary}\label{ACC}
Assume the same notation as in Theorem \ref{HLCT}, Corollary \ref{SVTight} and Theorem \ref{ASEPCT} respectively and define for $x \in [-r,r]$
$$g^{HL}_N(x) =  \sigma_{\mu}f^{HL}_N(x) + \frac{x^2f_1''(\mu)}{2}, \hspace{2mm} g^{SV}_N(x) =  \sigma_{\mu}f^{SV}_N(x) -  \frac{x^2 f_2''(\mu)}{2}, \hspace{2mm} g^{ASEP}_N(x) =\sigma_{\alpha} f^{ASEP}_N(x) -  \frac{x^2 f_3''(\alpha)}{2 }.$$
If $g^{HL}_\infty$, $g^{SV}_\infty$ and $g^{ASEP}_\infty$ are any subsequential limits of $g^{HL}_N $, $g^{SV}_N $ and $g^{ASEP}_N $ respectively as $N \rightarrow \infty$ then $g^{HL}_\infty$, $g^{SV}_\infty$ and $g^{ASEP}_\infty$ are absolutely continuous with respect to a Brownian bridge of variance $2r  f_1'(\mu) [1 - f_1'(\mu)]$,$- 2r f'_3(\mu) [1 + f'_2(\mu)]$ and  $-2r   f'_3(\alpha) [1 + f'_3(\alpha)]$ respectively in the sense of Definition \ref{DACB}.
\end{corollary}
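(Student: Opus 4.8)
\textbf{Proof plan for Corollary \ref{ACC}.}

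The plan is to deduce Corollary \ref{ACC} directly from Theorem \ref{ACBB} applied to each of the three tight sequences produced in Theorem \ref{HLTight}, Corollary \ref{SVTight} and Theorem \ref{ASEPTight}. First I would recall from the proofs of those three results that each of $f^{HL}_N$, $f^{SV}_N$ and $f^{ASEP}_N$ was (up to the deterministic affine normalization built into its definition) exhibited as $\sigma_\bullet^{-1}N^{-\alpha/2}$ times the top curve of an $(\alpha, p, r+1)$-good sequence with $\alpha = 2/3$ and the appropriate slope $p$: namely $p = f_1'(\mu)$ in the Hall-Littlewood case, and $p = -f_3'(\alpha)$ in the ASEP case (with the six-vertex case identical to the Hall-Littlewood one via Theorem \ref{SVHL}). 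Thus each $\sigma_\bullet f^{\bullet}_N$, after removing the parabolic term, is exactly a sequence $f_N$ of the form appearing in Theorem \ref{PropTightGood}, whose subsequential limits are, by Theorem \ref{ACBB}, absolutely continuous with respect to a Brownian bridge of variance $2rp(1-p)$.

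The main step is then purely bookkeeping with the scaling constants. In the ASEP case, $g^{ASEP}_N(x) = \sigma_\alpha f^{ASEP}_N(x) - \tfrac{x^2 f_3''(\alpha)}{2}$ is precisely $N^{-1/3}\big(\tilde L_1^N(xN^{2/3}) + x N^{2/3} f_3'(\alpha)\big)$ for the good sequence $(\tilde L_1^N,\tilde L_2^N)$ constructed in the proof of Theorem \ref{ASEPTight}, i.e. it is the function $f_N$ of Theorem \ref{PropTightGood} with $p = -f_3'(\alpha)$; hence by Theorem \ref{ACBB} every subsequential limit is absolutely continuous with respect to a Brownian bridge of variance $2r p(1-p) = 2r(-f_3'(\alpha))(1+f_3'(\alpha)) = -2r f_3'(\alpha)[1+f_3'(\alpha)]$, which matches the claim. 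The Hall-Littlewood case is the same computation with $p = f_1'(\mu)$, giving variance $2r f_1'(\mu)[1 - f_1'(\mu)]$, and the six-vertex case follows because $f^{SV}_N \eqd f^{HL}_N$ by Theorem \ref{SVHL} (so the asserted variance $-2r f_2'(\mu)[1 + f_2'(\mu)]$ coincides with $2r f_1'(\mu)[1-f_1'(\mu)]$ after substituting $f_2'(\mu) = -f_1'(\mu)$, which is evident from the explicit formulas for the constants). One must also check that the parabola subtraction is harmless: $g^{\bullet}_N$ differs from $\sigma_\bullet f^{\bullet}_N$ by a fixed continuous (deterministic) function, so subsequential limits of $g^{\bullet}_N$ are subsequential limits of $\sigma_\bullet f^{\bullet}_N$ shifted by that fixed function, and the definition of absolute continuity in Definition \ref{DACB} is unaffected since $G$ is applied after the shift — or more simply, the map $G$ already subtracts the affine interpolation of the endpoints, and one observes that adding a fixed smooth function to a curve changes $G$ of it by adding a fixed $X_0$-valued function, which preserves the null-set property relative to a Brownian bridge.

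The step I expect to require the most care is verifying that the normalizations are matched exactly — that is, that the curve whose limit Theorem \ref{ACBB} controls really is $g^{\bullet}_N$ and not some affinely-reparametrized variant, and that the variance $2rp(1-p)$ coming out of Theorem \ref{ACBB} (whose $p$ is the slope of the good sequence, not of $f^{\bullet}_N$) is transcribed with the correct sign into the statement. Concretely, in the ASEP case one must be careful that the good sequence $(\tilde L_1^N, \tilde L_2^N)$ has slope parameter $p = -f_3'(\alpha) = \tfrac{1-\alpha}{2} \in (0,1)$, so that $p(1-p) > 0$ and the resulting variance is genuinely positive, confirming the limit is non-degenerate. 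Once these identifications are pinned down, the corollary is immediate: apply Theorem \ref{ACBB} three times and read off the variances. I would write the proof as a short paragraph making exactly these three invocations explicit.
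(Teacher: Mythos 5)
Your proposal is correct and follows essentially the same route as the paper: identify each $g^{\bullet}_N$ (up to negligible floor-rounding) with the function $f_N$ of Theorem \ref{PropTightGood} for the corresponding $(2/3,p,r+1)$-good sequence with $p=f_1'(\mu)$ (HL), the same via the distributional identity of Theorem \ref{SVHL} (S6V), and $p=-f_3'(\alpha)$ (ASEP), then invoke Theorem \ref{ACBB} and read off the variance $2rp(1-p)$. The extra discussion of the parabola shift is unnecessary (since $g^{\bullet}_N$ is already the good-sequence curve itself) but does not harm the argument.
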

\begin{proof}
From the proof of Theorem \ref{HLTight} we know that 
$$g_N^{HL}(s) = N^{-1/3}\left(L^N_1(s N^{2/3}) - f'_1(\mu) s N^{2/3} \right ), \mbox{ for $s\in [-r,r]$},$$
where the sequence $(L_1^N,L_2^N) $  is $(2/3, f_1'(\mu), r+1)$-good. By Theorem \ref{ACBB} we conclude the statement for $g^{HL}_\infty$. In addition, by Theorem \ref{SVHL} we know that for each $N \in \mathbb{N}$,  $f^{HL}_N$ has the same distribution as $f^{SV}_N$ and so we conclude the statement for $g^{SV}_\infty$ as well.

From the proof of Theorem \ref{ASEPTight} we know that
$$g_N^{ASEP}(s) = N^{-1/3}\left(\tilde L_1^N(s N^{2/3})  + f_3'(\alpha) s N^{2/3} \right ), \mbox{ for $s\in [-r,r]$},$$
where the sequence $(\tilde L_1^N,\tilde L_2^N) $  is $\left(2/3, -f'_3(\alpha), r+1\right)$-good. By Theorem \ref{ACBB} we conclude the statement for $g^{ASEP}_\infty$. 
\end{proof}
\begin{remark} Conjecturally, $f^{HL}_N$, $f^{SV}_N$ and $f^{ASEP}_N$ should converge to the Airy$_2$ process. Corollary \ref{ACC} provides further evidence for this result as it is known that the Airy$_2$ process minus a parabola has Brownian paths \cite{CorHamA}. See also the discussion at the end of Section \ref{sec.HLGLE}.
\end{remark}

\subsection{Proof of Theorem \ref{ACBB}}\label{Section7.2} In this section we give the proof of Theorem \ref{ACBB}, which for clarity is split into several steps. Before we go into the main argument we introduce some useful notation and give an outline of our main ideas.\\

Throughout we assume we have the same notation as in the statement of Theorem \ref{PropTightGood} as well as the notation from Section \ref{Section7.1} above. We  Denote $\sigma_1^2 = 2r p(1-p)$, $s_1 = \lfloor r N^{\alpha} \rfloor$, $r_N = s_1N^{-\alpha}$ and $S' = \llbracket - s_1+ 1, s_1\rrbracket$.  In addition, we define three probability spaces $\mathbb{P}^1, \mathbb{P}^2, \mathbb{P}^3$ as well as a big probability space $\tilde{\mathbb{P}}$, which is the product space of $\mathbb{P}^1, \mathbb{P}^2$ and $ \mathbb{P}^3$.  The three spaces will carry different stochastic objects and we will use the superscript to emphasize, which properties we are using in different steps of the proof. We also reserve $\mathbb{P}$ to refer to the law of universal probabilistic objects like a Brownian bridge of a fixed variance.

From Theorem \ref{KMT} in the paper, we know that for each $n \in \mathbb{N}$ we have a probability space, on which we have a Brownian bridge $B^\sigma$ with variance $\sigma^2 = p(1-p)$ and a family of random paths $\ell^{(n,z)} \in \Omega(0,n; 0, z)$ for $z = 0,\dots,n$ such that $\ell^{(n,z)}$ has law $\mathbb{P}^{0,n;0,z}_{free}$ and
\begin{equation*}
\mathbb{E}\left[ e^{a \Delta(n,z)} \right] \leq C e^{\alpha (\log n)^2}e^{|z- p n|^2/n}, \mbox{ where $\Delta(n,z)=  \sup_{0 \leq t \leq n} \left| \sqrt{n} B^\sigma_{t/n} + \frac{t}{n}z - \ell^{(n,z)}(t) \right|,$}
\end{equation*}
where the constants $C,a, \alpha$ depend only on $p$ and are fixed. By taking products of countably many of the above spaces we can construct a probability space $(\Omega^1, \mathcal{F}^1, \mathbb{P}^1)$, on which we have defined independent Brownian bridges $B^{\sigma,k,n}$ and independent families of random paths $\ell^{(n,k,z)} \in \Omega(0,n; 0, z)$ for $z = 0,\dots,n$ such that $\ell^{(n,k,z)}$ has law $\mathbb{P}^{0,n;0,z}_{free}$ for each $k$ and
\begin{equation*}
\mathbb{E}_{\mathbb{P}^1}\left[ e^{a \Delta(n,k,z)} \right] \leq C e^{\alpha (\log n)^2}e^{|z- p n|^2/n}, \mbox{ where $\Delta(n,k,z):=  \sup_{0 \leq t \leq n} \left| \sqrt{n} B^{\sigma,k,n}_{t/n} + \frac{t}{n}z - \ell^{(n,k,z)}(t) \right|.$}
\end{equation*}
In words, for each pair $(k,n) \in \mathbb{N} \times \mathbb{N}$ we have an independent copy of the probability space afforded by Theorem \ref{KMT} sitting inside $(\Omega^1, \mathcal{F}^1, \mathbb{P}^1)$.  In addition, we assume that $(\Omega^1, \mathcal{F}^1, \mathbb{P}^1)$ carries a uniform random variable $U \in (0,1)$, which is independent of all else.

Since $\mathbb{P}_\infty$ is a subseqential limit of $\mathbb{P}_N$ we know that we can find an increasing sequence $N_j$ such that $\mathbb{P}_{N_j}$ weakly converge to $\mathbb{P}_\infty$. By Skorohod's representation theorem (see e.g. Theorem 6.7 in \cite{Bill}) we can find a probability space $(\Omega^2, \mathcal{F}^2, \mathbb{P}^2)$, on which are defined random variables $\tilde{f}_{N_j}$ and $\tilde{f}_\infty$ that take values in $(Y, \mathcal{B}(Y))$ such that the laws of $\tilde{f}_{N_j}$ and $\tilde{f}_{\infty}$ are $\mathbb{P}_{N_j}$ and $\mathbb{P}_\infty$ respectively and such that $d_Y\left(\tilde{f}_{N_j}(\omega^2),\tilde{f}_{\infty}(\omega^2) \right) \rightarrow 0 $ as $j \rightarrow \infty$ for each $\omega^2 \in \Omega^2$.

We consider a probability space $(\Omega^3, \mathcal{F}^3, \mathbb{P}^3)$, on which we have defined the original $(\alpha, p, r+1)$-good sequence $\mathfrak{L}^N = (L_1^N, L_2^N)$ and so
$$f_N(s) = N^{-\alpha/2}(L_1^N(sN^{\alpha}) - p s N^{\alpha}), \mbox{ for $s\in [-r,r]$}$$
has law $\mathbb{P}_{N}$ for each $N \geq 1$. Let us briefly explain the difference between $\mathbb{P}^2$ and $\mathbb{P}^3$ and why we need both. The space  $(\Omega^2, \mathcal{F}^2, \mathbb{P}^2)$ carries the random variables $\tilde{f}_{N_j}$ of law $\mathbb{P}_{N_j}$ and what is crucial is that the latter converge {\em almost surely} to $\tilde{f}_\infty$, whose law is $\mathbb{P}_\infty$. The space $(\Omega^3, \mathcal{F}^3, \mathbb{P}^3)$ carries the {\em entire} discrete line ensembles  $\mathfrak{L}^N = (L_1^N, L_2^N)$ (and not just the top curve), which is needed to perform the resampling procedure of Section \ref{Section3.1}. Finally, we define  $(\tilde \Omega, \tilde{ \mathcal{F}}, \tilde{\mathbb{P}})$ as the product of the three probability spaces we defined above.\\

At this time we give a brief outline of the steps in our proof. In the first step we fix $K \in \mathcal{B}(X)$ such that $\mathbb{P}(B^{\sigma_1} \in K) = 0$ and find an open set $O$, which contains $K$, and such that $B^{\sigma_1}$ is {\em extremely} unlikely to belong to $O$. Our goal is then to show that $G(\tilde{f}_\infty)$ is also unlikely to belong to $O$, the exact statement is given in (\ref{Z3}) below. Using that $O$ is open and that $\tilde{f}_{N_j}$ converge to $\tilde{f}_\infty$ almost surely we can reduce our goal to showing that it is unlikely that $G(\tilde{f}_{N_j})$ belongs to $O$ {\em and } $\tilde{f}_{N_j}$ is at least a small distance away from the complement of $G^{-1}(O)$ for large $j$. Our gain from the almost sure convergence is that  we have bounded ourselves away from $G^{-1}(O)^c$, which implies that by performing small perturbations we do not leave $G^{-1}(O)$. As the laws of $\tilde{f}_{N_j}$ and $f_{N_j}$ are the same we can switch from $(\Omega^2, \mathcal{F}^2, \mathbb{P}^2)$ to $(\Omega^3, \mathcal{F}^3, \mathbb{P}^3)$, reducing the goal to showing that it is unlikely that $G(f_{N})$ belongs to $O$ and $f_{N}$ is at least a small distance away from the complement of $G^{-1}(O)$ for large $N$. The exact statement is given in (\ref{Z4}) and the reduction happens in Step 2. The benefit of this switch is that we can perform the resampling of Section \ref{Section3.1} in $(\Omega^3, \mathcal{F}^3, \mathbb{P}^3)$ as the latter carries an entire line ensemble.

In the third step we use $U$ and $\ell^{(2s_1,k,z)}$ for $k = 1,2,3...$ to resample $f_N$ on the interval $[-s_1, s_1]$. If we denote by $Q$ the index $k$ of the first line we accept from the resampling, we can rephrase our statements for $f_N$ to equivalent statements that involve the path $\ell^{(2s_1,Q,z)}$ -- this is (\ref{Z5}). The benefit of working with $\ell^{(2s_1,k,z)}$ is that they are already strongly coupled with Brownian bridges by construction. In Step 4 we construct an event $F(N)$, on which our coupling of $\ell^{(2s_1,Q,z)}$ and the Browniand bridge $B^{\sigma, Q, 2s_1}$ is good and on which $B^{\sigma, Q, 2s_1}$ is well-behaved (its supremum and modulus of continuity are controlled). Provided we are on $F(N)$ (where the coupling is good) we see that $\ell^{(2s_1,Q,z)}$ belonging to a certain set (we want to show is unlikely) implies that $\sqrt{2r}B^{\sigma, Q, 2s_1}$ belongs to $O$. Here it is crucial, that we have the extra distance to the complement of $G^{-1}(O)$ so that when we approximate our discrete paths with Brownian bridges we do not leave the set $G^{-1}(O)$. 

The above steps reduce the problem to showing that  it is unlikely that $\sqrt{2r}B^{\sigma, Q, 2s_1}$ belongs to $O$ or that we are outside the event $F(N)$ -- the exact statement is in (\ref{Z7}). The control of $\sqrt{2r}B^{\sigma, Q, 2s_1}$ is obtained by arguing that with high probability $Q$ is bounded -- this requires our estimate on the acceptance probability from Proposition \ref{PropMain} and is the focus of Step 5. By having $Q$ bounded we reduce the question to a regular Brownian bridge, for which the event it belongs to $O$ is unlikely by definition of $O$. We demonstrate that $F(N)^c$ is unlikely in Step 6. As before  we use the estimate on the acceptance probability to reduce the question to one involving a regular Brownian bridge. In addition, we use that with high probability we have uniform control of the coupling of our paths with Brownian bridges for all large $N$.

We now turn to the proof of the theorem.\\

{\raggedleft {\bf Step 1.}} Suppose that $K \in \mathcal{B}(X)$ is given such that $\mathbb{P}(B^{\sigma_1} \in K) = 0$. We wish to show that 
\begin{equation}\label{Z1}
\mathbb{P}^2\left( G(\tilde{f}_\infty) \in K \right) = 0.
\end{equation}
Let $\epsilon \in (0,1)$ be given and note that by Proposition \ref{PropMain} and Theorem \ref{PropTightGood} , we can find $\delta \in (0,1)$ and $ M > 0$ such that for all large $N$ one has
\begin{equation}\label{Z2}
\begin{split}
\mathbb{P}^3\left( E(\delta, M, N) \right) < \epsilon, \mbox{ where } E(\delta, M, N) =  &\Big\{  Z_t( -s_1,s_1,L_1^N(- s_1),L_1^N(s_1), L_2; S') < \delta   \Big\} \cup \\
&  \Big\{ \sup_{s \in [-rN^{\alpha},rN^{\alpha}]}\left| L^N_1(s) - p s \right| \geq   MN^{\alpha/2} \Big\}.
\end{split}
\end{equation}
We observe that since $C([-r,r])$ is a metric space we have by Theorem II.2.1 in \cite{Parth}  that the measure of $B^{\sigma_1}$ is outer-regular. In particular, we can find an open set $O$ such that $K \subset O$ and $\mathbb{P}(B^{\sigma_1} \in O) < \epsilon \cdot \frac{\log (1 - \delta)}{\log (\epsilon)}$. The set $O$ will not be constructed explicitly and we will not require other properties from it other than it is open and contains $K$.

We will show that 
\begin{equation}\label{Z3}
 \mathbb{P}^2\left( G(\tilde{f}_\infty) \in O \right) \leq 6\epsilon.
\end{equation}
Notice that the above implies that $\mathbb{P}^2\left( G(\tilde{f}_\infty) \in K \right) \leq 6\epsilon$ and hence we reduce the proof of the theorem to establishing (\ref{Z3}).\\

{\raggedleft {\bf Step 2.}} Our goal in this step is to reduce (\ref{Z3}) to a statement involving finite indexed curves. 

We first observe $G^{-1}(O)$ is open since $G$ is continuous (this was proved in Section \ref{Section7.1}). The latter implies that 
\begin{equation*}
\begin{split}
 &\mathbb{P}^2\left( G(\tilde{f}_\infty) \in O \right) = \mathbb{P}^2\left( \tilde{f}_\infty \in G^{-1}(O) \right)  =\lim_{j \rightarrow \infty} \mathbb{P}^2 \left( \left\{\tilde{f}_{N_j} \in G^{-1}(O) \right\} \cap  \left\{ d_Y(\tilde{f}_{N_j}, G^{-1}(O)^c) > r_j\right\} \right),
\end{split}
\end{equation*}
where $r_j$ is any sequence that converges to $0$  as $j \rightarrow \infty$. The first equality is by definition. The second one follows from the fact that $\tilde{f}_{N_j}$ converge to $\tilde{f}_\infty$  in the uniform topology $\mathbb{P}^2$-almost surely and that $G^{-1}(O)$ is open. To be more specific we take $r_j = N_j^{-\alpha/8}$ for the sequel.

Since $f_N$ has law $\mathbb{P}_{N}$ for each $N \geq 1$,  we observe that to get (\ref{Z3}) it suffices to show that 
\begin{equation}\label{Z4}
\limsup_{N \rightarrow \infty}  \mathbb{P}^3\left(  \left\{ f_{N} \in G^{-1}(O) \right\} \cap \left \{ d_Y(f_{N}, G^{-1}(O)^c) > N^{-\alpha/8} \right\} \right) \leq 6\epsilon .
\end{equation}

{\raggedleft {\bf Step 3.}} At this time we recall the resampling procedure from Remark \ref{RemHLGP} in the setting of our probability spaces $\mathbb{P}^i$. The goal of this step is to rephrase (\ref{Z4}) into a statement involving the paths $\ell^{(n,k,z)}$ that are defined on $(\Omega^1, \mathcal{F}^1, \mathbb{P}^1)$. 

Denote by $a = L^N_1(-s_1)$, $b = L^N_1(s_1)$, $z = b-a$, $n = 2s_1$ and $\ell_{bot} = L^N_2$ restricted to $[-s_1, s_1]$. We resample the top curve $L^N_1$ as follows. We start by erasing the curve in the interval $[-s_1,s_1]$. For $k = 1,2,3,\dots$ we take $\ell^{(n,k,z)}$ (these were defined on the space $(\Omega^1, \mathcal{F}^1, \mathbb{P}^1)$), check if $W_t(-s_1, s_1, (-s_1,a) + \ell^{(n,k,z)}, \ell_{bot}; S') > U$ and set $Q$ to be the minimal index $k$, which satisfies the inequality. Here $(-s_1,a) + \ell^{(n,k,z)}$ is just the up-right path $\ell^{(n,k,z)}$ shifted so that it starts from the point $(-s_1,a)$. 

Notice that by construction the path $(-s_1,a) + \ell^{(n,k,z)}$ are independent identically distributed as $\mathbb{P}_{free}^{-s_1,s_1;a,b}$.  Because $\mathfrak{L}^N$ satisfies the Hall-Littlewood Gibbs property we have
\begin{equation}
 \tilde{\mathbb{P} }\left((-s_1, a) + \ell^{(2s_1,Q,b-a)}= \ell \right) = \mathbb{P}^{3}\left(  L^N_1[-s_1,s_1] = \ell \right),
\end{equation}
for every $\ell \in \cup_{z_1 \leq z_2} \Omega(-s_1, s_1; z_1, z_2)$ where $L^N_1[-s_1,s_1] $ stands for the restriction of $L_1^N$ to the interval $[-s_1,s_1]$. If we denote 
$$h_N(s) = \begin{cases} N^{-\alpha/2}\left( a+ \ell^{(2s_1,Q,b-a)}(sN^{\alpha} + s_1)\right), &\mbox{ for $s\in [-r_N, r_N ] $} \\ f_N(s) &\mbox{ for $ s \in [-r,r] \backslash [-r_N, r_N ]$,}\end{cases}$$
we have that $h_N$ has the same law as $f_N$. Consequently it suffices to show that 
\begin{equation}\label{Z5}
\limsup_{N \rightarrow \infty}  \tilde{\mathbb{P}}\left(  \left\{ h_{N} \in G^{-1}(O) \right\} \cap \left \{ d_Y(h_{N}, G^{-1}(O)^c) > N^{-\alpha/8} \right\} \right) \leq 6\epsilon .
\end{equation}

{\raggedleft {\bf Step 4.}} Let $B^k(s):= B^{\sigma,k,2s_1}_s$ for $s \in[0,1]$ and consider the event
\begin{equation}\label{DefF}
F(N) = \left\{ \Delta\left(2s_1,Q, b - a\right) < N^{\alpha/4}\right\} \cap \left\{ \sup_{s \in [0,1]} \left|B^Q(s) \right| \leq N^{\alpha/4} \right\} \cap \left\{ w(B^Q, N^{-\alpha}) \leq N^{-\alpha/4} \right\}.
\end{equation}
In the above $w$ stands for the modulus of continuity of a function on $[0,1]$ as defined in (\ref{MOC}). \\

{\raggedleft In this step we verify the following statement: There exists $N_0 \in \mathbb{N}$ and $C$ both depending on $r$ such that for $N \geq N_0$ and on the event $F(N)$ we have }
\begin{equation}\label{AZ1}
d_Y\left( h_N, H^Q_1\right) \leq C N^{-\alpha/4}, \mbox{ where }H^Q_1 = F_{h_N(-r), h_N(r)} \left( \sqrt{2r}B^Q\right)
\end{equation}
Before we prove (\ref{AZ1}) we give a brief summary of the ideas. By definition, we have that $H_1^Q$ is given by an appropriate shift and rescaling of $B^Q$, which interpolates the points $(-r, h_N(-r))$ and $(r, h_N(r))$. To better understand how $H^Q_1$ differs from $h_N$ we first do an auxillary rescaling $H_2^Q$ by erasing the part of $h_N$ on the interval $[-r_N, r_N]$ and interpolating the points $\left(-r_N, h_N(-r_N)\right), \left(r_N, h_N(r_N)\right)$ with an appropriate shift and rescaling of $B^Q$. The distance $d_Y(h_N, H_2^Q)$ is easily shown to be $O\left(N^{-\alpha/4}\right)$ using only the strong coupling of $B^Q$ and $\ell^{(2s_1, Q, b-a)}$  on $F(N)$ (this is the first event in (\ref{DefF})). Since $r_N$ is close to $r$ and $h_N(\pm r_N)$ is close to $h_N(\pm r)$ one can show that $d_Y(H_2^Q, H_1^Q) = O\left(N^{-\alpha/4}\right)$. The latter estimate uses the bounds on $B^Q$ and $w\left(B^Q, N^{-\alpha} \right)$ from the second and third event in (\ref{DefF}), since what is involved is a certain stretching of the Brownian bridge $B^Q$. In what follows we supply the details of the above strategy.

We start by defining
$$H^Q_2(s)= \begin{cases} N^{-\alpha/2}\left( a +\sqrt{2s_1} B^Q\left(\frac{sN^{\alpha} + s_1}{2s_1}\right) + \frac{sN^{\alpha} + s_1}{2s_1}(b- a)\right) &\mbox{ for $s\in [-r_N, r_N ], $} \\ f_N(s) &\mbox{ for $ s \in [-r,r] \backslash [-r_N, r_N ]$,}\end{cases}$$
where we recall that $r_N = s_1 N^{-\alpha}$. Notice that 
$$d_Y\left( h_N, H^Q_2\right) = N^{-\alpha/2}  \cdot \Delta\left(2s_1,Q, b - a\right),$$
and so on the event $F(N)$ for all $N \geq 1$ we have
\begin{equation}\label{AZ2}
d_Y\left( h_N, H^Q_2\right)  < N^{-\alpha/4}.
\end{equation}

We next estimate $H^Q_2(s) - H^Q_1(s) $ on the interval $[-r,r]$. Whenever $s\in [-r_N, r_N]$ and we are on the event $F(N)$ we have
\begin{equation}\label{AZ3}
\begin{split}
H^Q_2(s) - H^Q_1(s) =   N^{-\alpha/2}\left( a +\sqrt{2s_1} B^Q\left(\frac{sN^{\alpha} + s_1}{2s_1}\right) + \frac{sN^{\alpha} + s_1}{2s_1}(b - a)\right) -  \\
 - \left( h_N(-r) + \sqrt{2r}B^Q \left( \frac{s + r}{2r} \right) + \frac{s + r}{2r} (h_N(r) - h_N(-r)) \right) = O(N^{-\alpha/4}),
\end{split}
\end{equation}
where the constant in the big $O$ notation depends on $r$. In obtaining the second equality above we used that:
\begin{enumerate}
\item $s_1 = \lfloor rN^{\alpha} \rfloor = rN^{\alpha} + O(1)$, $ b -a \leq 2rN^{\alpha}$,
\item $\left| h_N(-r) - N^{-\alpha/2} \cdot a \right| = \left|h_N(-r) - h_N \left(-r_N\right) \right|  \leq N^{-\alpha/2}$, 
\item$|h_N(r) - N^{-\alpha/2} \cdot b| = \left|h_N(r) - h_N \left(r_N\right) \right| \leq N^{-\alpha/2}$, 
\item on $F(N)$ we have $\sup_{s \in [0,1]} \left|B^Q(s) \right| \leq N^{\alpha/4}$ and $ w(B^Q, N^{-\alpha}) \leq N^{-\alpha/4} $.
\end{enumerate}
For $s \in  [-r,r] \backslash [-r_N, r_N]$, we know that 
\begin{equation*}
\begin{split}
\left| H^Q_1(s) - H^Q_2(s)  \right| \leq   \left| H^Q_1(\pm r_N) - H^Q_2(\pm r_N)  \right| + \left|H^Q_1(s) - H^Q_1\left(\pm r_N\right) \right|+\left|H^Q_2(s) - H^Q_2\left(\pm r_N\right) \right|,
\end{split}
\end{equation*}
where we choose the top sign if $s  > r_N$ and the bottom sign otherwise. Note that the first term above is $O\left(N^{-\alpha/4}\right)$ by (\ref{AZ3}). Substituting the definitions of $H^Q_1$ and $H^Q_2$ we get for $s \in  [-r,r] \backslash [-r_N, r_N ]$
\begin{equation}\label{AZ4}
\begin{split}
\left| H^Q_1(s) - H^Q_2(s)  \right| \leq  O\left(N^{-\alpha/4}\right) +   N^{-\alpha/2} \left| \sqrt{2s_1} B^Q \left(\frac{sN^{\alpha} + s_1}{2s_1}\right) + \frac{sN^{\alpha} \mp s_1}{2s_1}(b - a)  \right| + \\
 \left| \sqrt{2r}\left[ B^Q \left( \frac{s + r}{2r} \right) - B^Q \left( \frac{\pm s_1 N^{-\alpha} + r}{2r} \right) \right]+ \frac{s \mp s_1 N^{-\alpha}   }{2r} (h_N(r) - h_N(-r))\right| =  O\left(N^{-\alpha/4}\right),
\end{split}
\end{equation}
where again we take the top sign if $s  > r_N$ and the bottom sign otherwise and the constant in the big $O$ notation depends only on $r$. In obtaining the last equality we used the same estimates above together with the inequality $|h_N(r) - h_N(-r)| \leq 2r N^{\alpha/2}$. Combining (\ref{AZ3}) and (\ref{AZ4}) we deduce that
\begin{equation}\label{AZ5}
d_Y\left( H^Q_1, H^Q_2\right)  = O\left( N^{-\alpha/4}\right),
\end{equation}
where the constant in the big $O$ notation depends on $r$. Combining (\ref{AZ2}) and (\ref{AZ5}) we deduce (\ref{AZ1}).\\

{\raggedleft {\bf Step 5.}} In this step we first show we have the following  inclusion of events for all large $N$ (depending on $r$) 
\begin{equation}\label{Z6}
 I(N):= F(N) \cap \left\{ h_{N} \in G^{-1}(O) \right\} \cap \left \{ d_Y(h_{N}, G^{-1}(O)^c) > N^{-\alpha/8}  \right\}\subset \left\{ \sqrt{2r}B^Q \in O \right\}. 
\end{equation}
Recall from (\ref{AZ1}) that there exists $N_0$ and $C$ depending on $r$ such that for $N \geq N_0$ and on the event $F(N)$ 
$$H_1^Q =  F_{h_N(-r), h_N(r)} \left( \sqrt{2r}B^Q\right) \mbox{ and }d_Y\left( h_N, H^Q_1\right) \leq C N^{-\alpha/4}.$$ 
By increasing $N_0$ we can also ensure that $CN^{-\alpha/4} < N^{-\alpha/8}$ for $N \geq N_0$. 

Fix $N \geq N_0$ and assume we are on the event $I(N)$. Since $ h_{N} \in G^{-1}(O)$ and $d_Y(h_{N}, G^{-1}(O)^c) > N^{-\alpha/8}$, we see that $H_1^Q \in G^{-1}(O)$. Observe that $G(H_1^Q) = \sqrt{2r} B^Q$ by definition and so we conclude that $\sqrt{2r}B^Q \in O$ on $I(N)$. This proves (\ref{Z6}).\\

From (\ref{Z6}) we know that the LHS of (\ref{Z5}) is bounded by
$$ \limsup_{N \rightarrow \infty} \left[ \tilde{\mathbb{P}}\left( \sqrt{2r}B^Q \in O  \right) +\tilde{\mathbb{P}} \left(F(N)^c \right)\right].$$
In order to finish the proof it suffices to show 
\begin{equation}\label{Z7}
\limsup_{N \rightarrow \infty} \tilde{\mathbb{P}}\left(  \sqrt{2r}B^Q\in O \right) \leq 3\epsilon \mbox{ and } \limsup_{N \rightarrow \infty}\tilde{\mathbb{P}} \left(F(N)^c \right) \leq 3\epsilon.
\end{equation}

In the second part of this step we verify the first inequality in (\ref{Z7}) and for brevity we set $W = \frac{ \log (\epsilon)}{\log (1 - \delta)}$. Observe that 
$$\tilde{\mathbb{P}}\left( \sqrt{2r}B^Q \in O \right) \leq \mathbb{P}^3 (E(\delta, M, N)) +  \tilde{\mathbb{P}} \left( E(\delta,M,N)^c \cap \left\{ \sqrt{2r}B^Q\in O \right\}\right),$$
where we recall that $E(\delta, M, N)$ was defined in Step 1. By assumption on $E(\delta, M, N)$ it suffices to show that
\begin{equation}\label{Z8}
\limsup_{N \rightarrow \infty}  \tilde{\mathbb{P}} \left( E(\delta,M,N)^c \cap \left\{ \sqrt{2r}B^Q\in O \right\}\right) \leq 2\epsilon.
\end{equation}
Notice that on $E(\delta,M,N)^c$ we have that $Q$ is a geometric random variable with parameter \\$Z_t(-s_1,s_1, a ,b, \ell_{bot}; S') \geq \delta$. In particular, we have the a.s. inequality
\begin{equation}\label{Qbound}
\tilde{\mathbb{P}} \left(Q > W | E(\delta,M,N)^c \right) \leq (1 - \delta)^W.
\end{equation}
The above suggests that 
\begin{equation*}
\begin{split}
&\tilde{\mathbb{P}} \left( E(\delta,M,N)^c \cap \left\{ \sqrt{2r}B^Q\in O \right\}\right)  \leq \\
&\tilde{\mathbb{P}} \left( E(\delta,M,N)^c\right) \left[ ( 1 - \delta)^W +  \tilde{\mathbb{P}} \left( \left\{ Q \leq W \right\} \cap \left\{ \sqrt{2r}B^Q\in O \right\} \Big{|}E(\delta,M,N)^c\right)\right] \leq \\
&  (1 - \delta)^W +  \tilde{\mathbb{P}}  \left( \bigcup_{i = 1}^W \left\{\sqrt{2r} B^{i} \in O \right\} \right) \leq  (1 - \delta)^W +  W \cdot \tilde{\mathbb{P}}  \left(  \sqrt{2r} B^{1} \in O \right),
\end{split}
\end{equation*}
where we used in the last inequality that $B^{k} $ are identically distributed. 

Now notice that $ \tilde{\mathbb{P}}  \left(  \sqrt{2r} B^{1} \in O  \right) = \mathbb{P}( B^{\sigma_1} \in O) \leq \epsilon \cdot \frac{\log (1 - \delta)}{ \log (\epsilon)}$ by our choice of $O$ and so we conclude that 
$$(1 - \delta)^W + W \cdot \tilde{\mathbb{P}}  \left( \sqrt{2r} B^{1} \in O  \right) \leq (1 - \delta)^W + \epsilon \leq 2\epsilon.$$
This establishes (\ref{Z8}).\\

{\raggedleft {\bf Step 6.}} In this final step we establish the second inequality in (\ref{Z7}) and as in Step 5 set $W = \frac{ \log (\epsilon)}{\log (1 - \delta)}$. Observe that 
$$\tilde{\mathbb{P}}\left( F(N)^c \right) \leq \mathbb{P}^3 (E(\delta, M, N)) +  \tilde{\mathbb{P}} \left( E(\delta,M,N)^c \cap F(N)^c\right)$$
and so by assumption on $E(\delta, M, N)$ it suffices to show that
\begin{equation}\label{Z9}
\limsup_{N \rightarrow \infty}  \tilde{\mathbb{P}} \left( E(\delta,M,N)^c \cap F(N)^c\right) \leq 2\epsilon.
\end{equation}
Using (\ref{Qbound}) we see that 
$$\tilde{\mathbb{P}} \left( E(\delta,M,N)^c \cap F(N)^c\right)  \leq  \tilde{\mathbb{P}} \left( E(\delta,M,N)^c\right) \left[ ( 1 - \delta)^W +  \tilde{\mathbb{P}} \left( \left\{ Q \leq W \right\} \cap F(N)^c\Big{|}E(\delta,M,N)^c\right)\right] \leq $$
$$ ( 1 - \delta)^W + \sum_{i = 1}^W \tilde{\mathbb{P}} \left( \left\{ Q = i \right\} \cap F(N)^c \cap E(\delta,M,N)^c\right).$$
Since $ ( 1 - \delta)^W \leq \epsilon$, we reduce (\ref{Z9}) to establishing
\begin{equation}\label{Z10}
\limsup_{N \rightarrow \infty}\sum_{i = 1}^W \tilde{\mathbb{P}} \Big{(} \left\{ Q = i \right\} \cap F(N)^c \cap E(\delta,M,N)^c\Big{)} \leq \epsilon.
\end{equation}
One clearly has that 
\begin{equation*}
\begin{split}
\tilde{\mathbb{P}} \Big{(} \left\{ Q = i \right\} \cap F(N)^c \cap E(\delta,M,N)^c\Big{)} \leq \hspace{2mm} &\tilde{\mathbb{P}} \Big{(}  A_i^N \cap E(\delta,M,N)^c\Big{)} + \tilde{\mathbb{P}} \Big{(}  B_i^N\cap E(\delta,M,N)^c\Big{)} + \\
& \tilde{\mathbb{P}} \Big{(}  C_i^N \cap E(\delta,M,N)^c\Big{)},
\end{split}
\end{equation*}
where 
$$A_i^N= \left\{ \Delta\left(2s_1,i, b - a\right) \geq N^{\alpha/4}\right\},  B_i^N=\left\{ \sup_{s \in [0,1]} \left|B^i(s) \right| < N^{\alpha/4} \right\}, C_i^N= \left\{ w(B^i, N^{-\alpha}) > N^{-\alpha/4} \right\}.$$
In addition, we know that since $B^i$ are identically distributed 
$$\sum_{i = 1}^W \tilde{\mathbb{P}} \left( \left\{ Q = i \right\} \cap F(N)^c \cap E(\delta,M,N)^c\right)  \leq $$
$$W \cdot \left[  \tilde{\mathbb{P}} \left(  A_1^N \cap E(\delta,M,N)^c\right) + \tilde{\mathbb{P}} \left(  B_1^N\cap E(\delta,M,N)^c\right) + \tilde{\mathbb{P}} \left(  C_1^N \cap E(\delta,M,N)^c\right)\right] .$$
The above inequality reduces (\ref{Z10}) to showing that 
\begin{equation}\label{ZF}
\begin{split}
&\limsup_{N \rightarrow \infty} \tilde{\mathbb{P}} \left(  A_1^N \cap E(\delta,M,N)^c\right) = 0,\\
&\limsup_{N \rightarrow \infty} \tilde{\mathbb{P}} \left(  B_1^N \cap E(\delta,M,N)^c\right) = 0, \hspace{5mm}
\limsup_{N \rightarrow \infty} \tilde{\mathbb{P}} \left(  C_1^N \cap E(\delta,M,N)^c\right) = 0.
\end{split}
\end{equation}

Notice that by construction 
$$ \tilde{\mathbb{P}} \left(  B_1^N \right) = \sum_{a \leq b} \mathbb{P}^1 \left( \sup_{s \in [0,1]} \left|B^{\sigma, b-a, 2s_1}_s\right| < N^{\alpha/4}\right) \mathbb{P}^3 \left( L^N_1(-s_1) =a, L^N_1(s_1) =b \right)  $$
$$ =  \sum_{a \leq b} \mathbb{P} \left( \sup_{s \in [0,1]} \left|B^{\sigma} (s)\right| < N^{\alpha/4}\right) \mathbb{P}^3 \left( L^N_1(-s_1) =a, L^N_1(s_1) =b \right) = \mathbb{P} \left( \sup_{s \in [0,1]} \left|B^{\sigma} (s)\right| < N^{\alpha/4}\right) ,$$
and the latter clearly converges to $0$ as $N \rightarrow \infty$.

A similar argument shows that 
$$  \tilde{\mathbb{P}} \left(  C_1^N \right)  = \mathbb{P} \left( w(B^\sigma, N^{-\alpha})> N^{-\alpha/4}\right) ,$$
and the latter converges to $0$ as $N \rightarrow \infty$ by the almost sure H{\" o}lder-$1/3$ continuity of the Brownian bridge (see e.g. Proposition 7.8 in Chapter 8 of \cite{Cinlar}). The above estimates establish the second line in (\ref{ZF}).\\ 

In the remainder we study $\tilde{\mathbb{P}} \left(  A_1^N \cap E(\delta,M,N)^c\right) $ and notice that by assumption on $E(\delta,M,N)$ we have that on the event $E(\delta,M,N)^c$ the values $a = L_1(-s_1)$ and $b = L(s_1)$ satisfy 
$$|a + ps_1| \leq MN^{\alpha/2} \mbox{ and } |b - ps_1| \leq MN^{\alpha/2}.$$
The latter implies that 
$$\tilde{\mathbb{P}} \left(  A_1^N \cap E(\delta,M,N)^c\right) \leq \sum_{|z| \leq 2MN^{\alpha/2}} \tilde{\mathbb{P}} \left(  A_1^N \cap \left\{ b -a = \lfloor 2ps_1 + z \rfloor \right\} \right) = $$
$$ = \sum_{|z| \leq 2MN^{\alpha/2}} {\mathbb{P}}^1 \left(  \Delta\left(2s_1,1,  \lfloor 2ps_1 + z \rfloor \right) \geq N^{\alpha/4} \right) \mathbb{P}^3(\left\{ b -a = \lfloor 2ps_1 + z \rfloor \right\}).$$
By Chebyshev's inequality and Theorem \ref{KMT} we know that 
$${\mathbb{P}}^1 \left(  \Delta\left(2s_1,1, \lfloor 2ps_1 + z \rfloor  \right)\geq N^{\alpha/4} \right) \leq C'  N^{-\alpha/4}  e^{c' (\log N)^2},$$
for some constants $C'$ and $c'$ that are independent of $N$ but depend on $M$. The latter inequalities show that 
$$\tilde{\mathbb{P}} \left(  A_1^N \cap E(\delta,M,N)^c\right) \leq   C'  N^{-\alpha/4} e^{c' (\log N)^2} \hspace{-4mm} \sum_{|z| \leq 2MN^{\alpha/2}}\hspace{-2mm}  \mathbb{P}^3(\left\{ b -a = \lfloor 2ps_1 + z \rfloor \right\}) \leq C'   N^{-\alpha/4} e^{c' (\log N)^2} \hspace{-1mm} .$$
Since the latter clearly converges to $0$ as $N \rightarrow \infty$, we conclude (\ref{ZF}), which finishes the proof.

\section{Appendix: Strong coupling of random walks and Brownian bridges}\label{Section8}
In this section we prove a certain generalization of Theorem 6.3 in \cite{LF}, given in Theorem \ref{KMTA} below, which we will use to prove Theorem \ref{KMT} in the main text.

\subsection{Proof of Theorem \ref{KMT}}\label{Section8.1}

Fix $p \in (0,1)$ throughout this and the next sections. Let $X_i$ be i.i.d. random variables with $\mathbb{P}(X_1 = 1) = p$ and $\mathbb{P}(X_1 = 0) = 1- p$. We also let $S_n = X_1 + \cdots + X_n$ denote the random walk with increments $X_i$. For $z \in L_n= \{0,...,n\}$ we let $S^{(n,z)} = \{S_m^{(n,z)} \}_{m = 0}^n$ denote the process with the law of $\{S_m\}_{m = 0}^n$, conditioned so that $S_n = z$. Finally, recall from Section \ref{Section4.2} that $B^\sigma$ stands for the Brownian bridge (conditioned on $B_0 = 0, B_1 = 0$)  with variance $\sigma^2$. We are interested in proving the following result.

\begin{theorem}\label{KMTA}
 For every $b > 0$, there exist constants $0 < C, a, \alpha < \infty$ (depending on $b$ and $p$) such that for every positive integer $n$, there is a probability space on which are defined a Brownian bridge $B^\sigma$ with variance $\sigma^2 = p(1-p)$ and the family of processes $S^{(n,z)} $ for $z \in L_n$ such that
\begin{equation}\label{KMTeqA}
\mathbb{E}\left[ e^{a \Delta(n,z)} \right] \leq C e^{\alpha (\log n)^2} e^{b|z- p n|^2/n},
\end{equation}
where $\Delta(n,z) = \Delta(n,z,B^{\sigma}, S^{(n,z)})=  \sup_{0 \leq t \leq n} \left| \sqrt{n} B^\sigma_{t/n} + \frac{t}{n}z - S^{(n,z)}_t \right|.$
We define $S_t^{(n,z)}$ for non-integer $t$ by linear interpolation.
\end{theorem}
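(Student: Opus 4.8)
The plan is to establish Theorem \ref{KMTA} by the classical Komlós--Major--Tusnády (KMT) dyadic bisection scheme, suitably adapted to handle the conditioning on the endpoint $S_n = z$ and to obtain the exponential-moment bound with the Gaussian factor $e^{b|z-pn|^2/n}$ that is uniform in $z$. First I would reduce to a statement about the \emph{unconditioned} random walk bridge relative to its own mean: writing $S^{(n,z)}_t - \frac{t}{n}z$, the process we must couple to $\sqrt n B^\sigma_{t/n}$ is a centered bridge, and the quantity $\Delta(n,z)$ only measures the deviation between this centered bridge and the Brownian bridge. The key point is that the law of the centered bridge does not depend too wildly on $z$: the increments $X_i - z/n$ have mean zero, and conditioning $S_n=z$ is a local-limit-type event. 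I would therefore quote (or reprove by the same argument as in \cite{LF}) the coupling for the symmetric or mean-adjusted case and track the dependence of the constants on $z$ through $n$.

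The heart of the argument is the dyadic bisection. Assume $n = 2^k$ (the general case follows by an elementary padding/interpolation argument, absorbed into the $e^{\alpha(\log n)^2}$ factor). The Brownian bridge $B^\sigma$ is built from its values on the dyadic grid, and likewise one resamples the midpoint value $S^{(n,z)}_{n/2}$ conditionally. At each dyadic level $j$ (with $2^j$ sub-bridges each of length $n/2^j$), the conditional midpoint displacement of the random walk bridge is, by a quantitative local central limit theorem, close in total variation to the corresponding Gaussian midpoint displacement, with an error governed by the size of the displacement. The standard KMT lemma (Tusnády's lemma, in its conditional form) lets one couple the discrete midpoint and the Gaussian midpoint so that the difference has exponential moments controlled by $C\exp(c\, m^2 / n_j)$ where $m$ is the relative position of the subinterval's endpoints and $n_j$ its length. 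Summing the telescoping coupling errors over all $k = \log_2 n$ levels, using that the Brownian bridge increments are Gaussian with summable variances, produces the bound $\mathbb{E}[e^{a\Delta(n,z)}] \le C e^{\alpha(\log n)^2} e^{b|z-pn|^2/n}$; the $(\log n)^2$ exponent is the usual price of $k$ nested Tusnády couplings, and the $|z-pn|^2/n$ factor is the accumulated cost of the very first bisection, where the global endpoint constraint $z$ enters.

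I would organize the writing as follows. Step 1: reduce Theorem \ref{KMT} to Theorem \ref{KMTA} (this is essentially the $p$-dependent affine change of variables noted in the remark after Theorem \ref{KMT}, and the factor $e^{|z-pn|^2/n}$ in \eqref{KMTeq} is the special case $b=1$ of \eqref{KMTeqA}). Step 2: state and prove the conditional Tusnády lemma — given $S_{m}=x$ at the ends of an interval of length $\ell$ (with $\ell \le n$), couple the conditioned midpoint $S_{m+\ell/2}$ to the appropriate binomial/Gaussian so that the defect has an exponential moment $\le C\exp(c(x-p\ell)^2/\ell)$; this is where a careful local CLT with uniform error is needed. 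Step 3: set up the bisection on $n=2^k$, define the coupling level by level, and carry out the telescoping sum of exponential-moment bounds, invoking a chaining/maximal inequality to pass from grid points to the supremum over $t\in[0,n]$ (linear interpolation of both processes makes the supremum over the continuum comparable to the maximum over the grid up to the oscillation of a Brownian bridge on an interval of length $1$, which has all exponential moments). Step 4: remove the assumption $n=2^k$.

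The main obstacle I expect is Step 2 together with the bookkeeping in Step 3: one must prove the local limit theorem / Tusnády coupling with constants that are \emph{uniform in the endpoint value} $z$ (equivalently, in the drift $z/n$), and then verify that summing these errors over the $\log n$ dyadic levels only degrades the bound by the stated $e^{\alpha(\log n)^2}$ factor while the endpoint cost stays at $e^{b|z-pn|^2/n}$ rather than blowing up geometrically in $k$. Since \cite{LF} carries this out in the $p=1/2$ case, the new content is purely the asymmetry; the increments being $\{0,1\}$-valued rather than $\{-1,1\}$-valued is a harmless affine rescaling, but the drift means one conditions on a binomial with parameter $p$ rather than $1/2$, so the relevant local CLT estimates must be stated for $\mathrm{Bin}(\ell,p)$ and its large-deviation regime near $z$. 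I would isolate this as a self-contained lemma and otherwise follow the structure of \cite{LF} verbatim.
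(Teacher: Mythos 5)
Your plan is essentially the paper's own proof: both adapt the dyadic bisection scheme of \cite{LF} to the $p$-biased conditioned walk, with the core ingredient being a conditional Tusn\'ady/quantile-coupling lemma for the midpoint proved via a uniform local CLT for the binomial bridge (the paper's Lemmas \ref{LQ1}--\ref{LQ3}), the $e^{\alpha(\log n)^2}$ factor coming from the $\log_2 n$ recursion levels, and the endpoint cost $e^{b|z-pn|^2/n}$ controlled by choosing the per-level constant ($b_1$) small and splitting the midpoint sum according to $|w-z/2|$ versus $|z-pn|$. The only cosmetic differences are that the paper's induction couples whole paths (so the supremum is handled recursively rather than by a separate chaining step) and treats odd $n$ by splitting $n=k+(k+1)$ rather than by padding; neither changes the substance.
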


We observe that conditional on $S_n = z$ the law of the path determined by $S_n$ is precisely $\mathbb{P}^{0,n;0,z}_{free}$. Consequently, Theorem \ref{KMTA} implies Theorem \ref{KMT} and in the remainder we focus on establishing the former. Our arguments will follow closely those in Section 6 of \cite{LF}.\\

The proof of Theorem \ref{KMTA} relies on two lemmas, which we state below and whose proofs are deferred to Section \ref{Section8.2}. We begin by introducing some necessary notation. Suppose that $Z$ is a continuous random variable with strictly increasing cumulative distribution function $F$ and $G$ is the distribution function of a discrete random variable, whose support is $\{a_1, a_2,...\}$. Then $(Z,W)$ are {\em quantile-coupled (with distribution functions $(F,G)$)} if $W$ is defined by  
$$W = a_j  \hspace{3mm} \mbox{ if } \hspace{3mm} r_{j-} < Z \leq r_j,$$
where $r_{j-}, r_j$ are defined by 
$$F(r_{j-}) = G(a_j -), \hspace{5mm} F(r_j) = G(a_j).$$
The quantile-coupling has the following property. If 
$$F(a_k-x) \leq G(a_k-) < G(a_k) \leq F(a_k + x),$$
then
\begin{equation}\label{Q1}
|Z - W| = |Z - a_k| \leq x \hspace{5mm} \mbox{ on the event } \{W = a_k \}.
\end{equation}
With the above notation we state the following two lemmas.
\begin{lemma}\label{LQ1}
There exists $\epsilon_0$ (depending on $p$) such that for every $b_1 > 0$ there exist constants $0 < c_1,a_1 < \infty$ such that the following holds. Let $N$ be an $N(0,1)$ random variable. For each integers $m,n$ such that $n > 1$ and $|2m - n| \leq 1$ and every $z \in L_n$, let
$$Z = Z^{(m,n,z)} = \frac{m}{n} z + \sqrt{p(1-p)m \left( 1 - \frac{m}{n}\right) }N, \mbox { so that } Z \sim N\left( \frac{m}{n}z, p(1-p)m \left( 1 - \frac{m}{n}\right) \right).$$
Let $W = W^{(m,n,z)}$ be the random variable, whose law is the same as that of $S_m^{(n,z)}$ and which is quantile-coupled with $Z$. Then if $|z - pn| \leq \epsilon_0n$ and $\mathbb{P}(W = w) > 0$,
\begin{equation}\label{Q2}
\mathbb{E} \left[e^{a_1 |Z - W|} \Big{|} W= w \right] \leq c_1 \cdot \sqrt{n} \cdot  \exp \left(b_1 \frac{(w - pm)^2 + (z - pn)^2}{n}\right).
\end{equation}
\end{lemma}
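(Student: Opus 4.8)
\textbf{Proof plan for Lemma \ref{LQ1}.} The plan is to reduce the bound \eqref{Q2} to a careful analysis of the quantile coupling via \eqref{Q1}, applied with a good choice of the function $x = x(w)$. First I would observe that conditioned on $S_n = z$, the random variable $S_m^{(n,z)}$ has an explicit hypergeometric-type distribution: $\mathbb{P}(S_m^{(n,z)} = w) = \binom{m}{w}\binom{n-m}{z-w}/\binom{n}{z}$, supported on $\max(0, z-n+m) \le w \le \min(m,z)$. On this support $G$ (its cdf) has jump sizes that, by a local central limit / Stirling estimate, behave like a Gaussian with mean $\frac{m}{n}z$ and variance $p(1-p)m(1-\frac mn)(1+o(1))$ in the bulk, with controlled corrections as $w$ moves away from the mean (on the scale where $|z-pn|\le\epsilon_0 n$ keeps things non-degenerate). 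The matching continuous law $F$ is exactly the cdf of $Z \sim N(\frac mn z, p(1-p)m(1-\frac mn))$. So the task is to show that for an appropriate $x(w)$ growing logarithmically (plus a term governing the Gaussian tail correction), one has $F(w - x(w)) \le G(w-) < G(w) \le F(w + x(w))$, and then to integrate $e^{a_1|Z-W|}$ against the conditional law using \eqref{Q1}.

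Second, I would carry out the comparison of $F$ and $G$ quantitatively. This is the technical heart and follows the scheme of the corresponding estimate in Section 6 of \cite{LF} (their Lemma, on which their Theorem 6.3 rests). Writing $w = \frac mn z + s$, one uses Stirling's formula to expand $\log \mathbb{P}(S_m^{(n,z)} = w)$ up to the relevant order, obtaining a Gaussian density in $s$ times an error factor $\exp(O(\tfrac{1}{\sqrt n}) + O(\tfrac{|s|^3}{n^2}) + O(\tfrac{(z-pn)^2}{n}\cdot\tfrac{|s|}{n^{?}}))$; the constraint $|z-pn|\le\epsilon_0 n$ (with $\epsilon_0$ small depending on $p$) ensures the binomial coefficients are never near the boundary of their range so the expansions are valid and the variance stays comparable to $p(1-p)m(1-\tfrac mn)\asymp n$. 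Comparing cumulative sums with the Gaussian integral and absorbing the discretization error (jump sizes $\asymp n^{-1/2}$), one finds that $x(w)$ can be taken of the form $x(w) = c_2\big(\log n + \tfrac{(w-pm)^2 + (z-pn)^2}{n}\big)$ for a suitable constant $c_2$; this is exactly what is needed so that the tail/correction terms in the Stirling expansion are dominated. The inequality $F(w-x(w)) \le G(w-) < G(w) \le F(w+x(w))$ then holds for all $w$ in the support and all $n > 1$.

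Third, with this $x(w)$ in hand, \eqref{Q1} gives $|Z - W| \le x(W)$ a.s.\ on $\{W = w\}$, hence trivially $\mathbb{E}[e^{a_1|Z-W|}\mid W = w] \le e^{a_1 x(w)}$, and choosing $a_1$ small enough that $a_1 c_2 \le b_1$ (and $a_1 c_2 \le \tfrac12$, say, to kill the $\log n$ into a $\sqrt n$) yields $e^{a_1 x(w)} \le c_1 \sqrt n \exp\big(b_1\tfrac{(w-pm)^2+(z-pn)^2}{n}\big)$, which is precisely \eqref{Q2}. One subtlety to handle with care: the statement allows $|2m-n|\le 1$ (i.e.\ $m$ is essentially $n/2$ but $n$ may be odd), so the variance $m(1-\tfrac mn)$ is $\tfrac n4 + O(1)$ and the mean $\tfrac mn z$ is a non-integer; this causes no trouble but must be tracked through the Stirling expansion, and it is the reason the bound carries a floating $c_1$ rather than sharp constants.

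\textbf{Main obstacle.} The crux — and the step I expect to be most delicate — is the quantitative Stirling/local-limit comparison establishing $F(w-x(w)) \le G(w-) \le G(w) \le F(w+x(w))$ uniformly over the whole support of $S_m^{(n,z)}$, including the moderately-large-deviation regime where $(w-pm)^2/n$ is of order up to $\log n$ or larger; getting the error terms to be genuinely dominated by $x(w)$ there (rather than just in the bulk) is what forces the particular quadratic-plus-logarithmic form of $x(w)$ and is where one must reproduce, with the bias parameter $p\neq 1/2$, the estimates of \cite{LF}. The restriction to $|z-pn|\le\epsilon_0 n$ is exactly the device that keeps this regime controlled; passing from this lemma to Theorem \ref{KMTA} (which must cover all $z\in L_n$) is then handled in Section \ref{Section8.2}, presumably by the dyadic/KMT recursion over scales combined with a separate crude argument when $z$ is far from $pn$.
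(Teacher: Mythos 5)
Your plan is workable and reaches the stated bound, but it is organized quite differently from the paper's argument, and one point in it needs to be made precise. The paper does not compare the conditioned walk directly to the law of $Z$. Instead it introduces an intermediate Gaussian $\hat Z = z/2 + \sigma_{n,z}N$ with the $z$-dependent variance $\sigma_{n,z}^2 = \frac n4\frac zn\left(1-\frac zn\right)$, which is the correct variance for $S_m^{(n,z)}$, and obtains the sandwich (\ref{Q11}) between $G$ and the law of $\hat Z$ by citing Lemma 6.9 of \cite{LF}, whose hypotheses are verified with the local limit estimate of Lemma \ref{LQ3} and the large-deviation bound of Lemma \ref{LQ4}; this gives the pointwise bound $|\hat Z - W|\le c'\left[1+(W-z/2)^2/n\right]$. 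The mismatch between $\hat Z$ and the prescribed $Z$ (variance $p(1-p)n/4$) is then handled by a separate computation: $\mathbb{E}[e^{a|Z-\hat Z|}\mid W=w]$ is bounded by unconditioning and dividing by $\mathbb{P}(W=w)$, the lower bound on $\mathbb{P}(W=w)$ from Lemma \ref{LQ3} being where the $\sqrt n$ prefactor comes from, and finally a Jensen-inequality step (taking $1/k$-th powers) shrinks the constants in the exponent to the prescribed $b_1$ at the cost of a small $a_1$. You propose instead a single cdf sandwich between $G$ and $F$ (the law of $Z$ itself) with allowance $x(w)=c_2\bigl(\log n + \frac{(w-pm)^2+(z-pn)^2}{n}\bigr)$, after which (\ref{Q1}) gives $|Z-W|\le x(w)$ on $\{W=w\}$ and choosing $a_1c_2\le\min(b_1,1/2)$ finishes; this one-step scheme simultaneously replaces the paper's $\mathbb{P}(W=w)$ unconditioning and its Jensen step, and your $\log n$ allowance (which only costs the factor $n^{a_1c_2}\le\sqrt n$) is harmless, indeed more than is needed for the matched-variance part, where a constant suffices.

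The caveat: your assertion that $S_m^{(n,z)}$ is approximately Gaussian with variance $p(1-p)m\left(1-\frac mn\right)(1+o(1))$ is not correct as stated. Its variance is $\approx \frac n4\frac zn\left(1-\frac zn\right)$, whose relative discrepancy from $p(1-p)\frac n4$ is of order $|z/n-p|$, i.e.\ up to $\epsilon_0$, not $o(1)$. Consequently the sandwich you need is not the matched-variance estimate of \cite{LF} applied verbatim, but a modified one in which the tails of two Gaussians with variances differing by a relative factor $1+O(|z/n-p|)$ are compared; this forces an extra shift of order $|w-z/2|\,|z-pn|/n$ in the exponent, which is precisely why the $(z-pn)^2/n$ term must appear in $x(w)$, and since $|w-z/2|\,|z-pn|/n\le\frac12\bigl((w-z/2)^2+(z-pn)^2\bigr)/n$ your quadratic allowance does absorb it. So the crux is not merely the Stirling expansion in the moderate-deviation regime, but this variance adjustment — the very thing the paper isolates in its explicit $\hat Z$-versus-$Z$ computation — and your write-up should establish the modified sandwich rather than cite the matched-variance lemma.
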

\begin{lemma}\label{LQ2}
 There exist positive constants $\epsilon_0, c_2,b_2$ (depending on $p$) such that for every integers $m,n$ such that $n \geq 2$ and $|2m - n| \leq 1$, every $z \in L_n$ with $| z - pn|\leq \epsilon_0 n$ and every $w \in \mathbb{Z}$,
$$\mathbb{P}( S_m = w | S_n = z) \leq c_2 n^{-1/2} \exp \left( -b_2 \frac{(w - (z/2))^2}{n}\right).$$
\end{lemma}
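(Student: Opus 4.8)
The plan is to compute the conditional law exactly --- it is hypergeometric --- and then combine a universal sub-Gaussian tail bound with a sharp local estimate in the bulk obtained from Stirling's formula. First I would note that $S_m$ and $S_n-S_m$ are independent, binomial with parameters $(m,p)$ and $(n-m,p)$, so for $0\le w\le m$ and $0\le z-w\le n-m$,
\[
\mathbb{P}(S_m=w\mid S_n=z)=\frac{\mathbb{P}(S_m=w)\,\mathbb{P}(S_n-S_m=z-w)}{\mathbb{P}(S_n=z)}=\frac{\binom{m}{w}\binom{n-m}{z-w}}{\binom{n}{z}},
\]
the powers of $p$ and $1-p$ cancelling; for $w$ outside this range the probability is $0$ and the claimed bound is trivial. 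Thus $S_m$ conditioned on $\{S_n=z\}$ is hypergeometric with mean $mz/n$, and since $|2m-n|\le1$ and $0\le z\le n$ we have $|mz/n-z/2|=z\,|m/n-1/2|\le 1/2$, i.e. $z/2$ is the mean up to an $O(1)$ shift.

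Next I would record a sub-Gaussian bound valid for \emph{all} admissible $(n,z,w)$. By Hoeffding's inequality for sampling without replacement,
\[
\mathbb{P}(S_m=w\mid S_n=z)\le \mathbb{P}\bigl(|S_m-mz/n|\ge |w-mz/n| \mid S_n=z\bigr)\le 2\exp\!\left(-\frac{2(w-mz/n)^2}{m}\right),
\]
and using $m\le n$ with $|mz/n-z/2|\le 1/2$ this yields an absolute constant $C_1$ with $\mathbb{P}(S_m=w\mid S_n=z)\le C_1\exp\bigl(-(w-z/2)^2/(2n)\bigr)$. This already proves the lemma whenever $(w-z/2)^2\ge 2n\log n$, since then $\tfrac{(w-z/2)^2}{2n}\ge\tfrac{(w-z/2)^2}{4n}+\tfrac12\log n$, which supplies the needed factor $n^{-1/2}$; and it proves it for all $n$ below any fixed threshold $n_0$ by enlarging the constant $c_2$. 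Hence it remains to treat $n\ge n_0$ with $(w-z/2)^2<2n\log n$.

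In that bulk regime I would fix $\epsilon_0=\tfrac18\min(p,1-p)$ (shrinking it if needed to match the $\epsilon_0$ of Lemma \ref{LQ1}) and observe that $|w-z/2|<\sqrt{2n\log n}=o(n)$ together with $|z-pn|\le\epsilon_0 n$ forces, for $n$ large, each of $w$, $m-w$, $z-w$, $n-m-z+w$ (and also $n$, $n-m$, $z$, $n-z$) to lie between $cn$ and $n$ for a constant $c=c(p)>0$. I would then apply Stirling $k!=\sqrt{2\pi k}\,(k/e)^k e^{\theta_k}$, $|\theta_k|\le 1/(12k)$, to the nine factorials in $\dfrac{m!\,(n-m)!\,z!\,(n-z)!}{n!\,w!\,(m-w)!\,(z-w)!\,(n-m-z+w)!}$: since every argument is of order $n$, the $\theta$-errors contribute $1+O(1/n)$, the square-root factors combine to $\Theta(n^{-1/2})$ uniformly over the range, and the $(k/e)^k$ factors give $\exp(-n\Phi(w/n,z/n,m/n))$, where for fixed $(z/n,m/n)$ the function $\Phi$ is smooth and strictly convex in its first argument, vanishes at $w/n=(m/n)(z/n)$, and (a short computation of $\Phi''$ at the minimum gives $4/(p(1-p))$) has second derivative bounded below by a positive constant depending only on $p$ on the compact set of ratios in play. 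Hence $n\Phi\ge c'(w-mz/n)^2/n$, so $\exp(-n\Phi)\le C_2\exp\bigl(-c'(w-mz/n)^2/n\bigr)\le C_2'\exp\bigl(-b_2(w-z/2)^2/n\bigr)$, using once more $|mz/n-z/2|\le1/2$ and $|w-z/2|=o(n)$. Combining the bulk and tail estimates and taking $b_2$ to be the smaller of the two exponents, with $c_2$ large enough to also cover $n<n_0$, completes the proof.

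I expect the only genuine work to be in the bulk case: establishing the uniform quadratic lower bound on the relative-entropy rate function $\Phi$ over the relevant compact parameter region, and verifying that no factorial argument degenerates so that the Stirling prefactor is honestly $\Theta(n^{-1/2})$ rather than polynomially large. This is mechanical and parallels the corresponding computation in \cite{LF}, the only change being that the midpoint value $1/2$ there is replaced by $p$ in the constraints on $z/n$; the accompanying bookkeeping (splitting off the contributions of the factorials $n!$, $(n-m)!$, $z!$, $(n-z)!$ in the expansion) is identical.
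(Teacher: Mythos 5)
Your proof is correct, but it takes a different route from the paper. The paper obtains Lemma \ref{LQ2} as an immediate consequence of two other statements it needs anyway: the local limit theorem Lemma \ref{LQ3} (proved by one Stirling evaluation at the central value $w=\tfrac{m}{n}z$ followed by telescoping the ratios of consecutive probabilities, with a two-sided Gaussian asymptotic and an $O(|w|^3/n^2)$ error) handles $|w-\tfrac{m}{n}z|\leq cn$, while the unproved large-deviation bound Lemma \ref{LQ4} (exponential in $n$, at linear scale $|w-\tfrac{m}{n}z|>am$) disposes of the rest, since there $(w-z/2)^2/n=O(n)$. You instead identify the conditional law as hypergeometric, kill the tail at the much smaller scale $|w-z/2|\gtrsim\sqrt{n\log n}$ with Hoeffding's inequality for sampling without replacement, and in the bulk apply Stirling to all nine factorials, bounding the exponent below by uniform convexity of the entropy rate function $\Phi$ (your checks are right: $\Phi$ and $\Phi'$ vanish at $\beta=\mu\alpha$, and $\Phi''=\beta^{-1}+(\mu-\beta)^{-1}+(\alpha-\beta)^{-1}+(1-\mu-\alpha+\beta)^{-1}\geq 4$ on the whole admissible range, so no compactness argument is even needed; and with $\epsilon_0$ of order $\min(p,1-p)$ and $|w-z/2|=o(n)$ all nine arguments are of order $n$, so the prefactor is honestly $\Theta(n^{-1/2})$). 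What each approach buys: the paper's is essentially free given Lemma \ref{LQ3}, which it must prove anyway for Lemma \ref{LQ1}, but it leans on the unproved Lemma \ref{LQ4}; yours is self-contained, replaces the large-deviation lemma by a citable concentration inequality, and only requires a one-sided upper bound on the pmf rather than a two-sided local CLT, at the cost of redoing a Stirling computation that the paper's Lemma \ref{LQ3} already contains. One cosmetic remark: you need not shrink your $\epsilon_0$ to match Lemma \ref{LQ1}; the paper takes the minimum of the two $\epsilon_0$'s at the point where both lemmas are used.
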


\begin{proof}(Theorem \ref{KMTA}) It suffices to prove the theorem when $b$ is sufficiently small. For the remainder we fix $b > 0$ such that $b < b_2/ 37$, where $b_2$ is the constant from Lemma \ref{LQ2}. Let $\epsilon_0$ be the smaller of the two values of $\epsilon_0$ in Lemmas \ref{LQ1} and \ref{LQ2}.

 In this proof, by an {\em $n$-coupling} we will mean a probability space on which are defined a Brownian bridge $B^\sigma$ and the family of processes $\{ S^{(n,z)}: z \in L_n\}$. Notice that for any $n$-coupling if $z \in L_n$, $S_t = S^{(n,z)}_t$ then 
$$\Delta(n,z) =  \sup_{0 \leq t \leq n} \left| \sqrt{n} B^\sigma_{t/n} + \frac{t}{n}z - S^{(n,z)}_t \right| \leq 2n + \sup_{0 \leq t \leq n} |\sqrt{n} B^\sigma_{t/n}|.$$ 
The above together with the fact that there are positive constants $\tilde c$ and $u$ such that \\$\mathbb{E}\left[\exp \left( \sup_{0 \leq t \leq 1 } y|B^\sigma_t| \right) \right] \leq \tilde c e^{uy^2}$ for any $y > 0$ (see e.g. (6.5) in \cite{LF}) imply that 
$$\mathbb{E}\left[ e^{a \Delta(n,z)} \right]  \leq \tilde c e^{(2a + ua^2) n}.$$
Clearly, there exists $a_0 = a_0(b)$ such that if $0 < a < a_0$ then $2a + ua^2 \leq b \epsilon_0^2$.

The latter has the following implications. Firstly, (\ref{KMTeqA}) will hold for any $n$-coupling with $C = \tilde c$, $\alpha = 0$ and $a \in (0,a_0)$ if $z \in L_n$ satisfies $|z - pn| \geq \epsilon_0n$. For the remainder of the proof we assume that $a < a_0$. Let $b_1 = b/20$ and let $a_1, c_1$ be as in Lemma \ref{LQ1} for this value of $b_1$. We assume that $a < a_1$ and show how to construct the $n$-coupling so that (\ref{KMTeqA}) holds for some $C,\alpha$.

We proceed by induction and note that we can find $C \geq \max(1, \tilde c)$ sufficiently large so that for any $n$-coupling with $n \leq 2$ we have
$$ \mathbb{E}\left[ e^{a \Delta(n,z)} \right] e^{-b|z- p n|^2/n} \leq C, \hspace{5mm} \forall z \in L_n, n \leq 2.$$
With the above we have fixed our choice of $a$ and $C$.

We will show that for every positive integer $s$, we have that there exist $n$-couplings for all $n \leq 2^s$ such that
\begin{equation}\label{Q3}
\mathbb{E}\left[ e^{a \Delta(n,z)} \right] e^{-b|z- p n|^2/n} \leq A_n^{s-1} \cdot C, \hspace{5mm} \forall{z \in L_n},
\end{equation}
where $A_n =  2c_1 c_2 n + 2c_1 \sqrt{n}$. The theorem clearly follows from this claim.\\

We proceed by induction on $s$ with base case $s = 1$ being true by our choice of $C$ above. We suppose our claim is true for $s$ and let $2^s < n \leq 2^{s+1}$. We will show how to construct a probability space on which we have a Brownian bridge and a family of processes $\{S^{(n,z)}: |z -p n| \leq \epsilon_0 n \}$, which satisfy (\ref{Q3}). Afterwards we can adjoin (after possibly enlarging the probability space) the processes for $|z| > n \epsilon_0$. Since $C > \tilde c$ and $a < a_0$ we know that (\ref{Q3}) will continue to hold for these processes as well. Hence, we assume that $|z - pn | \leq \epsilon_0 n$. For simplicity we assume that $n = 2k$, where $k$ is an integer such that $2^{s-1} < k \leq 2^s$ (if $n$ is odd we write $n = k + (k+1)$ and do a similar argument).

We define the $n$-coupling as follows:
\begin{itemize}
\item Choose two independent $k$-couplings 
$$\left( \{S^{1 (k,z))}\}_{z \in L_k}, B^1 \right), \hspace{5mm} \left( \{S^{2 (k,z))}\}_{z \in L_k}, B^2 \right), \mbox{ satisfying (\ref{Q3})}.$$
Such a choice is possible by the induction hypothesis.
\item Let $N \sim N(0,1)$ and define the translated {\em normal} variables $Z^z = \frac{z}{2} + \sqrt{\frac{p(1-p)n}{4}}N$ as well as the quantile-coupled random variables $W^z$ as in Lemma \ref{LQ1}. Assume, as we may, that all of these random variables are independent of the two $k$-couplings chosen above. Observe that by our choice of $a$ we have that
\begin{equation}\label{Q4}
\mathbb{E} \left[e^{a |Z^z - W^z|} \Big{|} W^z= w \right] \leq c_1 \cdot \sqrt{n} \cdot  \exp \left(\frac{b}{20} \cdot \frac{(w - kp)^2 + (z - np)^2}{n}\right).
\end{equation}
\item Let
\begin{equation}\label{Q5}
B_t = \begin{cases} 2^{-1/2} B_{2t}^1 + t\sqrt{p(1-p)}N & 0 \leq t \leq 1/2,\\2^{-1/2} B_{2(t-1/2)}^2 + (1-t)\sqrt{p(1-p)}N & 1/2 \leq t \leq 1. \end{cases}
\end{equation}
By Lemma 6.5 in \cite{LF}, $B_t$ is a Brownian bridge with variance $\sigma^2$.
\item Let $S^{(n,z)}_k = W^z$, and 
$$S_{m}^{(n,z)} = \begin{cases}S^{1 (k,W^z)}_m &0 \leq m \leq k, \\ W^z + S_{m-k}^{2 (k,z-W^z)}, &k \leq m \leq n.\end{cases}$$
What we have done is that we first chose the value of $S_k^{(n,z)}$ from the conditional distribution of $S_k$, given $S_n = z$. Conditioned on the midpoint $S^{(n,z)}_k = W^z$ the two halves of the random walk bridge are independent and upto a trivial shift we can use $S^{1 (k,W^z)}$ and $S^{2 (k,z -W^z)}$ to build them.
\end{itemize}
The above defines our coupling and what remains to be seen is that it satisfies (\ref{Q3}) with $s + 1$.

Note that 
$$\Delta(n,z,S^{(n,z)}, B) \leq |Z^z - W^z| + \max \left(\Delta(k, W^z, S^{1 (k,W^z)}, B^1),\Delta(k, z- W^z, S^{2 (k,z - W^z)}, B^2)  \right)$$
and therefore for any $w$ such that $\mathbb{P}(W^z = w) > 0$ we have
$$\mathbb{E}\left[ e^{a\Delta(n,z)} \Big{|} W^z = w\right] \leq \mathbb{E}\left[ e^{a|Z^z - W^z|} \Big{|} W^z = w\right] \times  C A_n^{s-1}\left( e^{b|w - kp|^2/k} + e^{b|z-w - kp|^2/k}\right).$$
In deriving the last expression we used that our two $k$-couplings satisfy (\ref{Q3}) and the simple inequality $\mathbb{E}[e^{\max (Z_1,Z_2)}] \leq\mathbb{E}[e^{Z_1}]  + \mathbb{E}[e^{Z_2}]  $. Taking expectation on both sides above we see that
\begin{equation}\label{Q6}
\mathbb{E}\left[ e^{a\Delta(n,z)} \right] \leq C\cdot (2c_1 \sqrt{n}) \cdot A_n^{s-1} \sum_{w = 0}^k \mathbb{P}(W^z = w) \exp \left(\frac{9}{4} \cdot \frac{b\max(|w - kp|^2,|z-w - kp|^2) }{n} \right) .
\end{equation}
In deriving the last expression we used (\ref{Q4}) and the simple inequality $x^2 + y^2 \leq 5 \max (x^2, (x-y)^2)$ as well as that $k = n/2$.\\

We finally estimate the sum in (\ref{Q6}) by splitting it over the $w$ such that $|w - z/2| > |z-pn|/6$ and $|w - z/2| \leq |z-pn|/6$. Notice that if $|w  - z/2| \leq |z-pn|/6$ we have $\max( |w - pk|^2,|z-w - pk|^2) \leq (2|z - pn|/3)^2$; hence
\begin{equation}\label{Q7} 
\sum_{w: |w - z/2| \leq |z-pn|/6} \mathbb{P}(W^z = w) \exp \left(\frac{9}{4} \cdot \frac{\max( |w - kp|^2,|z-w - kp|^2) }{n} \right) \leq \exp \left(\frac{|z-pn|^2}{n} \right).
\end{equation}
To handle the case $|w - z/2| > |z-pn|/6$  we use Lemma \ref{LQ2}, from which we know that 
$$\mathbb{P}(W^z = w) = \mathbb{P}(S_k = w | S_n = z) \leq c_2 n^{-1/2} \exp \left(- b_2 \frac{(w - (z/2))^2}{n} \right).$$
Using the latter together with the fact that for $|w - z/2| > |z-pn|/6$ we have that $(w- z/2)^2 > \frac{1}{16} \max \left( (w - kp)^2 , |z-w - kp|^2 \right)$ we see that
\begin{equation}\label{Q8} 
\begin{split}
&\sum_{w: |w - z/2| > |z-pn|/6} \mathbb{P}(W^z = w) \exp \left(\frac{9}{4} \cdot \frac{b \max( |w - kp|^2,|z-w - kp|^2) }{n} \right) \leq \\
&\sum_{w = 1 }^kc_2n^{-1/2} \exp \left(- \frac{b}{16} \cdot \frac{(w - kp)^2}{n} \right) \leq c_2 \sqrt{n}.
\end{split}
\end{equation}
Combining the above estimates we see that
$$\mathbb{E}\left[ e^{a\Delta(n,z)} \right] \leq C\cdot (2c_1 \sqrt{n}) \cdot A_n^{s-1} \left[ \exp \left(\frac{|z-pn|^2}{n} \right) + c_2 \sqrt{n} \right] \leq C \cdot A_n^{s} \exp \left(\frac{|z-pn|^2}{n} \right) .$$
The above concludes the proof.

\end{proof}

\subsection{Proof of Lemmas \ref{LQ1} and \ref{LQ2}}\label{Section8.2}
Our proofs of Lemma \ref{LQ1} and \ref{LQ2} will mostly follow (appropriately adapted) arguments from Sections 6.4 and 6.5 in \cite{LF}. We begin with two technical lemmas.

\begin{lemma}\label{LQ3}
 There is a constant $c > 0$ (depending on $p$) such that for integers $m,n,z$ and real $w$ with $n \geq 2$, $|2m - n| \leq 1$, $| z - pn| \leq c n$, $|w | \leq cn$ and $ w + \frac{m}{n}z  \in \mathbb{N}$ one has
\begin{equation}\label{Q9}
\mathbb{P} \left( S_m = w + \frac{m}{n}z  \Big{|} S_n= z \right) = \frac{1}{\sqrt{2\pi \sigma_{n,z}^2}} \exp \left( - \frac{w^2}{2 \sigma_{n,z}^2} + O\left(\frac{1}{\sqrt{n}} + \frac{|w|^3}{n^2} \right) \right),
\end{equation}
where $\sigma_{n,z}^2 = (n/4)(z/n)(1 - z/n)$.
\end{lemma}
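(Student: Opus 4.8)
The plan is to reduce the conditional probability to a ratio of binomial coefficients and then run a careful Stirling expansion, following the strategy of Sections 6.4--6.5 of \cite{LF} but keeping track of the $p$-dependence. First I would use independence of the increments of $S$: since $S_m$ and $S_n - S_m$ are independent with $\mathrm{Binomial}(m,p)$ and $\mathrm{Binomial}(n-m,p)$ laws, the powers of $p$ and $1-p$ cancel and, writing $k = w + \tfrac mn z$,
\[
\mathbb{P}\left(S_m = k \mid S_n = z\right) = \frac{\binom{m}{k}\binom{n-m}{z-k}}{\binom{n}{z}}.
\]
Under $|z-pn|\le cn$, $|w|\le cn$, $|2m-n|\le1$, for $c$ small enough depending on $p$ each of $k$, $m-k$, $z-k$, $n-m-z+k$ is $\Theta(n)$ and bounded away from the ends of its range, so Stirling's formula $\log N! = N\log N - N + \tfrac12\log(2\pi N) + O(1/N)$ applies to every factorial with total error uniformly $O(1/n)$.

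Next I would write $\log\mathbb{P}(S_m = k \mid S_n = z) = A(k) + B(k) + O(1/n)$, where $A(k) = mH(k/m) + (n-m)H\big(\tfrac{z-k}{n-m}\big) - nH(z/n)$ with $H(x) = -x\log x - (1-x)\log(1-x)$, and $B(k)$ collects the prefactor terms $\tfrac12\log\tfrac{m}{2\pi k(m-k)} + \tfrac12\log\tfrac{n-m}{2\pi(z-k)(n-m-z+k)} - \tfrac12\log\tfrac{n}{2\pi z(n-z)}$. The structural point, which I would highlight, is that since $H'$ is strictly monotone the equation $A'(k) = H'(k/m) - H'\big(\tfrac{z-k}{n-m}\big) = 0$ has the unique solution $k_0 = \tfrac mn z$, i.e.\ $w = 0$; at that point $k_0/m = \tfrac{z-k_0}{n-m} = z/n$, so $A''(k_0) = \big(\tfrac1m + \tfrac1{n-m}\big)H''(z/n) = -\tfrac{n^3}{m(n-m)z(n-z)}$, and since $m(n-m) = \tfrac{n^2}{4}(1+O(1/n))$ while $\sigma_{n,z}^2 = \tfrac{z(n-z)}{4n}$ this equals $-\tfrac{1}{\sigma_{n,z}^2}(1+O(1/n))$. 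A direct computation likewise gives $B(k_0) = \tfrac12\log\tfrac{n^3}{2\pi m(n-m)z(n-z)} = \tfrac12\log\tfrac{1}{2\pi\sigma_{n,z}^2} + O(1/n)$.

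Then I would Taylor expand around $k_0$. For $A$ the linear term vanishes, the quadratic term is $\tfrac12 A''(k_0)w^2 = -\tfrac{w^2}{2\sigma_{n,z}^2} + O(w^2/n^2)$, and the cubic remainder is $O(|w|^3/n^2)$ since $A'''$ is $O(1/n^2)$ uniformly on the segment from $k_0$ to $k$ (this is where smallness of $c$ reenters, to keep $H'''$ bounded along the way). For $B$, being $\tfrac12\log$ of a smooth function of arguments of order $n$, one has $B(k) - B(k_0) = O(|w|/n)$ by the mean value theorem. Finally one checks the elementary inequality $O(1/n) + O(w^2/n^2) + O(|w|/n) = O(1/\sqrt n + |w|^3/n^2)$ by treating $|w|\le\sqrt n$ and $|w|\ge\sqrt n$ separately, and assembles the pieces into the claimed identity.

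I expect the main obstacle to be purely bookkeeping: ensuring every error term is uniform in $(m,n,z,w)$ over the stated ranges with constants depending only on $p$, and in particular fixing the admissible value of $c$ so that the cubic Taylor remainder for $A$ is genuinely $O(|w|^3/n^2)$ throughout. The clean fact that $A'(k_0) = 0$ exactly at $k_0 = \tfrac mn z$ --- a consequence of the strict monotonicity of $H'$ --- is what makes the expansion work and eliminates any spurious linear term; everything else is a routine but careful computation.
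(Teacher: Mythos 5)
Your proposal is correct: the reduction to $\binom{m}{k}\binom{n-m}{z-k}/\binom{n}{z}$, the identification of $k_0=\tfrac{m}{n}z$ as the exact critical point of the entropy term $A$ (with $A(k_0)=0$, $A'(k_0)=0$, $A''(k_0)=-\tfrac{n^3}{m(n-m)z(n-z)}=-\sigma_{n,z}^{-2}(1+O(1/n))$), the $O(|w|^3/n^2)$ cubic remainder via a uniform bound on $A'''$, the $O(|w|/n)$ control of the prefactor $B$, and the final case split at $|w|=\sqrt n$ all check out, with $c$ chosen small enough (depending on $p$) that every argument of $H$ and every factorial stays of order $n$ and away from the endpoints. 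The route differs from the paper's in the technical decomposition: the paper applies Stirling only at the central value $j=0$ and then propagates outward by a telescoping product of one-step ratios $p(j+1)/p(j)=A(j,m,n,z)=1-B(j,m,n,z)$, summing $\log(1+x)=x+O(x^2)$ to produce the Gaussian exponent, whereas you apply Stirling at the general point and do a global Taylor expansion of the entropy functional around its maximizer. The paper's ratio method stays entirely discrete and only ever needs first-order expansions of logarithms, at the cost of an explicit computation of the one-step ratio; your Laplace-type expansion is more systematic and makes the source of the variance $\sigma_{n,z}^2$ transparent ($A''(k_0)=(\tfrac1m+\tfrac1{n-m})H''(z/n)$), at the cost of having to control second and third derivatives uniformly on the segment from $k_0$ to $k$. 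Either way the error bookkeeping is the same, and both give the stated uniform $O(1/\sqrt n+|w|^3/n^2)$ error, so your argument is a valid substitute for the paper's sketch.
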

\begin{proof}
The above result is very similar to Lemma 6.7 in \cite{LF} and so we only sketch the main ideas that go into the proof. The statement of the lemma will follow if we can show that if $|j| \leq cn$ we have
$$p(j,m,n,z) = \mathbb{P} \left( \lfloor S_m -  \frac{m}{n}z \rfloor = j  \Big{|} S_n= z \right) = \frac{1}{\sqrt{2\pi \sigma_{n,z}^2}} \exp \left( - \frac{j^2}{2 \sigma_{n,z}^2} + O\left(\frac{1}{\sqrt{n}} + \frac{j^3}{n^2} \right) \right).$$
Using Stirling's approximation formula $A! = \sqrt{2 \pi} A^{A + 1/2} e^{-A} [1 + O(A^{-1})],$
we see that 
$$p(0,m,n,z) = \mathbb{P} \left( \lfloor S_m - \frac{m}{n}z \rfloor = 0  \Big{|} S_n= z \right) =  \frac{1}{\sqrt{2\pi \sigma_{n,z}^2}} \left( 1 +  O\left(n^{-1}  \right) \right) .$$
Let us remark that in order to apply Stirling's approximation, we needed to choose $c$ sufficiently small so that $ \frac{m}{n}z$, $z$, $m - \frac{m}{n}z$, $n- z$ all tend to infinity faster than $\epsilon n$  for some $\epsilon > 0$ fixed (depending on $p$) as $n \rightarrow \infty$. For the remainder we assume such a $c$ is chosen and the constant in the big $O$ notation above depends on it. 

Let us focus on the case $j > 0$ (if $j < 0$ a similar argument can be applied). For $j > 0$  and $A(j,m,n,z) = \frac{\left( m + z - 2 \lceil \frac{m}{n} z \rceil - 2j\right)^2 - (m-z)^2 }{\left( 2 \lceil \frac{m}{n} z \rceil + 2j + 2 + m - z\right)^2 - (m-z)^2} $ we have
$$p(j + 1,m,n,z)  = p(j ,m,n,z)  \times A(j,m,n,z) $$
and so
$$p(j ,m,n,z)  = p(0,m,n,z) \times \prod_{i = 1}^jA(i,m,n,z). $$
Given our earlier result for $p(0,m,n,z)$ to finish the proof it remains to show that 
\begin{equation}\label{Q9}
\sum_{i = 1}^j \log \left[A(j,m,n,z) \right] = - \frac{j^2 }{2\sigma_{n,z}^2} + O\left(\frac{1}{\sqrt{n}} +\frac{j^3}{n^2} \right).
\end{equation}
Notice that if we choose $c$ sufficiently small, we have that 
$$A(j,m,n,z) = 1 - B(j,m,n,z), \mbox{ where } B(j,m,n,z) = \frac{8jm}{m^2 - (m-z)^2} + O\left( \frac{j}{n^2} + \frac{1}{n}\right)$$
and $0 \leq B(j,m,n,z) \leq \frac{1}{2}$. Using the latter together with the fact that $\log(1 + x) = x + O(x^2)$ for $|x| \leq 1/2$ we get
$$\sum_{i = 1}^j \log \left[A(j,m,n,z) \right] = - \sum_{i =1}^j\frac{8im}{m^2 - (m-z)^2} + O\left( \frac{j^2}{n^2} + \frac{j}{n}\right) =  - \frac{4j^2m}{m^2 - (m-z)^2} +  O\left( \frac{j^3}{n^2} + \frac{1}{\sqrt{n}}\right) .$$
To conclude the proof we observe that
$$\frac{4j^2m}{m^2 - (m-z)^2} = \frac{j^2}{m \cdot \frac{z}{2m} \cdot \left( 1 - \frac{z}{2m} \right)} = \frac{j^2}{\frac{n}{2} \cdot \frac{z}{n} \cdot \left( 1 - \frac{z}{n} \right)} +O\left( \frac{j^2}{n^2} \right) = \frac{j^2}{2\sigma_{n,z}^2} + O\left( \frac{j^3}{n^2} + \frac{1}{\sqrt{n}}\right) . $$
\end{proof}

We now state without proof an easy large deviation estmiate, which can be established in the same way one establishes large deviations for binomial random variables.
\begin{lemma}\label{LQ4}
There exists an $\eta > 0$ (depending on $p$) such that, for any $a > 0$, there exist $C = C(a) < \infty$ and $\gamma = \gamma(a) > 0$ with the following properties. For any integers $m,n,z$ with $n \geq 2$, $|2m - n| \leq 1$, $| z - pn| \leq \eta n$ one has
\begin{equation}\label{Q10}
\mathbb{P} \left( \left| S_m - \frac{m}{n} \right| > am \Big{|} S_n = z\right) \leq C e^{-\gamma m}.
\end{equation}
\end{lemma}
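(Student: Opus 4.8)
The plan is to reduce the conditional large deviation bound to the classical (unconditional) one for i.i.d.\ Bernoulli sums. The key observation is that, conditionally on $S_n=z$, the increment vector $(X_1,\dots,X_n)$ is uniform over the binary strings with exactly $z$ ones, and this conditional law is the same whether the unconditional increments are Bernoulli$(p)$ or Bernoulli$(q)$ for any $q\in(0,1)$. I will choose $q=z/n$, so that the unconditional mean of $S_m$ under $\mathbb{P}_q$ equals $\tfrac{m}{n}z$ --- exactly the centering appearing in (\ref{Q10}), where $\tfrac{m}{n}$ should be read as $\tfrac{m}{n}z$.

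First I would fix $\eta=\eta(p)>0$ small enough that $|z-pn|\le\eta n$ forces $q=z/n$ to lie in a compact subinterval $[\delta,1-\delta]\subset(0,1)$ depending only on $p$. Letting $\mathbb{P}_q$ be the law of i.i.d.\ Bernoulli$(q)$ increments, the identity of conditional laws gives
\[
\mathbb{P}\Big(\big|S_m-\tfrac{m}{n}z\big|>am \ \Big|\ S_n=z\Big)
=\frac{\mathbb{P}_q\big(\{|S_m-qm|>am\}\cap\{S_n=z\}\big)}{\mathbb{P}_q(S_n=z)}
\le \frac{\mathbb{P}_q\big(|S_m-qm|>am\big)}{\mathbb{P}_q(S_n=z)}.
\]
Second, for the numerator I would invoke the standard Chernoff/Cram\'er bound for sums of i.i.d.\ Bernoulli$(q)$ variables; since $q\in[\delta,1-\delta]$, there are $C_0,\gamma_0>0$ depending only on $a$ and $\delta$ (hence on $a$ and $p$) with $\mathbb{P}_q(|S_m-qm|>am)\le C_0 e^{-\gamma_0 m}$, uniformly in $q$. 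Third, for the denominator I would use the local central limit theorem --- equivalently the Stirling estimate already established in the proof of Lemma \ref{LQ3}, applied at the mode --- which gives $\mathbb{P}_q(S_n=z)\ge c_0 n^{-1/2}$ with $c_0=c_0(\delta)>0$ (for the finitely many small $n$ this is a direct check). Finally, using the hypothesis $|2m-n|\le1$, so that $n^{1/2}\le C m^{1/2}$, I would combine the three bounds:
\[
\mathbb{P}\Big(\big|S_m-\tfrac{m}{n}z\big|>am \ \Big|\ S_n=z\Big)
\le c_0^{-1}C_0\, n^{1/2} e^{-\gamma_0 m}\le C(a)\, m^{1/2}e^{-\gamma_0 m}\le C(a)\,e^{-\gamma m}
\]
for any $\gamma<\gamma_0$ after enlarging $C(a)$ to absorb the polynomial factor, which is (\ref{Q10}).

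I do not expect a real obstacle here: the statement is a routine large deviation estimate, as the excerpt already remarks. The two places requiring minor care are the uniformity (in $q\in[\delta,1-\delta]$) of the exponential Chernoff bound and of the lower bound $\mathbb{P}_q(S_n=z)\gtrsim n^{-1/2}$ --- both of which degenerate as $q\to0$ or $q\to1$ and are precisely why the hypothesis $|z-pn|\le\eta n$ with $\eta$ small is imposed. Each follows from a standard computation: the Chernoff rate $\Lambda^*_q(x)=\sup_\lambda\big(\lambda x-\log(qe^\lambda+1-q)\big)$ is bounded below uniformly over $|x-q|\ge a$, $q\in[\delta,1-\delta]$, and the lower bound on $\mathbb{P}_q(S_n=z)=\binom{n}{z}q^z(1-q)^{n-z}$ follows from Stirling exactly as in Lemma \ref{LQ3}.
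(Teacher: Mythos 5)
Your proof is correct. The paper itself states Lemma \ref{LQ4} without proof, remarking only that it ``can be established in the same way one establishes large deviations for binomial random variables,'' and your argument --- exchanging the Bernoulli parameter for $q=z/n$ (legitimate since the conditional law given $S_n=z$ does not depend on the parameter), bounding the conditional probability by $\mathbb{P}_q\left(|S_m-qm|>am\right)/\mathbb{P}_q(S_n=z)$, and combining a Chernoff bound uniform over $q$ in a compact subinterval of $(0,1)$ with the Stirling lower bound $\mathbb{P}_q(S_n=z)\gtrsim n^{-1/2}$, then absorbing the polynomial factor using $n\leq 2m+1$ --- is precisely such a reduction, so it matches the intended route. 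You also read the statement correctly: the centering $\frac{m}{n}$ in (\ref{Q10}) is a typo for $\frac{m}{n}z$, and taking $\eta$ small enough that $q=z/n$ stays in $[\delta,1-\delta]$ (which also guarantees $1\leq z\leq n-1$, so the conditioning event has positive $\mathbb{P}_q$-probability) is exactly what makes both the Chernoff rate and the local lower bound uniform.
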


It is clear that Lemmas \ref{LQ3} and \ref{LQ4} imply Lemma \ref{LQ2}. What remains is to prove Lemma \ref{LQ1}, to which we now turn.

\begin{proof}(Lemma \ref{LQ1})
Notice that we only need to prove the lemma for $n$ sufficiently large. In order to simplify the notation we will assume that $n$ is even and so $m = n/2$ (the case $n$ odd can be handled similarly). 

We start by choosing $\epsilon_0 \leq \min (c,\eta)$ with $c$ and $\eta$ as in Lemmas \ref{LQ3} and \ref{LQ4} respectively. We denote
$$Z = Z_{n,z} = z/2 +  \sqrt{p(1-p)n/4} N,  \hspace{5mm} \hat Z = \hat Z_{n,z} = z/2 + \sigma_{n,z}N,$$
where we recall that $\sigma_{n,z}^2 = (n/4)(z/n)(1 - z/n)$ and let $W = W_{n,z}$ be the random variable with distribution $S^{(n,z)}_{n/2}$ that is quantile coupled with $N$. Notice that $W$ is also quantile coupled with $Z$ and $\hat Z$. We write $F = F_{n,z}$ for the distribution function of $\hat Z$ and $G = G_{n,z}$ for the distribution function of $W$. We observe that from Lemmas \ref{LQ3} and \ref{LQ4}, the random variable $W - \lfloor z/2 \rfloor$ satisfies the conditions of Lemma 6.9 in \cite{LF}, from which we deduce that there are constants $c', \epsilon' > 0$ and $N' \in \mathbb{N}$ such that for $n \geq N'$ and $|x- z/2|  \leq \epsilon' n$ we have
\begin{equation}\label{Q11}
F\left( x - c'\left[1 + \frac{(x- z/2)^2}{n}  \right]\right) \leq G(x- 1) \leq G(x + 1) \leq F\left( x + c' \left[1 + \frac{(x- z/2)^2}{n}  \right]\right).
\end{equation}
In the remainder we assume $\epsilon_0 \leq \epsilon'$ as well. It follows from (\ref{Q1}) and (\ref{Q11}) that 
\begin{equation}\label{R1}
 |\hat Z - W| \leq c'\left[1 + \frac{(W- z/2)^2}{n}\right],
\end{equation}
for all $n \geq N'$, provided that $|z - pn| \leq \epsilon_0 n$, $|W - z/2| \leq \epsilon_0 n$. In addition, we have the following string of inequalities for any $a > 0$
$$\mathbb{E}\left[ e^{a|Z - \hat Z|} \Big{|} W = w\right] \leq \mathbb{E}\left[ e^{a(Z - \hat Z)} +e^{-a(Z - \hat Z)}  \Big{|} W = w\right] \leq \frac{ \mathbb{E}\left[ e^{a(Z - \hat Z)} +e^{-a(Z - \hat Z)}  \right]}{\mathbb{P}(W = w)} = \frac{2e^{a^2\sigma(n,p)^2/2}}{\mathbb{P}(W = w)},$$
where $\sigma(n,p) = \sqrt{n/4} \cdot \left|\sqrt{p(1-p)} - \sqrt{(z/n)(1 - z/n)} \right|$. It follows from Lemma \ref{LQ3} that if $|w - z/2| \leq  \epsilon_0 n$ and $|z - pn| \leq \epsilon_0 n$ then we have for some $C > 0$ and all $n \geq 2$ that
\begin{equation}\label{R2}
\mathbb{E}\left[ e^{a|Z - \hat Z|} \Big{|} W = w\right] \leq Ce^{a^2\sigma(n,p)^2/2} \sqrt{n} \cdot \exp \left( C \frac{(w - z/2)^2}{n}\right).
\end{equation}
Combining (\ref{R1}) and (\ref{R2}) we see that for some (possibly larger than before) $C > 0$ we have
\begin{equation}\label{R3}
\mathbb{E} \left[e^{a|W - Z|} \Big{|}W = w \right] \leq \mathbb{E} \left[e^{a|W - \hat Z|} e^{a|Z - Z|} \Big{|}W = w \right] \leq Ce^{a^2\sigma(n,p)^2/2} \sqrt{n} \cdot \exp \left( C \frac{(w - z/2)^2}{n}\right),
\end{equation}
provided $ n \geq N'$, $|w - z/2| \leq\epsilon_0 n$ and $|z - pn| \leq \epsilon_0 n$. 

Notice that by possibly taking $\epsilon_0$ smaller we can make $\sigma(n,p) \leq \sqrt{n/4} \cdot c_p | z/n - p|$, where $c_p = \frac{2}{p(1-p)}$. Using the latter together with (\ref{R3}) and Jensen's inequality we have for any $k \in \mathbb{N}$ that
\begin{equation*}
 \mathbb{E} \left[e^{(1/k)|W - Z|} \Big{|}W = w \right] \leq \mathbb{E} \left[e^{|W - Z|} \Big{|}W = w \right]^{1/k} \leq  (\sqrt{n}C)^{1/k} \cdot \exp \left( \frac{c_p(z-pn)^2}{nk} + C \frac{(w - z/2)^2}{nk}\right),
\end{equation*}
and if we further use that $(x +y )^2 \leq 2x^2 + 2y^2$ above we see that
\begin{equation}\label{R4}
 \mathbb{E} \left[e^{(1/k)|W - Z|} \Big{|}W = w \right] \leq  (\sqrt{n}C)^{1/k} \cdot \exp \left( \frac{(c_p + 1/2)(z-pn)^2}{nk} + \frac{2C(w - pm)^2}{nk}\right),
\end{equation}
provided $ n \geq N'$, $|w - z/2| \leq\epsilon_0 n$ and $|z - pn| \leq \epsilon_0 n$.\\

Suppose now that $b_1$ is given, and let $k$ be sufficiently large so that 
$$\frac{c_p + 1/2}{k} \leq b_1 \mbox{ and } \frac{2C}{k} \leq b_1.$$
If $a_1 \leq 1/k$ we see from (\ref{R4}) that
\begin{equation}\label{R5}
 \mathbb{E} \left[e^{a_1|W - Z|} \Big{|}W = w \right] \leq  C^{1/k}\sqrt{n} \cdot \exp \left( \frac{b_1(z-pn)^2}{n} + \frac{b_1(w - pm)^2}{n}\right),
\end{equation}
provided $ n \geq N'$, $|w - z/2| \leq\epsilon_0 n$ and $|z - pn| \leq \epsilon_0 n$. If $|z-pn| > \epsilon_0 n$ or $|w - z/2| >  \epsilon_0 n$ we observe that 
$$\frac{b_1(z-pn)^2}{n} + \frac{b_1(w - pm)^2}{n} \geq \frac{b_1\epsilon_0^2 n}{3} .$$
One easily observes that if $a_1 \leq a_0$ with $a_0$ sufficiently small and $C \geq \tilde c$ with $\tilde c$ sufficiently large we have for any $w$ such that $\mathbb{P}(W = w) > 0$ that
$$ \mathbb{E} \left[e^{a_1|W - Z|} \Big{|}W = w \right] \leq  C^{1/k}\sqrt{n} \cdot \exp \left( \frac{b_1\epsilon_0^2 n}{3}\right).$$
The latter statements suggest that (\ref{R5}) holds for all $w$ such that $\mathbb{P}(W = w) > 0$ and $n \geq N'$, which concludes the proof of the lemma.

\end{proof} 

\bibliographystyle{amsplain}
\bibliography{PD}

\providecommand{\bysame}{\leavevmode\hbox to3em{\hrulefill}\thinspace}
\providecommand{\MR}{\relax\ifhmode\unskip\space\fi MR }
\providecommand{\MRhref}[2]{%
  \href{http://www.ams.org/mathscinet-getitem?mr=#1}{#2}
}
\providecommand{\href}[2]{#2}
\begin{thebibliography}{10}

\bibitem{Agg16B}
A.~Aggarwal, \emph{Current fluctuations of the stationary {A}{S}{E}{P} and
  six-vertex model},  (2016), Preprint, arXiv:1608.04726.

\bibitem{Agg16}
\bysame, \emph{Convergence of the stochastic six-vertex model to the
  {A}{S}{E}{P}}, Math. Phys. Anal. Geom. \textbf{20} (2017).

\bibitem{CorQ}
G.~Amir, I.~Corwin, and J.~Quastel, \emph{Probability distribution of the free
  energy of the continuum directed random polymer in 1 + 1 dimensions}, Commun.
  Pure Appl. Math. \textbf{64} (2011), 466--537.

\bibitem{Dam12}
A.~Auffinger and M.~Damron, \emph{The scaling relation $\chi = 2 \xi - 1$ for
  directed polymers in a random environment}, ALEA, Lat. Am. J. Probab. Math.
  Stat. \textbf{10} (2013), 857--880.

\bibitem{Dam14}
\bysame, \emph{A simplified proof of the relation between scaling exponents in
  first-passage percolation}, Ann. Probab. \textbf{42} (2014), 1197--1211.

\bibitem{BDMM}
J.~Baik, P.~Deift, K.T.-R. McLaughlin, P.~Miller, and X.~Zhou, \emph{Optimal
  tail estimates for directed last passage site percolation with geometric
  random variables}, Adv. Theor. Math. Phys. \textbf{5} (2001), 1207--1250.

\bibitem{BFP}
J.~Baik, P.L. Ferrari, and S.~P{\' e}ch{\' e}, \emph{Limit process of
  stationary {T}{A}{S}{E}{P} near the characteristic line}, Commun. Pure Appl.
  Math. \textbf{63} (2010), 1017--1070.

\bibitem{BQS}
M.~Bal{\' a}zs, J.~Quastel, and T.~Sepp{\"a}l{\"a}inen, \emph{Scaling exponent
  for the {H}opf-{C}ole solution of {K}{P}{Z}/{S}tochastic {B}urgers}, J. Amer.
  Math. Soc. \textbf{24} (2011), 683--708.

\bibitem{BalS}
M.~Bal{\' a}zs and T.~Sepp{\" a}l{\" a}inen, \emph{Order of current variance
  and diffusivity in the asymmetric simple exclusion process}, Ann. Math.
  \textbf{171} (2010), 1237--1265.

\bibitem{BS}
A.L. Barab{\' a}si and H.E. Stanley, \emph{Fractal concepts in surface growth},
  Cambridge University Press, 1995.

\bibitem{Bar15}
G.~Barraquand, \emph{A phase tansition for q-{T}{A}{S}{E}{P} with a few slower
  particles}, Stoch. Proc. Appl. \textbf{125} (2015), 2674--2699.

\bibitem{BWh}
D.~Betea and M.~Wheeler, \emph{Refined {C}auchy and {L}ittlewood identities,
  plane partitions and symmetry classes of alternating sign matrices}, J. Comb.
  Theory Ser. A. \textbf{137} (2016), 126--165.

\bibitem{Bill}
P.~Billingsley, \emph{Convergence of probability measures, {S}econd edition},
  Academic Press, New York, 1999.

\bibitem{Bor16}
A.~Borodin, \emph{Stochastic higher spin six vertex model and {M}acdonald
  measures},  (2016), Preprint, arXiv:1608.01553.

\bibitem{Bor14}
\bysame, \emph{On a family of symmetric rational functions}, Adv. Math.
  \textbf{306} (2017), 973--1018.

\bibitem{BBW}
A.~Borodin, A.~Bufetov, and M.~Wheeler, \emph{Between the stochastic six vertex
  model and {H}all-{L}ittlewood processes},  (2016), Preprint,
  arXiv:1611.09486.

\bibitem{BorCor}
A.~Borodin and I.~Corwin, \emph{Macdonald processes}, Probab. Theory Relat.
  Fields \textbf{158} (2014).

\bibitem{BCF}
A.~Borodin, I.~Corwin, and P.~L. Ferrari, \emph{Free energy fluctuations for
  directed polymers in random media in 1 + 1 dimension}, Commun. Pure Appl.
  Math. \textbf{67} (2014), 1129--1214.

\bibitem{BCG14}
A.~Borodin, I.~Corwin, and V.~Gorin, \emph{Stochastic six-vertex model}, Duke
  Math. J. \textbf{165} (2016), 563--624.

\bibitem{BCR}
A.~Borodin, I.~Corwin, and D.~Remenik, \emph{Log-{G}amma polymer free energy
  fluctuations via a {F}redholm determinant identity}, Commun. Math. Phys.
  \textbf{324} (2013), 215--232.

\bibitem{BorCorRem}
\bysame, \emph{Multiplicative functionals on ensembles of non-intersecting
  paths}, Ann. Inst. H. Poincar{\' e} Probab. Statist. \textbf{51} (2015),
  28--58.

\bibitem{BCS}
A.~Borodin, I.~Corwin, and T.~Sasamoto, \emph{From duality to determinants for
  $q$-{T}{A}{S}{E}{P} and {A}{S}{E}{P}}, Ann. Probab. \textbf{42} (2014),
  2314--2382.

\bibitem{BFPS}
A.~Borodin, P.~Ferrari, M.~Pr{\" a}hofer, and T.~Sasamoto, \emph{Fluctuation
  properties of the {T}{A}{S}{E}{P} with periodic initial configuration}.

\bibitem{BorGorIP}
A.~Borodin and V.~Gorin, \emph{Lectures on {I}ntegrable {P}robability},
  (2012), Preprint, arXiv:1212.3351.

\bibitem{BorO}
A.~Borodin and G.~Olshanski, \emph{The {A}{S}{E}{P} and determinantal point
  processes},  (2016), Preprint, arXiv:1608.01564.

\bibitem{BPIP2}
A.~Borodin and L.~Petrov, \emph{Lectures on {I}ntegrable probability: {F}rom
  representation theory to {M}acdonald processes}, Probab. Surv. \textbf{11}
  (2014), 1--58.

\bibitem{BP16}
\bysame, \emph{Higher spin six vertex model and symmetric rational functions},
  (2016), Preprint, arXiv:1601.05770.

\bibitem{BPIP}
\bysame, \emph{Lectures on {I}ntegrable probability: {S}tochastic vertex models
  and symmetric functions},  (2016), Preprint, arXiv:1605.01349.

\bibitem{BM}
A.~Bufetov and K.~Matveev, \emph{Hall-{L}ittlewood {R}{S}{K} field},  (2017),
  Preprint, arXiv:1705.07169.

\bibitem{Chatt}
S.~Chatterjee, \emph{The universal relation between scaling exponents in
  first-passage percolation}, Ann. Math. \textbf{177} (2013), 663--697.

\bibitem{Cinlar}
E.~{\c C}inlar, \emph{Probability and stochastics}, Springer, New York, 2011.

\bibitem{CU2}
I.~Corwin, \emph{The {K}ardar-{P}arisi-{Z}hang equation and universality
  class}, Random Matrices: Theory Appl. \textbf{1} (2012).

\bibitem{Cor14}
\bysame, \emph{Macdonald processes, quantum integrable systems and the
  {K}ardar-{P}arisi-{Z}hang universality class}, In: Proceedings of the
  International Congress of Mathematicians (2014).

\bibitem{Cor12}
\bysame, \emph{Two ways to solve {A}{S}{E}{P}}, In: Topics in Percolative and
  Disordered Systems, Springer Proc. Math. Stat. \textbf{69} (2014), 1--13,
  Springer, New York.

\bibitem{CFP}
I.~Corwin, P.L. Ferrari, and S.~P{\' e}ch{\' e}, \emph{Limit processes for
  {T}{A}{S}{E}{P} with shocks and rarefaction fans}, J. Stat. Phys.
  \textbf{140} (2010), 232--267.

\bibitem{CorHamA}
I.~Corwin and A.~Hammond, \emph{Brownian {G}ibbs property for {A}iry line
  ensembles}, Invent. Math. \textbf{195} (2014), 441--508.

\bibitem{CorHamK}
\bysame, \emph{K{P}{Z} line ensemble}, Probab. Theory Relat. Fields
  \textbf{166} (2016), 67--185.

\bibitem{CCSZ}
I.~Corwin, N.~O'Connell, T.~Sepp{\"a}l{\"a}inen, and N.~Zygouras,
  \emph{Tropical combinatorics and {W}hittaker functions}, Duke Math. J.
  \textbf{163} (2014), 513.

\bibitem{CQR}
I.~Corwin, J.~Quastel, and D.~Remenik, \emph{Continuum statistics of the
  {A}iry$_2$ process}, Commun. Math. Phys. \textbf{317} (2013), 347--362.

\bibitem{ED}
E.~Dimitrov, \emph{K{P}{Z} and {A}iry limits of {H}all-{L}ittlewood random
  plane partitions}, Ann. Inst. H. Poincar{\' e} Probab. Statist. (to appear)
  (2017), Preprint, arXiv:1602.00727.

\bibitem{Dot13}
V.~Dotsenko, \emph{Two-point free energy distribution function in (1+1)
  directed polymers}, J. Phys. A. \textbf{46} (2013).

\bibitem{Dot14}
\bysame, \emph{$n$-point free energy distribution function in one dimensional
  random directed polymers}, Condens. Matter Phys. \textbf{17} (2014).

\bibitem{FerVet}
P.~L. Ferrari and B.~Veto, \emph{Tracy-{W}idom asymptotics for
  q-{T}{A}{S}{E}{P}}, Ann. Inst. H. Poincar{\' e} Probab. Statist. \textbf{51}
  (2015), 1465--1485.

\bibitem{FNS}
D.~Forster, D.R. Nelson, and M.J. Stephen, \emph{Large-distance and long-time
  properties of a randomly stirred fluid}, Phys. Rev. A \textbf{16} (1977),
  732--749.

\bibitem{Gwa}
H.-L. Gwa and H.~Spohn, \emph{Six-vertex model, roughened surfaces, and an
  asymmetric spin {H}amiltonian}, Phys. Rev. Lett. \textbf{68} (1992),
  725--728.

\bibitem{HT}
T.~Halpin-Healy and K.~Takeuchi, \emph{A {K}{P}{Z} cocktail-shaken, not
  stirred: {T}oasting 30 years of kinetically roughened surfaces}, J. Stat.
  Phys. \textbf{160} (2015), 794--814.

\bibitem{ISS13}
T.~Imamura, T.~Sasamoto, and H.~Spohn, \emph{On the equal time two-point
  distribution of the one-dimensional {K}{P}{Z} equation by replica}, J. Phys.
  A. \textbf{46} (2013).

\bibitem{Joh00b}
K.~Johansson, \emph{Transversal fluctuations for increasing subsequences on the
  plane}, Probab. Theory Relat. Fields \textbf{116} (2000), 445--456.

\bibitem{Joh02}
\bysame, \emph{Discrete polynuclear growth and determinantal processes},
  Commun. Math. Phys. \textbf{242} (2003), 277--329.

\bibitem{KS}
I.~Karatzas and S.~Shreve, \emph{Brownian motion and stochastic calculus},
  Volume 113 of Graduate Texts in Mathematics, Springer, 1988.

\bibitem{KPZ}
M.~Kardar, G.~Parisi, and Y.C. Zhang, \emph{Dynamic scaling of growing
  interfaces}, Phys. Rev. Lett. \textbf{56} (1986), 889--892.

\bibitem{Kor13}
C.~Korff, \emph{Cylindric versions of specialised {M}acdonald functions and a
  deformed {V}erlinde algebra}, Commun. Math. Phys. \textbf{318} (2013),
  173--246.

\bibitem{KQ}
A.~Krishnan and J.~Quastel, \emph{Tracy-{W}idom fluctuations for perturbations
  of the log-gamma polymer in intermediate disorder},  (2016), Preprint,
  arXiv:1610.06975.

\bibitem{LF}
G.F. Lawler and J.A. Trujillo-Ferreras, \emph{Random walk loop-soup}, Trans.
  Amer. Math. Soc. \textbf{359} (2007), 767--787.

\bibitem{Lig2}
T.~Liggett, \emph{Interacting {P}article {S}ystems}, Springer, Berlin, 2005.

\bibitem{Mac}
I.~G. Macdonald, \emph{Symmetric functions and {H}all polynomials}, 2 ed.,
  Oxford University Press Inc., New York, 1995.

\bibitem{MQR}
K.~Matetski, J.~Quastel, and D.~Remenik, \emph{The {K}{P}{Z} fixed point},
  (2016), Preprint, arXiv:1701.00018.

\bibitem{MFQR}
G.~Moreno~Flores, J.~Quastel, and D.~Remenik, \emph{Endpoint distribution of
  directed polymers in 1 + 1 dimensions}, Commun. Math. Phys. \textbf{317}
  (2013), 363--380.

\bibitem{NZ}
V.-L. Nguyen and N.~Zygouras, \emph{Variants of geometric {R}{S}{K}, geometric
  {P}{N}{G} and the multipoint distribution of the log-gamma polymer}, Int.
  Math. Res. Notices (2016).

\bibitem{Nica}
M.~Nica, \emph{Intermediate disorder limits for multi-layer semi-discrete
  directed polymers},  (2016), Preprint, arXiv:1609.00298.

\bibitem{OC}
N.~O'Connell, \emph{Directed polymers and the quantum {T}oda lattice}, Ann.
  Probab. \textbf{40} (2012), 437--458.

\bibitem{CSZ}
N.~O'Connell, T.~Sepp{\"a}l{\"a}inen, and N.~Zygouras, \emph{Geometric
  {R}{S}{K} correspondence, {W}hittaker functions and symmetrized random
  polymers}, Invent. Math. \textbf{197} (2014), 361--416.

\bibitem{OCW}
N.~O'Connell and J.~Warren, \emph{A multi-layer extension of the stochastic
  heat equation}, Commun. Math. Phys. \textbf{341} (2016), 1--33.

\bibitem{OQR}
J.~Ortmann, J.~Quastel, and D.~Remenik, \emph{A {P}faffian representation for
  flat {A}{S}{E}{P}}, Commun. Pure Appl. Math. \textbf{70} (2017), 3--89.

\bibitem{Parth}
K.~R. Parthasarathy, \emph{Probability measures on metric spaces}, Wiley, New
  York, 1967.

\bibitem{Spohn}
M.~Pr{\" a}hofer and H.~Spohn, \emph{Scale invariance of the {P}{N}{G}
  {D}roplet and the {A}iry process}, J. Stat. Phys. \textbf{108} (2002),
  1071--1106.

\bibitem{PSpohn2}
S.~Prolhac and H.~Spohn, \emph{The one-dimensional {K}{P}{Z} equation and
  {A}iry process}, J. Stat. Mech.-Theory E. \textbf{P03020} (2011).

\bibitem{PSpohn}
\bysame, \emph{Two-point generating function of the free energy for a directed
  polymer in a random medium}, J. Stat. Mech.-Theory E. \textbf{P01031} (2011).

\bibitem{QR}
J.~Quastel and D.~Remenik, \emph{Airy processes and variational problems},
  Topics in {P}ercolative and {D}isordered {S}ystems, {S}pringer {P}roceedings
  in {M}athematics \& {S}tatistics \textbf{69} (2014), 121--171.

\bibitem{QS}
J.~Quastel and H.~Spohn, \emph{The one-dimensional {K}{P}{Z} equation and its
  universality class}, J. Stat. Phys. \textbf{160} (2015), 965--984.

\bibitem{QV}
J.~Quastel and B.~Valk{\' o}, \emph{$t^{1/3}$ {S}uperdiffusivity of
  {F}inite-{R}ange {A}symmetric {E}xclusion {P}rocesses on $\mathbb{Z}$},
  Commun. Math. Phys. \textbf{273} (2007), 379--394.

\bibitem{QV2}
\bysame, \emph{Diffusivity of lattice gases}, Arch. Rat. Mech. Anal.
  \textbf{210} (2013), 269--320.

\bibitem{Sas05}
T.~Sasamoto, \emph{Spatial correlations of the 1{D} {K}{P}{Z} surface on a flat
  substrate}, J. Phys. A. \textbf{38} (2005), L549--L556.

\bibitem{Sep12}
T.~Sepp{\" a}l{\" a}inen, \emph{Scaling for a one-dimensional directed polymer
  with boundary conditions}, Ann. Probab. \textbf{40} (2012), 19--73.

\bibitem{Sep10}
T.~Sepp{\" a}l{\" a}inen and B.~Valk{\' o}, \emph{Bounds for scaling exponents
  for a 1+1 dimensional directed polymer in a {B}rownian environment}, Alea
  \textbf{7} (2010), 451--476.

\bibitem{Spi}
F.~Spitzer, \emph{Interaction of {M}arkov processes}, Adv. Math. \textbf{5}
  (1970), 246--290.

\bibitem{TWPaper}
C.~Tracy and H.~Widom, \emph{Level-spacing distributions and the {A}iry
  kernel}, Commun. Math. Phys. \textbf{159} (1994), 151--174.

\bibitem{TWASEP}
\bysame, \emph{Asymptotics in {A}{S}{E}{P} with step initial condition},
  Commun. Math. Phys. \textbf{290} (2009), 129--154.

\bibitem{TWASEP2}
\bysame, \emph{Formulas and asymptotics for the asymmetric simple exclusion
  process}, Math. Phys. Anal. Geom. \textbf{14} (2011), 211--235.

\bibitem{KSB}
H.~van Beijeren, R.~Kutner, and H.~Spohn, \emph{Excess noise for driven
  diffusive systems}, Phys. Rev. Lett. \textbf{54} (1985).

\bibitem{Vul2}
M.~Vuleti{\' c}, \emph{The shifted {S}chur process and asymptotics of large
  random strict plane partitions}, Int. Math. Res. Notices \textbf{14} (2007),
  doi: 10.1093/imnr/rnm043.

\bibitem{Vul}
\bysame, \emph{A generalization of {M}ac{M}ahon's formula}, Trans. Amer. Math.
  Soc. \textbf{361} (2009), 2789--2804.

\end{thebibliography}

\end{document}